
\documentclass[reqno,twoside,11pt]{amsart}
\usepackage{graphicx,subcaption}
\usepackage{hyperref}
\usepackage{enumerate, amsthm}
\usepackage{mathrsfs}
\usepackage{rotating}
\usepackage{cancel}
\usepackage{xcolor}
\usepackage[margin=1 in]{geometry}
\usepackage[margin={1.5cm, 1.5cm},font=small, labelsep=endash]{caption}
\usepackage{mathtools}
\usepackage{verbatim}

\DeclareFontFamily{OT1}{fraktura}{}
\DeclareFontShape{OT1}{fraktura}{m}{n} {<5> <6> <7> <8> <9> <10> <11> <12> <13> <14.4> [1.1] eufm10}{}
\DeclareMathAlphabet{\fraktura}{OT1}{fraktura}{m}{n}

\renewcommand{\section}{\secdef\sct\sect}
\newcommand{\sct}[2][default]{\refstepcounter{section}
\addcontentsline{toc}{section}
{{\tocsection {}{\thesection}{\!\!\!\!#1\dotfill}}{}}
\vspace{0.7cm}
\centerline{ 
\scshape\arabic{section}.\ #1} \nopagebreak \vspace{0.2cm}}
\newcommand{\sect}[1]{
\vspace{0.4cm} \centerline{\large\scshape\rmfamily #1}
\vspace{0.2cm}}

\renewcommand{\subsection}{\secdef\subsct\sbsect}
\newcommand{\subsct}[2][default]{\refstepcounter{subsection}
\addcontentsline{toc}{subsection}
{{\tocsection{\!\!}{\hspace{1.2em}\thesubsection}{\!\!\!\!#1\dotfill}}{}}
\nopagebreak\vspace{0.45\baselineskip} {\flushleft\bf
\thesection.\arabic{subsection}~\bf #1.~}
\\*[3mm]\noindent
\nopagebreak}
\newcommand{\sbsect}[1]{
\vspace{0.1cm}\noindent
\textbf{#1.~}\vspace{0.1cm}}

\renewcommand{\subsubsection}{%
\secdef \subsubsect\sbsbsect}
\newcommand{\subsubsect}[2][default]{%
\refstepcounter{subsubsection} 
\addcontentsline{toc}{subsubsection}{{\tocsection{\!\!}
{\hspace{3.05em}\thesubsubsection}{\!\!\!\!#1\dotfill}}{}}
\nopagebreak
\vspace{0.15\baselineskip} \nopagebreak {\flushleft\rmfamily
\itshape\arabic{section}.\arabic{subsection}.\arabic{subsubsection}
\ \rmfamily #1\/.}\ }
\newcommand{\sbsbsect}[1]{\vspace{0.1cm}\noindent
\rmfamily \itshape
\arabic{section}.\arabic{subsection}.\arabic{subsubsection} \
\sffamily #1\/.\ }



\newif\ifmarek
\marektrue

\ifmarek
\usepackage{mathptmx}

\setlength{\topmargin}{0.0truein}
\setlength{\oddsidemargin}{0.325truein}
\setlength{\evensidemargin}{0.325truein}
\setlength{\textheight}{8.3truein}
\setlength{\textwidth}{5.85truein}

\fi

\newcommand{\bdryin}{\partial^{\text{\rm in}}}
\newcommand{\bdryout}{\partial^{\text{\rm out}}}

\newcommand{\twoeqref}[2]{(\ref{#1}--\ref{#2})}
\newcommand{\LR}{{\text{\rm LR}}}
\newcommand{\UD}{{\text{\rm UD}}}
\newcommand{\cc}{{\text{\rm c}}}
\newcommand{\wt}{\widetilde}


      \usepackage{amssymb}
      \usepackage{amsmath}
      
   %
      \numberwithin{equation}{section}
      \theoremstyle{plain}
      \newtheorem{theorem}{Theorem}[section]
      \newtheorem{lemma}[theorem]{Lemma}
      \newtheorem{corollary}[theorem]{Corollary}
      \newtheorem{proposition}[theorem]{Proposition}

      \theoremstyle{definition}

      \theoremstyle{remark}
      \newtheorem{remark}[theorem]{Remark}


\newenvironment{proofsect}[1]{\vskip0.1cm\noindent{\rmfamily\itshape #1.}}{\qed\vspace{0.15cm}}


      \newcommand{\R}{{\mathbb R}}
      \newcommand{\E}{\mathbb E}
      \newcommand{\e}{\mathrm{e}}
      \renewcommand{\P}{\mathbb P}
      
      \newcommand{\var}{\mathrm{Var}}
      \newcommand{\cov}{\mathrm{Cov}}

      \newcommand{\Z}{\mathbb{Z}}
      
      \newcommand{\N}{\mathbb{N}}

      \newcommand{\eff}{\mathrm{eff}}

      \newcommand{\lb}{\partial_\mathrm{left}}
      \newcommand{\rb}{\partial_\mathrm{right}}
      \newcommand{\ub}{\partial_\mathrm{up}}
      \newcommand{\db}{\partial_\mathrm{down}}
      \renewcommand{\o}{\mathrm{O}}

      \newcommand{\laweq}{\overset{\text{\rm law}}=}
      \newcommand{\cspecial}{\hat{\fraktura c}}

\begin{document}

\title[Random walk on DGFF\hfill]{\large Return probability and recurrence for the random walk driven by two-dimensional Gaussian free field}

\author[\hfill M.~Biskup, J.~Ding, S.~Goswami]{Marek Biskup$^{1,2,*}$, Jian Ding$^{3,\dagger}$ \,and\, Subhajit Goswami$^{4,\dagger}$}
\maketitle

\renewcommand*{\thefootnote}{\fnsymbol{footnote}}

\footnotetext{$^*$ Partially supported by NSF grant DMS-1407558 and GA\v CR project P201/16-15238S}
\footnotetext{$^\dagger$ Partially supported by NSF grant DMS-1455049 and Alfred Sloan fellowship.}

\vspace{-5mm}
\centerline{\textit{$^1$
Department of Mathematics, UCLA, Los Angeles, California}}
\centerline{\textit{$^2$
Center for Theoretical Study, Charles University, Prague, Czech Republic}}
\centerline{\textit{$^3$
Statistics Department, Wharton, University of Pennsylvania, Philadelphia, Pennsylvania}}
\centerline{\textit{$^4$
Institut des Hautes \'Etudes Scientifiques, Bures-sur-Yvette, France}}
\medskip
\centerline{\today}

\vspace{-3mm}
\begin{abstract}
Given any $\gamma>0$ and for $\eta=\{\eta_v\}_{v\in \mathbb Z^2}$ denoting a sample of the two-dimensional discrete Gaussian free field on $\mathbb Z^2$ pinned at the origin, 
we consider the random walk on~$\Z^2$ among random conductances where the conductance of edge
$(u, v)$ is given by $\e^{\gamma(\eta_u + \eta_v)}$. We show that,   for almost every~$\eta$, this random walk  is recurrent and that, with probability tending to~1 as $T\to \infty$, the return probability at time~$2T$  decays as  $T^{-1+o(1)}$.  In addition, we prove a version of subdiffusive behavior by showing that the expected exit time from a ball of radius~$N$ scales as $N^{\psi(\gamma)+o(1)}$ with $\psi(\gamma)>2$ for all~$\gamma>0$.  Our results rely on  delicate control of   the effective resistance for this random network.  In particular, we show  that the effective resistance between two vertices  at  Euclidean distance~$N$  behaves  as~$N^{o(1)}$.
\end{abstract}

\section{Introduction}
\noindent
Let $\eta = \{\eta_v\}_{v \in \Z^2}$  denote a sample of  
the discrete Gaussian free field (GFF) on $\Z^2$ pinned to 0  
at the origin.  Explicitly,  $\{\eta_v\}_{v \in \Z^2}$ is a centered Gaussian process such that 
\begin{equation}
\eta_{ 0 }=0\quad\text{\rm and}\quad\E (\eta_u \eta_v) = G_{\Z^2 \setminus \{ 0 \}}(u,v)
\,\text{\rm\ for all }\, u, v \in \Z^2\,,
\end{equation}
where $G_{\Z^2 \setminus \{ 0 \}}(u, v)$ is  the Green function in $\Z^2\setminus\{0\}$; i.e., 
the expected number of visits to $v$ for the simple random walk on $\Z^2$ started at $u$ and killed 
upon reaching the origin. 
For $\gamma > 0$  and conditional on the sample~$\eta$ of the GFF,  let  $\{X_t\}_{t\geq 0}$ be  a discrete-time Markov chain with  transition  probabilities   
given~by 
\begin{equation}
\label{eq-def-transition}
p_\eta(u, v) := \frac{\e^{\gamma(\eta_v- \eta_u)}}{\sum_{w: |w-u|_1=1}\e^{\gamma(\eta_w-\eta_u)}} \mathbf 1_{|v - u|_1=1} \,,
\end{equation}
 where $|\cdot|_1$ denotes the $\ell^1$-norm on~$\Z^2$. We will write $P_\eta^x$ for the law of the above random walk such that $P_\eta^x(X_0=x)=1$ and use~$E^x_\eta$ to denote the corresponding expectation. We also write~$\P$ for the law of the GFF and use~$\E$ (as above) to denote the expectation with respect to~$\P$.

The transition kernel~$p_\eta$ depends only on the differences $\{\eta_x-\eta_y\colon x,y\in\Z^2\}$ whose law is,  as it turns out,  invariant and ergodic with respect to the  translations  of~$\Z^2$.  (Thanks to the explicit control of correlation decay, the law is actually readily shown to be even strongly mixing.)  The Markov chain $\{X_t\}_{t\ge0}$ is thus an example of a random walk in a stationary random environment. The main conclusion we prove about this random walk is then:

\begin{theorem}
\label{thm_main_spectral}
For each~$\gamma>0$ and each $\delta>0$, 
\begin{equation}
\label{eq-return-probability}
\lim_{T \to \infty}\,\P\Bigl(\e^{-(\log T)^{1/2+\delta}}T^{-1}\leq P_\eta^0(X_{2T} = 0) \leq \e^{ (\log T)^{1/2+\delta}}T^{-1}\Bigr) = 1\,. 
\end{equation}
Furthermore, $\{X_t\}_{t \geq 0}$ is recurrent for $\P$-almost every $\eta$. 
\end{theorem}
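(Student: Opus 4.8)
The plan is to deduce both assertions from the effective-resistance estimate $R_{\eff}(x,y)=|x-y|^{o(1)}$ and the volume/exit-time estimates announced in the abstract. Throughout, the walk is reversible, with conductances $c_{uv}=\e^{\gamma(\eta_u+\eta_v)}$ and reversing measure $\pi(u):=\sum_{w:\,|w-u|_1=1}\e^{\gamma(\eta_u+\eta_w)}$; in particular $\pi(0)=\sum_{w\sim0}\e^{\gamma\eta_w}$ is $\P$-a.s.\ finite, so $\pi(0)=\e^{o(\log T)}$. If one writes $\pi(B(0,r))=r^{\,d_f+o(1)}$, the vanishing of the resistance exponent ($R_{\eff}(0,\partial B(0,r))=r^{\,o(1)}$) forces, via the resistance-form relation between volume, resistance, exit time and heat kernel (Barlow--Coulhon--Kumagai, Kumagai's notes), the walk dimension to equal $d_f$ (so $\psi(\gamma)=d_f$) and the return exponent to equal $d_f/d_f=1$ for \emph{every} $\gamma$; the factors $\e^{\pm(\log T)^{1/2+\delta}}$ in \eqref{eq-return-probability} are exactly the price of uniformizing these $o(1)$'s over the range of scales $r\le\sqrt T$ that governs time $2T$.

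\emph{The return probability.} For the upper bound in \eqref{eq-return-probability}, note that on the (probability~$\to1$) event that $R_{\eff}(0,\partial B(0,r))\le r^{o(1)}$ and $\pi(B(0,r))\le r^{\psi(\gamma)+o(1)}$ hold simultaneously for all $r\le\sqrt T$ (the first follows from $R_{\eff}(0,v)=r^{o(1)}$ for $v\in\partial B(0,r)$ by Rayleigh monotonicity), these inputs give a Nash / Faber--Krahn inequality at the relevant scales, which standard on-diagonal heat-kernel technology for reversible chains turns into $P^0_\eta(X_{2T}=0)\le T^{-1+o(1)}$. For the lower bound I would argue directly: by reversibility and Cauchy--Schwarz,
\[
P^0_\eta(X_{2T}=0)=\pi(0)\sum_{y}\frac{P^0_\eta(X_T=y)^2}{\pi(y)}\ \geq\ \pi(0)\,\frac{P^0_\eta\bigl(X_T\in B(0,r)\bigr)^2}{\pi\bigl(B(0,r)\bigr)}\,.
\]
Choosing $r=r(T)$ with $E^0_\eta[\tau_{B(0,r)}]\in\bigl[2T,\,2T\,r^{o(1)}\bigr]$ --- possible since $E^0_\eta[\tau_{B(0,r)}]=r^{\psi(\gamma)+o(1)}$ --- the exit-time estimate (which in particular makes $E^x_\eta[\tau_{B(0,r)}]$ comparable over starting points $x\in B(0,r)$, hence gives a sub-exponential upper tail for $\tau_{B(0,r)}$) yields $P^0_\eta(X_T\in B(0,r))\ge P^0_\eta(\tau_{B(0,r)}>T)\ge\tfrac12$. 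Since then $\pi(B(0,r))\le r^{\psi(\gamma)+o(1)}=T^{1+o(1)}$ and $\pi(0)=T^{o(1)}$, this gives $P^0_\eta(X_{2T}=0)\ge T^{-1-o(1)}$.

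\emph{Recurrence.} The chain is irreducible, so $0$ is recurrent iff $\E^0_\eta[\#\{t\ge0:X_t=0\}]=\infty$; this expectation equals $\pi(0)\,R_{\eff}(0,\infty)$, where $R_{\eff}(0,\infty):=\lim_{N\to\infty}R_{\eff}(0,\partial B(0,N))\in(0,\infty]$ is a non-decreasing limit. So it suffices to prove $R_{\eff}(0,\partial B(0,N))\to\infty$ $\P$-a.s.; note that the whp statement $R_{\eff}=N^{o(1)}$ is by itself consistent with a finite limit, so a genuinely multi-scale argument is needed. Contracting each sphere $\partial B(0,2^k)$ to a point, the series law gives $R_{\eff}(0,\partial B(0,2^K))\ge\sum_{k=0}^{K-1}\mathcal R_k$, where $\mathcal R_k$ is the effective resistance between the two boundary circles of the dyadic annulus $A_k:=B(0,2^{k+1})\setminus B(0,2^k)$. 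The goal is then: (i) $\P(\mathcal R_k\ge c_0)\ge p_0>0$ uniformly in $k$, for suitable fixed $c_0,p_0$; and (ii) asymptotic independence of the events $\{\mathcal R_k\ge c_0\}$ along a subsequence of scales. Given (i) and (ii), a second-moment Borel--Cantelli argument gives $\mathcal R_k\ge c_0$ for infinitely many $k$ a.s., hence $R_{\eff}(0,\infty)=\infty$ a.s., hence recurrence. For (i) I would split $\eta|_{\overline{A_k}}=\ell_k+\tilde\eta_k$ with $\ell_k$ the ambient level of the field over $A_k$ (a centered Gaussian with $\var\ell_k\asymp k$) and $\tilde\eta_k$ the centered remainder, so that $\mathcal R_k=\e^{-2\gamma\ell_k}\widetilde{\mathcal R}_k$; the effective-resistance \emph{lower} bound of the paper, applied inside $A_k$, controls $\widetilde{\mathcal R}_k$ from below, while with probability bounded below uniformly in $k$ (a fixed Gaussian-tail event, since one only needs $\ell_k$ below a suitable multiple of $\sqrt k$) the prefactor $\e^{-2\gamma\ell_k}$ is large enough to push $\mathcal R_k$ above $c_0$. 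For (ii) I would use the decoupling of the GFF across scales: $\eta|_{A_k}$ is a zero-boundary GFF in $A_k$ (independent across disjoint annuli) plus a harmonic field whose oscillation over the bulk of a dyadic annulus is tame, and the level process $k\mapsto\ell_k$ is (a time-change of) an Ornstein--Uhlenbeck process in $\log$-scale, whose excursions below a fixed multiple of $\sqrt k$ recur infinitely often.

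\emph{The main obstacle} is the effective-resistance lower bound feeding step~(i) --- equivalently, the lower half of $R_{\eff}(x,y)=|x-y|^{o(1)}$: one must show that the (abundant, but spatially isolated) high points of the GFF, on which the conductances $\e^{2\gamma\eta_v}$ are enormous, cannot be chained into an anomalously cheap crossing of an annulus. This is where the delicate two-dimensional analysis of the paper is indispensable: crude cutset (Nash--Williams) or radial test-function bounds are hopelessly lossy here, since the heavy tails of $\e^{2\gamma\eta_v}$ make the total conductance of any sphere-crossing cutset polynomially large even though the true crossing resistance is only $N^{-o(1)}$, so such bounds cannot even deliver recurrence for any $\gamma>0$, let alone the sharp asymptotics. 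A secondary technical point is reconciling the precise strength of the resistance estimate with the Gaussian tail used for $\ell_k$ in~(i), and, in the return-probability lower bound, the anti-concentration $P^0_\eta(X_T\in B(0,r))\ge\tfrac12$, which requires the exit-time estimate with uniform control over starting points.
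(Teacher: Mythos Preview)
Your Cauchy--Schwarz display for the heat-kernel lower bound is exactly what the paper uses (equation~\eqref{E:5.32b}). But the inputs you propose to feed into it, and your upper-bound strategy, both have genuine gaps.

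\textbf{Upper bound.} Your appeal to ``standard on-diagonal heat-kernel technology'' (Barlow--Coulhon--Kumagai / Nash / Faber--Krahn) is precisely what the paper says does \emph{not} go through here, for lack of the required uniform control on resistance growth; see the opening paragraph of Section~\ref{sec-5.3}. The resistance bound \eqref{E:1.10ua} is pointwise in $(u,v)$, and a union bound over all vertices in $B(N)$ degrades it to a polynomial. The paper's workaround (Lemma~\ref{lemma-5.6}) is to restrict to a set $\Xi_T$ of points in an annulus with moderate resistance to the origin, run the trace chain on $\Xi_T$, and bound the expected number of returns to $0$ via the commute-time identity on that trace chain.

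\textbf{Lower bound.} Even granting a uniform bound $\sup_{x\in B(0,r)}E^x_\eta[\tau_{B(0,r)}]\le M$, the Markov-at-time-$T$ argument gives only $P^0_\eta(\tau>T)\ge (E^0_\eta[\tau]-T)/M$, hence $\ge T/M=r^{-o(1)}$, not $\ge\tfrac12$. More to the point, the paper does not establish that uniform-in-$x$ bound, and for the reason above it cannot be obtained to precision $r^{o(1)}$ by union-bounding \eqref{E:1.10ua}. The paper instead restricts to a good set $\Xi_N^\star$ and runs the trace chain there; the decisive input is Proposition~\ref{prop-5.8}, a lower bound on the \emph{difference}
\[
R_{B(N+1)_\eta}\bigl(0,\partial B(N)\bigr)+R_{B(N+1)_\eta}\bigl(v,\partial B(N)\bigr)-R_{B(N+1)_\eta}(0,v),
\]
which controls the voltage at $v$ and hence the exit-time lower bound. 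Each term here fluctuates like $\e^{O(\sqrt{\log N})}$, and one must show they do not cancel; the paper does this via a Law of the Iterated Logarithm for the GFF level along concentric annuli (Lemma~\ref{lemma-5.11}), locating an annulus where the level is low enough to beat the fluctuations. This is the heart of the lower-bound argument, not the ``secondary technical point'' you flag.

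\textbf{Recurrence.} Your annulus decomposition and level-process picture are right and close to the paper's. But ``second-moment Borel--Cantelli plus asymptotic independence'' does not work as stated: the levels $\ell_k$ form a random walk in $k$ (not an OU process), so the events $\{\ell_k\le -c\sqrt k\}$ are strongly positively correlated, while the independent fine-field events alone do not force $\mathcal R_k\ge c_0$. The paper's fix is again the LIL: once the level drops below $-c\sqrt{k\log\log k}$ at some scale, it stays low for a stretch of consecutive scales, during which the independent fine-field events (each of uniformly positive probability) must hit at least once. This is Lemma~\ref{lemma-5.11} for recurrence and, in the stronger form needed for \eqref{E:1.12ua}, the record-value argument of Lemma~\ref{lemma-excursions}.
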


The transition probabilities $p_\eta$ are such that the walk prefers to move along the edges where~$\eta$ increases; the walk is thus driven towards larger values of the field. This has been predicted (e.g., in \cite{CLD,CL01}) to result in a subdiffusive behavior. We prove a version of subdiffusivity for the expected exit time from large balls:

\begin{theorem}
\label{thm_main_exit_time}
Let $\tau_{B(N)^\cc}$ denote the first exit time of $\{X_t\colon t\ge0\}$ from the box $B(N):=[-N,N]^2\cap\Z^2$. For each~$\delta>0$, we then have
\begin{equation}
\label{eq-expected-exit-time}
\lim_{N \to \infty}\P\Bigl(N^{\psi(\gamma)} \e^{-(\log N)^{1/2+\delta}} \leq E_\eta^0 \tau_{B(N)^\cc} \leq N^{\psi(\gamma)} \e^{(\log N)^{1/2+\delta}}\Bigr) = 1\,, 
\end{equation}
where
\begin{equation}
\label{E:1.4}
\psi(\gamma):=\begin{cases}
2 + 2(\gamma/\gamma_\cc)^2,\qquad&\text{\rm if }\gamma \le \gamma_\cc := \sqrt{\pi/2},
\\*[1mm]
4\gamma/\gamma_\cc,\qquad&\text{\rm otherwise}.
\end{cases}
\end{equation}
\end{theorem}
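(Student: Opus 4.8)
The plan is to derive both the return-probability estimate \eqref{eq-return-probability} and the expected-exit-time estimate \eqref{eq-expected-exit-time} from sharp two-sided control on the \emph{effective resistance} of the random network with conductances $c_\eta(u,v)=\e^{\gamma(\eta_u+\eta_v)}$, which the abstract promises is $N^{o(1)}$ between vertices at Euclidean distance $N$. The key translation tools are the standard electric-network identities for reversible Markov chains: writing $\pi_\eta(u)=\sum_{v}c_\eta(u,v)$ for the (reversible) invariant measure, one has the formula
\begin{equation}
\label{E:plan-return}
P_\eta^0(X_{2T}=0)\ \asymp\ \frac{\pi_\eta(0)}{\sum_{v\in B(r)}\pi_\eta(v)}
\end{equation}
for the return probability, valid at a suitably chosen scale $r=r(T)$ determined by the time–space relation between $T$ and the resistance growth, together with the commute-time identity $E_\eta^x\tau_y+E_\eta^y\tau_x=2\,\mathcal R_{\eff}(x,y)\sum_v\pi_\eta(v)$ and its localized version for exit times from $B(N)$. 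The point is that $\pi_\eta$ is an exponential functional of the GFF, so $\log\sum_{v\in B(N)}\pi_\eta(v)$ concentrates around $2\gamma$ times the maximal local sum of the field, which is controlled by classical GFF extrema estimates and contributes the polynomial exponent; the resistance only contributes a subpolynomial correction.

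Concretely, I would proceed as follows. \textbf{Step 1: Volume growth of $\pi_\eta$.} Show that, with probability tending to one, $\sum_{v\in B(N)}\pi_\eta(v)=N^{2+2(\gamma/\gamma_\cc)\vee 1\,\cdot\,(\dots)+o(1)}$—more precisely, that $\log\sum_{v\in B(N)}\e^{2\gamma\eta_v}$ equals $\psi(\gamma)\log N$ up to $\e^{O((\log N)^{1/2+\delta})}$ errors. The exponent $\psi(\gamma)$ in \eqref{E:1.4} is exactly the Liouville-type exponent: for $\gamma\le\gamma_\cc$ the sum is dominated by its bulk/typical values giving $2+2(\gamma/\gamma_\cc)^2$ (second-moment/Gaussian-tail computation plus a matching lower bound via a paving argument), while for $\gamma>\gamma_\cc$ it is dominated by the near-maximal points of $\eta$ on $B(N)$, whose value is $\sim 2\log N/\gamma_\cc$, yielding $4\gamma/\gamma_\cc$; both regimes are handled by a first/second-moment argument combined with the known concentration of the GFF maximum on $B(N)$ around $(2/\gamma_\cc)\log N$. \textbf{Step 2: Resistance bound.} Invoke the earlier-stated result that $\mathcal R_{\eff}(0,\partial B(N))$ and $\mathcal R_{\eff}(x,y)$ for $|x-y|\asymp N$ are $N^{o(1)}$, in fact $\e^{O((\log N)^{1/2+\delta})}$ with high probability. \textbf{Step 3: Exit time.} Combine Steps 1–2 via the commute-time identity (and a comparison of $\tau_{B(N)^\cc}$ with the commute time between $0$ and $\partial B(N)$, using that the resistance from $0$ to $\partial B(N)$ is comparable to that between $0$ and a typical far point, up to subpolynomial factors) to get $E_\eta^0\tau_{B(N)^\cc}=\mathcal R_{\eff}\cdot\sum_{v\in B(N)}\pi_\eta(v)\cdot N^{o(1)}=N^{\psi(\gamma)+o(1)}$, with the stretched-exponential error bookkeeping giving the precise form in \eqref{eq-expected-exit-time}.

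\textbf{Step 4: Return probability and recurrence.} For \eqref{eq-return-probability}, use the Carne–Varopoulos / evolving-set machinery, or more directly the relation between the heat kernel and the growth of $\pi_\eta$-volume and resistance: since resistances grow subpolynomially, the walk is "close to" the walk on $\Z^2$ at the level of the time–space scaling, so at time $2T$ the walk has effectively explored a region where the $\pi_\eta$-mass is $\asymp T\cdot T^{o(1)}$ (balancing diffusive spread $\mathcal R_{\eff}\cdot \pi_\eta(\text{ball})\asymp T$), and \eqref{E:plan-return} then gives $P_\eta^0(X_{2T}=0)\asymp \pi_\eta(0)/(\text{mass}\asymp T)=T^{-1+o(1)}$. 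Recurrence for a.e.\ $\eta$ follows because $\mathcal R_{\eff}(0,\partial B(N))\to\infty$ (indeed it is known two-dimensionally to diverge—here at worst subpolynomially, but divergence is what matters): a finite-energy flow to infinity would contradict $\mathcal R_{\eff}(0,\infty)=\infty$, which one gets by Borel–Cantelli from the high-probability lower bounds on resistances across annuli, or simply from the general fact that subpolynomial-resistance two-dimensional networks with polynomially growing $\pi_\eta$-volume are recurrent by the Nash–Williams criterion applied to dyadic annuli.

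\textbf{Main obstacle.} I expect the crux to be \emph{not} the heat-kernel/exit-time translations (those are fairly mechanical once the inputs are in place) but rather obtaining the matching upper and lower bounds for the effective resistance with only $\e^{O((\log N)^{1/2+\delta})}$ error—this is the delicate input flagged in the abstract, requiring a multiscale/renormalization analysis of resistances across GFF-annuli and careful handling of the strong correlations in $\eta$. A secondary delicate point is the lower bound in Step 1 in the supercritical regime $\gamma>\gamma_\cc$, where one must show the sum is genuinely dominated by exceptional high points of the field without being killed by the cost of their rarity; this needs a truncated second-moment (or modified-maximum) argument of the kind familiar from the study of the GFF maximum and Liouville quantum gravity measures. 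Matching the $1/2+\delta$ exponent in the stretched-exponential errors throughout, rather than a cruder $\e^{o((\log N))}=N^{o(1)}$, will require tracking the Gaussian concentration rates carefully at each step.
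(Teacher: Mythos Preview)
Your overall architecture matches the paper's: the upper bound on $E_\eta^0\tau_{B(N)^\cc}$ is indeed the one-line hitting-time bound $E_\eta^0\tau_{B(N)^\cc}\le R_{B(N)_\eta}(0,\partial B(N))\cdot\pi_\eta(B(N))$ (Lemma~\ref{lemma-5.7}), and your Step~1 is essentially what the paper carries out in Lemmas~\ref{lemma-5.3}--\ref{lem-volume-upper} and Theorem~\ref{thm-levelset}.

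However, there is a genuine gap in Step~3 for the \emph{lower} bound, and your ``Main obstacle'' paragraph has the difficulty located in the wrong place. The commute-time identity gives $E_\eta^0\tau_{\partial B(N)}+E_\eta^{\partial B(N)}\tau_0 = 2R(0,\partial B(N))\,\pi_\eta(B(N))$, but this does not by itself lower-bound $E_\eta^0\tau_{\partial B(N)}$: you would need to \emph{upper}-bound the return term $E_\eta^{\partial B(N)}\tau_0$, and there is no cheap way to do that. Equivalently, the exact formula is $E_\eta^0\tau_{B(N)^\cc}=R(0,\partial B(N))\sum_{v}\pi_\eta(v)\,\phi(v)$ with $\phi(v)=P_\eta^v(\tau_0<\tau_{\partial B(N)})$, and by the three-point reduction (Corollary~\ref{cor-5.7}) one has $2R(0,\partial B(N))\,\phi(v)=R(0,\partial B(N))+R(v,\partial B(N))-R(0,v)$. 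Each term on the right is $\e^{O((\log N)^{1/2+\delta})}$ individually, so the \emph{difference} could a priori be tiny or zero; knowing the three resistances ``up to subpolynomial factors'' says nothing about cancellations. Your claim that the translation is ``fairly mechanical'' is therefore incorrect.

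The paper's resolution (Proposition~\ref{prop-5.8} and Lemma~\ref{lemma-5.11}) is a separate, substantive argument: one runs the concentric-annulus decomposition of the GFF and uses a quantitative Law of the Iterated Logarithm for the Gaussian random walk of annular averages (Lemmas~\ref{lem:finite_LIL}--\ref{lem:anchoring}) to locate, with high probability, an annulus $A_{n,k^\star}$ on which the coarse field is anomalously negative (hence annular resistance anomalously large) and a further-in annulus $A_{n,k_\star}$ with small circuit resistance. Nash--Williams across $A_{n,k^\star}$ then forces both $R(0,\partial B(N))$ and $R(v,\partial B(N))$ to exceed $R(0,v)$ by at least $\log N$ for all $v$ in a slightly smaller box, yielding $R(0,\partial B(N))\,\phi(v)\ge\tfrac12\log N$ there. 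Summing over the good set $\Xi_N^\star$ and invoking your Step~1 (Lemma~\ref{lem-good-volume2}) finishes the lower bound. This LIL step is also what drives the resistance lower bound \eqref{E:1.12ua} and recurrence; it is not a byproduct of the $N^{o(1)}$ resistance estimate but an additional input that you have not planned for.
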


The bounds on  the  expected hitting time indicate that $|X_T|$ should scale as $T^{\frac{1}{\psi(\gamma)}+o(1)}$ for large~$T$. Although we expect this to be true, we have so far only been able to prove a corresponding lower~bound:

\begin{theorem}
\label{thm_main_diffusive_exponent}
For $\P$-almost every~$\eta$ and each~$\delta>0$, 
\begin{equation}
\label{eq-diffusive exponent}
\,P_\eta^0\Bigl(|X_{T}| \geq \e^{-(\log T)^{1/2+\delta}}T^{\frac{1}{\psi(\gamma)}}\Bigr)\,\underset{T\to\infty}\longrightarrow\,1 \quad\text{\rm in probability},
\end{equation}
where $\psi(\gamma)$ is as in~\eqref{E:1.4}.
\end{theorem}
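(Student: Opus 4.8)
\textbf{Proof proposal for Theorem \ref{thm_main_diffusive_exponent}.}

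The plan is to deduce the lower bound on $|X_T|$ from an \emph{upper} bound on the expected exit time from \emph{small} balls together with a commute-time identity. Write $N=N(T):=\lceil\e^{-(\log T)^{1/2+\delta}}T^{1/\psi(\gamma)}\rceil$, so that the event in \eqref{eq-diffusive exponent} is, up to negligible corrections, the event $\{\tau_{B(N)^\cc}\le T\}$ for the walk started at the origin. Thus it suffices to show that, with $\P$-probability tending to~$1$, one has $P^0_\eta(\tau_{B(N)^\cc}\le T)\to1$; and for this it is enough to show $E^0_\eta\tau_{B(N)^\cc}=o(T)$ together with a concentration (anti-concentration) statement ruling out the walk lingering in $B(N)$ for an atypically long time. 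To control $E^0_\eta\tau_{B(N)^\cc}$ we use the standard identity for reversible chains: writing $\pi_\eta(u):=\sum_{w:|w-u|_1=1}\e^{\gamma(\eta_u+\eta_w)}$ for the (unnormalized) reversing measure and $R_{\eff}$ for the effective resistance in the network with conductances $c_\eta(u,v)=\e^{\gamma(\eta_u+\eta_v)}$, we have
\begin{equation}
\label{eq-exit-time-resistance-bound}
E^0_\eta\tau_{B(N)^\cc}\;\le\; \Bigl(\sum_{u\in B(N)}\pi_\eta(u)\Bigr)\;\cdot\;R_{\eff}\bigl(0\leftrightarrow B(N)^\cc\bigr)\,.
\end{equation}
Indeed, $\tau_{B(N)^\cc}$ is dominated by the commute time between $0$ and the boundary of $B(N)$, which equals (twice) the total conductance restricted to $B(N)$ times the corresponding effective resistance.

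The two factors on the right-hand side of \eqref{eq-exit-time-resistance-bound} are exactly the quantities that Theorem~\ref{thm_main_exit_time} and its proof control. The effective resistance factor is $N^{o(1)}$ by the resistance estimate advertised in the abstract (the proof of Theorem~\ref{thm_main_exit_time} already produces $R_{\eff}(0\leftrightarrow B(N)^\cc)\le\e^{(\log N)^{1/2+\delta'}}$ with $\P$-probability $\to1$). For the conductance sum, the bound $\sum_{u\in B(N)}\pi_\eta(u)\le\e^{(\log N)^{1/2+\delta'}}N^{\psi(\gamma)}$ with high $\P$-probability is precisely the upper half of the estimate behind \eqref{eq-expected-exit-time}: it is governed by the maximal and near-maximal values of the field in $B(N)$, whose contribution to $\sum_u\e^{2\gamma\eta_u}$ scales as $N^{\psi(\gamma)+o(1)}$ (this is where the two regimes $\gamma\lessgtr\gamma_\cc$ and the formula \eqref{E:1.4} enter). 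Combining, $E^0_\eta\tau_{B(N)^\cc}\le\e^{(\log N)^{1/2+\delta''}}N^{\psi(\gamma)}$ with probability $\to1$, and by the choice of $N$ this is $\le\e^{(\log N)^{1/2+\delta''}-\psi(\gamma)(\log T)^{1/2+\delta}}T\cdot(1+o(1))=o(T)$ once $\delta''<\delta$ (recall $\log N=\log T-O((\log T)^{1/2+\delta})$, so the two square-root-log terms are comparable and the exponent is strictly negative for large~$T$). Markov's inequality then gives $P^0_\eta(\tau_{B(N)^\cc}>T)=o(1)$, i.e.\ $P^0_\eta(\tau_{B(N)^\cc}\le T)\to1$, which is the claim since $\{\tau_{B(N)^\cc}\le T\}\subseteq\{|X_T|\ge N\}$ is false in general — so here one must instead argue as follows.

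The subtlety just flagged is the real point: $\tau_{B(N)^\cc}\le T$ only says the walk has \emph{visited} $B(N)^\cc$ by time $T$, not that $|X_T|\ge N$. To upgrade, one runs the same estimate at a slightly smaller radius and a slightly earlier time: pick an intermediate time $T'=T-T^{1-\epsilon}$ and let the walk first reach $\partial B(N)$ by time $T'$ (with high probability, as above with $N$ chosen from $T'$, which changes $N$ only by a subpolynomial factor absorbed into $\e^{\pm(\log T)^{1/2+\delta}}$). Then, conditionally on the walk being at some $x$ with $|x|=N$ at a time $\le T'$, the probability that it returns to $B(N/2)$, say, within the remaining time $T-T'$ is small: this is again an exit-time estimate, now a \emph{lower} bound on the time to travel the annulus $B(N)\setminus B(N/2)$ starting from its outer boundary, which follows from the matching \emph{lower} bound in Theorem~\ref{thm_main_exit_time} applied to annular regions (the effective resistance across the annulus is $N^{o(1)}$, but the relevant conductance sum is still of order $N^{\psi(\gamma)+o(1)}\gg T^{1-\epsilon}$, so the walk cannot cross back in time $T^{1-\epsilon}$ except with probability $T^{-\Omega(\epsilon)+o(1)}$). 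Union-bounding over the (polynomially many) boundary starting points and over the discrete set of possible hitting times, the walk is with probability $\to1$ outside $B(N/2)$ at time $T$, which gives \eqref{eq-diffusive exponent} after relabeling $N/2\mapsto N$ and adjusting $\delta$.

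\textbf{Main obstacle.} The routine parts are the commute-time bound \eqref{eq-exit-time-resistance-bound} and the arithmetic matching $\log N$ with $\log T$. The genuine difficulty is the last paragraph: controlling the walk \emph{after} it first exits $B(N)$, i.e.\ showing it does not come back deep inside. This requires a quantitative lower bound on crossing times of annuli that is uniform over starting points on the inner/outer boundary and strong enough to survive a union bound over $\mathrm{poly}(N)$ starting points and $\le T$ time values — in other words, a high-probability (in $\eta$) statement that \emph{every} vertex in $B(N)^\cc\cap B(2N)$ has $E^x_\eta\tau\gg T^{1-\epsilon}$ for the exit time from the annulus, not merely the typical one. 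Establishing this uniform-in-$x$ version, presumably via a chaining/multiscale argument over dyadic sub-annuli together with the effective-resistance bounds already in hand, is where essentially all the work of the proof concentrates; the rest is bookkeeping with the sub-Gaussian error factors $\e^{\pm(\log T)^{1/2+\delta}}$.
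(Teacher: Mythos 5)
Your plan works up to and including the application of Markov's inequality to $\tau_{B(N)^\cc}$, but the step you yourself flag as the ``main obstacle'' is not merely a laborious estimate---it is a genuine gap, and the paper takes a completely different route precisely to avoid it.

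The difficulty is this: knowing that $E^x_\eta\tau$ is large, uniformly over starting points $x$ on $\partial B(N)$, for the hitting time $\tau$ of $B(N/2)$, tells you nothing useful about $P^x_\eta(\tau\le T^{1-\epsilon})$. The hitting time could have a heavy left tail; the walk could re-enter $B(N/2)$ quickly with non-vanishing probability even if its mean hitting time is $\gg T$. What you would actually need is a high-probability \emph{anti-concentration} statement for hitting times (or equivalently an off-diagonal heat-kernel lower bound on the walk's position at a fixed time), uniformly over $\mathrm{poly}(N)$ starting points. The paper explicitly flags the non-concentration of $\tau_{B(N)^\cc}$ as the obstruction preventing them from upgrading \eqref{eq-expected-exit-time} to a matching \emph{upper} bound on displacement; the same obstruction blocks the second half of your argument. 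Furthermore, your framing---``conditionally on the walk being at some $x$ with $|x|=N$ at a time $\le T'$''---is not a conditioning one can actually work with (the event that $\tau_{B(N)^\cc}\le T'$ and $X_{\tau_{B(N)^\cc}}=x$ is not the same as conditioning on the position at a deterministic time), so even the decoupling you want between ``before hitting'' and ``after hitting'' needs to be set up via the strong Markov property at $\tau_{B(N)^\cc}$, at a random time, and then you still face the anti-concentration problem.

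The paper instead sidesteps position control entirely with a Chapman--Kolmogorov/Cauchy--Schwarz argument: by reversibility,
\begin{equation}
P^0_\eta(X_{2T}=0)\ \ge\ \sum_{x\in B(N)}P^0_\eta(X_T=x)P^x_\eta(X_T=0)\ =\ \pi_\eta(0)\sum_{x\in B(N)}\frac{P^0_\eta(X_T=x)^2}{\pi_\eta(x)}\ \ge\ \pi_\eta(0)\,\frac{P^0_\eta\bigl(X_T\in B(N)\bigr)^2}{\pi_\eta\bigl(B(N)\bigr)}\,.
\end{equation}
Feeding in the \emph{upper} bound on $P^0_\eta(X_{2T}=0)$ from Lemma~\ref{lemma-5.6} and the \emph{upper} bound on $\pi_\eta(B(N))$ from Lemma~\ref{lem-volume-upper}, and then choosing $N$ so that $T=N^{\psi(\gamma)}\e^{(\log N)^{1/2+2\delta}}$, forces $P^0_\eta(X_T\in B(N))\to0$ in probability. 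This only needs the on-diagonal heat-kernel upper bound and the volume bound---both already established---and it directly bounds $P^0_\eta(|X_T|\le N)$ without ever needing to track the walk after it exits $B(N)$. If you want, you can view your commute-time bound $E^0_\eta\tau_{B(N)^\cc}\le\pi_\eta(B(N))\,R_{\eff}(0\leftrightarrow B(N)^\cc)$ as an ingredient (it is essentially Lemma~\ref{lemma-5.7}), but it is not on the critical path of the paper's proof of this theorem; what actually closes the argument is the heat-kernel upper bound, not the exit-time bound.
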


\noindent
We note that Theorems~\ref{thm_main_exit_time} and~\ref{thm_main_diffusive_exponent} are consistent with the predictions in \cite{CLD,CL01} for general log-correlated fields. 
In particular, \eqref{eq-diffusive exponent} confirms the prediction for the diffusive exponent of the walk from \cite{CLD,CL01} as a lower bound. The reason why the bounds in~\eqref{eq-expected-exit-time} are not sufficient is that we do not know whether $\tau_{B(N)^\cc}$ scales with~$N$ proportionally to its expectation.  A full proof of subdiffusive behavior thus remains elusive.

The technical approach that makes our analysis possible stems from the following simple rewrite of the transition kernel  achieved by multiplying both the numerator and the denominator in \eqref{eq-def-transition} by $\e^{2\gamma \eta_u}$, 
\begin{equation}
\label{eq-def-transition2}
p_\eta(u, v) = \frac{\e^{\gamma(\eta_v + \eta_u)}}{\sum_{w: |w - u|_1}\e^{\gamma(\eta_w + \eta_u)}} \mathbf 1_{|v - u|_1=1}.
\end{equation}
This represents $\{X_t\}_{t\ge0}$ as a random walk among random conductances, or a Random Conductance Model to which a large body of literature has been dedicated in recent years (see \cite{Biskup11,Kumagai14} for 
reviews). An immediate benefit of the rewrite is that the process is now reversible with respect to the measure $\pi_\eta$ on~$\Z^2$ defined by
\begin{equation}
\label{E:1.8}
\pi_\eta(u):=\sum_{v\colon |u-v|_1=1}\e^{\gamma(\eta_u+\eta_v)}.
\end{equation}
A price to pay is that the conductance  $\e^{\gamma(\eta_u+\eta_v)}$ of edge $(u,v)$ depends on~$\eta$ and not just its gradients, and the law of the conductances is thus not translation invariant. 

As it turns out, the change of the  $N$-dependence  of the expected exit time  $E_\eta^0 \tau_{B(N)^\cc}$  at the critical  value~$\gamma_\cc$  (see Theorem~\ref{thm_main_exit_time}) arises, in its entirety, from the asymptotic
\begin{equation}
\pi_\eta\bigl(B(N)\bigr)
= N^{\psi(\gamma)+o(1)},\qquad N\to\infty\,.
\end{equation}
This is, roughly speaking, because point-to-point effective resistances  in the associated random conductance network  $\Z^2_\eta$  behave, for points at 
distance~$N$, as~$N^{o(1)}$ for every~$\gamma>0$.  The effective resistance and further background on the theory of resistor networks will be discussed in detail in Section~\ref{sec:sec_gen_par_ser}. For now we just recall  that the effective resistance is the voltage needed to induce a unit current between two given points, or sets, in a resistor network.  The precise version of the preceding sentence is then as follows: 

\begin{theorem}
\label{thm-effective-resistance}
Let us regard $B(N):=[-N,N]^2\cap\Z^2$ as a  resistor  network where edge $(u,v)$ has conductance $\e^{\gamma(\eta_u+\eta_v)}$. Let $R_{B(N)_{ \eta}}(u, v)$ denote the effective resistance between~$u$ and~$v$ in network~$B(N)$. Then for each~$\gamma>0$ there are $C,C'\in(0,\infty)$ such that 
\begin{equation}
\label{E:1.10ua}
\max_{u, v \in B(N)}\P\Bigl(R_{B(N)_{ \eta}}(u, v) \geq C\e^{C t \sqrt{\log N}}\Bigr) \leq  C'\e^{-t^2}\,\log N
\end{equation}
holds for each~$N\ge1$ and each~$t\ge0$. 
Moreover, for the corresponding network $\Z^2_\eta$ on all of~$\Z^2$, 
\begin{equation}
\label{E:1.11ua}
\limsup_{N\to\infty}\,\,\frac{\log R_{\Z^2_{\eta}}(0,B(N)^\cc)}{(\log N)^{1/2}(\log\log N)^{1/2}}
<\infty,
\qquad\P\text{\rm-a.s.}
\end{equation}
and, for each~$\gamma>0$, 
also
\begin{equation}
\label{E:1.12ua}
\liminf_{N\to\infty}\,\,\frac{\log R_{\Z^2_{\eta}}(0,B(N)^\cc)}{(\log N)^{1/2}/ (\log\log\log N)^{1/2}}>0.
\qquad\P\text{\rm-a.s.}
\end{equation}
 Both limits are in fact constant $\P$-a.s. 
\end{theorem}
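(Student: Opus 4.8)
The plan is to establish the three bounds in Theorem~\ref{thm-effective-resistance} largely independently, since they involve different ranges and different methods, but all resting on one key heuristic: in two dimensions, effective resistance is comparable to that of a ``parallel'' collection of roughly concentric dyadic annuli, and the $o(1)$ exponent reflects the fact that $\log$ of the resistance of an annulus at scale $2^k$ is of order at most $\sqrt k$ (the order of the maximum of the GFF gradient contributions along a single path through the annulus).

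\textbf{Upper bound \eqref{E:1.10ua}.} The strategy here is to produce an explicit unit flow from~$u$ to~$v$ and bound its energy, invoking Thomson's principle $R_{B(N)_\eta}(u,v)\le \sum_{e}r_e\theta_e^2$ for any unit flow~$\theta$. The natural candidate is the flow that, at Euclidean scale $2^k$, spreads the unit current roughly uniformly around a dyadic annulus $A_k$ surrounding~$u$ (and symmetrically around~$v$) and routes it radially through $O(2^k)$ parallel disjoint paths crossing $A_k$. Each such crossing path has length $O(2^k)$ and its resistance is $\sum_{e\in\text{path}}\e^{-\gamma(\eta_x+\eta_y)}$; we need that, uniformly over the $\sim 2^k$ candidate parallel paths, at least a positive fraction of them have resistance at most $2^k\e^{O(t\sqrt{\log 2^k})}$. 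This is exactly the kind of first-moment/union-bound estimate that the Gaussian tails afford: $\P(\eta_x+\eta_y \le -a)\le \e^{-a^2/(C k)}$ at scale $2^k$ since $\operatorname{Var}\eta_x=O(\log 2^k)=O(k)$, so choosing $a = t\sqrt{k}$ makes the bad-path probability summable against the number of paths after a union bound over the $O(\log N)$ scales (this is the source of the $\log N$ factor on the right of \eqref{E:1.10ua}) and the polynomially-many vertices. Summing the annulus resistances $R_k \le \e^{O(t\sqrt k)}2^{-k}$ (the $2^{-k}$ from having $\sim 2^k$ parallel paths of length $\sim 2^k$, i.e.\ aspect ratio $O(1)$, so resistance $O(1)\cdot\e^{O(t\sqrt k)}$ --- more carefully, a square annulus has $O(1)$ resistance up to the conductance fluctuations) over $k\le \log_2 N$ gives a geometric-type sum dominated by its largest term $\e^{O(t\sqrt{\log N})}$. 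Taking a maximum over $u,v$ costs another polynomial factor absorbed into adjusting constants in the exponent.

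\textbf{Almost-sure upper bound \eqref{E:1.11ua}.} This should follow from \eqref{E:1.10ua} by a Borel--Cantelli argument along a subsequence. Setting $t = C''\sqrt{\log\log N}$ in \eqref{E:1.10ua} (applied with $v$ ranging over $B(N)^\cc$, or rather a point on it, after noting $R_{\Z^2_\eta}(0,B(N)^\cc)\le R_{B(2N)_\eta}(0,\partial B(N))$ and that the boundary can be short-circuited) makes the right side $C'(\log N)^{1-\omega(1)}$, which is summable along $N_j = 2^{2^j}$ or even $N_j=j^2$; on that event $\log R \le C\sqrt{\log N_j}\sqrt{\log\log N_j}$, and monotonicity of $R_{\Z^2_\eta}(0,B(N)^\cc)$ in~$N$ interpolates between subsequence points. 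The shorting of $B(N)^\cc$ to a single vertex is legitimate since shorting only decreases resistance, and restricting to the finite network $B(2N)$ only increases it, so \eqref{E:1.10ua} is applicable.

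\textbf{Almost-sure lower bound \eqref{E:1.12ua}.} This is the one I expect to be the main obstacle, because lower bounds on effective resistance require exhibiting a \emph{cutset} of large resistance (via the Nash--Williams inequality $R_{\Z^2_\eta}(0,B(N)^\cc)\ge \sum_k (\sum_{e\in \Pi_k} c_e)^{-1}$ for disjoint edge-cutsets $\Pi_k$) rather than a single good flow, so we need \emph{every} path across some annulus to be expensive, i.e.\ the \emph{minimum} over crossing paths of a sum of $\e^{\gamma(\eta_x+\eta_y)}$ to be small --- equivalently the field to be uniformly very negative on an entire annulus at some scale, which is a large-deviation event and hence rare. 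The idea is to take $\Pi_k$ to be the edge-boundary of $B(2^k)$ and estimate $\sum_{e\in\Pi_k}\e^{\gamma(\eta_x+\eta_y)}$. The point-to-set resistance is bounded below by $\sum_{k=1}^{\log_2 N}\bigl(\sum_{e\in\Pi_k}\e^{\gamma(\eta_x+\eta_y)}\bigr)^{-1}$, and it suffices that for a positive density of scales~$k$ one has $\sum_{e\in\Pi_k}\e^{\gamma(\eta_x+\eta_y)} \le \e^{c\sqrt k/(\log k)^{1+\delta}}$, i.e.\ that $\max_{e\in\Pi_k}(\eta_x+\eta_y)$ is not too large \emph{and} there is no atypically conductive spot --- but actually we want the cheaper event that \emph{all} of $\Pi_k$ is at field level $\le -c\sqrt k/(\log k)^{1+\delta}$, whose probability, by the FKG inequality and the fact that $|\Pi_k|=O(2^k)$ boundary vertices each with variance $O(k)$, is at least $\e^{-O(2^k)\cdot \e^{-c^2 k/(C(\log k)^{2+2\delta})}}$-type --- too small. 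The correct route is instead: along a sparse deterministic subsequence of scales $k_j$ with large gaps, the annuli are far apart so the field restricted to them is nearly independent after conditioning, and we ask only that $\min_k$ over that subsequence fails, i.e.\ we need a \emph{second-moment} or conditional-independence argument showing that with probability one infinitely many scales~$k$ have $\Pi_k$-sum at most $\e^{o(\sqrt k)}$ --- and this is where the $(\log\log N)^{1+\delta}$ denominator (as opposed to a clean constant) enters: it is the slack needed to make the per-scale failure probability summable while still keeping the events ``rich enough'' to occur infinitely often. Concretely I would (i) decouple scales using the Markov/Gibbs property of the GFF (the field on $\partial B(2^{k})$ given the field outside is a bounded-range perturbation of an independent copy), (ii) lower-bound the probability that a single annulus is ``globally low'' by the cost of the mean shift, which is $\exp(-O(\text{(field level)}^2/\operatorname{Var})) = \exp(-O((\sqrt k/(\log k)^{1+\delta})^2/k)) = \exp(-O(1/(\log k)^{2+2\delta}))$, a quantity whose reciprocal-log behavior makes the events occur for a positive fraction of scales by a conditional Borel--Cantelli lemma, and (iii) conclude $\log R \gtrsim \sqrt{k}/(\log k)^{1+\delta}$ for such a scale $k\le \log_2 N$, which after taking $N\to\infty$ gives the claim. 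The delicate point is balancing the decoupling range against the number of usable scales so that infinitely many good scales survive almost surely; I expect this to require a careful martingale/second-moment computation rather than a crude union bound, and it is the technical heart of the lower bound.
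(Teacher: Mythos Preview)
Your approach to \eqref{E:1.10ua} has a genuine gap that cannot be patched by tightening constants. The explicit-flow / first-moment estimate you describe gives at best a \emph{polynomial} bound $R_{B(N)_\eta}(u,v)\le N^{c\gamma^2}$, not the subpolynomial $\e^{O(\sqrt{\log N})}$. The reason is that for a single radial path of length $\sim 2^k$ through the scale-$2^k$ annulus, the path resistance $\sum_e \e^{-\gamma(\eta_{e_-}+\eta_{e_+})}$ has \emph{expectation} of order $2^k\e^{2\gamma^2 g k}$ (since $\var\eta_x\sim g k$), already polynomial in~$2^k$; and a union bound over the $2^k$ edges on the path forces $a\gtrsim k$, not $a\sim t\sqrt k$, to control the worst edge. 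Spreading current uniformly over $\sim 2^k$ parallel paths does not help: the energy of such a flow is $\sum_e \theta_e^2 r_e\sim 2^{2k}\cdot 2^{-2k}\cdot\E r_e\sim \e^{2\gamma^2 g k}$. One cannot beat this without an \emph{adaptive} flow that routes around low-field spots, and constructing such a flow is the whole problem.

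The paper's route is entirely different and relies on the self-duality $\eta\overset{\text{law}}=-\eta$: it first decomposes $\chi_M=Y_M+Z_M$ (Lemma~\ref{lem:field_decomp_smooth}) with $Z_M$ smooth enough that the ratio of resistances on adjacent edges is bounded, then applies a planar duality estimate (Lemmas~\ref{lem:dual_couple}--\ref{remark_dual_couple}) to get $R_{\LR;B(N),Z_M}\cdot R^\star_{\UD;B(N),Z_M}\ge c$; since $R^\star_{\UD}\overset{\text{law}}=R_{\LR}$ by the symmetry $Z_M\overset{\text{law}}=-Z_M$ and rotation invariance, this forces the median of $\log R_{\LR}$ to be $O(\log\log M)$. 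The small-variance field~$Y_M$ costs only a further $\log\log$ factor. An RSW argument (Section~\ref{sec:RSW}) then transfers this to rectangles, and Gaussian concentration (Proposition~\ref{lem:Gaussian_conc}) upgrades the median bound to the tail estimate \eqref{E:1.10ua}. None of this is visible from Thomson's principle with an explicit test flow.

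Your Borel--Cantelli derivation of \eqref{E:1.11ua} from \eqref{E:1.10ua} is fine and is what the paper does (implicitly).

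For \eqref{E:1.12ua} you have the right architecture --- concentric annuli, Gibbs--Markov decoupling, a random-walk description of the mean field level, and infinitely many ``good'' scales --- but the Nash--Williams step as you state it fails for the same reason as above: a \emph{single} cutset $\Pi_k$ has $\sum_{e\in\Pi_k}\e^{\gamma(\eta_{e_-}+\eta_{e_+})}$ of polynomial size even after subtracting the mean level, because the fine field on the annulus still has variance $\sim k$. The paper instead lower-bounds the full \emph{effective resistance across the annulus} in the fine field (Lemma~\ref{lemma-annuli}), which again requires the duality machinery and the RSW upper bound. With that in hand, the mean level $S_k=\sum_{j\le k}X_j$ is a genuine random walk with bounded-variance increments, and the paper uses a record-value argument (Lemma~\ref{lemma-excursions}) rather than a second-moment computation to show $S_k\ge c\sqrt k/(\log k)^{1+\delta}$ along a subsequence of scales at which the fine-field resistance is also controlled.
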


We note that, in light of monotonicity of $N\mapsto R_{\Z^2_{\eta}}(0,B(N)^\cc)$, the bounds in Theorem~\ref{thm-effective-resistance} readily imply  the recurrence claimed in Theorem~\ref{thm_main_spectral}.
The upper bound on the effective resistance in \eqref{E:1.11ua} comes from \eqref{E:1.10ua} while the lower bound \eqref{E:1.12ua} arises, more or less, from its approximate representation using a random walk and invoking Chung's Law of the Iterated Logarithm.

\begin{figure}[t]
\centerline{\includegraphics[width=0.25\textwidth]{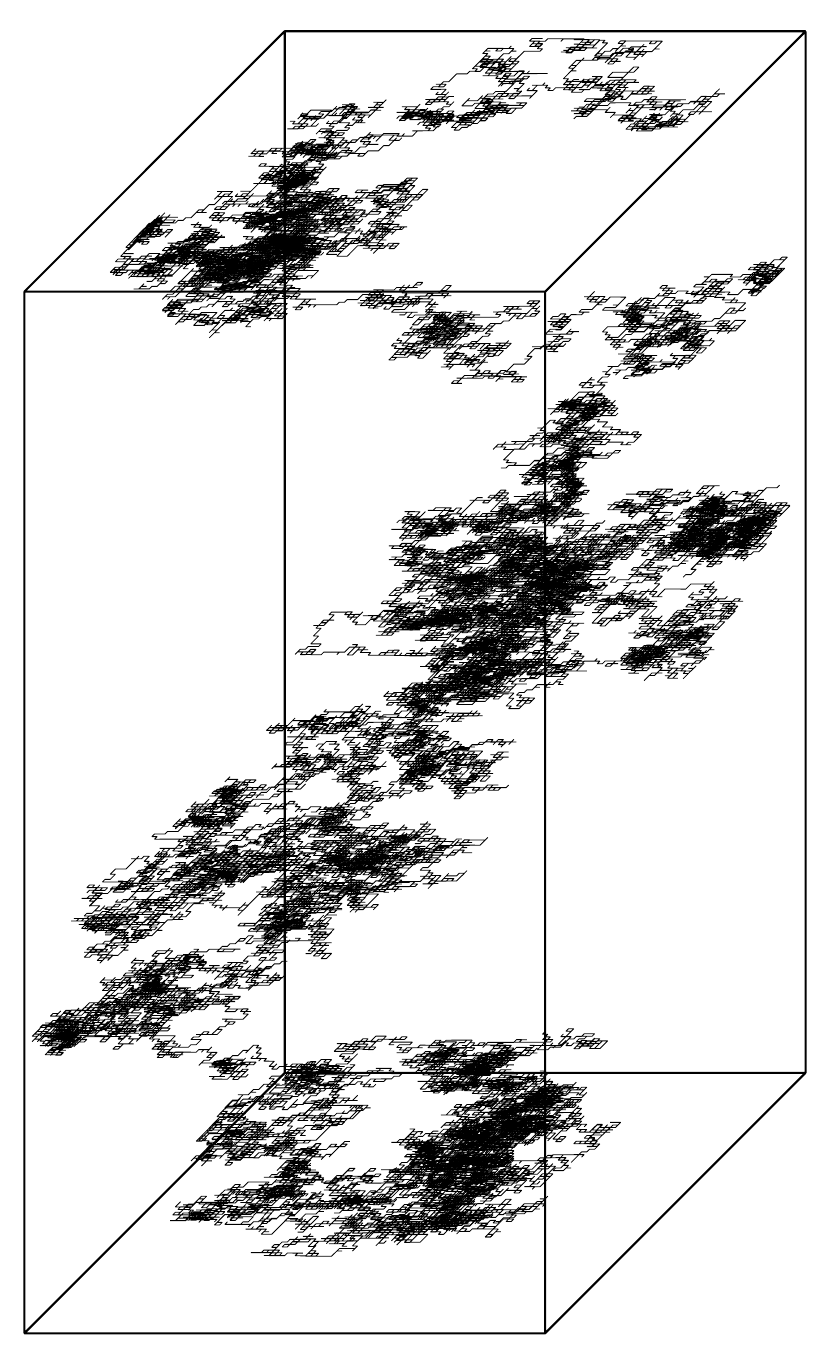}
\includegraphics[width=0.25\textwidth]{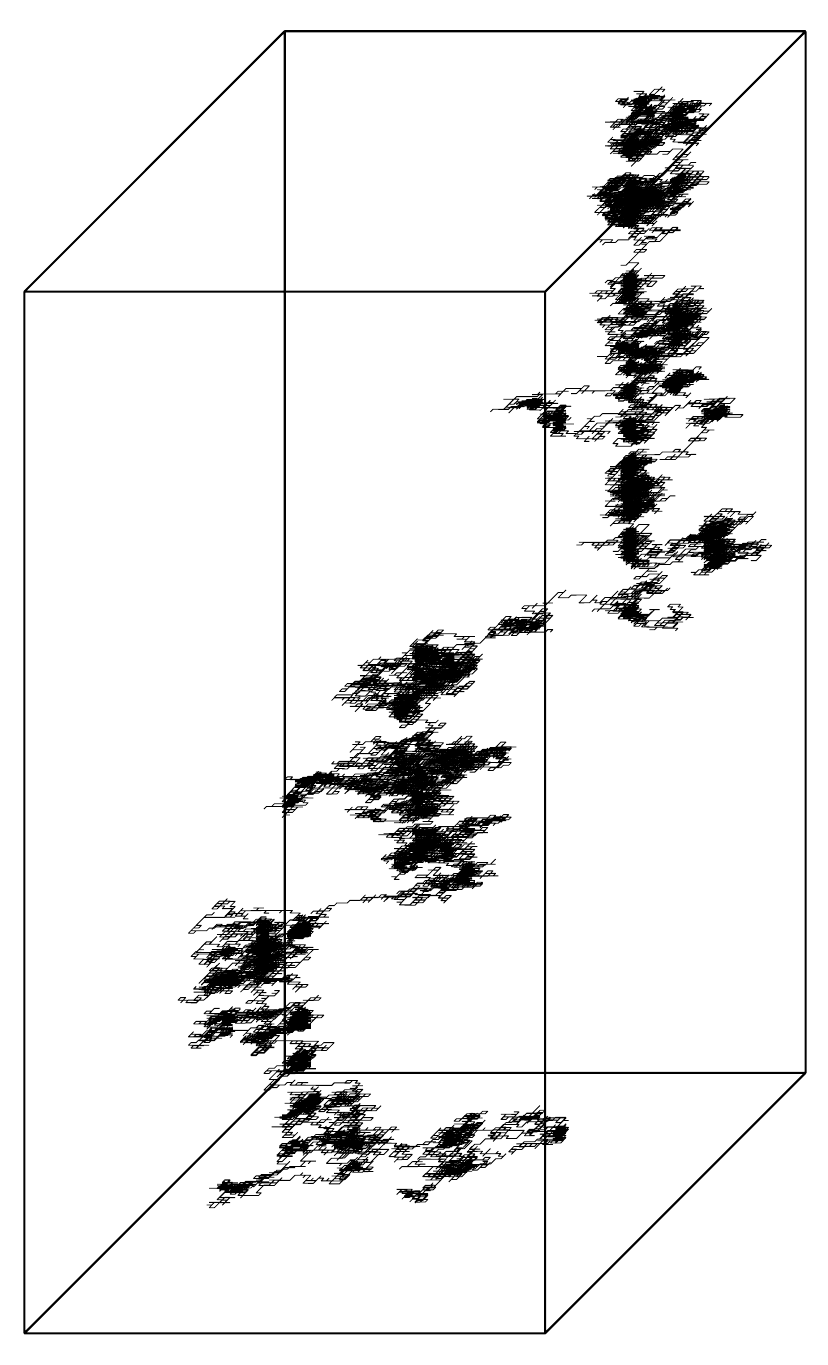}
\includegraphics[width=0.25\textwidth]{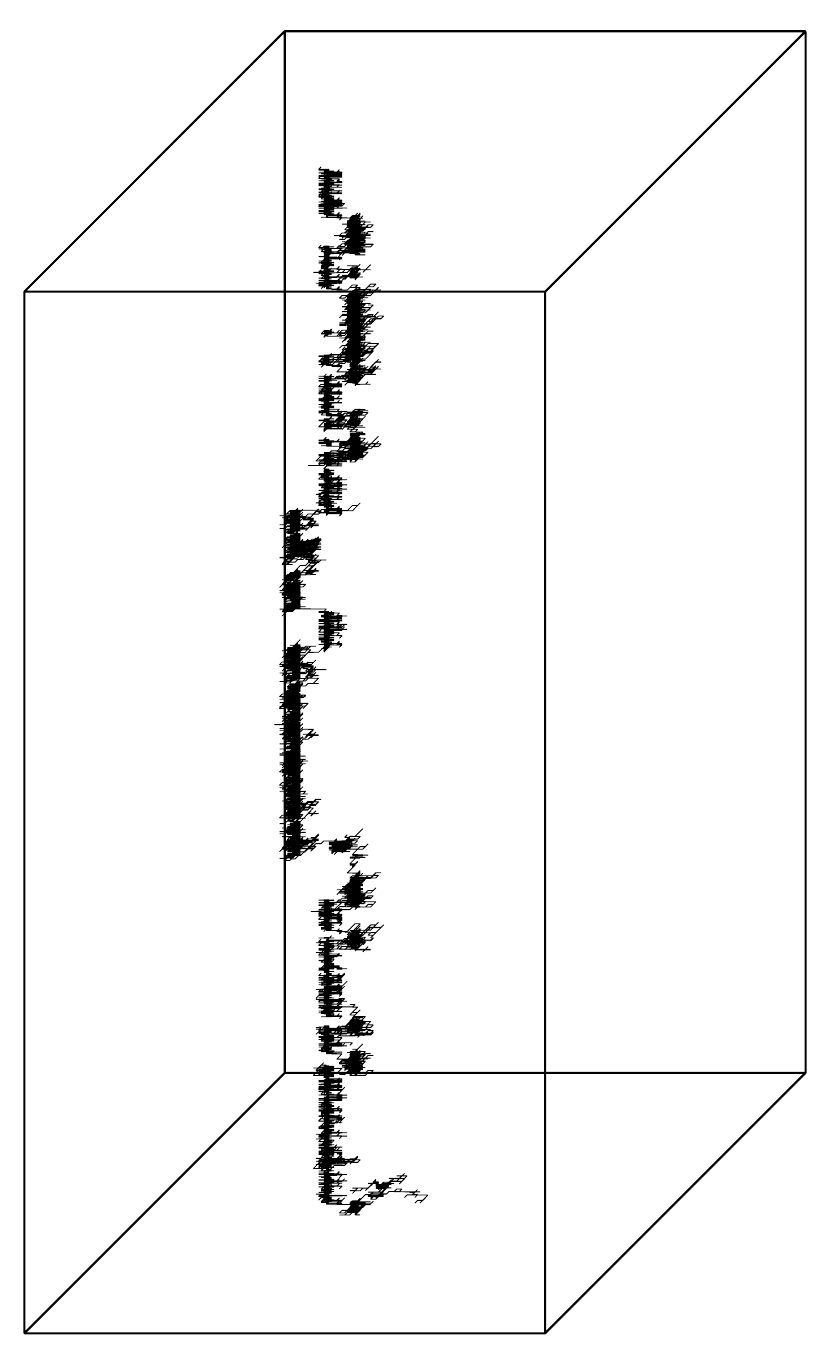}
}
\small 
\vglue0.0cm
\caption{
\label{fig1}
Runs of $100000$ steps of the random walk with transition probabilities~\eqref{eq-def-transition} confined (through reflecting boundary conditions) to a box of side-length~$100$. Labelled left to right, the plots correspond to $\gamma/\gamma_\cc$ equal to~$0.2$, $0.6$ and~$1.2$; time runs upward the vertical axis. Trapping effects are quite apparent.}
\normalsize
\end{figure}

\subsection{Background and related work}
Closely related to our problem is the recently-defined \textit{Liouville Brownian motion} (LBM), which is basically just a time change of the standard Brownian motion by an exponential of the continuum Gaussian free field. The construction of the process was carried out in \cite{GRV13,B14}, with the associated heat kernel constructed in \cite{GRV14}.
 The  spectral dimension (defined as 2 times the exponent for the return probability computed in almost sure sense with respect to the underlying random environment) for LBM was computed  in \cite{RV14} and~\cite{AK}.  Sharp on-diagonal estimates for the LBM heat kernel were proved in~\cite{AK} and some nontrivial bounds for off-diagonal LBM heat kernel were established in \cite{AK,MRVZ14}. 

A random walk naturally associated with LBM is the continuous-time simple symmetric random walk with exponential holding time at~$x$ having parameter $\e^{\beta\eta_x}$  where, in our notation, $\beta:=2\gamma$.  A more natural (albeit qualitatively similar, as far as long-time behavior is concerned) modification is to use $\pi_\eta(x)$ (see~\eqref{E:1.8}) instead of~$\e^{2\gamma\eta_x}$; we will refer to the associated process as the Liouville Random Walk (LRW) below. Formally, this process is a continuous-time Markov chain on $\Z^2$ with generator
\begin{equation}
\label{LWR-gen}
\mathcal L_\eta^{\text{LRW}}f(x):=\frac1{4\pi_\eta(x)}\sum_{y\colon  |x-y|_1=1}\bigl[f(y)-f(x)\bigr].
\end{equation}
Although the LRW is reversible and its formulation using conductances in principle possible, the resulting resistor network is that of the simple random walk and the behavior of the LRW is thus quite different from that studied here. For instance, unlike for the LRW, our random walk moves preferably towards neighbors with a higher potential, emphasizing the trapping effects of the random environment; see Fig.~\ref{fig1}.
The off-diagonal heat kernel computation in \cite{MRVZ14} is also of a different flavor: Our  control  of the return probability  relies crucially on the electric-resistance metric while the off-diagonal LBM heat kernel is expected to be related to the Liouville first passage (Liouville FPP) percolation metric (see \cite{DG16watabiki,DD16}). 

\begin{figure}[t]
\centerline{\includegraphics[width=0.25\textwidth]{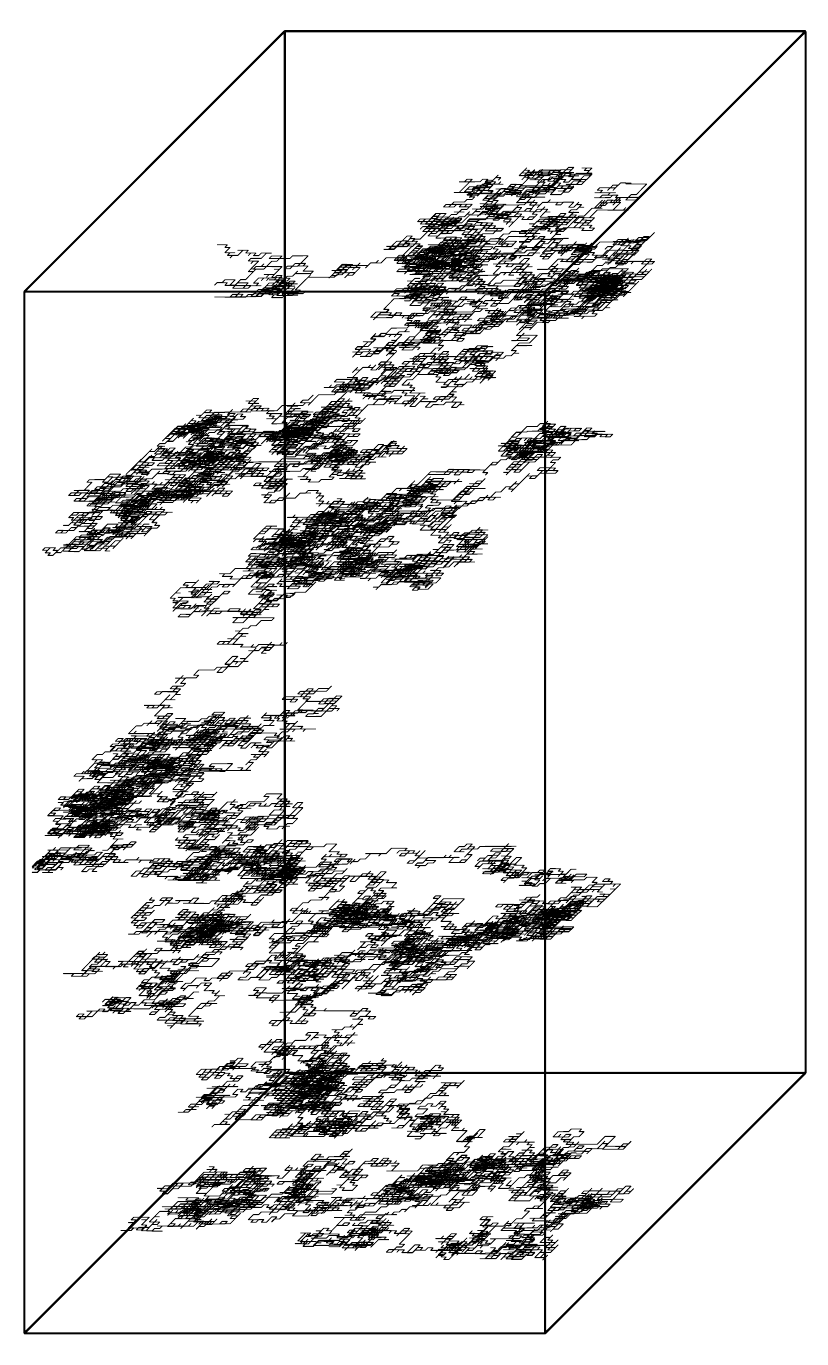}
\includegraphics[width=0.25\textwidth]{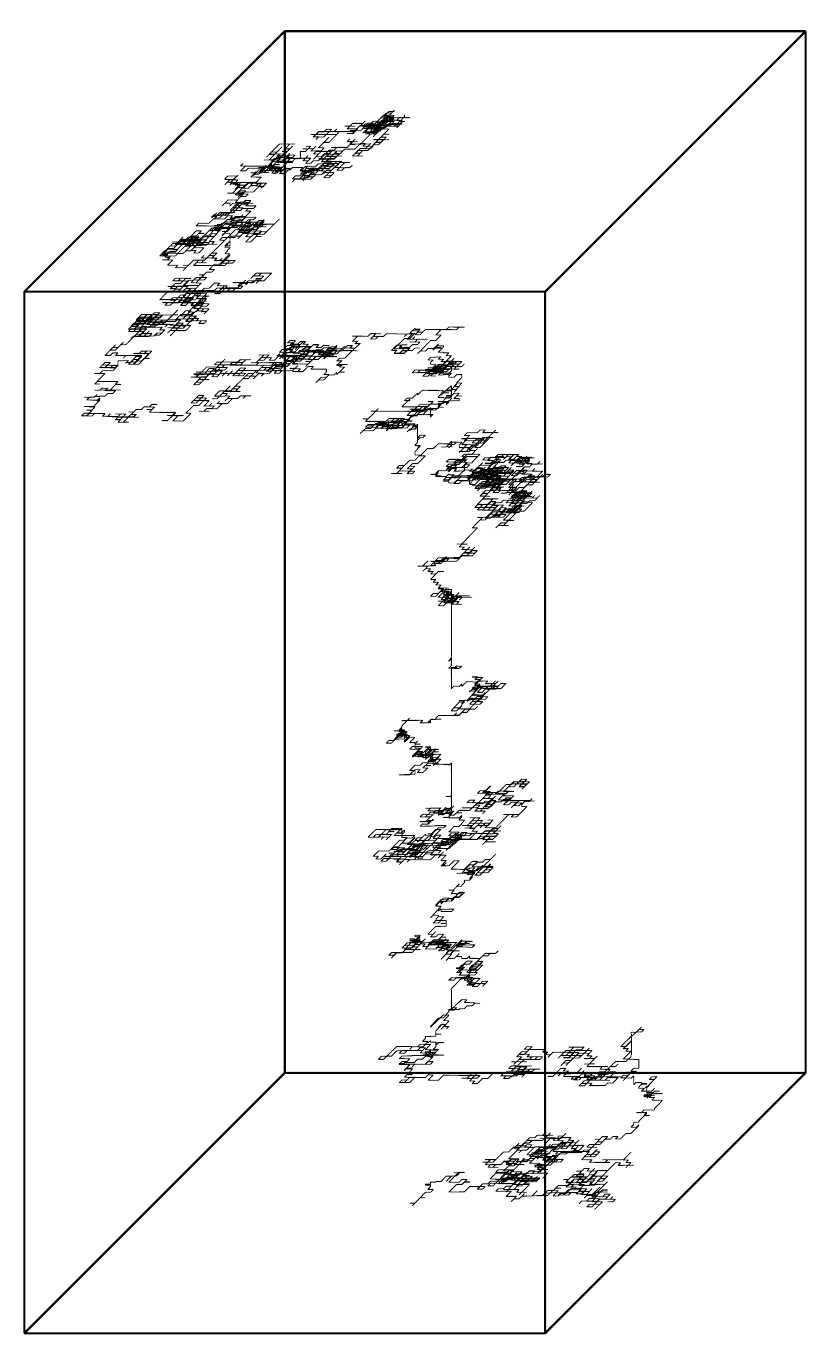}
\includegraphics[width=0.25\textwidth]{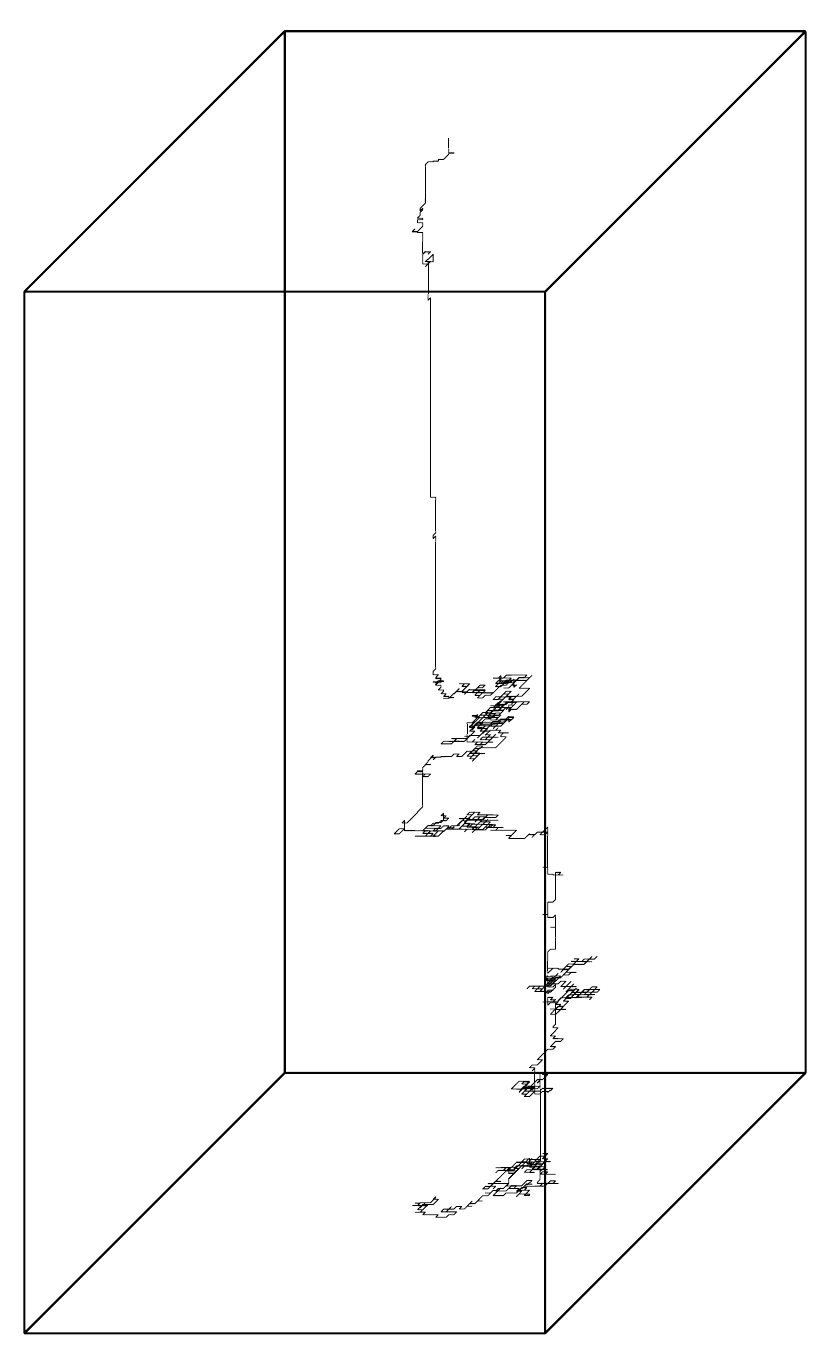}
}
\small 
\vglue0.0cm
\caption{
\label{fig2}
Runs of the Liouville Random Walk (see \eqref{LWR-gen}) for time $100000$ in the same environments, and for the same values of~$\gamma$, as in Fig.~\ref{fig1}. The difference between the two walks is quite obvious, particularly so for larger~$\gamma$.}
\normalsize
\end{figure}

Notwithstanding the above differences, both the LRW and our random walk share the following fact: $x\mapsto\pi_\eta(x)$ defined above is a stationary measure (conditional on $\eta$) for both processes. The same thus applies to any interpolation between the LRW and our random walk; namely, the continuous-time Markov chain with generator
\begin{equation}
\mathcal L_{\eta,\theta}f(x):= \theta\mathcal L_\eta^{\text{LRW}}f(x)+(1-\theta)\sum_{y\colon  |x-y|_1=1}p_\eta(x,y)\bigl[f(y)-f(x)\bigr]
\end{equation}
for any $\theta\in[0,1]$. A question of interest is whether scaling limits of these random walks can eventually be extracted and whether they lead to distinct (for distinct~$\theta$) diffusive processes defined on the background of a continuum Gaussian Free Field.

Another related series of works is on random walks on random planar maps.  This is thanks to  the conjectural relation between LQG and random planar maps (note that part of the conjecture has been established in \cite{MS15,MS16}). Building on  ideas from the theory of circle packings~\cite{BS01},  the authors of \cite{GGN13} proved that the uniform infinite planar triangulation and quadrangulation  are both  almost surely recurrent. In \cite{BC13}, it was shown that the random walk on the uniform infinite planar quadrangulation is sub-diffusive, where an upper bound of $1/3$ on the exponent  was  given while the conjectured exponent is $1/4$.

As noted above, our work relies on estimates of effective resistances, which is a fundamental metric for a graph. Recently, some other metric properties of GFF  have  been studied, including the pseudo-metric defined via the zero-set of the GFF on the metric graph \cite{LW16}, the Liouville FPP metric \cite{DD16,DG16watabiki} (which is roughly the graph distance on the network $\mathbb Z^2_{\eta}$ if we regard edge conductances as passage times) and the chemical distance for the level-set percolation \cite{DL16}. These  studies  reveal different facets of the metric properties of the GFF. In particular, by \cite{DG16watabiki} and the present paper, we see that putting random weights/conductances as exponential of the GFF substantially distorts the graph distance of $\mathbb Z^2$ but  has much less of an effect on  the resistance metric of~$\mathbb Z^2$.

\subsection{A word on proof strategy}
 In light of  the connection between random walks and effective resistances (see, e.g., \cite{LP16} for some background),  the principal step (and the bulk of the paper) is the proof of  Theorem~\ref{thm-effective-resistance}.  This theorem is proved by  a novel combination of  planar and electrostatic  duality, Gaussian concentration inequality and the Russo-Seymour-Welsh theory, as we outline below.

Duality considerations for planar electric networks are quite classical. They invariably boil down to the simple fact that, in a planar network, every harmonic function comes hand-in-hand with its harmonic conjugate. An example of a duality statement, and a source of inspiration for us, is \cite[Proposition 9.4]{LP16}, where it is shown that, for locally-finite planar networks with sufficient connectivity, the wired  effective  resistance across an edge (with the edge removed) is equal to the free  effective  conductance across the dual edge in the dual network (with the dual edge removed). However, the need to deal with more complex geometric settings steered us to develop a version of duality that is phrased in purely geometric terms. In particular, we use that, in planar networks with a bounded degree, cutsets can naturally be associated with paths and \emph{vice versa}. 

The starting point of our proofs is thus a representation of the effective resistance, resp., conductance as a variational minimum of the Dirichlet energy for \emph{families} of paths, resp., cutsets. Although these generalize well-known  upper bounds on these quantities (e.g., the Nash-Williams estimate), we prefer to think of them merely as extensions of the Parallel and Series Laws. Indeed, the variational characterizations are obtained by replacing individual edges by equivalent collections of new edges, connected either in series or parallel depending on the context, and noting that the said  upper  bounds become sharp once we allow for optimization over all such replacements. We refer to Propositions~\ref{prop:fundamental_prop_resistance} and \ref{prop:fundamental_prop_conductance} in Section~\ref{sec:sec_gen_par_ser} for more details.  We note that, in this part, planarity is not needed. 

Another useful fact that we rely on heavily is the symmetry~$\eta \laweq-\eta$ which implies that the joint laws of the conductances are those of the resistances. Using this  along with planarity considerations  we can 
\emph{almost} argue that the law of the effective resistance between the left and right boundaries  of a square centered at the origin  is the same as the law of the effective conductance between the top and bottom boundaries.  The rotation symmetry of~$\eta$ and the (electrostatic) duality between the effective conductance and resistance would then imply that  the law of the effective resistance through a square is  the same as that of its reciprocal value.  Combined with  a  Gaussian concentration inequality (see \cite{ST74,Borell75}), this  would readily show that, for the square of side~$N$,   this effective resistance is typically $N^{o(1)}$. 

However, some care is needed  to make  the ``almost duality''  argument work.  In fact,  we  do not expect  an exact duality of the kind valid  for critical bond percolation on~$\Z^2$ to hold in our case. Indeed, such a duality might for instance  entail that the law of the conductances on a minimal cutset (separating, say, the opposite sides of a square) in the primal network is the same as the law of the resistances on the dual path ``cutting through'' this cutset. Although the GFFs on a graph and its dual are quite closely related (see, e.g., \cite{BK-grad}), we do not see how this property can possibly be true. 
Notwithstanding, we are more than happy to work with just an approximate duality which, as it turns out, requires  only  a uniform bound on the \emph{ratio} of resistances of neighboring edges. This ratio would be unmanageably too large if  applied the duality argument to the network based on  the GFF  itself.  For this reason, we invoke a  decomposition of GFF (see Lemma~\ref{lem:field_decomp_smooth}) into a sum of two independent fields, one of which has small variance and the other is a highly smooth field.  We then apply the  approximate duality to the network derived from  the  smooth field, and we argue that the influence from the other field is small since it has small variance.

 We have so far explained only how to  estimate the effective resistances between the boundaries of a \emph{square}.  However,  in order to prove our theorems, we need to estimate effective resistances between vertices, for which a crucial ingredient is an estimate of the effective resistances between the two shorter sides of a \emph{rectangle}. Questions of this type fall into the framework of the Russo-Seymour-Welsh (RSW) theory.  This is  an important technique
in planar statistical physics, initiated in \cite{Russo78,SW78,Russo81}
 with the aim to prove uniform positivity of the probability of a crossing  of a rectangle  
in critical Bernoulli percolation. Recently, the theory has been
 adapted to include FK percolation, see e.g. \cite{DHN11,BDC12,DST15}, and, in \cite{Tassion14}, also Voronoi percolation. 
In fact, the beautiful method in \cite{Tassion14} is widely applicable
to percolation problems satisfying the FKG inequality, mild symmetry
assumptions, and weak correlation between well-separated regions.
For example, in \cite{DRT16}, this method was used to give a simpler
proof of the result of \cite{BDC12}, and in \cite{DMT},  a RSW theorem was proved  for the crossing probability of level sets of  the 
planar~GFF.

\begin{figure}[t]
\centerline{\includegraphics[width=0.42\textwidth]{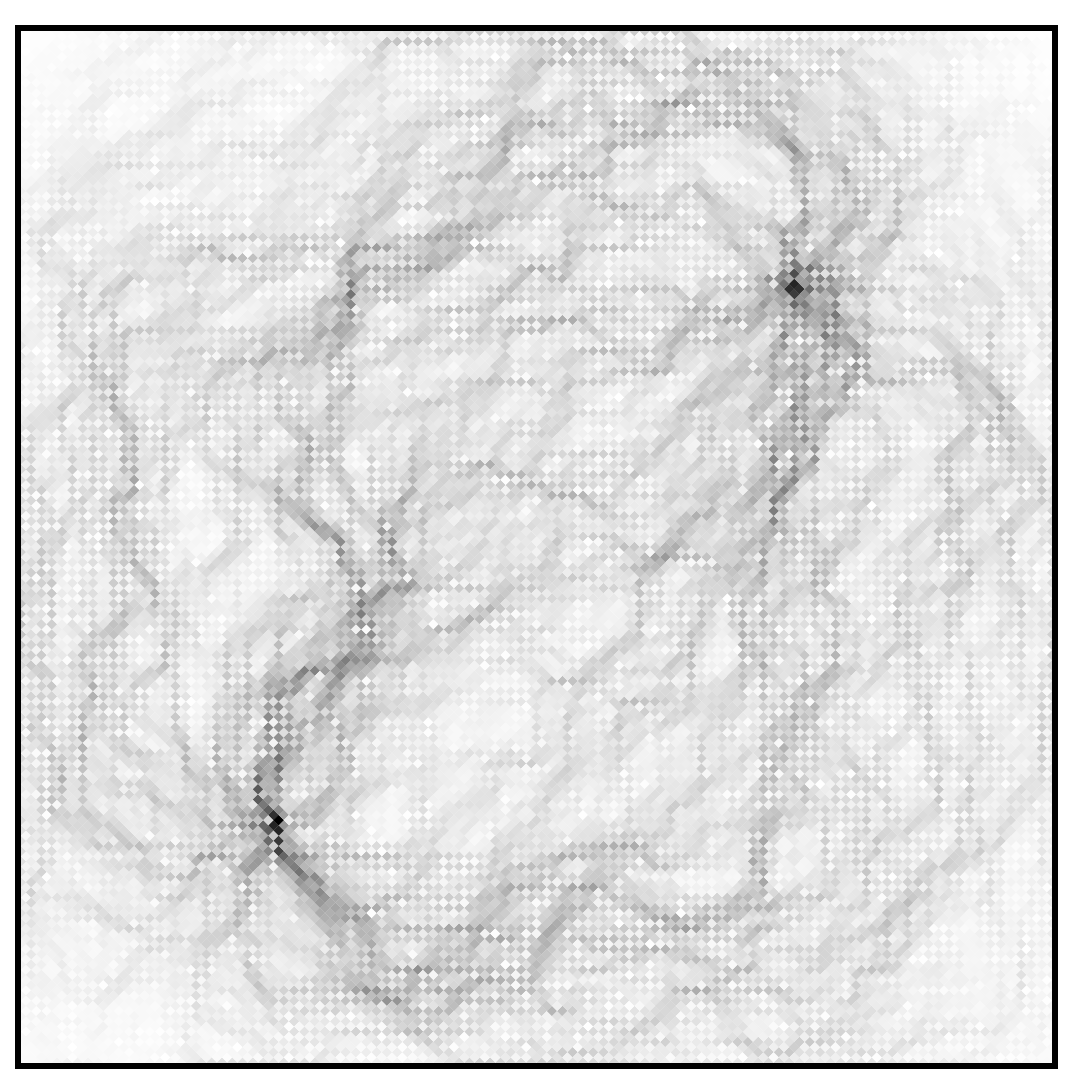}
\hglue3mm\includegraphics[width=0.42\textwidth]{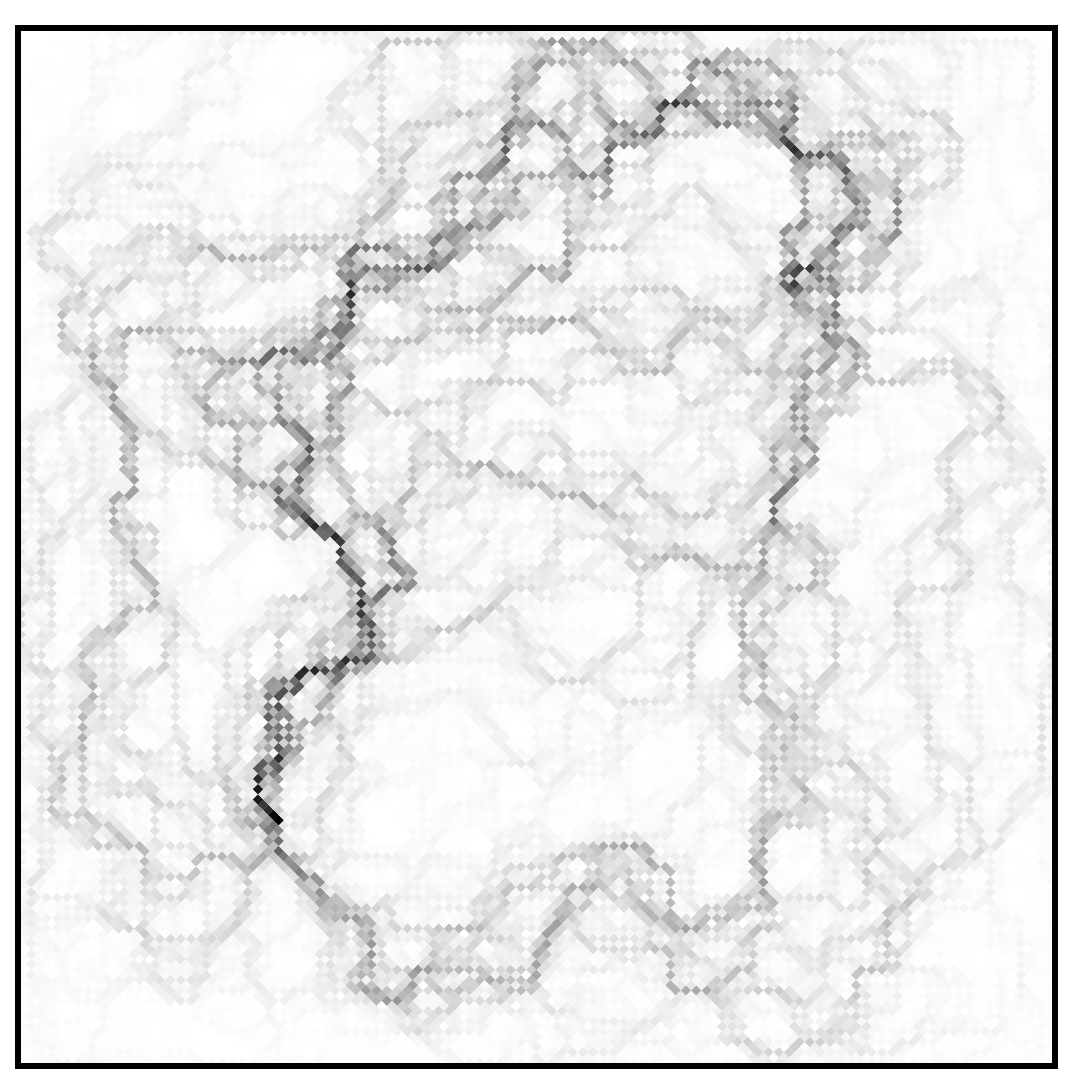}
}
\small 
\vglue0.0cm
\caption{
\label{fig3}
The current flow realizing (through Thomson's Principle) the effective resistance between two points in the network associated with a sample of the GFF in a square of side $N:=100$ and~$\gamma=0.2\gamma_\cc$ (left) and $\gamma=0.5\gamma_\cc$ (right). The source/sink of the current lie on the diagonal of the square (marked by the dark spots in the left figure). The intensity of the shade increases with the value of the current. }
\normalsize
\end{figure}

Our RSW proof is \emph{hugely} inspired by \cite{Tassion14}, with the novelty of incorporating the  (resistance)  metric rather than merely considering connectivity. We remark that in a recent work \cite{DD16}, a RSW result was established for the Liouville FPP metric, again 
inspired by \cite{Tassion14}. It is fair to say that the RSW result in the present paper is less complicated than that in \cite{DD16}, for the reason that we have the approximate duality in our context which was not 
available in \cite{DD16}. However, our RSW  proof has its own subtlety since, for instance, we need to consider 
 crossings by whole collections  of paths simultaneously. The RSW proof is carried out in Section~\ref{sec:RSW}. 
 Once the effective resistances are under control, we move on to the proof of the  results on random walks. The upper bound on the return probability  is proved in  Section~\ref{subsec:spectral_dim_upper}  using the methods drawn from  \cite{Kumagai14}. The lower 
bound on the return probability is more subtle as it requires showing that the effective resistance from $0$ to~$v$ in $B(N)$ is bounded by the sum of the resistances from $0$ to~$\partial B(N)$ and from~$v$ to~$\partial B(N$). This amounts to bounding a \emph{difference} of effective resistances, which is not  immediate from the estimates obtained thus far. 

 We approach this by invoking a concentric decomposition of the GFF along a sequence of annuli, which permits representing of the typical value of the resistance as an exponential of a random walk. The Law of the Iterated Logarithm then shows that the natural fluctuations of the effective resistance (which are of order $\e^{O(\sqrt{\log N})}$) can be beaten in at least one of the annuli. These key steps are the content of Proposition~\ref{prop-5.8} and  Lemma~\ref{lemma-5.11}. As an immediate consequence, we then get recurrence and, in fact, also the bounds in Theorem~\ref{thm-effective-resistance}.

\begin{figure}[t]
\centerline{\includegraphics[width=0.45\textwidth]{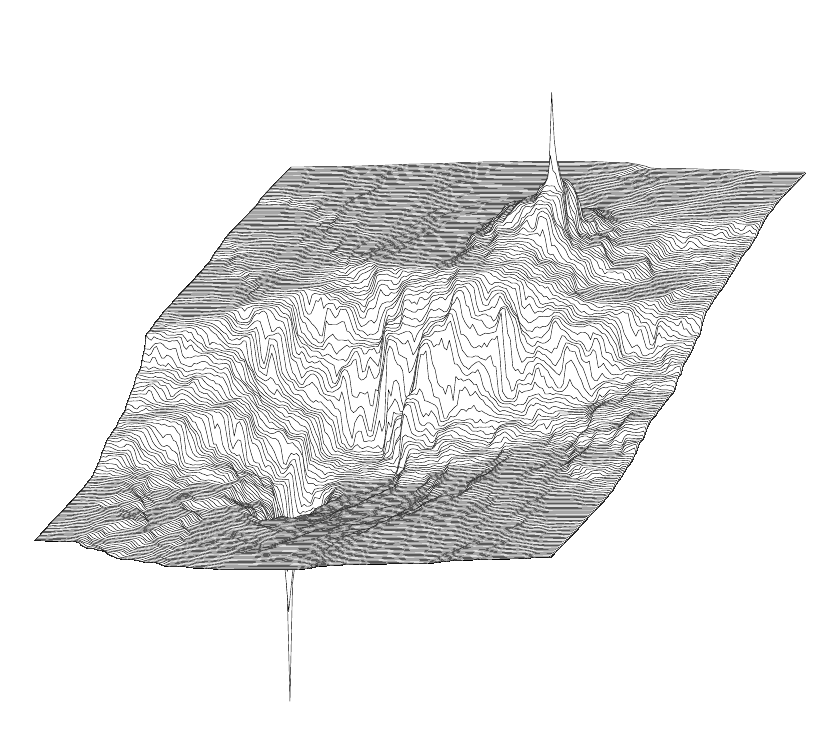}
\hglue-8mm\raise3pt\hbox{\includegraphics[width=0.42\textwidth]{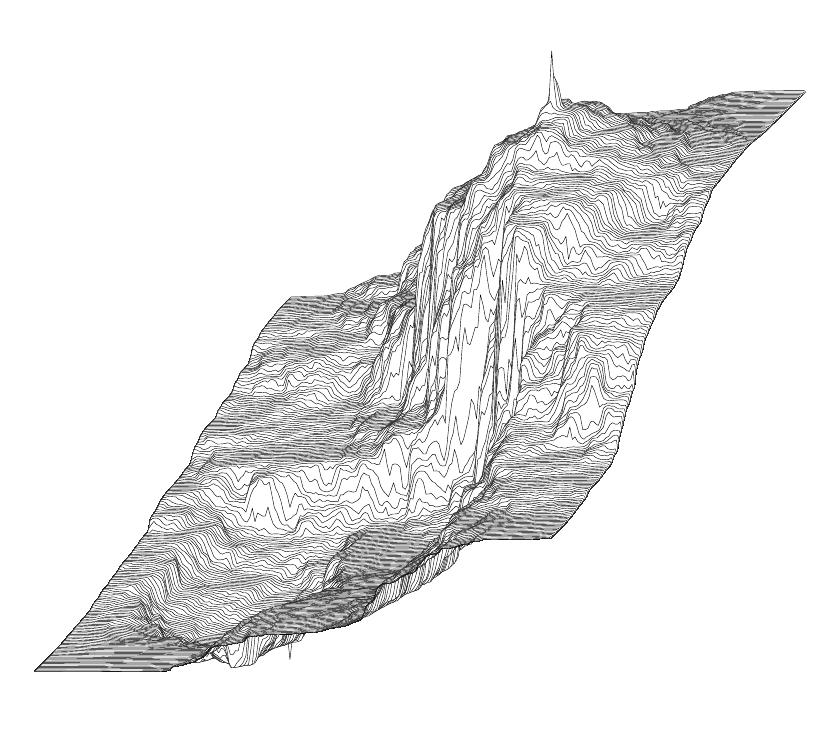}}
}
\small 
\vglue0.0cm
\caption{
\label{fig4}
Voltage profiles for the same geometric setting as in Fig.~4 but for two different samples of GFF at $\gamma=0.3\gamma_\cc$. Notice that the profile represents the probability that a random walk hits the highest point before hitting the lowest point in this  profile.  The plots indicate presence of macroscopic ``barriers'' where this probability undergoes a dramatic change. }
\normalsize
\end{figure}
%
\subsection{Discussions and future directions}
We feel that our method of estimating effective resistances provides a novel framework which may have applications in other planar random media. In fact, from our proofs we should be able to see that our method can be adapted to some other log-correlated Gaussian fields such as those considered in \cite{Madaule15}. We refrain ourselves from doing so, for the reason that we do not  yet  know  how to characterize the class  of log-correlated Gaussian fields  with subpolynomial (i.e., $N^{o(1)}$-like)  growth of the effective resistances.   

One important, and perhaps  less conspicuous,  ingredient  of our proofs  is the estimate of the effective resistance  by means of  the Gaussian concentration inequality. When the underlying random media is not  a function of 
a Gaussian process,  a derivation of such a concentration inequality seems to be a challenge.   
 A natural class of non-Gaussian models where one should try to prove an analogue of Theorem~\ref{thm-effective-resistance} is that of  gradient fields  with uniformly convex interactions.   Indeed, there the required concentration is implied by the Brascamp-Lieb inequality.

 Concerning our  future goals for the problem at hand, our first attempt will aim at the computation of the spectral dimension (which amounts to an almost sure version of \eqref{eq-return-probability}) and an upper bound on the diffusive exponent matching the lower bound in Theorem~\ref{thm_main_diffusive_exponent}. Our ultimate goal is to prove existence of an appropriate scaling limit of the whole problem. This applies not only to the walk itself, but also to the resistance metric as well as the associated current and voltage configurations; see Fig.~\ref{fig3} and~\ref{fig4} for illustrations  and the recent review~\cite[Chapter~16]{Biskup-PIMS} for more specific questions.

\subsection{Acknowledgements}
The work of M.B.\ has been partially supported by NSF grants DMS-1407558 and DMS-1712632 and GA\v CR project P201/16-15238S. The work of J.D.\ and S.G.\ has been partially supported by NSF grant DMS-1455049 and Alfred Sloan fellowship.
The authors  wish to  thank Takashi Kumagai, Hubert Lacoin, R\'emi Rhodes,  Steve Lalley,  Vincent Vargas and Ofer Zeitouni for helpful discussions.  We are also grateful to two anonymous referees for their numerous valuable suggestions that improved greatly the presentation of this work.  The first draft of the paper was completed when both JD and SG were at the University of Chicago.

\section{Generalized parallel and series laws}
\label{sec:sec_gen_par_ser}\noindent
As noted above, our asymptotic statements on the random walk hinge on estimates of effective resistance between various sets in~$\Z^2$. These will in turn rely crucially on a certain duality between the effective resistance and the effective conductance which will itself be based on the distributional equality of~$\eta$ with~$-\eta$.  The  exposition of  our  proofs thus starts with general versions of these duality statements.  The results in this section hold for general networks;  planarity is not required. They  can be viewed as refinements of \cite[Proposition 9.4]{LP16} and are therefore of general interest as well.

\subsection{ Variational characterization of effective resistance}
Let $\mathfrak G$ be a finite,  unoriented,  connected graph where each edge~$e$ is equipped with a resistance $r_e\in\R_+$, where $\R_+$ denotes the set of positive reals.   We will use~$\mathfrak G$ to denote both the corresponding network  as well as the underlying graph. Let~$V(\mathfrak G)$ and~$E(\mathfrak G)$ respectively denote the set of 
vertices and edges of $\mathfrak G$. We assume for simplicity that~$\mathfrak G$ has no self-loops although 
 we allow distinct vertices to be connected by  multiple edges.  For the purpose of counting we identify the two orientations of each edge; $E(\mathfrak G)$ thus includes both orientations as one edge.  

Two edges~$e$ and~$e'$ of~$\mathfrak G$ are said to be \emph{adjacent} to each other, denoted as $e \sim e'$, if they share at least 
one endpoint. Similarly a vertex~$v$ and an edge~$e$ are adjacent, denoted as $v \sim e$, if $v$ is an endpoint 
of the edge $e$.
A \emph{path} $P$ is a sequence of  edges  of $\mathfrak G$ such that any two successive  edges  are 
adjacent.  We say that a path is edge self-avoiding if every edge appears in~$P$ at most once.  We also use $P$ to denote the subgraph  of~$\mathfrak G$  induced by the edge set of~$P$.

For $u, v\in V(\mathfrak G)$, a \textit{flow}~$\theta$ from~$u$ to~$v$ is an assignment of a number~$\theta(x,y)$ to each \textit{oriented} edge $(x,y)$ such that $\theta(x,y)=-\theta(y,x)$ and $\sum_{y\colon y\sim x}\theta(x,y)=0$ whenever~$x\ne u,v$. The \textit{value of the flow}~$\theta$ is then the number $\text{val}(\theta):=\sum_{y\colon y\sim u}\theta(u,y)$; a unit flow then has this value equal to one. With these notions in place, the effective resistance $R_{\mathfrak G}(u, v)$ between $u$ and $v$
is defined by
\begin{equation}
\label{eq-resistance-flow}
R_{\mathfrak G}(u, v) := \inf_{\theta} 
\,\,\sum_{e\in E(\mathfrak G)} r_e\,\theta_\e^2\,,
\end{equation}
where the infimum  (which is achieved because~$\mathfrak G$ is finite)  is over all unit flows from $u$ to $v$.  Note that we sum over each edge~$e\in E(\mathfrak G)$ only once, taking advantage of the fact that $\theta_e$ appears in a square in this, and later expressions.

Recall that a multiset of elements of~$A$ is a set of pairs $\{(a,i)\colon i=1,\dots,n_a\}$ for some $n_a\in\{0,1,\dots\}$  for each~$a\in A$.  We have the following alternative characterization of $R_{\mathfrak G}(u, v)$:

\begin{proposition}
\label{prop:fundamental_prop_resistance}
 Let  $\mathfrak P_{u, v}$ denote the set of 
 all multisets   of  edge self-avoiding  paths from~$u$ to~$v$.
Then
\begin{equation}\label{eq-general-parellel}
R_{\mathfrak G}(u, v) = \inf_{\mathcal P \in \mathfrak P_{u, v}} \,\, \inf_{\{r_{e, P}\colon e\in 
E(\mathfrak G),\, P\in \mathcal P\} \in \mathfrak R_\mathcal P}\Bigl(\,\sum_{P\in \mathcal P} \frac{1}
{\sum_{e\in P} r_{e, P}}\Bigr)^{-1}\,,
\end{equation}
where $\mathfrak R_\mathcal P$ is the set of all  assignments  $\{r_{e, P}: e\in 
E(\mathfrak G),P\in \mathcal P\} \in \mathbb R_+^{E(\mathfrak G)\times \mathcal P}$ such that 
\begin{equation}\label{eq-r-e}
\sum_{P\in \mathcal P} \frac{1}{r_{e, P}} \leq \frac{1}{r_e} \,\text{\rm\ for all }\, e \in E(\mathcal 
G)\,.
\end{equation}
The infima in~\eqref{eq-general-parellel} are (jointly) achieved.
\end{proposition}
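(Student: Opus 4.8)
The plan is to prove the two inequalities in~\eqref{eq-general-parellel} separately, using the flow characterization~\eqref{eq-resistance-flow} as the common reference point, and to think of the right-hand side as an optimized application of the Parallel and Series Laws.

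\medskip\noindent\emph{Upper bound (the RHS is at least $R_{\mathfrak G}(u,v)$).} Fix any $\mathcal P\in\mathfrak P_{u,v}$ and any admissible assignment $\{r_{e,P}\}\in\mathfrak R_{\mathcal P}$. The idea is to construct an auxiliary network $\mathfrak G'$ by replacing each edge~$e$ of~$\mathfrak G$ with a bundle of parallel edges indexed by $\{P\in\mathcal P\colon e\in P\}$, where the copy associated to~$P$ has resistance $r_{e,P}$, plus possibly one extra parallel edge carrying the residual conductance $1/r_e-\sum_{P\ni e}1/r_{e,P}\ge0$ so that the Parallel Law makes the effective resistance across the bundle exactly~$r_e$. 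Consequently $R_{\mathfrak G'}(u,v)=R_{\mathfrak G}(u,v)$. Now in~$\mathfrak G'$ the multiset~$\mathcal P$ gives rise to a collection of edge-disjoint simple paths (each~$P$ uses its own dedicated copies), the $P$-th of which has series-resistance $\sum_{e\in P}r_{e,P}$; keeping only these paths and using monotonicity of effective resistance under edge deletion together with the Parallel and Series Laws yields $R_{\mathfrak G}(u,v)=R_{\mathfrak G'}(u,v)\le\bigl(\sum_{P\in\mathcal P}(\sum_{e\in P}r_{e,P})^{-1}\bigr)^{-1}$. Taking infima over~$\mathcal P$ and over $\mathfrak R_{\mathcal P}$ gives this direction. (Alternatively, one can bypass the auxiliary network entirely: given such an assignment, build a unit flow~$\theta$ on~$\mathfrak G$ by superposing flows along the paths of~$\mathcal P$ with values proportional to $(\sum_{e\in P}r_{e,P})^{-1}$, and bound $\sum_e r_e\theta_e^2$ using Cauchy--Schwarz together with~\eqref{eq-r-e}; this is the ``generalized Nash--Williams'' computation.)

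\medskip\noindent\emph{Lower bound (the RHS is at most $R_{\mathfrak G}(u,v)$).} Here I would take the optimal unit flow~$\theta$ achieving~\eqref{eq-resistance-flow} and decompose it into a finite sum of nonnegative flows along simple paths from~$u$ to~$v$, i.e.\ find simple paths $P_1,\dots,P_k$ and weights $\alpha_1,\dots,\alpha_k>0$ with $\sum_j\alpha_j=1$ such that $\theta(x,y)=\sum_{j\colon (x,y)\in P_j}\alpha_j$ on every oriented edge carrying positive flow (this is the standard flow-decomposition; since~$\theta$ may be supported on a graph with cycles one first removes cyclic flow, which only decreases the energy). Let $\mathcal P:=\{P_1,\dots,P_k\}$ be the resulting multiset and set $r_{e,P_j}:=r_e\,\theta_e/\alpha_j$ for $e\in P_j$ (and leave $r_{e,P}$ arbitrary, say very large, when $e\notin P$). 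Then $\sum_{P\ni e}1/r_{e,P}=\sum_{j\colon e\in P_j}\alpha_j/(r_e\theta_e)=\theta_e/(r_e\theta_e)=1/r_e$, so the constraint~\eqref{eq-r-e} holds with equality; and $\sum_{e\in P_j}r_{e,P_j}=\alpha_j^{-1}\sum_{e\in P_j}r_e\theta_e$. One then checks, using that $\sum_{e\in P_j}r_e\theta_e$ is (by the path decomposition and a rearrangement of the sum $\sum_e r_e\theta_e^2=\sum_e r_e\theta_e\sum_{j\colon e\in P_j}\alpha_j=\sum_j\alpha_j\sum_{e\in P_j}r_e\theta_e$) controlled so that $\bigl(\sum_j(\sum_{e\in P_j}r_{e,P_j})^{-1}\bigr)^{-1}=\bigl(\sum_j\alpha_j^2/(\sum_{e\in P_j}r_e\theta_e)\bigr)^{-1}\le\sum_j\alpha_j\sum_{e\in P_j}r_e\theta_e=\sum_e r_e\theta_e^2=R_{\mathfrak G}(u,v)$, where the middle inequality is Cauchy--Schwarz (or the harmonic-mean/arithmetic-mean comparison) applied to the weights~$\alpha_j$ and the quantities $\sum_{e\in P_j}r_e\theta_e$. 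This exhibits a choice of $\mathcal P$ and admissible $\{r_{e,P}\}$ witnessing that the RHS of~\eqref{eq-general-parellel} is $\le R_{\mathfrak G}(u,v)$.

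\medskip\noindent\emph{Attainment.} Both infima are achieved because the lower-bound construction produces an explicit $\mathcal P$ (a finite multiset, since a unit flow on a finite graph decomposes into at most $|E(\mathfrak G)|$ paths) and an explicit admissible assignment at which the value $R_{\mathfrak G}(u,v)$ is realized, and the upper bound shows no smaller value is possible; one should just note that it suffices to restrict the outer infimum to multisets of size at most $|E(\mathfrak G)|$, on which—after noting the relevant objective is continuous and the feasible set of $\{r_{e,P}\}$ can be taken compact after discarding paths with zero weight—a minimizer exists.

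\medskip\noindent The step I expect to be the main technical obstacle is the lower bound, specifically making the flow decomposition clean: one must first argue that an energy-minimizing unit flow can be taken to have no ``cyclic'' component (so that its support, restricted to positively-oriented edges, is acyclic and Hall/flow-decomposition applies), and then handle the bookkeeping for edges where $\theta_e=0$ so that the definition $r_{e,P}:=r_e\theta_e/\alpha_j$ does not blow up or become ill-posed—this is dealt with by simply omitting zero-flow edges from all paths, which is automatic since a path in the decomposition only traverses edges with $\theta_e>0$. The Cauchy--Schwarz step and the identification of the constraint~\eqref{eq-r-e} with the Parallel Law are then routine.
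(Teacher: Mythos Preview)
Your approach is essentially the paper's: the upper bound via the parallel-splitting of edges is identical, and for the lower bound the paper also decomposes the optimal flow $\theta^\star$ into simple paths $P_j$ with weights~$\alpha_j$ (it does so by an explicit greedy peeling, which automatically handles acyclicity), defines exactly your $r_{e,P_j}:=r_e\theta^\star_e/\alpha_j$, and finishes with the same quadratic/Cauchy--Schwarz optimization over the~$\alpha_j$.

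There is, however, an algebraic slip in your lower-bound display. With $r_{e,P_j}=r_e\theta_e/\alpha_j$ one gets $\sum_{e\in P_j}r_{e,P_j}=\alpha_j^{-1}S_j$ where $S_j:=\sum_{e\in P_j}r_e\theta_e$, so $\bigl(\sum_{e\in P_j}r_{e,P_j}\bigr)^{-1}=\alpha_j/S_j$, not $\alpha_j^2/S_j$. This matters: your stated inequality $\bigl(\sum_j\alpha_j^2/S_j\bigr)^{-1}\le\sum_j\alpha_j S_j$ is false in general (take two paths with $\alpha_1=\alpha_2=\tfrac12$ and $S_1=S_2=S$, giving $2S\le S$). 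With the correct exponent the step is exactly the weighted Cauchy--Schwarz $\bigl(\sum_j\alpha_j/S_j\bigr)\bigl(\sum_j\alpha_j S_j\bigr)\ge\bigl(\sum_j\alpha_j\bigr)^2=1$, which is what you intended and what the paper uses (phrased there as minimizing $\sum_j\alpha_j^2 R_j$ over probability vectors~$\alpha$). Fix that one symbol and the argument is complete.
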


\begin{proofsect}{Proof}
Let~$R^\star$ denote the right hand side of \eqref{eq-general-parellel}. We will first prove $R_{\mathfrak G }(u, v) \leq R^\star$. 
Let thus $\mathcal P \in \mathfrak P_{u,v}$ and $\{r_{e, P}\colon e\in E, P\in \mathcal P\} \in \mathfrak R_\mathcal P$ subject to \eqref{eq-r-e} be given. We will view each edge~$e$ in~$\mathfrak G$ as a \emph{parallel} of a collection of edges $\{e_P\colon P\in \mathcal P\}$ where the resistance on~$e_P$ is $r_{e,P}$ and, if the inequality in~\eqref{eq-r-e} for edge~$e$ is strict, introduce a dummy edge $\tilde e$ with resistance $r_{\tilde e}$ such that $1/ r_{\tilde e} = 1/r_e-\sum_{P\in\mathcal P}1/r_{e,P}$. In this new network,~$\mathcal P$ can be identified with a collection of \emph{disjoint} paths where (by the series law) each path $P\in \mathcal P$ has total resistance $\sum_{e\in P} r_{e, P}$. The parallel law guarantees 
\begin{equation}
R_{\mathfrak G}(u, v) \leq \Bigl(\,\sum_{P\in \mathcal P} \frac{1}{\sum_{e\in P} 
r_{e, P}}\Bigr)^{-1}
\end{equation}
which proves $R_{\mathfrak G}(u, v) \leq R^\star$ as desired.

Next, we turn to proving that $R_{\mathfrak G} (u, v) \geq R^\star$ and that the  infima  in~\eqref{eq-general-parellel} are achieved.

Let~$\theta^\star$ be the unit flow achieving the infimum in \eqref{eq-resistance-flow}. We will run an algorithm that inductively identifies a sequence of flows~$\theta_k$ (not necessarily of unit value) and paths~$P_k$ from~$u$ and~$v$. A key fact to note is that, if~$e\mapsto \theta(e)$ is a flow from~$u$ to~$v$ with~$\text{val}(i)>0$, then there is an edge self-avoiding path~$P$ from~$u$ to~$v$ such that $\theta(e)>0$ for each~$e\in P$ oriented in the direction of~$P$. This follows by the fact that, at all vertices except the source and the sink, having an edge with a positive incoming flow forces an edge with a positive outgoing flow.

The algorithm runs as follows. INITIATE by setting $\theta_0:=\theta^\star$. If~$\text{val}(\theta_{k-1})=0$ then STOP, else use the above observation to find a simple path~$P_k$ from~$u$ to~$v$ where $\theta_{k-1}(e)>0$ for each edge~$e\in P_k$ oriented in the direction of the path. Then set $\alpha_k:=\min_{e\in P_k}|\theta_{k-1}(e)|$, let
\begin{equation}
\theta_k(e):=\theta_{k-1}(e)-\alpha_k\, \text{sign}(\theta_{k-1}(e))1_{\{e\in P_k\}}
\end{equation}
and, noting that~$\theta_k$ is a flow from~$u$ to~$v$ with~$\text{val}(\theta)\ge0$, REPEAT. 

As $\{e\in E(\mathfrak G)\colon \theta_k(e)\ne0\}$ is strictly decreasing in~$k$, the algorithm will terminate after a finite number of steps. Also $k\mapsto |\theta_k(e)|$ and~$k\mapsto\text{val}(\theta_k)$ are decreasing, and so 
\begin{equation}
\label{E:12.55}
\forall e\in E(\mathfrak G)\colon\,\,\,
\sum_{k\colon e\in P_k}\alpha_k \le |\theta^\star(e)|\qquad\text{and}\qquad\sum_k\alpha_k = \text{\rm val}(\theta^\star)=1.
\end{equation}
Set $r_{e,P_k}:=|\theta^\star(e)| r_e/\alpha_k$ and compute
\begin{equation}
\begin{aligned}
R_\eff(u,v)&=\sum_{e\in E(\mathfrak G)}r_e\,\theta^\star(e)^2
\ge\sum_{e\in E(\mathfrak G)}\,r_e|\theta^\star(e)|\sum_{k\colon e\in P_k}\alpha_k
\\&=\sum_{e\in E(\mathfrak G)}\,\sum_{k\colon e\in P_k}r_{e,P_k}\alpha_k^2
=\sum_k\,\alpha_k^2\Bigl(\sum_{e\in P_k}r_{e,P_k}\Bigr).
\end{aligned}
\end{equation}
Denoting the quantity in the large parentheses by~$R_k$, of all positive~$\alpha_k$'s subject to the equality in \eqref{E:12.55}, the right-hand side is minimized by~$\alpha_k:=\frac{1/R_k}{\sum_j 1/R_j}$. As \eqref{E:12.55} shows $\sum_k 1/r_{e,P_k}\le 1/r_e$ for each~$e\in E(\mathfrak G)$, the claim follows. 
\end{proofsect}

A slightly augmented version of the above proof in fact yields:

\begin{proposition}
\label{fundamental_cor_resistance}
Let $\mathfrak{T}_{u, v}$ be the set of all  multisets   of edges of~$\mathfrak G$ that, if considered as a graph on $V(\mathfrak G)$, contain a path between~$u$ and~$v$. 
Then
\begin{equation}
\label{eq-general-parellel2}
R_{\mathfrak G}(u, v) = \inf_{\mathcal T \in \mathfrak T_{u, v}}  \inf_{\{r_{e, T}: e\in 
E(\mathfrak G), 
T\in \mathcal T\} \in \mathfrak R_{\mathcal T}}\Bigl(\,\sum_{T \in \mathcal T} \frac{1}{\sum_{e\in T} 
r_{e, T}}\Bigr)^{-1}\,,
\end{equation}
where $\mathfrak R_{\mathcal T}$ is the set of all  assignments  $\{r_{e, T}\colon e\in 
E(\mathfrak G), T \in \mathcal T\} \in \mathbb R_+^{E(\mathfrak G) \times \mathcal T}$ such that 
\begin{equation}
\label{eq-r-e2}
\sum_{T \in \mathcal T} \frac{1}{r_{e, T}} \leq \frac{1}{r_e} \,\text{\rm\ for all }\, e \in E(\mathcal 
G)\,.
\end{equation}
 The infima are jointly achieved for~$\mathcal T$ being a subset of~$\mathfrak P_{u,v}$.
\end{proposition}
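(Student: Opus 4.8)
The plan is to reduce the statement to Proposition~\ref{prop:fundamental_prop_resistance} by observing that nothing in the earlier argument genuinely used that each element of $\mathcal P$ was a \emph{simple path}; what was used is that each $P$ carries its own private copy of each edge and that the series law applies along $P$. So the strategy is: (i) prove the easy inequality $R_{\mathfrak G}(u,v)\le$ RHS by the same parallel/series replacement as before; (ii) for the reverse inequality, reuse the minimizing flow $\theta^\star$ and the exact same inductive path-extraction procedure, noting that it produces \emph{simple} paths, so that the minimizing multiset $\mathcal T^\star$ it yields actually lies in $\mathfrak P_{u,v}\subseteq\mathfrak T_{u,v}$; (iii) conclude that the infimum over the larger class $\mathfrak T_{u,v}$ equals the infimum over $\mathfrak P_{u,v}$, which by Proposition~\ref{prop:fundamental_prop_resistance} equals $R_{\mathfrak G}(u,v)$.

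In more detail: for (i), fix $\mathcal T\in\mathfrak T_{u,v}$ and $\{r_{e,T}\}\in\mathfrak R_{\mathcal T}$ subject to \eqref{eq-r-e2}. As in the proof of Proposition~\ref{prop:fundamental_prop_resistance}, split each edge $e$ into a parallel bundle of edges $\{e_T\colon T\in\mathcal T\}$ with resistances $r_{e,T}$, plus a dummy edge absorbing the slack in \eqref{eq-r-e2}. By hypothesis each $T\in\mathcal T$, viewed on $V(\mathfrak G)$, contains a path $Q_T$ from $u$ to $v$; so in the refined network the sub-bundle indexed by $T$ contains a path from $u$ to $v$ whose total resistance is at most $\sum_{e\in T}r_{e,T}$ (the path uses only a subset of the edges of $T$, and we may simply delete, i.e. send to infinite resistance or remove, the edges of $T$ not on $Q_T$; by Rayleigh monotonicity this only increases resistance, but for the purpose of the \emph{upper} bound we keep all of them in parallel). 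Hence the parallel of these bundles, together with the dummies, has effective resistance at most $\bigl(\sum_{T}1/\sum_{e\in T}r_{e,T}\bigr)^{-1}$, and since this refined network is electrically equivalent to $\mathfrak G$, we get $R_{\mathfrak G}(u,v)\le$ RHS. A cleaner way to phrase this, which avoids fussing over which edges lie on $Q_T$: for each $T$ replace $r_{e,T}$ by $+\infty$ (equivalently, drop the parallel branch) for $e\notin Q_T$; the constraint \eqref{eq-r-e2} is only relaxed, and $\sum_{e\in T}r_{e,T}$ can only decrease, so it suffices to treat the case where each $\mathcal T$-element is itself a simple path — i.e. $\mathcal T\in\mathfrak P_{u,v}$ — and then invoke Proposition~\ref{prop:fundamental_prop_resistance} directly.

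For (ii)--(iii): since $\mathfrak P_{u,v}\subseteq\mathfrak T_{u,v}$ and the inner constraint sets $\mathfrak R_{\mathcal P}$, $\mathfrak R_{\mathcal T}$ are defined by the identical inequalities, the RHS of \eqref{eq-general-parellel2} is $\le$ the RHS of \eqref{eq-general-parellel}, which equals $R_{\mathfrak G}(u,v)$ by Proposition~\ref{prop:fundamental_prop_resistance}. Combined with the inequality $R_{\mathfrak G}(u,v)\le$ RHS of \eqref{eq-general-parellel2} from step (i), all three quantities coincide, and the minimizer of \eqref{eq-general-parellel} furnished by Proposition~\ref{prop:fundamental_prop_resistance} — which is a multiset of \emph{simple} paths — is simultaneously a minimizer of \eqref{eq-general-parellel2} lying in $\mathfrak P_{u,v}$, giving the final ``infima are jointly achieved for $\mathcal T$ a subset of $\mathfrak P_{u,v}$'' assertion. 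I do not expect a serious obstacle here; the one point requiring a little care is making the reduction ``each element of $\mathcal T$ may be assumed a simple path'' fully rigorous, i.e. checking that restricting each $r_{e,T}$ to a simple sub-path of $T$ (and discarding the rest) does not violate \eqref{eq-r-e2} and does not increase the objective — both are immediate since we only drop positive terms from the sums on both sides. Given that, the proposition is essentially a corollary of the previous one, as the phrase ``a slightly augmented version of the above proof'' in the excerpt already signals.
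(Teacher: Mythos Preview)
Your proposal is correct and follows essentially the same route as the paper: the paper also observes that $\mathfrak P_{u,v}\subseteq\mathfrak T_{u,v}$ gives $R_{\mathfrak G}(u,v)\ge R^\star$ immediately from Proposition~\ref{prop:fundamental_prop_resistance}, and for the reverse inequality it picks, for each $T\in\mathcal T$, a simple path $P_T\subseteq T$, sets $r_{e,P_T}:=r_{e,T}$ for $e\in P_T$, and notes that the constraint \eqref{eq-r-e2} is preserved while $\sum_{e\in P_T}r_{e,P_T}\le\sum_{e\in T}r_{e,T}$---exactly your ``cleaner way.'' One small wording slip: saying ``replace $r_{e,T}$ by $+\infty$'' and ``$\sum_{e\in T}r_{e,T}$ can only decrease'' are in tension; what you mean (and what the paper does) is to restrict the sum to $e\in P_T$, which indeed decreases it.
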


\begin{proofsect}{Proof}
Let~$R^\star$ denote the right-hand side of 
\eqref{eq-general-parellel2}. Obviously, $\mathfrak P_{u,v}\subseteq\mathfrak T_{u,v}$ so restricting the first infimum to $\mathcal T\in\mathfrak P_{u,v}$, Proposition~\ref{prop:fundamental_prop_resistance} shows 
$R_{\mathfrak G}(u, v)\ge R^\star$. (This will also ultimately give that the minimum is achieved over 
collections of paths.)  To get $R_{\mathfrak G}(u, v)\le R^\star$, let us consider an assignment $\{r_{e, T}\colon e\in E(\mathfrak G), T \in \mathcal T\}$ satisfying \eqref{eq-r-e2}. For each $T \in \mathcal T$, let $P_T$ denote an arbitrarily chosen  edge self-avoiding  path 
between~$u$ and~$v$ formed by edges in $T$. Then, defining $r_{e, P_T} := r_{e, T}$ for each $T \in \mathcal T$, we find that the assignment $\{r_{e, P_T}\colon e\in E(\mathfrak G), T \in \mathcal T\}$ 
satisfies \eqref{eq-r-e}. Now the claim follows from the simple observation that $\sum_{e \in P_T} r_{e, p_T} 
\leq \sum_{e \in P_T} r_{e, T}$. 
\end{proofsect}

\subsection{ Variational characterization of effective conductance}
An alternative way to approach an electric network is using conductances.  We write $c_e := 
1/r_e$ for the edge conductance on $e$, and define the effective conductance between $u$ and $v$
by 
\begin{equation}
\label{eq-conductance-minimization}
C_{\mathfrak G}(u, v) := \inf_F \,\sum_{e\in E(\mathfrak G)} c_e\,\bigl[F(e_+) - F(e_-)\bigr]^2\,,
\end{equation}
where $e_\pm$ are the two endpoints of the edge $e$  (in some \emph{a priori} orientation)  and the  infimum  is over all functions $F\colon 
V\to \mathbb R$ satisfying $F(u)=1$ and $F(v) = 0$.  The infimum is again achieved by the fact that~$\mathfrak G$ is finite.
The fundamental electrostatic duality  is then expressed as
\begin{equation}
\label{E:2.14}
C_{\mathfrak G}(u, v) = \frac1{R_{\mathfrak G}(u, 
v)}
\end{equation}
and our aim is to capitalize on this relation further by exploiting the geometric duality between paths and cutsets. Here we say that 
a set of edges $\pi$ is a cutset between $u$ and~$v$ (or that $\pi$ separates~$u$ from~$v$) if 
each path from $u$ to~$v$   uses an edge in~$\pi$.  

\begin{proposition}
\label{prop:fundamental_prop_conductance}
 Let $\varPi_{u, v}$ denote the set of all finite collections of cutsets between~$u$ and~$v$. Then 
\begin{equation}\label{eq-general-Nash-William}
C_{\mathfrak G}(u, v) = \inf_{\Pi \in { \varPi}_{u, v}}  \inf_{\{c_{e, \pi}\colon e\in E(\mathfrak G), \pi\in \Pi\} \in \mathfrak C_{\Pi}}\Bigl(\,\sum_{\pi\in \Pi} \frac{1}{\sum_{e\in \pi} c_{e,
\pi}}\Bigr)^{-1}\,,
\end{equation}
where $\mathfrak C_{\Pi}$ is the set of all  assignments  $\{c_{e, \pi} \colon e\in E(\mathfrak G), \pi \in \Pi\} \in \mathbb R_+^{E(\mathfrak G)\times \Pi}$ such that 
\begin{equation}
\label{eq-c-e}
\sum_{\pi \in \Pi} \frac{1}{c_{e, \pi}} \leq \frac{1}{c_e} \,\text{\rm\ for all }\, e \in E(\mathfrak G)\,.
\end{equation}
 The infima in~\eqref{eq-general-Nash-William} are (jointly) achieved.
\end{proposition}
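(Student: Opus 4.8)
The plan is to transcribe the proof of Proposition~\ref{prop:fundamental_prop_resistance}, interchanging throughout the roles of resistance and conductance, of path and cutset, and of unit flow and potential. Write $C^\star$ for the right-hand side of~\eqref{eq-general-Nash-William}; we must show $C_{\mathfrak G}(u,v)\le C^\star$, then $C_{\mathfrak G}(u,v)\ge C^\star$ together with attainment. We use the same convention as in Proposition~\ref{prop:fundamental_prop_resistance} (there applied to the $r_{e,P}$ with $e\notin P$), namely that $1/c_{e,\pi}:=0$ whenever $e\notin\pi$, so that \eqref{eq-c-e} constrains only $\sum_{\pi\ni e}1/c_{e,\pi}$ and the sum $\sum_{e\in\pi}c_{e,\pi}$ involves only $e\in\pi$.

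For $C_{\mathfrak G}(u,v)\le C^\star$, fix $\Pi\in\varPi_{u,v}$ and $\{c_{e,\pi}\}\in\mathfrak C_\Pi$ and dualize the edge-splitting device of the previous proof: replace each edge $e$ by a \emph{series} string consisting of one edge $e_\pi$ of conductance $c_{e,\pi}$ for every $\pi\in\Pi$ with $e\in\pi$, together with (when the inequality in~\eqref{eq-c-e} is strict) one dummy edge of conductance $\bigl(1/c_e-\sum_{\pi\ni e}1/c_{e,\pi}\bigr)^{-1}$. By the series law the resulting network is electrically equivalent to $\mathfrak G$, and in it the edge sets $\widehat\pi:=\{e_\pi\colon e\in\pi\}$, $\pi\in\Pi$, are pairwise edge-disjoint cutsets between $u$ and $v$, each carrying total ("parallel") conductance $\sum_{e\in\pi}c_{e,\pi}$. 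The Nash--Williams inequality — which here plays exactly the role that the parallel law played in the proof of Proposition~\ref{prop:fundamental_prop_resistance} — then gives $C_{\mathfrak G}(u,v)\le\bigl(\sum_{\pi\in\Pi}1/\sum_{e\in\pi}c_{e,\pi}\bigr)^{-1}$, and taking the infimum over $\Pi$ and $\{c_{e,\pi}\}$ yields $C_{\mathfrak G}(u,v)\le C^\star$.

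For the reverse inequality and attainment, let $F^\star$ achieve the minimum in~\eqref{eq-conductance-minimization}; then $0\le F^\star\le1$, $F^\star(u)=1$, $F^\star(v)=0$, and its Dirichlet energy equals $C_{\mathfrak G}(u,v)$. Let $1=a_0>a_1>\dots>a_m=0$ list the distinct values of $F^\star$, set $\alpha_k:=a_{k-1}-a_k$ (so $\sum_k\alpha_k=1$), and let $\pi_k^\star$ be the set of edges whose two endpoint values straddle the gap $(a_k,a_{k-1})$, i.e.\ $\min\{F^\star(e_+),F^\star(e_-)\}\le a_k$ and $\max\{F^\star(e_+),F^\star(e_-)\}\ge a_{k-1}$. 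Since no value of $F^\star$ lies strictly between consecutive $a_i$'s, a crossing argument (a $u$--$v$ path runs from value $1$ down to value $0$, hence contains an edge straddling each gap) shows every $\pi_k^\star$ is a cutset, and that for each edge $e$ the set $\{k\colon e\in\pi_k^\star\}$ is the contiguous block of indices with $(a_k,a_{k-1})$ lying between the two endpoint values of $e$; telescoping then gives $\sum_{k\colon e\in\pi_k^\star}\alpha_k=|F^\star(e_+)-F^\star(e_-)|=:\delta_e$ for every edge $e$. Now put $c^\star_{e,\pi_k^\star}:=(\delta_e/\alpha_k)c_e$ for $e\in\pi_k^\star$ (note $\delta_e\ge\alpha_k$ there). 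The identity gives $\sum_k 1/c^\star_{e,\pi_k^\star}=1/c_e$, so $\{c^\star_{e,\pi}\}\in\mathfrak C_{\Pi^\star}$ with $\Pi^\star:=\{\pi_1^\star,\dots,\pi_m^\star\}$; applying the identity once more gives $\sum_k\alpha_k^2 C_k^\star=\sum_e c_e\delta_e^2=C_{\mathfrak G}(u,v)$, where $C_k^\star:=\sum_{e\in\pi_k^\star}c^\star_{e,\pi_k^\star}$. Finally Cauchy--Schwarz with $\sum_k\alpha_k=1$ gives $\sum_k\alpha_k^2 C_k^\star\ge\bigl(\sum_k 1/C_k^\star\bigr)^{-1}$. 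Chaining these, $C^\star\le\bigl(\sum_k 1/C_k^\star\bigr)^{-1}\le C_{\mathfrak G}(u,v)$, which is the missing inequality; combined with the first part all three quantities coincide, so $(\Pi^\star,\{c^\star_{e,\pi}\})$ realizes the infimum in~\eqref{eq-general-Nash-William}.

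The one genuinely new ingredient over Proposition~\ref{prop:fundamental_prop_resistance} is the construction in the preceding paragraph: the level-set decomposition of the optimal voltage $F^\star$ into cutsets, replacing the decomposition of the optimal flow into paths, along with the accompanying redistribution of conductances meeting~\eqref{eq-c-e}. The point that needs care is the bookkeeping identity $\sum_{k\colon e\in\pi_k^\star}\alpha_k=\delta_e$ — the exact analogue of $\sum_{j\colon e\in P_j}\alpha_j=\theta^\star_e$ in the earlier proof — i.e.\ showing that the relevant indices form a contiguous block and carry precisely the $\alpha$-mass $\delta_e$; once this is in place the rest is a mechanical transcription, with the Nash--Williams inequality substituting for the parallel law and Cauchy--Schwarz used exactly as before.
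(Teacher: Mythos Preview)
Your proof is correct and follows essentially the same strategy as the paper: the upper bound via series-splitting of edges and Nash--Williams is identical, and the lower bound decomposes the optimal voltage $F^\star$ into cutsets with the same conductance assignment $c_{e,\pi_k}=(dF^\star(e)/\alpha_k)c_e$ and the same Cauchy--Schwarz finish. The only difference is in how the cutsets are produced: the paper runs an iterative peeling procedure on $F^\star$ (extracting one cutset at a time and updating the potential), whereas you read them off all at once as the level-set boundaries between consecutive distinct values of~$F^\star$; your construction is more direct and makes the key bookkeeping identity $\sum_{k:\,e\in\pi_k^\star}\alpha_k=|F^\star(e_+)-F^\star(e_-)|$ immediate by telescoping, without needing to track harmonicity of the intermediate potentials.
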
 

\begin{proofsect}{Proof}
The proof is structurally similar to that of Proposition~\ref{prop:fundamental_prop_resistance}.
Denote by $C^\star$ the quantity on the right hand side of \eqref{eq-general-Nash-William}. We will 
first prove $C_{\mathfrak G }(u, v) \leq C^\star$. Pick $\Pi \in 
\varPi$ and $\{c_{e, \pi}\colon e\in E(\mathfrak G), \pi \in \Pi\} \in \mathfrak C_{\Pi}$ subject to 
\eqref{eq-c-e}.  Now  view each edge~$e$ as a \emph{series} of a 
collection of edges $\{e_\pi: e\in \pi, \pi\in \Pi\}$ where the conductance on~$e_\pi$ is $c_{e, 
\pi}$ and, if the inequality in~\eqref{eq-c-e} is strict, introduce a dummy edge~$\tilde e$ with conductance $c_{\tilde e}$ such that $1/c_{\tilde e}=1/c_e-\sum_{\pi\in\Pi}1/c_{e,\pi}$. In this new network,~$\Pi$ can be identified with a collection of \emph{disjoint} cutsets, where the 
cutset $\pi \in \Pi$ has total conductance $\sum_{e\in \pi} c_{e, \pi}$. The 
Nash-Williams Criterion then shows
\begin{equation}
C_{\mathfrak G}(u, v) \leq 
\Bigl(\,\sum_{\pi\in \Pi} \frac{1}{\sum_{e\in 
\pi} c_{e, \pi}}\Bigr)^{-1}
\end{equation}
thus proving $C_{\mathfrak G}(u, v) \leq C^\star$ as desired.

Next, we turn to proving $C_{\mathfrak G} (u, v) \geq C^\star$ and that the infima in~\eqref{eq-general-Nash-William} are attained. Let $F^\star$ be a 
function that achieves the infimum in~\eqref{eq-conductance-minimization}. This function is discrete harmonic in the sense that $\mathcal L F^\star(x)=0$ for~$x\ne u,v$, where
\begin{equation}
\mathcal L f(x):=\sum_{y\colon y\sim x}c_{(x,y)}\,\bigl[f(y)-f(x)\bigr].
\end{equation}

This is an important property in light of the following observation: If~$f\colon V(\mathfrak G)\to\R$ is such that~$f(u)>f(v)$ and $\mathcal L f(x)=0$ for~$x\not\in D\cup\{v\}$ where
\begin{equation}
\label{E:D-def}
D:=\bigl\{x\in  V(\mathfrak G)\colon f(x)=f(u)\bigr\},
\end{equation}
then $\pi:=\{(x,y)\in E(\mathfrak G)\colon x\in D,y\not\in D\}$ defines a cut-set~$\pi$ from~$u$ to~$v$ such that $f(x)>f(y)$ holds for each edge~$(x,y)\in\pi$ oriented so that~$x\in D$ and~$y\not\in D$.

We will now define a sequence of functions $F_k\colon V(\mathfrak G)\to\R$ and cuts~$\pi_k$ by the following algorithm: INITIATE by $F_0:=f^\star$. If $F_{k-1}$ is constant then STOP, else use the above observation with $D_{k-1}$ related to~$F_{k-1}$ as~$D$ is to~$F$ in \eqref{E:D-def} to define~$\pi_k:=\{(x,y)\in E(\mathfrak G)\colon x\in D_k,y\not\in D_k\}$. Noting that $F_{k-1}(x)>F_{k-1}(y)$ for each edge $(x,y)\in\pi$ (oriented to point from~$u$ to~$v$), set~$\alpha_k:=\min_{(x,y)\in\pi}|F_{k-1}(x)-F_{k-1}(y)|$ and let
\begin{equation}
F_k(z):=F_{k-1}(z)-\alpha_k\,1_{D_{k-1}}(z).
\end{equation}
Then REPEAT.

As is checked by induction,~$k\mapsto D_k$ is strictly increasing while $k\mapsto F_k$ is non-increasing with $F_k=F^\star$ on~$ V(\mathfrak G)\smallsetminus D_k$. In particular, we have $\mathcal L F_k(x)=0$ for all $x\not\in D_k\cup\{v\}$ and so the above observation can repeatedly be used. The premise of the strict inequality $F_k(u)>F_k(v)$ for all but the final step is the consequence of the Maximum Principle. 

Now we perform some elementary calculations. Let~$\Pi$ denote the set of the cutsets~$\pi_k$ identified above. For each edge~$(x,y)$ and each~$\pi_k\in\Pi$, define
\begin{equation}
\label{E:13.15}
c_{e,\pi_k} := \frac{|F^\star(y)-F^\star(x)|}{\alpha_k}\,c_e.
\end{equation}
The construction and the fact that $F^\star(u)=1$ and~$F^\star(v)=0$ imply 
\begin{equation}
\label{E:13.16}
\sum_k\alpha_k=1\,.
\end{equation}
For any $e=(x,y)$, we also get
\begin{equation}
\sum_{k\colon e\in\pi_k}\alpha_k = \bigl|F^\star(y)-F^\star(x)\bigr|.
\end{equation}
In particular, the collection $\{c_{e,\pi}\colon\pi\in\Pi,\,e\in\pi\}$ obeys \eqref{eq-c-e}. Moreover, \eqref{E:13.15} then shows, for any $e=(x,y)$,
\begin{equation}
\sum_{k\colon e\in\pi_k} c_{e,\pi_k}\alpha_k^2 = c_e\bigl|F^\star(y)-F^\star(x)\bigr|\sum_{k\colon e\in\pi_k}\alpha_k = c_e\bigl|F^\star(y)-F^\star(x)\bigr|^2.
\end{equation}
Summing over~$e\in E(\mathfrak G)$ and rearranging the sums yields
\begin{equation}
C_{\mathfrak G}(u,v) = \sum_{e\in E(\mathfrak G)}\,\,\sum_{k\colon e\in\pi_k} c_{e,\pi_k}\alpha_k^2 =  
\sum_k\alpha_k^2\Bigl(\sum_{e\colon e\in\pi_k}c_{e,\pi_k}\Bigr)\,.
\end{equation}
Denoting the sum in the large parentheses by~$C_k$, among all non-negative~$\alpha_k$'s satisfying \eqref{E:13.16}, the right-hand side is minimal for~$\alpha_k:=\frac{1/C_k}{\sum_j1/C_j}$. This shows ``$\ge$'' in \eqref{eq-general-Nash-William}.
\end{proofsect}

Propositions~\ref{prop:fundamental_prop_resistance} and \ref{prop:fundamental_prop_conductance} seem to be closely related to various variational characterizations of effective resistance/conductance by way of optimizing over \emph{random} paths and cutsets. These are rooted in the Nash-Williams criterion and Terry Lyons' random-path method for bounding effective resistance (which can be shown to be sharp). The ultimate statements of these characterizations can be found in Berman and Konsowa~\cite{BK90}.

\subsection{ Restricted notion of effective resistance}
Propositions~\ref{prop:fundamental_prop_resistance} and \ref{prop:fundamental_prop_conductance}  naturally lead to restricted notions  of resistance and conductance  obtained by limiting the optimization to  only \emph{subsets}  of paths and cutsets, respectively.   For the purpose of  the  current paper we will only be concerned with effective resistance. To this end, 
 for each collection~$\mathcal A$ of finite sets of elements from~$E(\mathfrak G$), we define 
\begin{equation}
\label{resist_restrict}
R_{\mathfrak G}(\mathcal A) := \inf_{\{r_{e, A}: e\in E(\mathcal A),\, A \in \mathcal 
A\} \in \mathfrak R_\mathcal A}\Bigl(\,\sum_{A \in \mathcal A} \frac{1}{\sum_{e\in A} r_{e, A}}\Bigr)^{-1}\,,
\end{equation}
where $E(\mathcal A):=\bigcup_{A\in\mathcal A}A$ and where $\mathfrak R_\mathcal A$ is the set of all $\{r_{e, A}: e\in 
E(\mathcal A),\, A \in \mathcal A\} \in \mathbb R_+^{ E(\mathcal A)\times \mathcal A}$  such that 
\begin{equation}
\label{eq-r-e-restrict}
\sum_{A\in \mathcal A} \frac{1}{r_{e, A}} \leq \frac{1}{r_e} \,\text{\rm\ for all }\, e \in  E(\mathcal A)\,.
\end{equation}
 We refer to $R_{\mathfrak G}(\mathcal A)$ as the effective resistance \textit{restricted to~$\mathcal 
A$}. By taking suitable $r_{e,P}$,  the map  $\mathcal A\mapsto R_{\mathfrak G}(\mathcal A)$ is shown to be 
non-increasing with respect to the set inclusion. We will mostly be interested in $R_{\mathfrak G}(\mathcal A)$ when $\mathcal A$ is a set of  edge self-avoiding  paths from~$u$ to~$v$.  In particular, $R_{\mathfrak G}(\mathcal A) = R_{\mathfrak G}(u, v)$ when $\mathcal A$ is the set of all 
the edge self-avoiding paths between~$u$ and $v$.   The following result is a generalization of the triangle inequality for the effective resistance.

\begin{lemma}
\label{lem:series_law}
Let $\mathcal P_1, \mathcal P_2, \cdots, \mathcal P_k$ be collections of paths such that for any choice of $P_i$ from $\mathcal P_i$ for each $1 \leq i \leq k$, the graph union $\bigcup_{1 \leq i \leq k} P_i$ contains a path 
between $u$ and~$v$. Then 
\begin{equation}
R_{\mathfrak G}(u, v) \leq \sum_{i=1}^kR_{\mathfrak G}(\mathcal P_i)\,.
\end{equation}
\end{lemma}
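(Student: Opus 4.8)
The plan is to construct, from optimizers for each $R_{\mathfrak G}(\mathcal P_i)$, a single flow from~$u$ to~$v$ whose Dirichlet energy is at most $\sum_{i=1}^k R_{\mathfrak G}(\mathcal P_i)$, and then invoke \eqref{eq-resistance-flow}. The natural way to do this is to mimic the classical ``series law'' proof: concatenate the flows realizing the individual restricted resistances. Concretely, for each~$i$ let $\{r_{e,P}^{(i)}\colon e\in E(\mathcal P_i),\,P\in\mathcal P_i\}$ be the minimizing assignment in~\eqref{resist_restrict}, whose existence is guaranteed by the remark following Proposition~\ref{prop:fundamental_prop_resistance} (the infimum is achieved). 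As in the proof of Proposition~\ref{prop:fundamental_prop_resistance}, replace each edge~$e$ by a parallel bundle of edges $\{e_P\colon P\in\mathcal P_i,\ i\le k\}$ (plus a dummy edge if the inequality \eqref{eq-r-e-restrict} is strict, summed over all $i$ simultaneously), with $e_P$ carrying resistance $r^{(i)}_{e,P}$ when $P\in\mathcal P_i$; the constraint \eqref{eq-r-e-restrict} for each~$i$ together with the fact that the $\mathcal P_i$ can be handled one bundle at a time ensures the total parallel conductance of the bundle does not exceed $1/r_e$, so this replacement only lowers $R_{\mathfrak G}(u,v)$.

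In this enlarged network each collection $\mathcal P_i$ becomes a family of pairwise edge-disjoint paths, and moreover paths coming from different~$\mathcal P_i$'s are edge-disjoint as well (they live on different copies $e_P$ of~$e$). Now I would build the unit flow from~$u$ to~$v$ by composing ``layers'': within the $i$-th layer, route a unit flow from~$u$ to~$v$ that is supported on the disjoint paths of $\mathcal P_i$ and, on path~$P\in\mathcal P_i$, carries current proportional to $1/\sum_{e\in P}r^{(i)}_{e,P}$, normalized so the total is~$1$; the energy of this layer is exactly $R_{\mathfrak G}(\mathcal P_i)$ by the series/parallel computation already performed in Proposition~\ref{prop:fundamental_prop_resistance}. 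The hypothesis that for \emph{every} choice $P_i\in\mathcal P_i$ the union $\bigcup_i P_i$ contains a $u$-$v$ path is what makes the layers ``chainable'': it guarantees that the $\mathcal P_i$ are arranged in series between~$u$ and~$v$, so one can cut the network along a nested sequence of cutsets $\pi_0=\{u\},\pi_1,\dots,\pi_k=\{v\}$ with $\mathcal P_i$ crossing the ``slab'' between $\pi_{i-1}$ and~$\pi_i$, and then simply add the $k$ layer-flows. Because the layers are edge-disjoint, the energy of the sum is the sum of the energies, giving a unit $u$-$v$ flow of energy $\sum_i R_{\mathfrak G}(\mathcal P_i)$, and \eqref{eq-resistance-flow} finishes the argument.

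Alternatively — and this may be cleaner to write — one can avoid flows entirely and argue directly at the level of the variational formula \eqref{eq-general-parellel2} of Proposition~\ref{fundamental_cor_resistance}: take $\mathcal T$ to be the multiset obtained by forming, for each tuple $(P_1,\dots,P_k)\in\mathcal P_1\times\cdots\times\mathcal P_k$, the edge set $\bigcup_i P_i$ (which by hypothesis contains a $u$-$v$ path, hence lies in $\mathfrak T_{u,v}$), and then split the resistances $r^{(i)}_{e,P_i}$ among these tuples in proportion dictated by the layer weights. One checks that the resulting assignment satisfies \eqref{eq-r-e2} and that each ``serial'' resistance $\sum_{e\in T}r_{e,T}$ telescopes into $\sum_i(\text{term for layer }i)$; a harmonic-mean/Cauchy--Schwarz bookkeeping then yields $\bigl(\sum_T (\sum_{e\in T}r_{e,T})^{-1}\bigr)^{-1}\le\sum_i R_{\mathfrak G}(\mathcal P_i)$, which is exactly the claim.

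\textbf{Main obstacle.} The delicate point is not the energy arithmetic, which is essentially a weighted AM--HM inequality as in Proposition~\ref{prop:fundamental_prop_resistance}, but rather the bookkeeping that makes the $k$ collections genuinely ``in series'' and edge-disjoint at the same time. One must be careful that when an edge~$e$ is used by paths from several different~$\mathcal P_i$, the conductance split obeys \eqref{eq-r-e-restrict} \emph{simultaneously} for all~$i$ rather than only on average; this is where the independent optimizers for each $R_{\mathfrak G}(\mathcal P_i)$, combined with the possibility of inserting dummy edges, is used crucially, and it is the step I would write out most carefully. The topological input — that the stated ``every transversal union connects $u$ to $v$'' hypothesis forces a nested cutset structure along which the flows can be concatenated — is intuitively clear but should be stated as a small lemma (or absorbed into the $\mathfrak T_{u,v}$-formulation of Proposition~\ref{fundamental_cor_resistance}, which sidesteps it).
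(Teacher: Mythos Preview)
Your second approach is exactly the one the paper takes, via Proposition~\ref{fundamental_cor_resistance}; what you describe as ``split the resistances among tuples in proportion to layer weights'' is carried out there concretely. The paper first disjointifies the edges by setting $E_1:=\bigcup_{P\in\mathcal P_1}E(P)$ and $E_j:=\bigcup_{P\in\mathcal P_j}E(P)\smallsetminus\bigcup_{i<j}E_i$, then for a tuple $T=(P_1,\dots,P_k)$ and $e\in E_i$ defines
\[
r_{e;P_1,\dots,P_k}:=r_{e,P_i}\prod_{j\ne i}\rho_{j,P_j}^{-1},
\qquad
\rho_{j,P}:=\frac{(\sum_{e\in P}r_{e,P})^{-1}}{\sum_{Q\in\mathcal P_j}(\sum_{e\in Q}r_{e,Q})^{-1}}\,.
\]
Because $\sum_{P\in\mathcal P_j}\rho_{j,P}=1$, summing $1/r_{e;P_1,\dots,P_k}$ over all tuples collapses to $\sum_{P_i\in\mathcal P_i}1/r_{e,P_i}\le 1/r_e$, which is exactly the simultaneous constraint~\eqref{eq-r-e2} that you correctly flag as the main obstacle. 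The ``serial resistance'' of each tuple is then bounded by $\bigl[\prod_j\rho_{j,P_j}\bigr]^{-1}\sum_i R_{\mathfrak G}(\mathcal P_i)$, and summing the reciprocals over all tuples (again using $\sum_P\rho_{j,P}=1$) gives the claim. So your sketch of Approach~2 is right; the product weighting is the missing ingredient.

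Your first approach, on the other hand, does not work, and not only because of the conductance bookkeeping. The claim that the hypothesis forces a nested sequence of cutsets with $\mathcal P_i$ crossing the $i$-th slab is simply false: look at how the lemma is actually used in Section~\ref{sec:RSW} (e.g.\ the proof of Proposition~\ref{lem:Tassion2}), where the $\mathcal P_i$ are left-right and up-down crossings of \emph{different} overlapping rectangles with no common endpoints whatsoever --- there is nothing ``serial'' about their arrangement, and you cannot route a unit $u$--$v$ flow along a single~$\mathcal P_i$. This is precisely why the paper works through the $\mathfrak T_{u,v}$-formulation of Proposition~\ref{fundamental_cor_resistance} rather than through flows. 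Separately, your remark that ``the constraint~\eqref{eq-r-e-restrict} for each~$i$ \dots\ ensures the total parallel conductance does not exceed $1/r_e$'' is incorrect: each~$i$ separately contributes up to~$1/r_e$, so the naive total is up to~$k/r_e$, and inserting dummy edges only adds conductance. The product weighting above is what fixes this, not dummy edges.
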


\begin{proofsect}{Proof}
Define the edge sets $E_1, E_2, \cdots, E_k$ recursively by setting $E_1 := \bigcup_{P \in \mathcal P_1}E(P)$ and  letting  $E_j := \bigcup_{P \in \mathcal P_j}E(P) \setminus \bigcup_{i < j}E_i$ for $k \geq j > 1$. Let $\{r_{e, P}\colon e\in E(\mathfrak G), P\in \mathcal P_i\}$ be a vector in 
${\mathbb R}_+^{E(\mathfrak G)\times \mathcal P_i}$ satisfying \eqref{eq-r-e-restrict} for all~$i$.  For each~$i=1,\dots,k$ and  each $P \in \mathcal P_i$, define $\rho_{i, P}$ by
\begin{equation}
\rho_{i, P} := \frac{\Bigl(\,\sum_{e \in E(P)}r_{e, P}\bigr)^{-1}}{\sum_{P \in \mathcal P_i}\Bigl(\,\sum_{e 
\in E(P)}r_{e, P}\bigr)^{-1}}\,.
\end{equation}
Also for $e \in E_i$ and $P_1, P_2,\cdots, P_k\,$ in $\mathcal P_1, \mathcal 
P_2, \cdots, \mathcal P_k$ respectively, define
\begin{equation}
r_{e; P_1, P_2, \cdots, P_k} := r_{e,P_i}\prod_{j \neq i} \frac{1}{\rho_{j, P_j}}\,.
\end{equation}
Notice that for any $e \in E_i$,
\begin{equation}
\label{E:2.34}
\sum_{\substack{P_j \in \mathcal P_j, \\ 1 \leq j \leq k}} \frac{1}{r_{e; P_1, P_2, \cdots, P_k}} = 
\sum_{P_i \in \mathcal P_i} \frac{1}{r_{e, P_i}} \leq \frac{1}{r_e}\,,
\end{equation}
where the first equality follows from the fact that $\sum_{P \in \mathcal P_j}\rho_{j, P} = 1$ for 
all $j$ and the last inequality is a consequence of \eqref{eq-r-e-restrict}. 

The above definitions also immediately give
\begin{equation}
\begin{aligned}
\sum\limits_{e \in \bigcup_{1 \leq i \leq 
  k}E(P_j)}r_{e; P_1, P_2, \cdots,P_k}
&\le\sum\limits_{1 \leq i \leq k}\sum \limits_{e \in 
   E(P_i)}\frac{r_{e,P_i}}{\prod_{j \neq i}\rho_{j, 
  P_j}}
\\
&=\sum\limits_{1 \leq i \leq k}\frac{\big(\sum _{P 
  \in \mathcal P_i}\frac{1}{\sum_{e \in E(P)}r_{e, 
  P}}\big)^{-1}}{\prod_{1 \leq j \leq k}\rho_{j,P_j}}
\end{aligned}
\end{equation}
As \eqref{E:2.34} holds, Proposition~\ref{fundamental_cor_resistance} with~$T$ being the 
  set   of edges in $P_1,\dots,P_k$ yields
\begin{equation}
\begin{aligned}
\label{eq:series_law}
R_{\mathfrak G}(u, v) 
&\leq  \Biggl(\,\sum_{\substack{P_j \in \mathcal P_j, \\ 1 \leq j \leq 
k}} \frac{1}{\sum\limits_{e \in \bigcup_{1 \leq i \leq k}E(P_j)}r_{e; P_1, P_2, \cdots,P_k}}\Biggr)^{-1}
\\
&\leq  
\Biggl(\biggl[\,\sum\limits_{1 \leq i \leq k}\Bigl(\sum _{P \in \mathcal P_i}\frac{1}{\sum_{e \in E(P)}r_{e, P}}\Bigr)^{-1}\biggr]^{-1}
\sum_{\substack{P_j \in \mathcal P_j, \\ 1 \leq j \leq k}} \prod_{1 \leq j \leq k}\rho_{j,P_j}
\Biggr)^{-1}
\\
&= \sum\limits_{1 \leq i \leq k}\Bigl(\,\sum _{P \in \mathcal P_i}\frac{1}{\sum_{e \in E(P)}r_{e, P}}\Bigr)^{-1}\,,
\end{aligned}
\end{equation}
where we again used $\sum_{P \in \mathcal P_j}\rho_{j, P} = 1$ in the last step. 
Since \eqref{eq:series_law} holds for all choices of $\{r_{e, P}\colon e\in E(\mathfrak G), P\in \mathcal P_i\}$ satisfying \eqref{eq-r-e-restrict}, the claim follows from \eqref{resist_restrict}.
\end{proofsect}

A similar upper bound holds also for the effective conductance.

\begin{lemma}
\label{lem:parallel_law}
Let $\mathcal P_1,\dots,\mathcal P_k\in\mathfrak P_{u, v}$ be such that every path from~$u$ to~$v$ lies in the union $\bigcup_{1 \leq i \leq k} \mathcal P_i$. 
Then
\begin{equation}
C_{\mathfrak G}(u, v) \leq \sum_{1 \leq i \leq k}R_{\mathfrak G}(\mathcal P_i)^{-1}\,.
\end{equation}
\end{lemma}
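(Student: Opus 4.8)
The plan is to work entirely at the level of the variational formula for the restricted resistance (Proposition~\ref{prop:fundamental_prop_resistance}) rather than trying to manipulate $C_{\mathfrak G}(u,v)$ and the $R_{\mathfrak G}(\mathcal P_i)$ directly; the latter looks awkward because $R_{\mathfrak G}(\mathcal P_i)^{-1}$ is a ``conductance-like'' (hence subadditive) object while $R_{\mathfrak G}(u,v)$ sits on the opposite side of the duality \eqref{E:2.14}. So the first step is to invoke Proposition~\ref{prop:fundamental_prop_resistance} to pick a multiset $\mathcal P^\star$ of simple $u$--$v$ paths and an assignment $\{r^\star_{e,P}\colon e\in E(\mathfrak G),\,P\in\mathcal P^\star\}\in\mathfrak R_{\mathcal P^\star}$ that jointly attain the infimum there, so that, writing $R^\star_P:=\sum_{e\in P}r^\star_{e,P}$,
\begin{equation*}
R_{\mathfrak G}(u,v)=\Bigl(\sum_{P\in\mathcal P^\star}\tfrac1{R^\star_P}\Bigr)^{-1},\qquad\text{equivalently, by }\eqref{E:2.14},\qquad C_{\mathfrak G}(u,v)=\sum_{P\in\mathcal P^\star}\tfrac1{R^\star_P}.
\end{equation*}
Inspecting the proof of Proposition~\ref{prop:fundamental_prop_resistance}, the minimizer it constructs consists of \emph{distinct} paths, so we may and do take $\mathcal P^\star$ to have no repeated elements.

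Next I would use the hypothesis that every simple $u$--$v$ path lies in $\bigcup_{1\le i\le k}\mathcal P_i$ to partition $\mathcal P^\star$ into disjoint sub-collections $\mathcal Q_1,\dots,\mathcal Q_k$ with $\mathcal Q_i\subseteq\mathcal P_i$ for each $i$ (a path lying in several of the $\mathcal P_i$ is assigned to just one of them). The heart of the argument is then a one-sided comparison for each fixed $i$: transplant the optimal assignment onto $\mathcal P_i$ by putting resistance $(1+\delta)r^\star_{e,P}$ on each path $P\in\mathcal Q_i$, now regarded as a path of $\mathcal P_i$, and resistance $N$ on every remaining path of $\mathcal P_i$. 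Since $\mathcal Q_i$ is a sub-collection of $\mathcal P^\star$ and $r^\star$ obeys \eqref{eq-r-e-restrict} for $\mathcal P^\star$, for each fixed $\delta>0$ this transplanted assignment obeys \eqref{eq-r-e-restrict} for $\mathcal P_i$ once $N$ is large, hence lies in $\mathfrak R_{\mathcal P_i}$. Evaluating \eqref{resist_restrict} on it and letting $N\to\infty$ and then $\delta\downarrow0$ gives
\begin{equation*}
R_{\mathfrak G}(\mathcal P_i)\le\Bigl(\sum_{P\in\mathcal Q_i}\tfrac1{R^\star_P}\Bigr)^{-1},\qquad\text{i.e.}\qquad R_{\mathfrak G}(\mathcal P_i)^{-1}\ge\sum_{P\in\mathcal Q_i}\tfrac1{R^\star_P}.
\end{equation*}
Summing over $i=1,\dots,k$ and using that the $\mathcal Q_i$ partition $\mathcal P^\star$ yields $\sum_i R_{\mathfrak G}(\mathcal P_i)^{-1}\ge\sum_{P\in\mathcal P^\star}1/R^\star_P=C_{\mathfrak G}(u,v)$, which is the assertion.

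The only delicate point is the transplantation step: one would like simply to set the resistances on the ``unused'' paths of $\mathcal P_i$ equal to $+\infty$, so that they drop out of both \eqref{eq-r-e-restrict} and \eqref{resist_restrict}, but $\mathfrak R_{\mathcal P_i}$ only admits values in $\R_+$, which forces the $\delta,N$ perturbation and the accompanying limit; the inequality $\sum_{P\in\mathcal Q_i}1/r^\star_{e,P}\le\sum_{P\in\mathcal P^\star}1/r^\star_{e,P}\le1/r_e$ that makes the transplant feasible is precisely where distinctness of the paths in $\mathcal P^\star$ (so that $\mathcal Q_i$ is genuinely a sub-collection over which the sum may be restricted) enters. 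Everything else is a direct application of Proposition~\ref{prop:fundamental_prop_resistance} together with the electrostatic duality \eqref{E:2.14}. An essentially equivalent alternative route, closer in spirit to the proof of Lemma~\ref{lem:series_law}, is to decompose the optimal unit current realizing $R_{\mathfrak G}(u,v)$ into coherent path-flows via the iteration in the proof of Proposition~\ref{prop:fundamental_prop_resistance}, group the constituent paths by the $\mathcal P_i$ containing them, and compare Dirichlet energies using the Cauchy--Schwarz inequality; this needs the same care with the per-edge budget \eqref{eq-r-e-restrict}.
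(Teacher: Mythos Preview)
Your proof is correct and follows essentially the same route as the paper's. Both arguments invoke Proposition~\ref{prop:fundamental_prop_resistance} together with the duality~\eqref{E:2.14}, group the paths according to the~$\mathcal P_i$, and use that the restriction of an admissible assignment to a sub-collection remains admissible. The paper works with an arbitrary $\mathcal P$ and admissible $\{r_{e,P}\}$ (allowing overcounting of paths lying in several $\mathcal P_i$) and then uses the monotonicity $R_{\mathfrak G}(\mathcal P\cap\mathcal P_i)^{-1}\le R_{\mathfrak G}(\mathcal P_i)^{-1}$ implicitly; you instead fix the minimizer $\mathcal P^\star$, partition it without overcounting, and prove that same monotonicity explicitly via the $(1+\delta,N)$ perturbation. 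Your extra care with the perturbation is exactly what justifies the step the paper treats as obvious, so the two arguments differ only in presentation.
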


\begin{proofsect}{Proof}
This is a direct consequence of Proposition~\ref{prop:fundamental_prop_resistance}.  Indeed, write $R_{\mathfrak G}(u,v)^{-1}$ as the suprema of $\sum_{P\in\mathcal P}(\sum_{e\in P}r_{e,P})^{-1}$ over~$\mathcal P$ and~$r_{e,P}$ satisfying \eqref{eq-r-e}. Next bound the sum over~$P$ by the sum over~$i=1,\dots,k$ and the sum over~$P\in\mathcal P\cap\mathcal P_i$ and observe, since $\sum_{P\in\mathcal P\cap\mathcal P_i}1/r_{e,P}\le\sum_{P\in\mathcal P}1/{r_{e,P}}\le1/r_e$, we have
\begin{equation}
\sum_{i=1}^k\sum_{P\in\mathcal P\cap P_i}\frac1{\sum_{e\in P}r_{e,P}}\le \sum_{i=1}^k R_{\mathfrak G}(\mathcal P_i)^{-1}.
\end{equation}
As this holds for all~$\mathcal P$ and all admissible $r_{e,P}$, the claim follows from \eqref{E:2.14}.\end{proofsect}

We note (and this will be useful later) that, in standard treatments of electrostatic theory on graphs, the notions of effective resistance/conductance are naturally defined between subsets  (as opposed to just single vertices)  of the underlying network. A simplest way to reduce this to our earlier definitions is by `` shorting''  the vertices in these sets together. Explicitly, given two non-empty disjoint sets $A,B\subseteq V(\mathfrak G)$ consider a network~$\mathfrak G'$ where all edges in $(A\times A)\cup(B\times B)$ have been removed and the vertices in~$A$ identified as one vertex~$\langle A\rangle$ --- with all edges in~$\mathfrak G$ with exactly one endpoint in~$A$ now ``pointing'' to~$\langle A\rangle$ in~$\mathfrak G'$ --- and the vertices in~$B$ similarly identified as one vertex~$\langle B\rangle$. Then we define
\begin{equation}
R_{\mathfrak G}(A,B):=R_{\mathfrak G'}\bigl(\langle A\rangle,\langle B\rangle\bigr)
\quad\text{\rm and}\quad
C_{\mathfrak G}(A,B):=C_{\mathfrak G'}\bigl(\langle A\rangle,\langle B\rangle\bigr).
\end{equation}
Note that, for one-point sets, $R_{\mathfrak G}(\{u\},\{v\})$ coincides with~$R_{\mathfrak G}(u,v)$, and similarly for the effective conductance.   The electrostatic duality also holds, $R_{\mathfrak G}(A,B) = 1/C_{\mathfrak G}(A,B)$. 

\subsection{Self-duality}
 The results in this section are consequences of Propositions~\ref{fundamental_cor_resistance} and 
\ref{prop:fundamental_prop_conductance}.  Observe that the similarity of the two formulas  \eqref{eq-general-parellel2} (also \eqref{eq-general-parellel})
 and  
 \eqref{eq-general-Nash-William}   naturally leads to  the  consideration of self-dual situations --- i.e., those in which the resistances~$r_e$ can somehow be exchanged for the conductances~$c_e$  and the paths can be exchanged for the cutsets.  An example of this is the network~$\Z^2_{\eta}$ where the distributional identity $\eta\laweq-\eta$  (recall from the introduction that $\eta$ is a centered Gaussian field)  makes the associated resistances $\{r_e\colon e\in E(\Z^2)\}$ equidistributed to the conductances $\{c_e\colon e\in E(\Z^2)\}$;  planarity then associates cutsets with paths.  To formalize this situation, given a network~$\mathfrak G$ we define its \emph{reciprocal}~$\mathfrak G^\star$ as the network with the same underlying graph but with the resistances swapped for the conductances. An edge~$e$ in network~$\mathfrak G^\star$ thus has resistance $r_e^\star:=1/r_e$, where~$r_e$ is the resistance of~$e$ in network~$\mathfrak G$.

\begin{lemma}
\label{lem:dual_couple}
Let $\mathfrak D$ denote the maximum vertex degree in~$\mathfrak G$ and let $\rho_{\max}$ denote the maximum ratio of the resistances of any pair of adjacent edges in~$\mathfrak G$. Given two pairs $(A, B)$ and $(C, D)$ of disjoint, nonempty subsets of~$V(\mathfrak G)$,  suppose that every path between $A$ and $B$ shares a vertex with every path between $C$ and $D$. Then
\begin{equation}
\label{ineq_dual_couple}
R_{\mathfrak G}(A, B) {R}_{\mathfrak G^\star}(C, D)\geq  \frac{1}{4 \mathfrak{D}^2\rho_{\max} } \,.
\end{equation}
\end{lemma}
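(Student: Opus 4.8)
\medskip
\noindent\textbf{Proof proposal.} The plan is to dualize. By the electrostatic duality \eqref{E:2.14} the assertion \eqref{ineq_dual_couple} is equivalent to $C_{\mathfrak G}(A,B)\le 4\mathfrak D^2\rho_{\max}\,R_{\mathfrak G^\star}(C,D)$. First I would reduce to the case $A=\{u\}$, $B=\{v\}$, $C=\{x\}$, $D=\{y\}$ of single vertices via the gluing construction from the end of Section~\ref{sec:sec_gen_par_ser}, which changes neither side of the inequality nor the crossing hypothesis; this reduction is routine. The core idea is then to take an \emph{optimal family of paths} realizing $R_{\mathfrak G^\star}(x,y)$ via Proposition~\ref{prop:fundamental_prop_resistance}, and turn it edge by edge into a \emph{family of cutsets} that can be fed into Proposition~\ref{prop:fundamental_prop_conductance} to bound $C_{\mathfrak G}(u,v)$ from above.

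In detail, I would apply Proposition~\ref{prop:fundamental_prop_resistance} to the reciprocal network $\mathfrak G^\star$ (edge resistances $r^\star_e=1/r_e$) and fix a minimizer $(\mathcal Q,\{r^\star_{e,Q}\})$: here $\mathcal Q$ is a multiset of simple paths from $x$ to $y$, the weights satisfy $\sum_{Q\ni e}1/r^\star_{e,Q}\le 1/r^\star_e=r_e$ for every $e$ (this is \eqref{eq-r-e} for $\mathfrak G^\star$, and the right-hand side equals $r_e$ because $r^\star_e=1/r_e$), and, writing $\ell(Q):=\sum_{e\in Q}r^\star_{e,Q}$, one has $\sum_{Q}\ell(Q)^{-1}=R_{\mathfrak G^\star}(x,y)^{-1}$. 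To each $Q\in\mathcal Q$ I associate the edge set $\pi_Q:=\{e\in E(\mathfrak G)\colon e\text{ is incident to a vertex of }Q\}$; since $Q$ is a path from $x$ to $y$, the crossing hypothesis forces every path from $u$ to $v$ to pass through a vertex of $Q$ and hence to use an edge of $\pi_Q$, so $\pi_Q$ separates $u$ from $v$. Finally, for $e\in\pi_Q$ I would pick an endpoint $w=w(e,Q)$ of $e$ lying in $V(Q)$ and an edge $g=g(e,Q)\in E(Q)$ incident to $w$ (both exist since $x\ne y$ makes $Q$ have at least one edge), note that $g\sim e$, and set $c_{e,\pi_Q}:=2\mathfrak D\rho_{\max}\,r^\star_{g(e,Q),Q}$.

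The rest is two degree-counting estimates. For \emph{feasibility} I would group $\sum_{Q:\,e\in\pi_Q}c_{e,\pi_Q}^{-1}$ by the value of $g(e,Q)$ (an edge adjacent to $e$, of which there are at most $2\mathfrak D$): each group contributes at most $\sum_{Q\ni g}1/r^\star_{g,Q}\le 1/r^\star_g=r_g\le\rho_{\max}r_e$, the last step by the definition of $\rho_{\max}$ since $g\sim e$; hence $\sum_{Q:\,e\in\pi_Q}c_{e,\pi_Q}^{-1}\le r_e=1/c_e$, so \eqref{eq-c-e} holds for $\Pi:=\{\pi_Q\colon Q\in\mathcal Q\}$ with the weights $\{c_{e,\pi_Q}\}$. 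For the \emph{objective} I would group $\sum_{e\in\pi_Q}c_{e,\pi_Q}$ by the value of $g(e,Q)\in E(Q)$: at most $2\mathfrak D$ edges $e$ are mapped to a given $g$ (those incident to one of its two endpoints), so $\sum_{e\in\pi_Q}c_{e,\pi_Q}\le 2\mathfrak D\cdot 2\mathfrak D\rho_{\max}\sum_{g\in E(Q)}r^\star_{g,Q}=4\mathfrak D^2\rho_{\max}\,\ell(Q)$, whence $\sum_{Q}\bigl(\sum_{e\in\pi_Q}c_{e,\pi_Q}\bigr)^{-1}\ge(4\mathfrak D^2\rho_{\max})^{-1}\sum_Q\ell(Q)^{-1}=\bigl(4\mathfrak D^2\rho_{\max}\,R_{\mathfrak G^\star}(x,y)\bigr)^{-1}$. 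Plugging $\Pi$ and $\{c_{e,\pi_Q}\}$ into Proposition~\ref{prop:fundamental_prop_conductance} gives $C_{\mathfrak G}(u,v)\le 4\mathfrak D^2\rho_{\max}\,R_{\mathfrak G^\star}(x,y)$, and \eqref{E:2.14} then yields \eqref{ineq_dual_couple}.

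I expect the main obstacle to be this transfer itself: converting a path $Q$ between $C$ and $D$ into a cutset $\pi_Q$ between $A$ and $B$ \emph{together with} a redistribution of the optimal path-resistances $r^\star_{e,Q}$ into cutset-conductances $c_{e,\pi_Q}$ that simultaneously respects the normalization \eqref{eq-c-e} and keeps the objective under control. The delicate point is that the edges of $\pi_Q$ are only \emph{adjacent} to $Q$, not on it, so both estimates have to absorb two independent layers of degree-bounded counting (the two factors of $2\mathfrak D$), and $\rho_{\max}$ enters precisely as the cost of comparing the resistance of a cutset edge with that of the neighbouring path edge whose weight it borrows. The gluing reduction to single vertices and the attainment of the infima are immediate from Section~\ref{sec:sec_gen_par_ser}.
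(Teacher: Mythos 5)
Your proof is correct and follows essentially the same overall strategy as the paper's: invoke the electrostatic duality \eqref{E:2.14}, take a minimizing family of paths from Proposition~\ref{prop:fundamental_prop_resistance} applied to $\mathfrak G^\star$, turn each path $Q$ into a cutset $\pi_Q$ between $A$ and $B$ by taking edges incident to $V(Q)$, and feed the result into Proposition~\ref{prop:fundamental_prop_conductance}. The one substantive difference is in the redistribution step: the paper spreads the conductance budget at each $e\in\pi_Q$ proportionally over \emph{all} path edges in $N_P(e)$ via weights $\theta_{e',P}:=c_{e'}/r'_{e',P}$ and then uses an arithmetic--harmonic-mean inequality to bound $\sum_{e\in\pi_P}c_{e,\pi_P}$, whereas you charge each $e\in\pi_Q$ to a \emph{single} chosen adjacent path edge $g(e,Q)$ and then do two counting passes (at most $2\mathfrak D$ choices of $g$ adjacent to a fixed $e$ for feasibility; at most $2\mathfrak D$ choices of $e$ mapping to a fixed $g$ for the objective). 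Your version avoids the convexity inequality entirely and gives the same constant $4\mathfrak D^2\rho_{\max}$, so it is a slightly more elementary implementation of the same idea.

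One remark should be tightened: you say the gluing reduction to singletons $u,v,x,y$ ``changes neither side of the inequality nor the crossing hypothesis,'' but gluing $A$ into one vertex changes the maximum degree and the resistance ratios of adjacent edges, so $\mathfrak D$ and $\rho_{\max}$ on the right-hand side do in general change under that operation (and if $A$ or $B$ meets $C$ or $D$, the two gluings can even collide). The fix is easy and is what the paper does: drop the reduction altogether and apply Propositions~\ref{prop:fundamental_prop_resistance} and~\ref{prop:fundamental_prop_conductance} directly with $\mathcal Q\in\mathfrak P_{C,D}$ and cutsets separating $A$ from $B$ in $\mathfrak G$ itself; the degree and ratio bounds needed in the two counting estimates are all intrinsic to $\mathfrak G$ and do not see the identifications used to define $R_{\mathfrak G}(A,B)$ and $R_{\mathfrak G^\star}(C,D)$. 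With that adjustment your argument is complete.
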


\begin{proofsect}{Proof}
The proof is based on the fact that every   path $P$ between $C$ and $D$  
 defines a cutset~$\pi_P$ between~$A$ and~$B$ by taking~$\pi_P$ to be the set of all edges adjacent to any edge in~$P$, but not including the edges in $(A\times A)\cup(B\times B)$.  By the electrostatic 
 duality we just need to show 

\begin{equation}
\label{E:2.36}
C_{\mathfrak G}(A, B)\le 4 \mathfrak{D}^2\rho_{\max} {R}_{\mathfrak G^\star}(C, D)\,.
\end{equation}
To this end, given any $\mathcal P \in \mathfrak P_{C, D}$  (i.e., $\mathcal P$ is a multiset of edge self-avoiding paths between~$C$ and~$D$)  let us pick positive numbers $\{r'_{e,P}: e \in E(\mathcal P), P\in \mathcal P\}$ such~that
\begin{equation}\label{eq-r-e3}
\sum_{P\in \mathcal P} \frac{1}{r'_{e, P}} \leq \frac{1}{c_e} \,\text{\rm\ for all }\, e \in  E(\mathcal P)\,.
\end{equation}
For any edge~$e$ and any path~$P\in\mathcal P$, let $N_P(e)$, 
$N_{\mathcal P}(e)$ and $N(e)$ denote the sets of all edges in $E(P)$, $E(\mathcal P)$ and 
$E(\mathfrak G)$ that are adjacent to~$e$, respectively. For any $e \in E(\mathcal P)$ 
and any $P \in \mathcal P$, let $\theta_{e, P} := c_e / r'_{e,P}$  and note  that $\theta_{e, P}$'s are  positive  
numbers satisfying $\sum_{P \in \mathcal P} \theta_{e, P} \le  1$ for all $e\in E(\mathcal P)$.  As a consequence, if we define 
\begin{equation}
c_{e, \pi_P} := \frac{c_e}{\sum\limits_{e' \in N_{\mathcal P'}(e)} \theta_{e', P}}| N_{\mathcal 
P}(e) |
\end{equation}
 then $\{c_{e,\pi_P}\colon e\in 
 \bigcup_{P \in \mathcal P} \pi_P,\,P\in\mathcal P\}$ satisfies 
 \eqref{eq-c-e}.  Now fix  a path $P$ in~$\mathcal P$ and compute, invoking the definitions of~$\mathfrak D$, $\rho_{\max}$ and also Jensen's inequality in the second step: 
\begin{equation}
\begin{aligned}
\sum\limits_{e \in \pi_P} c_{e, \pi_P} &= 
\sum\limits_{e \in \pi_P}\frac{c_e}{\sum\limits_{e' \in 
N_{\mathcal P}(e)} \theta_{e', P}}| N_{\mathcal P}(e)| \leq  2\mathfrak D \sum\limits_{e 
\in \pi_P}\frac{c_e}{\sum\limits_{e' \in N_P(e)} \theta_{e', P}}| N_P(e)| 
\\
&\leq  2\mathfrak D\sum\limits_{e \in 
\pi_P}\Big(\frac{c_e}         
{| N_P(e)|} \sum_{e' \in N_P(e)} \frac{1}
{\theta_{e',P}}\Big) \leq 2 \mathfrak D\sum
_{\begin{subarray}{c}
 e \in 
E(\mathfrak G), e' \in P \\  e\sim e' 
\end{subarray}}
\frac{c_e }{\theta_{e', P}}  
\\ 
&= 2\mathfrak D\sum\limits_{e' \in P}\frac{\sum\limits_{e 
\in 
N(e')}c_e}{\theta_{e', P}} \leq 4\mathfrak D^2 \rho_{\max} 
\sum\limits_{e' 
\in P}\frac{c_{e'}}{\theta_{e',P}} = 4\mathfrak D^2 
\rho_{\max}\sum\limits_{e' \in P}r'_{e',P}\,.
\end{aligned}
\end{equation}
 Hence we get by Proposition~\ref{prop:fundamental_prop_conductance}
 \begin{equation}
\label{conduc_resist_ineq}
C_{\mathfrak G}(A, B)   \leq \Bigl(\,\sum_{P \in \mathcal P'} \frac{1}{\sum_{e \in \pi_P} c_{e, 
\pi_P}}\Bigr)^{-1} \leq 4{\mathfrak D}^2\rho_{\max}\Bigl(\,\sum_{P \in \mathcal P'} \frac{1}{\sum_{e \in P} r'_{e, P}}\Bigr)^{-1}\,.
\end{equation}

As this holds for any choice of $\mathcal P$ and positive numbers $\{r'_{e, P}: e \in E(\mathcal P), P\in \mathcal P\}$ satisfying \eqref{eq-r-e3}, we get~\eqref{E:2.36} as desired.
\end{proofsect}

 A  crucial fact  underlying  the proof of the previous lemma was that one could obtain a cut 
set for~$\mathcal P$ from a path $P$ in $\mathcal P$ by taking union of all edges adjacent to vertices in $P$. 
In the same setup,  
we get a corresponding result also for effective conductances. Indeed, we have:

\begin{lemma}
\label{remark_dual_couple}
For the same setting and notation as in Lemma~\ref{lem:dual_couple}, assume that for every cutset~$\pi$ between~$C$ and~$D$, the subgraph induced by the set of all edges that are adjacent to some edge in~$\pi$ contains a path in $\mathfrak P_{A, B}$. Then 
\begin{equation}
\label{eq:dual_couple_conduct}
C_{\mathfrak G}(A , B) {C}_{\mathfrak G^\star}(C , D)\geq \frac{1}{4{\mathfrak D}^2\rho_{\max}}\,.
\end{equation}
\end{lemma}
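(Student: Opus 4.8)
The plan is to transcribe the proof of Lemma~\ref{lem:dual_couple} with the roles of resistances and conductances — equivalently, of paths and cutsets — interchanged, now using the hypothesis that the edge-neighborhood of each $C$--$D$ cutset carries an $A$--$B$ path in place of the crossing hypothesis used there. By the electrostatic duality $C_{\mathfrak G}(A,B)=1/R_{\mathfrak G}(A,B)$ and $C_{\mathfrak G^\star}(C,D)=1/R_{\mathfrak G^\star}(C,D)$, the inequality~\eqref{eq:dual_couple_conduct} is equivalent to
\begin{equation}
R_{\mathfrak G}(A,B)\le 4\mathfrak D^2\rho_{\max}\,C_{\mathfrak G^\star}(C,D).
\end{equation}
It therefore suffices to turn an optimal witness that $C_{\mathfrak G^\star}(C,D)$ is small into a witness that $R_{\mathfrak G}(A,B)$ is at most $4\mathfrak D^2\rho_{\max}$ times as large. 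As in Lemma~\ref{lem:dual_couple}, all quantities for the pairs $(A,B),(C,D)$ are read through the gluing reduction to two vertices, so the variational characterizations of this section apply.

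Concretely, apply Proposition~\ref{prop:fundamental_prop_conductance} to the reciprocal network $\mathfrak G^\star$, whose edge conductances equal $c^\star_e=r_e$, and pick a collection $\Pi$ of cutsets between $C$ and $D$ together with an assignment $\{c^\star_{e,\pi}\colon e\in\pi,\ \pi\in\Pi\}$ attaining the infimum there, so that $\sum_{\pi\in\Pi}1/c^\star_{e,\pi}\le 1/r_e$ and $C_{\mathfrak G^\star}(C,D)=\bigl(\sum_{\pi\in\Pi}1/\sum_{e\in\pi}c^\star_{e,\pi}\bigr)^{-1}$. For each $\pi\in\Pi$ the hypothesis furnishes a simple path $P_\pi\in\mathfrak P_{A,B}$ built only from edges adjacent to edges of $\pi$; put $\mathcal P:=\{P_\pi\colon\pi\in\Pi\}$. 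Setting $\theta^\star_{e,\pi}:=c^\star_e/c^\star_{e,\pi}$, so that $\sum_\pi\theta^\star_{e,\pi}\le1$, and, for $e\in P_\pi$,
\begin{equation}
r_{e,P_\pi}:=\frac{r_e\,|N_\Pi(e)|}{\sum_{e'\in N_\Pi(e)}\theta^\star_{e',\pi}}\,,
\end{equation}
with $N_\Pi(e)$ the set of edges of $\bigcup_{\pi\in\Pi}\pi$ adjacent to $e$ (nonempty by the choice of $P_\pi$), gives the exact mirror image of the assignment $c_{e,\pi_P}$ built in the proof of Lemma~\ref{lem:dual_couple}. The same two computations as there, now with $r\leftrightarrow c$, show that $\{r_{e,P_\pi}\}$ lies in $\mathfrak R_{\mathcal P}$ (by interchanging summations) and that
\begin{equation}
\sum_{e\in P_\pi}r_{e,P_\pi}\le 4\mathfrak D^2\rho_{\max}\sum_{e'\in\pi}c^\star_{e',\pi}\,,
\end{equation}
the latter using $|N_\Pi(e)|\le 2\mathfrak D$, Jensen's inequality, a rearrangement of the sum over adjacent pairs $e\in P_\pi,\ e'\in\pi$, and the bounds $|N(e')|\le 2\mathfrak D$ and $r_e\le\rho_{\max}r_{e'}$ for $e\sim e'$.

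Since $R_{\mathfrak G}(A,B)\le R_{\mathfrak G}(\mathcal P)$ by Proposition~\ref{prop:fundamental_prop_resistance} and $\{r_{e,P_\pi}\}$ is admissible, the two displays combine to
\begin{equation}
R_{\mathfrak G}(A,B)\le\Bigl(\sum_{\pi\in\Pi}\frac{1}{\sum_{e\in P_\pi}r_{e,P_\pi}}\Bigr)^{-1}\le 4\mathfrak D^2\rho_{\max}\Bigl(\sum_{\pi\in\Pi}\frac{1}{\sum_{e'\in\pi}c^\star_{e',\pi}}\Bigr)^{-1}=4\mathfrak D^2\rho_{\max}\,C_{\mathfrak G^\star}(C,D),
\end{equation}
which is the required reformulation, hence~\eqref{eq:dual_couple_conduct}. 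The step I expect to demand the most care is, exactly as in Lemma~\ref{lem:dual_couple}, the admissibility check: one must keep straight the various edge-neighborhoods $N_\Pi(e)$, $N_{P_\pi}(e)$ and $N_\pi(e)$, and confirm that every $e\in P_\pi$ is adjacent to at least one edge of $\pi$, so that the denominators above are positive and all index sets nonempty. Compared with Lemma~\ref{lem:dual_couple} this is in fact slightly easier, since here the path $P_\pi$ inside the edge-neighborhood of $\pi$ is provided by hypothesis rather than extracted from a crossing argument; beyond this bookkeeping, the proof is a line-by-line dualization of the earlier one.
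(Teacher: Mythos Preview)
Your proof is correct and follows essentially the same approach as the paper: both dualize the argument of Lemma~\ref{lem:dual_couple} by swapping paths with cutsets and resistances with conductances, then feed the resulting assignment into the variational characterization of effective resistance. The only cosmetic difference is that the paper works with the full edge-neighborhood $T_\pi$ of each cutset and invokes Proposition~\ref{fundamental_cor_resistance}, whereas you extract a specific path $P_\pi\subseteq T_\pi$ and use Proposition~\ref{prop:fundamental_prop_resistance} directly; since $\sum_{e\in P_\pi}r_{e,P_\pi}\le\sum_{e\in T_\pi}r_{e,T_\pi}$ this yields the same bound.
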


\begin{proofsect}{Proof}
For any cutset $\pi$ between $C$ and $D$, let $T_\pi$ denote the set of all edges that are adjacent to some 
edge in $\pi$. Thus $T_\pi$ contains a path in $\mathfrak 
P_{A, B}$ by the hypothesis of the lemma.  Also recall from the statement of  Proposition~\ref{prop:fundamental_prop_conductance} that $\varPi(C, D)$ is the set of all finite collections of 
cutsets between~$C$ and~$D$.  Now given any $\Pi \in \varPi_{C, D}$, we pick a collection of positive numbers $\{c_{e, \pi}^\star \colon e\in \bigcup_{\pi \in \Pi} T_\pi, \pi \in \Pi \}$ such that
\begin{equation}
\label{eq:dual_couple_conduct1}
\sum_{\pi \in \Pi}\frac{1}{c_{e, \pi}^\star} \leq \frac{1}{r_e}\,.
\end{equation}
Following the exact same sequence of steps as { in the proof of Lemma~\ref{lem:dual_couple}, we now find} $\{r_{e, T_\pi}\colon e \in \pi, \pi \in \Pi\}$ satisfying \eqref{eq-r-e2} such that
\begin{equation}
\label{eq:dual_couple_conduct2}
\Bigl(\,\sum_{\pi \in \Pi} \frac{1}{\sum_{e \in T_\pi} r_{e, 
T_\pi}}\Bigr)^{-1} \leq 4{\mathfrak D}^2\rho_{\max}\Bigl(\,\sum_{\pi \in \Pi} \frac{1}{\sum_{e \in \pi} c_{e, \pi}^\star}\Bigr)^{-1}\,.
\end{equation}
Proposition~\ref{fundamental_cor_resistance} then implies
\begin{equation}
R_{\mathfrak G}(A, B) \leq \Bigl(\,\sum_{\pi \in \Pi} \frac{1}{\sum_{e \in T_\pi} r_{e, 
T_\pi}}\Bigr)^{-1} \leq 4{\mathfrak D}^2\rho_{\max}\Bigl(\,\sum_{\pi \in \Pi} \frac{1}{\sum_{e \in \pi} c_{e, \pi}^\star}\Bigr)^{-1}\,.
\end{equation}
As this holds for all choices of $\Pi$ and $\{c_{e, \pi}^\star \colon e\in \pi, \pi \in \Pi \}$ satisfying \eqref{eq:dual_couple_conduct1}, we get the desired inequality \eqref{eq:dual_couple_conduct}.
\end{proofsect}

\section{Preliminaries on Gaussian processes}
\noindent
 Before we move on to the main line of the proof, we need to develop some preliminary control on the underlying Gaussian fields. The goal of this section is to amass the relevant technical claims concerning Gaussian processes and, in particular, the GFF. An impatient, or otherwise uninterested, reader may consider only skimming through this section and returning to it when the relevant claims are used in later proofs.

\subsection{Some standard inequalities}
We start by recalling, without proof, a few standard facts about general Gaussian processes:

\begin{lemma}[Theorem 7.1 in \cite{L01}]
\label{lem:Borell_ineq}
 Given a finite set $A$,  consider a centered Gaussian process $\{X_v: v \in A\}$. Then, for $x > 0$,
\begin{equation}
\P\biggl(\,\Bigl|   \max_{v \in A}X_v  - \E  \max_{v \in A}X_v\Bigr|  \geq x\biggr) \leq 2 \e^{-x^2 / 2 \sigma^2}\,,
\end{equation}
where $\sigma^2 := \max_{v \in A}\E (X_v^2)$.
\end{lemma}

\begin{lemma}[Theorem 4.1 in \cite{A90}]
\label{lem:basic_chaining}
 Suppose~$(S,d)$ is a finite metric space such that  $\max_{s, t \in S} d(s, t) = 1$ and assume there are $\beta,K_1\in(0,\infty)$ such that for every $\epsilon \in (0, 1]$, the $\epsilon$-covering number  $N_\epsilon(S, d)$ of $(S, d)$ obeys $N_\epsilon(S, d) \leq K_1\epsilon^{-\beta}$.   Then for any $\alpha,K_2\in(0,\infty)$ and any centered Gaussian process~$\{X_s\}_{s \in S}$ satisfying 
\begin{equation}
\sqrt{\E(X_s - X_{s'})^2} \leq K_2\,d(s, s')^\alpha,\qquad s,s'\in S,
\end{equation}
we have 
\begin{equation}
\E \bigl(\,\max_{s \in A} |X_s|\bigr) \leq K
\quad\text{\rm and}\quad
\E \bigl(\,\max_{s,t \in A} |X_s-X_t|\bigr) \leq K,
\end{equation}
where $K:=K_2(\sqrt{\beta\log 2} + \sqrt{\log (K_1 + 1)})K_\alpha$ with $K_\alpha := \sum_{n \geq 0}2^{-n\alpha}\sqrt{n + 1}$. 
\end{lemma}

As a consequence of Lemma~\ref{lem:basic_chaining} we get the following  result which we will use in the next subsection.

\begin{lemma} 
\label{lem:max_expectation}
Let $B_1, B_2, \ldots, B_N$ be squares  in~$\Z^2$  of side lengths $b_1, b_2, \ldots, b_N$ respectively and let $B := \cup_{j 
\in [N]}B_j$.  There exists an absolute constant $C' > 0$ such that, if 
$\{X_v\}_{v \in B}$ is a centered Gaussian process satisfying
\begin{equation}
\E(X_u - X_v)^2 \leq \frac{|u -v|}{b_j},\qquad  (u, v)\in\bigcup_{j=1}^N (B_j\times B_j),
\end{equation}
then
\begin{equation}
\label{E:3.5x}
\E \max_{v \in B} X_v \leq C'\sqrt{\log N}\Bigl(1 + \max_{v \in B}\sqrt{\E X_v^2}\,\Bigr) + C'\,.
\end{equation}
\end{lemma}

The following lemma, taken from \cite{Pitt82}, is the 
FKG inequality for Gaussian random variables. We will refer to this as the FKG in the rest of the paper.

\begin{lemma}
\label{lem:fkg_gaussian}
Consider a Gaussian process $\mathbf{X} = \{X_v\}_{v \in A}$  on a finite set~$A$,  and suppose that 
\begin{equation}
\label{E:3.6}
\cov(X_u, X_v)\ge0,\qquad u,v\in A.
\end{equation}
Then
\begin{equation}
\cov \bigl(f(\mathbf{X}), g(\mathbf{X})\bigr) \geq 0
\end{equation}
holds for any bounded, Borel measurable functions $f, g$ on $\R^A$ that are increasing  separately  in each coordinate.
\end{lemma}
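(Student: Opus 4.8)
The final statement to prove is Lemma~\ref{lem:fkg_gaussian}, the Gaussian FKG inequality: if a Gaussian vector has nonnegative covariances, then increasing functions are positively correlated.

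\medskip

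The plan is to reduce to the smooth, finite-dimensional case and then use a classical differentiation-in-the-covariance argument (an interpolation between two Gaussian laws), which is exactly how Pitt's theorem is usually proved. First I would observe that it suffices to prove the inequality for $f,g$ that are smooth with bounded derivatives and increasing in each coordinate: a general bounded increasing Borel $f$ on $\R^A$ can be approximated by mollification (convolving with a smooth nonnegative bump supported near the origin preserves monotonicity in each coordinate and boundedness), and then truncated, so that $f_\epsilon \to f$ pointwise (or in $L^2$ of the Gaussian law) while staying uniformly bounded and coordinatewise increasing; dominated convergence then transfers the covariance inequality from $f_\epsilon,g_\epsilon$ to $f,g$. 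So assume $f,g\in C^\infty_b(\R^A)$ with $\partial_i f\ge 0$, $\partial_i g\ge 0$ for every coordinate $i$.

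\medskip

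Next I would set up the Gaussian interpolation. Let $\Sigma=(\cov(X_u,X_v))_{u,v\in A}$ be the covariance matrix, which is positive semidefinite with nonnegative entries by hypothesis~\eqref{E:3.6}. Write $D$ for the diagonal matrix with the same diagonal as $\Sigma$, and for $t\in[0,1]$ consider the Gaussian vector $\mathbf X^{(t)}$ with mean zero and covariance $\Sigma_t := t\Sigma + (1-t)D$. Each $\Sigma_t$ is positive semidefinite (a convex combination of PSD matrices; one can add $\epsilon I$ and let $\epsilon\downarrow 0$ at the end to avoid degeneracy), so $\mathbf X^{(t)}$ is well-defined. At $t=1$ we recover $\mathbf X$; at $t=0$ the coordinates are independent, so $\cov(f(\mathbf X^{(0)}),g(\mathbf X^{(0)}))=0$. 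Then I would show that $\Phi(t) := \E[f(\mathbf X^{(t)})g(\mathbf X^{(t)})] - \E[f(\mathbf X^{(t)})]\,\E[g(\mathbf X^{(t)})]$ is nondecreasing in $t$ by differentiating. The key computational tool is the Gaussian integration-by-parts / heat-equation identity: for a smooth bounded $h$ on $\R^A$, $\frac{d}{dt}\E[h(\mathbf X^{(t)})] = \frac12\sum_{i,j}(\dot\Sigma_t)_{ij}\,\E[\partial_i\partial_j h(\mathbf X^{(t)})]$, where $\dot\Sigma_t = \Sigma - D$ has entries $\Sigma_{ij}$ for $i\ne j$ and $0$ on the diagonal. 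Applying this to $h=fg$ and to $h=f$, $h=g$ separately, the diagonal terms cancel between $\partial_i\partial_i(fg)$ and the products, and what survives is
\begin{equation}
\Phi'(t) = \sum_{i\ne j}\Sigma_{ij}\,\E\bigl[\partial_i f(\mathbf X^{(t)})\,\partial_j g(\mathbf X^{(t)})\bigr].
\end{equation}
Since $\Sigma_{ij}\ge 0$ for all $i,j$ and $\partial_i f\ge 0$, $\partial_j g\ge 0$ pointwise, every term on the right is nonnegative, so $\Phi'(t)\ge 0$. Hence $\Phi(1)\ge\Phi(0)=0$, which is the claim.

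\medskip

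Alternatively — and perhaps cleaner to write — I could cite Pitt's theorem \cite{Pitt82} essentially verbatim, since the lemma is quoted from there; but it seems worthwhile to record the short interpolation proof. The main obstacle, such as it is, is purely a matter of care rather than depth: handling the possibly-degenerate covariance matrix (resolved by the $+\epsilon I$ regularization, noting $D+\epsilon I$ stays diagonal and $t\Sigma+(1-t)D+\epsilon I$ stays PSD with nonnegative off-diagonal entries), and making the smoothing/truncation reduction to $C^\infty_b$ functions rigorous while preserving coordinatewise monotonicity. Both are standard. One should also double-check the cross-term bookkeeping in the differentiation identity — that the mixed second derivatives $\partial_i\partial_j(fg)$ for $i\ne j$ expand as $f\,\partial_i\partial_j g + \partial_i f\,\partial_j g + \partial_j f\,\partial_i g + g\,\partial_i\partial_j f$, and that after subtracting the contributions from $\frac{d}{dt}(\E f\cdot\E g)$ only the symmetric combination $\partial_i f\,\partial_j g + \partial_j f\,\partial_i g$ remains, which by symmetry of $\Sigma$ gives the displayed formula up to the harmless factor.
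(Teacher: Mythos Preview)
The paper does not actually prove this lemma; it is stated as ``taken from \cite{Pitt82}'' and no argument is given. Your attempt to supply a self-contained proof is therefore more than the paper does, but the interpolation you set up is the wrong one and the argument as written fails in two places.

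First, the claim $\Phi(0)=0$ is false. At $t=0$ the coordinates of $\mathbf X^{(0)}$ are independent, but $f(\mathbf X^{(0)})$ and $g(\mathbf X^{(0)})$ are still functions of the \emph{same} random vector and need not be uncorrelated (take $|A|=1$ and $f=g$). You would need the Harris/FKG inequality for product measures to conclude $\Phi(0)\ge 0$, which is fine but should be stated. Second, and more seriously, your derivative formula is incorrect. With $\dot\Sigma_t=\Sigma-D$ having zero diagonal, differentiating $\E[fg]-\E f\,\E g$ leaves, besides the terms $\E[\partial_i f\,\partial_j g]$, also the covariance terms $\cov(\partial_i\partial_j f,\,g)$ and $\cov(f,\,\partial_i\partial_j g)$, which do not cancel and carry no sign (since $\partial_i\partial_j f$ is not monotone). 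Your own cross-check paragraph overlooks exactly these residual terms.

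The standard fix, and essentially Pitt's argument, is to interpolate on \emph{two} copies: take a centered Gaussian $(\mathbf X^{(t)},\mathbf Y^{(t)})$ with block covariance $\bigl(\begin{smallmatrix}\Sigma & t\Sigma\\ t\Sigma & \Sigma\end{smallmatrix}\bigr)$ and set $\Psi(t):=\E[f(\mathbf X^{(t)})g(\mathbf Y^{(t)})]$. Then $\Psi(0)=\E f\,\E g$ exactly, $\Psi(1)=\E[fg]$, and because only the cross-block changes with~$t$ the heat-kernel differentiation gives
\[
\Psi'(t)=\sum_{i,j}\Sigma_{ij}\,\E\bigl[\partial_i f(\mathbf X^{(t)})\,\partial_j g(\mathbf Y^{(t)})\bigr]\ge 0,
\]
with no second-order residue. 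Your smoothing reduction and $\epsilon I$ regularization are fine and carry over unchanged.
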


As a corollary to FKG, we get:

\begin{corollary}
\label{cor:square_root_trick}
Consider a Gaussian process $\mathbf{X} = \{X_v\}_{v \in A}$ on a finite set $A$ such that \eqref{E:3.6} holds.  If $\mathcal E_1, \mathcal E_2, \cdots, \mathcal E_k\in\sigma(\mathbf X)$ are all increasing (or all decreasing), then 
\begin{equation}
\max_{i \in [k]}\P(\mathcal E_i) \geq 1 - \biggl(1 - \P\Bigl(\bigcup_{i \in [k]}\mathcal E_i\Bigr)\biggr)^{1/k}\,.
\end{equation}
\end{corollary}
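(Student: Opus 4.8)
The plan is to deduce this directly from the Gaussian FKG inequality of Lemma~\ref{lem:fkg_gaussian} via the standard ``square-root trick'' argument, the only real content being the positive association of the complementary events. Assume first that $\mathcal E_1,\dots,\mathcal E_k$ are all increasing; then their complements $\mathcal E_1^\cc,\dots,\mathcal E_k^\cc$ are all decreasing, which here is to be read as: writing $\mathcal E_i=\{\mathbf X\in B_i\}$ for an increasing Borel set $B_i\subseteq\R^A$, the indicator $\mathbf 1_{\mathcal E_i^\cc}$ is a bounded, Borel measurable function of $\mathbf X$ that is non-increasing separately in each coordinate. (If instead all $\mathcal E_i$ are decreasing, the same argument applies verbatim with the roles of $\mathcal E_i$ and $\mathcal E_i^\cc$ interchanged, since then each $\mathbf 1_{\mathcal E_i}$ is non-increasing.)

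The first step is to record that decreasing events are positively correlated under $\P$: for any two decreasing $\mathcal E,\mathcal F\in\sigma(\mathbf X)$, applying Lemma~\ref{lem:fkg_gaussian}—whose hypothesis \eqref{E:3.6} is exactly what is assumed here—to the \emph{increasing} functions $-\mathbf 1_{\mathcal E^\cc}$ and $-\mathbf 1_{\mathcal F^\cc}$ gives $\cov(\mathbf 1_{\mathcal E^\cc},\mathbf 1_{\mathcal F^\cc})\ge 0$, i.e.\ $\P(\mathcal E^\cc\cap\mathcal F^\cc)\ge\P(\mathcal E^\cc)\P(\mathcal F^\cc)$. Since a finite intersection of decreasing events is again decreasing (a product of finitely many nonnegative non-increasing functions is non-increasing), an induction on $k$—at the inductive step pairing the decreasing event $\bigcap_{i<k}\mathcal E_i^\cc$ with the decreasing event $\mathcal E_k^\cc$—yields
\begin{equation}
\P\Bigl(\,\bigcap_{i\in[k]}\mathcal E_i^\cc\Bigr)\ \ge\ \prod_{i\in[k]}\P(\mathcal E_i^\cc).
\end{equation}

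The second step is a one-line chain of inequalities. Set $p:=\max_{i\in[k]}\P(\mathcal E_i)$, so that $\P(\mathcal E_i^\cc)=1-\P(\mathcal E_i)\ge 1-p$ for every $i$; combining this with the previous display gives
\begin{equation}
1-\P\Bigl(\,\bigcup_{i\in[k]}\mathcal E_i\Bigr)=\P\Bigl(\,\bigcap_{i\in[k]}\mathcal E_i^\cc\Bigr)\ \ge\ (1-p)^k.
\end{equation}
Taking $k$-th roots and rearranging yields $p\ge 1-\bigl(1-\P(\bigcup_{i\in[k]}\mathcal E_i)\bigr)^{1/k}$, which is the assertion. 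There is no substantial obstacle; the only point needing a little care is the measure-theoretic bookkeeping that promotes ``$\mathcal E_i\in\sigma(\mathbf X)$ increasing'' to ``$\mathbf 1_{\mathcal E_i}$ is a coordinatewise increasing Borel function of $\mathbf X$'', so that Lemma~\ref{lem:fkg_gaussian} is applicable, together with the observation that intersections of monotone events stay monotone, which is what makes the induction go through.
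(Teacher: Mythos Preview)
The paper does not actually prove this corollary; it merely names it as the ``square-root trick'' and gives a reference. Your argument is the standard one and is correct in substance and structure.

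There is, however, a notational slip in your first step. You write that for decreasing $\mathcal E,\mathcal F$ one applies Lemma~\ref{lem:fkg_gaussian} to the ``increasing functions $-\mathbf 1_{\mathcal E^\cc}$ and $-\mathbf 1_{\mathcal F^\cc}$''. But if $\mathcal E$ is decreasing then $\mathcal E^\cc$ is increasing, so $\mathbf 1_{\mathcal E^\cc}$ is increasing and $-\mathbf 1_{\mathcal E^\cc}$ is \emph{decreasing}; moreover the conclusion you display, $\P(\mathcal E^\cc\cap\mathcal F^\cc)\ge\P(\mathcal E^\cc)\P(\mathcal F^\cc)$, is a statement about the increasing events $\mathcal E^\cc,\mathcal F^\cc$, not about the decreasing $\mathcal E,\mathcal F$ that you then use in the induction. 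The fix is trivial: drop the stray complements and apply FKG to the increasing functions $-\mathbf 1_{\mathcal E}$ and $-\mathbf 1_{\mathcal F}$, obtaining $\cov(\mathbf 1_{\mathcal E},\mathbf 1_{\mathcal F})\ge 0$, i.e.\ $\P(\mathcal E\cap\mathcal F)\ge\P(\mathcal E)\P(\mathcal F)$. With that correction the induction and the final chain of inequalities go through exactly as you wrote them.
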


\noindent
This is known as the ``square root trick'' in percolation literature (see, e.g., \cite{GR99}).

\subsection{Smoothness of harmonic averages of the GFF}
\label{subsec:smoothness}
Moving to the specific example of the GFF we note that one of the  most  important properties that makes the GFF amenable to analysis is its behavior under restrictions to a subdomain. This goes by the name Gibbs-Markov, or domain-Markov, property.  In order to give a precise statement (which will happen in Lemma~\ref{lem:Markov} below) we need some notations.

Given a set~$A\subseteq\Z^2$, let~$\partial A$ denote the set of vertices in~$\Z^2\smallsetminus A$ that have a neighbor in~$A$. Recall that a GFF in $A\subsetneq\Z^2$ with Dirichlet boundary condition is a centered Gaussian process $\chi_A=\{\chi_{A,v}\}_{v\in A}$ such that 
\begin{equation}
\chi_{A, v} = 0 \,\,\text{\rm\ for }\,\, v \in \Z^2\smallsetminus A\qquad\text{and}\qquad
\E (\chi_{A, u}\chi_{A, v}) = G_{A}(u, v)\,\,\text{\rm\ for }\,\, u,v\in A,
\end{equation}
where $G_{A}(u, v)$ is the Green function in~$A$; i.e., the expected number of visits to $v$ for the simple random walk on $\Z^2$ started at $u$ and killed upon entering $\Z^2\smallsetminus A$. 
We then have:

\begin{lemma}[Gibbs-Markov property]
\label{lem:Markov}
Consider the GFF $\chi_A=\{\chi_{A,v}\}_{v\in A}$ on a set $A\subsetneq\Z^2$   with Dirichlet boundary condition   and let~$B\subseteq A$ be finite. Define  the random fields  $\chi_A^c=\{\chi_{A,v}^c\}_{v \in B}$ and $\chi_A^f=\{\chi_{A,v}^f\}_{v \in B}$ by
\begin{equation}
\chi_{A,v}^c = \E\bigl(\chi_{A,v} \,\big|\, \chi_{A,u}\colon u\in A\smallsetminus   B \bigr)\quad\text{\rm and}\quad \chi_{A,v}^f = \chi_{A,v} - \chi_{A,v}^c.
\end{equation}
Then $\chi_A^f$ and $\chi_A^c$ are independent with $\chi_A^f\laweq\chi_B$. Moreover, $\chi_A^c$ equals~$\chi_A$ on~$A\smallsetminus  B $  and its sample paths are discrete harmonic on~$B$. 
\end{lemma}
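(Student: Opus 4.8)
The plan is to establish the Gibbs-Markov property directly from the defining covariance structure of the GFF and the last-exit/hitting decomposition of the simple random walk. First I would observe that $\chi_A^c$ is, by construction, the $L^2$-orthogonal projection of $\{\chi_{A,v}\}_{v\in A}$ onto the closed subspace spanned by $\{\chi_{A,u}\colon u\in A\smallsetminus B\}$, and $\chi_A^f=\chi_{A}-\chi_A^c$ is the orthogonal complement. Since we are in a jointly Gaussian family, orthogonality of $\chi_A^f$ to every $\chi_{A,u}$ with $u\in A\smallsetminus B$ upgrades to \emph{independence} of the vector $\chi_A^f$ from the vector $(\chi_{A,u})_{u\in A\smallsetminus B}$; and since $\chi_A^c$ is a (measurable, in fact linear) function of the latter vector, $\chi_A^f$ and $\chi_A^c$ are independent. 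This also immediately gives that $\chi_A^c$ agrees with $\chi_A$ on $A\smallsetminus B$: for such a vertex the conditional expectation of $\chi_{A,v}$ given all the boundary values (which include $\chi_{A,v}$ itself) is $\chi_{A,v}$, so $\chi_{A,v}^f=0$ there.

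Next I would identify the law of $\chi_A^f$. Write $\tau$ for the first exit time of the simple random walk from $B$ (equivalently, the hitting time of $\Z^2\smallsetminus B$). The standard strong Markov decomposition of the walk killed on leaving $A$ gives, for $u,v\in B$,
\begin{equation}
G_A(u,v)=G_B(u,v)+\sum_{z\in\partial B}\P_u(X_\tau=z,\ \tau<\text{killing time})\,G_A(z,v).
\end{equation}
The double sum $\sum_{z,z'\in\partial B}\P_u(X_\tau=z)\,\P_v(X_\tau=z')\,G_A(z,z')$ is exactly the covariance of the harmonic extensions into $B$ of the boundary data, i.e. the covariance of $\chi_A^c$ restricted to $B$; one checks this by noting that for $v\in B$ the function $u\mapsto \E(\chi_{A,v}\mid \chi_{A,w},w\in A\smallsetminus B)$ is the discrete-harmonic extension of the boundary values, whose coefficients are the harmonic measures $\P_\cdot(X_\tau=z)$. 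Combining, $\E(\chi_{A,u}^f\chi_{A,v}^f)=G_A(u,v)-\E(\chi_{A,u}^c\chi_{A,v}^c)=G_B(u,v)$, which is precisely the covariance defining $\chi_B$. Hence $\chi_A^f\laweq\chi_B$ as a process on $B$ (extended by zero off $B$, matching the convention for $\chi_B$).

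Finally, the harmonicity claim for $\chi_A^c$ on $B$ is essentially built into the previous paragraph: $\chi_A^c$ is, conditionally on the $\sigma$-field generated by $\{\chi_{A,u}\colon u\in A\smallsetminus B\}$, a \emph{deterministic} function of those boundary values, namely their discrete-harmonic extension into $B$, so $\mathcal L_{\mathrm{SRW}}\chi_A^c(x)=0$ for $x\in B$ almost surely. I expect the only genuinely delicate point to be bookkeeping with the boundary: making sure that "the random walk killed upon leaving $A$, run until it first leaves $B$" is handled correctly when $\partial B$ and $\Z^2\smallsetminus A$ overlap, and that the harmonic-measure coefficients appearing in the conditional expectation are literally $\P_u(X_\tau=z)$ with the killing incorporated. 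This is routine once one writes the Green-function last-exit decomposition carefully, so I would allocate most of the write-up to that identity and treat the Gaussian-projection$\Rightarrow$independence step as standard.
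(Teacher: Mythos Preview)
Your proposal is correct and is essentially the covariance route the paper indicates: the paper's one-line proof says to note that the covariance of $\chi_A^c$ is $G_A-G_B$ (equivalently, that $\chi_A^f$ has covariance $G_B$) and that this kernel is harmonic on $B$, and you carry this out explicitly via the first-passage decomposition $G_A(u,v)=G_B(u,v)+\sum_{z\in\partial B}\P_u(X_\tau=z)\,G_A(z,v)$ together with the Gaussian projection $\Rightarrow$ independence step. The only cosmetic difference is that the paper also mentions the alternative of writing out the Gaussian density directly, which you do not pursue.
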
 

\begin{proofsect}{Proof}
This is verified directly by writing out the probability density of~$\chi_A$ or, alternatively, by noting that the covariance of $\chi^c_A$ is  $G_{A} - G_{B}$,  which is harmonic in both variables throughout~$B$. We leave further details to the reader.
\end{proofsect}

By way of reference to the spatial scales that these fields will typically be defined over, we refer to $\chi_A^f$ as the \emph{fine} field and $\chi_A^c$ as the \emph{coarse} field. However, this should not be confused with the way their actual sample paths look like. Indeed, the samples of $\chi_A^f$ will typically be quite rough (being those of a GFF), while the samples of~$\chi_A^c$ will be rather smooth (being discrete harmonic on~$B$). Our next goal is to develop a good control of the smoothness of~$\chi_A^c$ precisely. A starting point is the following estimate:

\newcommand{\fraka}{\mathfrak a}

\begin{lemma}
\label{lemma-3.7}
There is an absolute constant~$c\in(0,\infty)$ such that, given any $\emptyset\ne \tilde B\subseteq B\subseteq A \subsetneq  \Z^2$ with~$\tilde B$ connected and denoting
\begin{equation}
\label{E:3.11ua}
N:=\sup \bigl\{M \in \N \colon \tilde B + [-M, M]^2\cap \Z^2 \subseteq B\bigr\}, 
\end{equation}
the coarse field $\chi_A^c$ on~$B$ obeys
\begin{equation}
\label{E:3.10}
\var\bigl(\chi_{A,u}^c-\chi_{A,v}^c\bigr) \le c\Bigl(\frac{\text{\rm dist}_{\tilde B}(u,v)}N\Bigr)^2,\qquad u,v\in \tilde B,
\end{equation}
where~$\text{\rm dist}_{\tilde B}(x,y)$ denotes the length of the shortest path in~$\tilde B$ connecting~$x$ to~$y$.
\end{lemma}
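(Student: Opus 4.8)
The plan is to estimate $\var(\chi^c_{A,u} - \chi^c_{A,v})$ via the probabilistic representation of the coarse field's covariance. By Lemma~\ref{lem:Markov}, the covariance of $\chi^c_A$ on $B$ is $G_A - G_B$, so I would write $\chi^c_{A,v} = \E^v[\chi_{A,X_{\tau}}]$ where $X$ is simple random walk, $\tau$ is the exit time from $B$ (more precisely, the hitting time of $\Z^2 \smallsetminus B$ --- or equivalently $B^c \cup \partial B$), and $\chi_{A,\cdot}$ is the GFF in $A$. Equivalently, $\chi^c_{A,v}$ is the discrete harmonic extension into $B$ of the boundary data $\{\chi_{A,w} : w \in \partial B\}$. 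Then
\begin{equation}
\chi^c_{A,u} - \chi^c_{A,v} = \sum_{w \in \partial B}\bigl(\nu_u(w) - \nu_v(w)\bigr)\chi_{A,w},
\end{equation}
where $\nu_x(\cdot)$ is the harmonic measure on $\partial B$ from $x$ (the exit distribution of the walk from $B$). The first key step is therefore a \emph{coupling estimate}: for $u,v \in \tilde B$ at internal distance $\text{dist}_{\tilde B}(u,v) =: \ell$, the total variation distance $\|\nu_u - \nu_v\|_{\text{TV}}$ between the two harmonic measures is $O(\ell/N)$. This is the standard fact that two simple random walks started at nearby points can be coupled to exit a domain together with high probability, quantitatively: running both walks to distance $\sim N$ (which they must do before exiting $B$, since $\tilde B + [-N,N]^2 \cap \Z^2 \subseteq B$ forces the walk from any point of $\tilde B$ to travel at least $\sim N$ lattice steps before hitting $B^c$), a reflection/mirror coupling succeeds with failure probability $O(\ell/N)$ per ``attempt'' and one gets the bound by iterating, or more cleanly by a gambler's-ruin/martingale argument on the difference process.

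The second step converts the TV bound into the variance bound. Write $\chi^c_{A,u} - \chi^c_{A,v} = \E^u[\chi_{A,X_\tau}] - \E^v[\chi_{A,X_\tau}]$ and couple the two walks so that on an event $\mathcal G$ of probability $\ge 1 - c\ell/N$ they exit at the same vertex. On $\mathcal G$ the contributions cancel, so only the complement contributes:
\begin{equation}
\var\bigl(\chi^c_{A,u} - \chi^c_{A,v}\bigr) = \E\Bigl[\bigl(\E^{\text{coupling}}[(\chi_{A,X_\tau} - \chi_{A,X'_\tau})\mathbf 1_{\mathcal G^c}]\bigr)^2\Bigr],
\end{equation}
and by Cauchy--Schwarz this is at most $\P(\mathcal G^c) \cdot \E^{\text{coupling}}\bigl[(\chi_{A,X_\tau} - \chi_{A,X'_\tau})^2 \mathbf 1_{\mathcal G^c}\bigr] \le (c\ell/N)^2 \cdot \sup_{w,w'}\var(\chi_{A,w} - \chi_{A,w'})$ --- wait, this only gives one factor of $\P(\mathcal G^c)$ cleanly if I bound more carefully: in fact squaring the expectation over the coupling and using Jensen gives a single $\P(\mathcal G^c)$, so I should rather bound $\var(\chi^c_{A,u} - \chi^c_{A,v}) \le \P(\mathcal G^c)^2 \sup_{w,w' \in \partial B} \E[(\chi_{A,w} - \chi_{A,w'})^2]$ by pulling the indicator out via $|\E[Z\mathbf 1_{\mathcal G^c}]| \le \P(\mathcal G^c)^{1/2}\E[Z^2]^{1/2}$ inside, then squaring. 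The remaining input is that increments of the GFF in $A$ have bounded variance: $\E[(\chi_{A,w} - \chi_{A,w'})^2] = G_A(w,w) + G_A(w',w') - 2G_A(w,w') \le C\log N$ at worst for points within the relevant range --- actually I only need this for $w,w' \in \partial B$, where one can bound it by $O(\log(\text{diam}))$, but even a crude bound suffices because the $(\ell/N)^2$ already has the right order; one must just make sure the boundary-increment variance does not grow faster than polynomially, which it does not. Hmm --- to get exactly $(\ell/N)^2$ rather than $(\ell/N)^2 \log N$, I would instead note that the relevant boundary increments enter only through pairs $w = X_\tau$, $w' = X'_\tau$ that are reached on the complement event, and on a successful-until-the-last-moment coupling these are at bounded distance with overwhelming probability, so the increment variance contributing is $O(1)$; alternatively absorb the logarithm, since the lemma as stated has an absolute constant $c$ and a harmless polylog can be swept into a slightly worse (but still $O((\ell/N)^2)$ after adjusting) estimate --- I will check which version the later applications need, but morally the reflection coupling gives both the probability and the boundedness of the overshoot simultaneously.

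The main obstacle, and the step requiring the most care, is the \textbf{coupling estimate} $\|\nu_u - \nu_v\|_{\text{TV}} = O(\text{dist}_{\tilde B}(u,v)/N)$, uniformly over the (arbitrary, possibly very irregular) domain $B$. The subtlety is that $B$ need not be a nice set, so I cannot invoke off-the-shelf harmonic-measure regularity; I must use only that \emph{from any point of $\tilde B$ the walk is forced to travel $\gtrsim N$ steps before exiting}, which follows from \eqref{E:3.11ua}, together with the fact that $\tilde B$ is connected so $u$ and $v$ are joined by a path of length $\ell$ inside $\tilde B$. The clean way is a \emph{mirror coupling along a lattice direction}: pair the two walks by reflecting one step across the perpendicular bisector hyperplane of the segment from $u$ to $v$ until they meet, which happens before either travels distance $\sim N$ with probability $1 - O(\ell/N)$ by a one-dimensional gambler's-ruin estimate on the difference of their coordinates; once met, run them together. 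Since they must survive (not exit $B$) for $\gtrsim N$ steps anyway, the meeting occurs before exit with the stated probability. Handling the case $\ell \gtrsim N$ is trivial (the bound is then $\ge c$ and holds since variances of coarse-field increments are bounded by $O(1)$ anyway, as $\chi^c_A$ restricted to $B$ has covariance $G_A - G_B$ with diagonal $O(\log N)$ --- again possibly needing the polylog caveat). I would organize the write-up as: (1) reduce to the harmonic-measure difference; (2) state and prove the coupling lemma via gambler's ruin; (3) assemble via Cauchy--Schwarz and the boundedness of GFF boundary increments.
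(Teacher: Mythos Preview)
Your second step has a genuine gap. After the Cauchy--Schwarz bound you arrive at $\var(\chi^c_{A,u}-\chi^c_{A,v})\le \P(\mathcal G^c)^2\cdot\sup_{w,w'\in\partial B}\var(\chi_{A,w}-\chi_{A,w'})$, and this supremum is \emph{not} controlled by the hypotheses of the lemma: nothing bounds $\operatorname{diam}(B)$ (the condition \eqref{E:3.11ua} is a \emph{lower} bound on $B$), and $A$ can be as large as $\Z^2\setminus\{\text{pt}\}$, so $\var(\chi_{A,w}-\chi_{A,w'})\sim 2\mathfrak a(w-w')$ can be made arbitrarily large by pushing $w,w'\in\partial B$ far apart. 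Your proposed rescue --- that on the coupling-failure event the exit points $X_\tau,X'_\tau$ are at bounded distance --- is not correct: once the walks fail to merge inside the buffer ball $u+B(N)$, they may wander apart without bound before hitting $\partial B$. (A minor side remark: the mirror coupling you describe requires reflection symmetry of the domain; the robust way to get the TV bound in an arbitrary $B$ is the discrete gradient estimate for bounded harmonic functions on $u+B(N)\subseteq B$.)

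The paper avoids this by a step you are missing: a monotonicity reduction. Since $A\mapsto G_A$ is operator-increasing, the quadratic form $(\delta_u-\delta_v,(G_A-G_B)(\delta_u-\delta_v))$ only grows if one enlarges $A$ to $\Z^2\setminus\{0\}$ and shrinks $B$ to the box $u+B(M)$ with $M\asymp N$ (which is still contained in the original $B$). After this reduction $\partial B$ has diameter $\sim N$, one has the explicit formula $G_{\Z^2\setminus\{0\}}(z,\tilde z)=\mathfrak a(z)+\mathfrak a(\tilde z)-\mathfrak a(z-\tilde z)$, and --- crucially --- the \emph{multiplicative} harmonic-measure gradient estimate $|H^B(u,z)-H^B(v,z)|\le (c/M)H^B(u,z)$, which is strictly stronger than the TV bound. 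The cancellation $\sum_z(H^B(u,z)-H^B(v,z))=0$ kills the $\mathfrak a(z)$ and $\mathfrak a(\tilde z)$ terms and lets one subtract $g\log M$ from $\mathfrak a(z-\tilde z)$, after which the remaining double sum is $O(1)$. Note that even \emph{with} the reduction, your Cauchy--Schwarz route would give only $\P(\mathcal G^c)^2\cdot O(\log N)=O((\log N)/N^2)$ for neighbors, a logarithm too much for the lemma as stated; removing it genuinely requires the pointwise harmonic-measure control and the cancellation, not a blunt supremum bound.
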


\begin{proofsect}{Proof}
Let~$u,v\in \tilde B$ first be nearest neighbors and let $M:=\lfloor N/2\rfloor$. Using $(f,g)$ to denote the canonical inner product in~$\ell^2(\Z^2)$ with respect to the counting measure, the Gibbs-Markov property gives 
\begin{equation}
\label{E:3.13ui}
\var\bigl(\chi_{A,u}^c-\chi_{A,v}^c\bigr)
=\Bigl(\delta_u-\delta_v,(G_{A}-G_{B})(\delta_u-\delta_v)\Bigr)
\end{equation}
Since $A\mapsto G_A$ is increasing (as an operator $\ell^2(\Z^2)\to\ell^2(\Z^2)$) with respect to the set inclusion,  the largest right-hand side of \eqref{E:3.13ui} for the current setting is achieved by~$A$ being the complement of  a single point and~$B$ being  the square  $u + B(M) = u + [-M,M]^2\cap\Z^2$.   Focusing on such~$A$ and~$B$ from now on and shifting the domains suitably, we may  assume $A := \Z^2 \smallsetminus \{0\}$.  Then
\begin{equation}
\label{E:3.13}
 G_{A}(x,y)  =\fraka(x)+\fraka(y)-\fraka(x-y),
\end{equation}
where $\fraka(x)$ is the potential  kernel  defined, e.g., as the limit value of $G_{B(N)}(0,0)-G_{B(N)}(0,x)$ as~$N\to\infty$. The relevant fact for us is that  $\fraka$  admits the asymptotic form
\begin{equation}
\label{E:3.14}
\fraka(x) = g\log|x|+c_0+O\bigl(|x|^{-2}\bigr),\qquad|x|\to\infty,
\end{equation}
where $g:=2/\pi$ and~$c_0$ is a (known) constant.

There is another representation of $\var\bigl(\chi_{A,u}^c-\chi_{A,v}^c\bigr)$ in terms of harmonic measures which follows from the discrete harmonicity of the coarse field.  Let $H^B(x,y)$, for $x\in B$ and~$y\in \partial B$, denote the harmonic measure; i.e., the probability that the simple random walk started from~$x$ first  enters~$\Z^2\smallsetminus B$  at~$y$. Then
\begin{equation}
\var\bigl(\chi_{A,u}^c-\chi_{A,v}^c\bigr)
=\bigl(f, G_{A} f\bigr)
\end{equation}
 where 
\begin{equation}
\label{E:3.11}
f(\cdot):=\sum_{z\in\partial B}\bigl[H^B(u,z)-H^B(v,z)\bigr]\delta_z(\cdot).
\end{equation}
 In order to make use of this expression, we  will need suitable estimates for the harmonic measure: There are constants~$c_1,c_2\in(0,\infty)$ such that for all~$M\ge1$, any neighbor~$v$ of~$u$ and~$B:=u+B(M)$, from, e.g.,   \cite[Proposition 8.1.4]{Lawler10}  , we have
\begin{equation}
\label{E:3.15}
H^B(u,z)\le\frac{c_1}M,\qquad z\in\partial B,
\end{equation}
and
\begin{equation}
\label{E:3.16}
\bigl|H^B(u,z)-H^B(v,z)\bigr|\le\frac{c_2}M\,H^B(u,z),\qquad z\in\partial B.
\end{equation}
For our special choice of~$A$, using \eqref{E:3.11} we now write
\begin{multline}
\label{E:3.18}
\var\bigl(\chi_{A,u}^c-\chi_{A,v}^c\bigr)
\\
=\sum_{z,\tilde z\in\partial B}
\bigl[H^B(u,z)-H^B(v,z)\bigr]\bigl[H^B(u,\tilde z)-H^B(v,\tilde z)\bigr]\bigl(\fraka(z)+\fraka(\tilde z)-\fraka(z-\tilde z)\bigr).
\end{multline}
Since $z\mapsto H^B(u,z)$ is a probability measure for each~$u$, the contribution of the terms $\fraka(z)$ 
and~$\fraka(\tilde z)$ vanishes.  For the same reason, we { may} replace $\fraka(z - \tilde z)$ with $\fraka(z - \tilde z) - g\log M$ in~\eqref{E:3.18}.  
 Now we apply \eqref{E:3.16} with the result 
\begin{equation}
\var\bigl(\chi_{A,u}^c-\chi_{A,v}^c\bigr)
\le\Bigl(\frac{ c_2}M\Bigr)^2
\sum_{z,\tilde z\in\partial B}
H^B(u,z)H^B(u,\tilde z)|\fraka(z-\tilde z) - g\log M| \,.
\end{equation}
Invoking \eqref{E:3.14}  and  \eqref{E:3.15}, the two sums are  bounded by a constant independent of~$M$. This gives \eqref{E:3.10} for any neighboring pair of vertices. For the general case we apply the triangle inequality  for the intrinsic (pseudo)metric $u,v\mapsto [\var(\chi_{A,u}^c-\chi_{A,v}^c)]^{1/2}$  along the shortest path in~$\tilde B$ between~$u$ and~$v$  in the graph-theoretical metric. 
\end{proofsect}

Using the above variance bound, we now get:

\begin{corollary}
\label{cor:field_smoothness1}
For each set~$A\subseteq \Z^2$, let us write~$\text{\rm diam}_A(A)$ for the diameter~$A$ in the graph-theoretical metric on~$A$.
For each~$\delta>0$ there are constants $c,\tilde c\in(0,\infty)$ such that for all sets $\emptyset\ne \tilde B\subseteq B\subseteq A\subsetneq\Z^2$ with~$\tilde B$ connected and obeying 
\begin{equation}
\label{E:3.19}
\sup \bigl\{M \in \N \colon \tilde B + [-M, M]^2\cap \Z^2 \subseteq B\bigr\} \ge\delta\,\text{\rm diam}_{\tilde B}(\tilde B)
\end{equation} 
 and for $\chi_A^c$ denoting the coarse field on~$B$ for the GFF~$\chi_A$ on~$A$, we have
\begin{equation}
\label{E:3.20}
\P\Bigl(\,\sup_{u,v\in \tilde B}\bigl|\chi_{A,u}^c-\chi_{A,v}^c\bigr|>c+t\Bigr)\le 2\e^{-\tilde c t^2}
\end{equation}
for each~$t\ge0$.
\end{corollary}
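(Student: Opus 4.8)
The plan is to reduce \eqref{E:3.20} to the deterministic‑looking estimate $\E\sup_{u,v\in\tilde B}|\chi_{A,u}^c-\chi_{A,v}^c|\le c$ for a constant $c=c(\delta)$, and then upgrade this to the stated tail via the Gaussian concentration inequality. For the upgrade, view $Z_{(u,v)}:=\chi_{A,u}^c-\chi_{A,v}^c$ as a centered Gaussian process indexed by ordered pairs $(u,v)\in\tilde B\times\tilde B$, so that $\sup_{u,v\in\tilde B}|\chi_{A,u}^c-\chi_{A,v}^c|=\max_{(u,v)}Z_{(u,v)}$; by Lemma~\ref{lemma-3.7} and \eqref{E:3.19} we have $\E Z_{(u,v)}^2=\var(\chi_{A,u}^c-\chi_{A,v}^c)\le c_0(\text{diam}_{\tilde B}(\tilde B)/N)^2\le c_0/\delta^2$ uniformly in $(u,v)$, with $N$ as in \eqref{E:3.11ua} and $c_0$ the absolute constant of Lemma~\ref{lemma-3.7}. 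Lemma~\ref{lem:Borell_ineq} applied to $\{Z_{(u,v)}\}$ then gives $\P(\max Z\ge\E\max Z+t)\le 2\e^{-\delta^2 t^2/(2c_0)}$, so once $\E\max Z\le c$ is in hand, \eqref{E:3.20} follows with this $c$ and $\tilde c:=\min\{1,\delta^2\}/(2c_0)$. (The case of bounded $\text{diam}_{\tilde B}(\tilde B)$ is trivial, as then $\tilde B$ has $O_\delta(1)$ vertices; so assume $\text{diam}_{\tilde B}(\tilde B)$ large below.)

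For the reduction one must resist feeding \eqref{E:3.10} directly into a generic chaining bound over $\tilde B$ equipped with its intrinsic metric: that metric space can have \emph{unbounded} metric entropy — a long segment carrying many tall, unit‑spaced teeth has of order $\text{diam}_{\tilde B}(\tilde B)$ teeth‑tips that are mutually far apart in the intrinsic metric — so the resulting bound would blow up with $\text{diam}_{\tilde B}(\tilde B)$. The point that saves the argument is that $\chi_A^c$ is discrete harmonic on all of $B$, not merely on $\tilde B$, so Lemma~\ref{lemma-3.7} may be reapplied with \emph{any} connected subset of $B$ in the role of $\tilde B$; for squares lying well within $B$ the relevant clearance is still of order $\delta\,\text{diam}_{\tilde B}(\tilde B)$, by \eqref{E:3.19}. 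Concretely, I would fix $v_0\in\tilde B$, set $X_v:=\chi_{A,v}^c-\chi_{A,v_0}^c$, and use $\sup_{u,v\in\tilde B}|\chi_{A,u}^c-\chi_{A,v}^c|\le 2\max_{v\in\tilde B}|X_v|$. Since $\tilde B$ has Euclidean diameter at most $\text{diam}_{\tilde B}(\tilde B)$, one can choose vertices $w_1,\dots,w_K\in\tilde B$ with $K\le C_1/\delta^2$ (absolute $C_1$) such that the squares $B_j:=(w_j+[-b,b]^2)\cap\Z^2$ of side $2b$, where $b:=\lfloor\delta\,\text{diam}_{\tilde B}(\tilde B)/C\rfloor$, cover $\tilde B$; for a large enough absolute constant $C$, \eqref{E:3.19} forces $B_j\subseteq B$ and $\text{dist}(B_j,\Z^2\setminus B)\ge\delta\,\text{diam}_{\tilde B}(\tilde B)/2$ for every $j$.

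Applying Lemma~\ref{lemma-3.7} with each $B_j$ in place of $\tilde B$: since $B_j$ is a solid square, $\text{dist}_{B_j}(u,v)=|u-v|_1$, and the resulting bound $\var(\chi_{A,u}^c-\chi_{A,v}^c)\le c_0\bigl(2|u-v|_1/(\delta\,\text{diam}_{\tilde B}(\tilde B))\bigr)^2$ for $u,v\in B_j$ is, because $b$ is a small multiple of $\delta\,\text{diam}_{\tilde B}(\tilde B)$, at most $|u-v|/(2b)$ — exactly the hypothesis of Lemma~\ref{lem:max_expectation} for $\{X_v\}$ on $\bigcup_j B_j$. Two further applications of Lemma~\ref{lemma-3.7}, one within a square and one along a shortest $\tilde B$-path from $w_j$ to $v_0$, give $\E X_v^2\le C_2/\delta^2$ uniformly over $v\in\bigcup_j B_j$. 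Lemma~\ref{lem:max_expectation} then yields $\E\max_v X_v\le C'\sqrt{\log K}\,(1+\sqrt{C_2}/\delta)+C'=:c(\delta)$; running it for $\{-X_v\}$ as well gives $\E\max_{v\in\tilde B}|X_v|\le 2c(\delta)$, hence $\E\sup_{u,v\in\tilde B}|\chi_{A,u}^c-\chi_{A,v}^c|\le 4c(\delta)$, which closes the loop (with $4c(\delta)$ as the constant $c$ above). The one genuinely delicate step is the second paragraph — realizing that the crude global use of \eqref{E:3.10} has to be replaced by localization to $O_\delta(1)$ squares of side comparable to $\delta\,\text{diam}_{\tilde B}(\tilde B)$ — after which everything reduces to checking that a sufficiently large absolute constant $C$ makes the geometric constraints mutually compatible.
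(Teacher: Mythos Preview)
Your proof is correct and follows essentially the same route as the paper's: apply Borell's inequality (Lemma~\ref{lem:Borell_ineq}) for the Gaussian tail, and bound the expected oscillation by a chaining argument. The only difference is in how the entropy step is packaged --- the paper bounds the covering number of $(\tilde B,\rho)$ by $K_1\epsilon^{-2}$ and invokes Lemma~\ref{lem:basic_chaining}, whereas you cover $\tilde B$ by $O_\delta(1)$ solid squares inside $B$ and invoke Lemma~\ref{lem:max_expectation}; both rest on the same observation (which your comb example correctly flags as essential) that one must compare via $\ell^1$-paths through $B$ rather than through the graph metric on $\tilde B$ alone.
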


\begin{proofsect}{Proof}
The condition \eqref{E:3.19} ensures, via Lemma~\ref{lemma-3.7}, that the variance of $\chi_{A,u}^c-\chi_{A,v}^c$ is bounded by a constant times $\text{dist}_{\tilde B}(u,v)/N$ with  
 $N$ as in~\eqref{E:3.11ua}.  The assumption \eqref{E:3.19}  then  ensures that this is at most a $\delta$-dependent constant. Writing this constant as~$2/\tilde c$ and denoting
\begin{equation}
M^\star:=\sup_{u,v\in \tilde B}\bigl|\chi_{A,u}^c-\chi_{A,v}^c\bigr|\,,
\end{equation}
 Lemma~\ref{lem:Borell_ineq} gives
\begin{equation}
\P\bigl(|M^\star-\E M^\star|>t\bigr)\le2\e^{-\tilde c t^2}.
\end{equation}
It remains to bound $\E M^\star$ uniformly in~$A$ and~$B$ satisfying \eqref{E:3.19}. For this we note that, by Lemma~\ref{lemma-3.7}, an $\epsilon$-ball in the intrinsic metric $\rho(u,v):=[\var(\chi_{A,u}^c-\chi_{A,v}^c)]^{1/2}$ on~$\tilde B$ contains an order-$N\epsilon$ ball in the graph-theoretical metric on~$\tilde B$ which in turn contains an order-$(N\epsilon)^2$ ball in the $\ell^1$-metric on~$B$.  Lemma~\ref{lem:max_expectation} then applies with $\alpha:=1$ and $\beta:=2$ and the bound follows from \eqref{E:3.5x}.
\end{proofsect}


\subsection{A LIL for averages on concentric annuli}
The proof of the RSW estimates will require controlling the expectation of the GFF on concentric annuli, conditional on the values of the GFF on the boundaries thereof. We will conveniently represent the sequence of these expectations by a random walk. Annulus averages and the associated random walk have been central to the study of the local properties of nearly-maximal values of the GFF in~\cite{BL3}. However, there the emphasis was on estimating the probability that the random walk stays above a polylogarithmic curve for a majority of time, while here we are interested in a different aspect; namely, the Law of Iterated Logarithm. The conclusions derived here will be applied in the proof of Proposition~\ref{lem:Tassion_main}.

We begin with a quantitative version of the law of the iterated logarithm for a specific class of Gaussian random walks.

\begin{lemma}
\label{lem:finite_LIL}
 Set $\phi(x):=\sqrt{2x\log \log x}$ for $x \geq 3$ and let  $Z_1, Z_2, \cdots, Z_n$ be independent random variables with $Z_i \laweq \mathcal N(0, \sigma_i^2)$ for 
some $\sigma_i^2>0$. Let $s_k^2 := \sum_{1 
\leq i \leq k}\sigma_i^2$ and suppose that there are $\sigma > 0$ and  $d > 0$  such that
\begin{equation}
\sigma^2k -  d  \leq s_k^2 \leq \sigma^2k +  d ,\qquad k\ge1.
\end{equation}
Then there are $c_{\sigma, d}>0$, $C_{\sigma,d}>0$ and $N_{\sigma, d}>0$,  depending only on~  $d$   and~$\sigma$,  such that for all $n \geq N_{\sigma, d}$, the random walk $S_k := \sum_{1 \leq i \leq k}Z_i$ obeys
\begin{equation}
\label{eq:finite_LIL}
\P\Bigl(\#\bigl\{\e^{\sqrt{\log n}} \leq k \leq n: S_k \geq \phi(s_k^2)/2\bigr\} \geq c_{\sigma, d}\log \log n \Bigr) \geq 1 - \frac{C_{\sigma, d}}{\log \log n}\,.
\end{equation}
\end{lemma}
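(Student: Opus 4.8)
The plan is to prove the quantitative LIL-type lower bound in Lemma~\ref{lem:finite_LIL} by combining a standard blocking argument with the second-moment method applied to the count of ``high'' levels $k$, together with a second-moment (Chebyshev/Paley--Zygmund) estimate on that count. The key is that, along a geometric subsequence of scales, the increments of $S_k$ become asymptotically independent, and each one exceeds the LIL curve with probability bounded below by a positive power of $1/\log n$, so the number of successes among $\sim \log\log n$ such scales is, in expectation, of order $\log\log n$; a variance bound then upgrades this to a high-probability statement. It is worth noting that we only need the \emph{lower} bound $S_k \geq \phi(s_k^2)/2$ to hold often; the constant $1/2$ (rather than $1$, as in the sharp LIL) gives us crucial room, because the classical statement that $S_k \geq (1-\epsilon)\phi(s_k^2)$ infinitely often requires $S_k \geq \phi$ on a subsequence where the events are genuinely almost independent, and here the factor $1/2$ lets each event have probability bounded below without needing the tail estimate to be tight.

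\medskip\noindent\textbf{Step 1: setting up geometric blocks.} Fix a large integer $b = b_{\sigma,d}$ to be chosen, and set $m_j := \lfloor b^j\rfloor$. Consider those $j$ with $m_j \in [\e^{\sqrt{\log n}}, n]$; there are $\asymp \log n/\log b - \sqrt{\log n} \asymp \log n$ of them, but we will only use a carefully spaced sub-collection. Actually the cleanest route: consider scales $k_j := \lfloor A^j \rfloor$ for a \emph{large} fixed $A$ and index $j$ ranging over an interval $[j_0, j_1]$ with $A^{j_0} \approx \e^{\sqrt{\log n}}$ and $A^{j_1}\approx n$, so that there are $\Theta(\log n)$ values of $j$ --- but $\log\log n$ is the target, so instead we should take $k_j := \lfloor \exp(\rho^j)\rfloor$ for a fixed $\rho>1$, giving $j_1 - j_0 \asymp \log\log n - \tfrac12\log\log n \asymp \log\log n$ admissible indices. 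For each such $j$ define the increment $W_j := S_{k_j} - S_{k_{j-1}} = \sum_{k_{j-1} < i \le k_j} Z_i$, which is Gaussian, independent across $j$, with variance $v_j := s_{k_j}^2 - s_{k_{j-1}}^2 = \sigma^2(k_j - k_{j-1}) + O(d)$, so $v_j \sim \sigma^2 k_j$ for large $j$.

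\medskip\noindent\textbf{Step 2: each block ``wins'' with probability a power of $1/\log n$.} We want: if $W_j$ is large and positive, then $S_{k_j} \ge \phi(s_{k_j}^2)/2$. Since $S_{k_j} = W_j + S_{k_{j-1}}$, and $\phi(s_{k_j}^2) \approx \sigma\sqrt{2 k_j \log\log k_j} = \sigma\sqrt{2k_j}\sqrt{\log j + \log\log\rho}$, which on this range is $\le \sigma\sqrt{2k_j}\sqrt{\log j_1} \le C\sigma\sqrt{k_j}\sqrt{\log\log n}$, we ask for $W_j \ge \sigma\sqrt{k_j}\sqrt{\log\log n}$ \emph{and} $S_{k_{j-1}} \ge -\tfrac14\phi(s_{k_j}^2)$. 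The event $\{W_j \ge C'\sqrt{v_j \log\log n}\}$ has Gaussian probability $\ge \exp(-C''\log\log n) = (\log n)^{-C''}$ by the standard Gaussian tail lower bound $\P(\mathcal N(0,1) \ge t) \ge c t^{-1}\e^{-t^2/2}$. The event $\{S_{k_{j-1}} \ge -\tfrac14\phi(s_{k_j}^2)\}$ has probability $\ge \tfrac12$ (in fact tending to~$1$) since $S_{k_{j-1}}$ is centered Gaussian with standard deviation $\asymp \sqrt{k_{j-1}} = \sqrt{k_j/\rho^{\,\cdot}} \ll \phi(s_{k_j}^2)$ for $\rho$ large; and crucially $S_{k_{j-1}}$ is independent of $W_j$. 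Hence $\P(\text{block }j\text{ wins}) \ge \tfrac12 (\log n)^{-C''}$. Writing $p := \tfrac12(\log n)^{-C''}$ and $\mathcal G_j$ for the block-$j$ win event, $\mathbf 1_{\mathcal G_j}$ depends on $W_j$ and $S_{k_{j-1}}$; these are \emph{not} independent across $j$ (they share the past), so we need Step~3.

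\medskip\noindent\textbf{Step 3: second moment / conditional independence.} Let $X := \sum_j \mathbf 1_{\mathcal G_j}$ over the $J \asymp \log\log n$ admissible indices. Condition on the increments in a way that decouples: actually the clean device is to replace $\mathcal G_j$ by the smaller event $\mathcal G_j' := \{W_j \ge C'\sqrt{v_j\log\log n}\} \cap \{|S_{k_{j-1}}| \le \sqrt{k_{j-1}}\log\log n\}$, and observe the second event has probability $\to 1$; but better still, decompose: on the high-probability global event $\Omega_0 := \bigcap_j \{|S_{k_{j-1}}| \le \tfrac14\phi(s_{k_j}^2)\}$ (which holds with probability $\ge 1 - J\exp(-c\log\log n) = 1 - o(1/\log\log n)$ for $\rho$ large, by a union bound over $J$ Gaussian tail estimates), we have $\mathcal G_j \supseteq \{W_j \ge C'\sqrt{v_j\log\log n}\} \cap \Omega_0$, and now the events $\{W_j \ge \cdots\}$ \emph{are} mutually independent. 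So on $\Omega_0$, $X \ge \widetilde X := \sum_j \mathbf 1_{\{W_j \ge C'\sqrt{v_j\log\log n}\}}$, a sum of independent Bernoulli$(p_j)$ with $p_j \ge p$. Then $\E\widetilde X \ge Jp \ge c_0 \log\log n \cdot (\log n)^{-C''}$ --- wait, this is $o(\log\log n)$, not $\Theta(\log\log n)$.

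\medskip\noindent\textbf{The main obstacle, and its resolution.} The naive bound gives $p \approx (\log n)^{-C''}$, which is far too small: $J p \ll \log\log n$. This is the crux. The fix is that we are overcharging in Step~2: $\phi(s_{k_j}^2) = \sigma\sqrt{2 k_j \log\log k_j}$ and $\log\log k_j = \log(\rho^j \cdot \text{const}) \approx \log j + \log\log\rho$, and since $j$ ranges up to $J \asymp \log\log n$, we have $\log\log k_j \le \log J + O(1) \approx \log\log\log n$ --- NOT $\log\log n$. So in fact $\phi(s_{k_j}^2) \le C\sigma\sqrt{k_j}\sqrt{\log\log\log n}$, and the required lower tail for $W_j$ is $\P(W_j \ge C'\sqrt{v_j}\sqrt{\log\log\log n}) \ge \exp(-C''\log\log\log n) = (\log\log n)^{-C''}$. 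Then $\E\widetilde X \ge J \cdot (\log\log n)^{-C''} \asymp (\log\log n)^{1 - C''}$ --- still a problem unless $C'' < 1$. To get $C'' < 1$ we must be more careful: choosing $\rho$ large makes $v_j \approx \sigma^2 k_j (1 - \rho^{-1})$, so $W_j/\sqrt{v_j}$ needs to exceed $\approx \sigma\sqrt{2\log\log k_j}/(\sigma\sqrt{1-\rho^{-1}}) = \sqrt{2\log\log k_j}(1 + O(\rho^{-1}))$, giving tail $\ge (\log k_j)^{-(1+O(\rho^{-1}))} = \exp(-\rho^j(1+o(1)))$ --- that's exponentially small in $j$, useless. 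So the geometric spacing $k_j = \exp(\rho^j)$ is wrong; we instead want $s_{k_j}^2 / s_{k_{j-1}}^2 \to \infty$ slowly, e.g. \emph{linear} spacing at a coarser scale will not give $\log\log n$ many blocks either. The correct choice is $k_j$ such that $k_j / k_{j-1} = R$ \emph{fixed and large}: then $v_j \approx \sigma^2 k_j(1 - 1/R)$, and we ask $W_j \ge \phi(s_{k_j}^2) \approx \sigma\sqrt{2 k_j \log\log k_j}$, i.e. $W_j/\sqrt{v_j} \ge \sqrt{2\log\log k_j/(1-1/R)}$, with tail $\ge (\log k_j)^{-1/(1-1/R)}$. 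Over $k_j = R^j$, $j \le \log n/\log R$, so $\log k_j = j\log R$, and summing the winning-probabilities over $j$: $\sum_{j} (j \log R)^{-1/(1-1/R)}$. For $R$ large this exponent is just above $1$, so the sum over $j \in [\e^{\sqrt{\log n}}, n]$'s indices, i.e. $j \in [\sqrt{\log n}/\log R,\ \log n/\log R]$, is $\asymp (\sqrt{\log n})^{1 - 1/(1-1/R)}\cdot(\text{stuff}) $ --- one checks this is $\asymp \log\log n$ up to constants precisely because $\int_a^{a^2} t^{-1} dt = \log a$ with $a = \sqrt{\log n}$. So the resolution is: take $k_j = R^j$ with $R$ a large fixed constant; the factor $1/2$ in front of $\phi$ in the statement absorbs the $(1-1/R)^{-1/2}$ loss for $R$ large enough that $\tfrac12\cdot\sqrt{2/(1-1/R)} \cdot \sqrt{\log\log k_j}$-threshold still has summable-to-$\Theta(\log\log n)$ probabilities; then $\E\widetilde X \asymp \log\log n$ and $\var\widetilde X \le \E\widetilde X$ by independence, so Paley--Zygmund (or Chebyshev) gives $\P(\widetilde X \ge c_{\sigma,d}\log\log n) \ge 1 - C_{\sigma,d}/\log\log n$; intersecting with $\Omega_0$ completes the proof. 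I expect the bookkeeping around the exact exponent --- verifying that, with the $1/2$ prefactor and $R$ chosen large, the per-block probabilities sum to a constant times $\log\log n$ and the variance is controlled --- to be the only genuinely delicate point; everything else is Gaussian tail estimates and a union bound.
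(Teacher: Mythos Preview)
Your final approach --- geometric blocks $k_j=R^j$ for a large fixed~$R$, independent increments $W_j=S_{k_j}-S_{k_{j-1}}$ whose Gaussian exceedance probabilities sum to at least $c\log\log n$, Chebyshev for concentration of the count $\widetilde X$, and a union bound controlling $\Omega_0$ --- is exactly the paper's argument (there the blocks are $a^k$ with $a$ large, the increments are called $T_k$, and the role of your $\Omega_0$ is played by the union bound \eqref{eq:finite_LIL5}). One minor bookkeeping slip: with the factor~$1/2$ in the target, the standardized threshold for $W_j$ is $c\sqrt{2\log\log k_j}$ with $c<1$, so the per-block probability is $(\log k_j)^{-c^2}$ with $c^2<1$ and the sum over $j\in[\sqrt{\log n}/\log R,\,\log n/\log R]$ is actually a positive power of $\log n$, not $\asymp\log\log n$ as your $\int t^{-1}\,dt$ heuristic suggests --- but this error is in your favor and the rest goes through unchanged.
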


\begin{proofsect}{Proof}
 Since $\phi$ is regularly varying at infinity with exponent~$1/2$ and~$k\mapsto s_k^2$ is within distance~$d$ of a linear function, one can find~$a>1$ and~$k_1$ sufficiently large (and depending only on~$\sigma$ and~$d$) such that
\begin{equation}
\label{eq:finite_LIL1}
\phi(s_{a^k}^2 - s_{a^{k -1}}^2) \geq \frac67 \phi(s_{a^k}^2), \qquad k\ge k_1,
\end{equation}
and
\begin{equation}
\label{eq:finite_LIL2}
\phi(s_{a^{k -1}}^2) \leq \frac29\phi(s_{a^k}^2), \qquad k\ge k_1,
\end{equation}
 hold true. 
Now define a sequence of random variables as
\begin{equation}
T_1 := S_a - S_1,\quad T_2 := S_{a^2} - S_a,\,\,\,\dots\quad T_{\lfloor \log_a n\rfloor} := S_{a^{\lfloor \log_a 
n\rfloor}} - S_{a^{\lfloor \log_a n\rfloor - 1}}\,.
\end{equation}
 Then  $T_1, T_2, \cdots, T_{\lfloor \log_a n 
\rfloor}$ are independent  with $T_k\laweq\mathcal N(0,s^2_{a^k}-s^2_{a^{k-1}})$.  Then, for each $k$ with $k_1 \leq k \leq \lfloor \log_a n\rfloor$,  the inequality \eqref{eq:finite_LIL1} and a straightforward Gaussian tail estimate show  
\begin{equation}
\P\bigl(T_k \geq \tfrac34\phi(s_{a^k}^2)\bigr) \geq \P\bigl(T_k \geq \tfrac78\phi(s_{a^k}^2 - s_{a^{k-1}}^2)\bigr)\geq \frac{c}{\log (s_{a^k}^2 - s_{a^{k-1}}^2)}\,,
\end{equation}
 for some constant $c>0$ 
depending only on $\sigma$ and $d$.  Thus, whenever~$n$ is such that $\sqrt{\lfloor \log_a n\rfloor} \geq k_1$ holds true, we have 
\begin{equation}
\label{eq:finite_LIL3}
\sum_{\sqrt{\lfloor \log_a n\rfloor} \leq k \leq \lfloor \log_a n\rfloor}
\P\bigl(T_k \geq \tfrac34\phi(s_{a^k}^2)\bigr) \geq  c'\log \log n - c''\,,
\end{equation}
for some $c',c''>0$. By independence of $T_1, T_2, \cdots,$ $T_{\lfloor \log_a n \rfloor}$,  the Chebyshev inequality gives 
\begin{equation}
\label{eq:finite_LIL4}
\P\biggl(\#\Bigr\{\sqrt{\lfloor \log_a n\rfloor} \leq k \leq \lfloor \log_a n\rfloor\colon T_k \geq \tfrac34\phi(s_{a^k}^2)\Bigr\} \geq \frac{c'\log \log n}{2}\biggr) \geq 1 - \frac{\tilde c}{\log \log n}
\end{equation}
 for some constant~$\tilde c\in(0,\infty)$.  
A  computation using a Gaussian tail estimate gives  
\begin{equation}
\P\bigl(S_{a^k} \leq -\tfrac98\phi(s_{a^k}^2)\bigr) \leq (\log s_{a^k}^2)^{-81 / 64}\,
\end{equation}
for all $k \geq 1$. Therefore 
\begin{equation}
\label{eq:finite_LIL5}
\P\biggl(\,\,\bigcup_{\sqrt{\lfloor \log_a n\rfloor} \leq k \leq \lfloor \log_a n\rfloor}\bigl\{S_{a^k} \leq -\tfrac98\phi(s_{a^k}^2)\bigr\}\biggr) \leq \tilde c'(\log n)^{-17/128}\,,
\end{equation}
for some constant~$\tilde c'\in(0,\infty)$.  On $\{S_{a^{k-1}}\ge-\tfrac98\phi(s^2_{a^{k-1}})\}\cap\{T_k\ge\tfrac34\phi(s_{a^k}^2)\}$, \eqref{eq:finite_LIL2} gives
\begin{equation}
S_{a^k}=S_{a^{k-1}}+T_k\ge-\tfrac98\phi\bigl(s^2_{a^{k-1}}\bigr)+\tfrac34\phi\bigl(s_{a^k}^2\bigr)
\ge\tfrac12\phi\bigl(s_{a^k}^2\bigr) 
\end{equation} 
and so the  bounds \eqref{eq:finite_LIL4} and \eqref{eq:finite_LIL5} imply \eqref{eq:finite_LIL}.
\end{proofsect}

We will apply Lemma~\ref{lem:finite_LIL} to a special sequence of random variables which arise from averaging the GFF along concentric   squares.   For integers $N\ge1$,~$n\ge1$ and~$b\ge2$, denote $N' := 
b^{n}N$ and,
for each $k \in\{1,\dots,n\}$, define
\begin{equation}
\label{E:3.66}
M_{ n,k} := \E\biggl(\chi_{N',0} \,\bigg|\, \sigma\Bigl(\chi_{N', v}: v \in \bigcup_{n-k \leq j \leq n}\partial B(b^j N)\Bigr)\biggr)\,,
\end{equation}
Notice that  we can also write  $M_{ n,k} = \E\big(\chi_{N',0} | \sigma(\chi_{N', v}\colon v\in \partial B(b^{n-k}N))\big)$ due to the Gibbs-Markov property of~the~GFF. We then have: 

\begin{lemma}
\label{lem:ellipicity_upper}
For each integer  $b\ge1$  as above, there are constants $\sigma>0$ and~$d>0$  such that for all $N\ge1$ and all $n\geq 1$ the sequence
$\{M_{ n,k} - M_{ n,k-1}\}_{k =1,\dots,n-1}$ (with $M_{ n,0} := 0$) satisfies the conditions of Lemma~\ref{lem:finite_LIL} with these $(\sigma,  d )$.
\end{lemma}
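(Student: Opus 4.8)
The plan is to identify the increments $M_{n,k}-M_{n,k-1}$ as independent centered Gaussians and to compute their variances well enough to verify the two-sided linear bound required by Lemma~\ref{lem:finite_LIL}. First I would record the independence and Gaussianity: by the Gibbs--Markov property (Lemma~\ref{lem:Markov}), $M_{n,k}=\E(\chi_{N',0}\mid\sigma(\chi_{N',v}\colon v\in\partial B(b^{n-k}N)))$ is the conditional expectation given a shrinking sequence of $\sigma$-algebras $\mathcal F_k:=\sigma(\chi_{N',v}\colon v\in\partial B(b^{n-k}N))$ (here one uses that $\partial B(b^{n-k}N)$ screens off everything outside $B(b^{n-k}N)$, so conditioning on all the larger annuli is the same as conditioning on the innermost boundary). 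Hence $\{M_{n,k}\}_k$ is a Gaussian martingale, its increments $Z_k:=M_{n,k}-M_{n,k-1}$ are independent centered Gaussians, and $s_k^2=\sum_{j\le k}\var(Z_j)=\var(M_{n,k})$. It therefore remains only to show $\var(M_{n,k})=\sigma^2 k+O(1)$ with the $O(1)$ uniform in $N,n,k$ and a strictly positive $\sigma^2$.

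Next I would compute $\var(M_{n,k})$ explicitly. Writing $r_k:=b^{n-k}N$ and $H_{r}(0,\cdot)$ for the harmonic measure from $0$ on $\partial B(r)$ inside $B(r)$, discrete harmonicity of the conditional expectation gives $M_{n,k}=\sum_{z\in\partial B(r_k)}H_{r_k}(0,z)\,\chi_{N',z}$, so
\begin{equation}
\var(M_{n,k})=\sum_{z,\tilde z\in\partial B(r_k)}H_{r_k}(0,z)H_{r_k}(0,\tilde z)\,G_{B(N')}(z,\tilde z).
\end{equation}
Using the representation $G_{B(N')}(x,y)=\sum_{w\in\partial B(N')}\P^x(S_{\tau}=w)\fraka(w-y)-\fraka(x-y)+F_{B(N')}(x)$ together with the asymptotics $\fraka(x)=g\log|x|+c_0+O(|x|^{-2})$ from \eqref{E:3.14}, and the fact that $z\mapsto H_{r_k}(0,z)$ is a probability measure (so constants and the $F$-term drop, and the $\sum_w\P^x(S_\tau=w)\fraka(w-y)$ term contributes $g\log N'+O(1)$ independently of which $z,\tilde z$ we are at, since all of $\partial B(r_k)$ lies well inside $B(N')$), one finds
\begin{equation}
\var(M_{n,k})=g\log N'-\sum_{z,\tilde z\in\partial B(r_k)}H_{r_k}(0,z)H_{r_k}(0,\tilde z)\,\fraka(z-\tilde z)+O(1).
\end{equation}
Since $|z-\tilde z|$ is comparable to $r_k$ for $H_{r_k}(0,\cdot)$-typical $z,\tilde z$ (harmonic measure from the center does not concentrate near a single boundary point — a bound of the type $H_{r_k}(0,z)\le c/r_k$ holds, cf.\ \cite[Prop.~8.1.4]{Lawler10}, or \eqref{E:3.15} after rescaling), the double sum equals $g\log r_k+O(1)$. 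Hence $\var(M_{n,k})=g\log(N'/r_k)+O(1)=g\,k\log b+O(1)$, which is exactly the desired bound with $\sigma^2:=g\log b>0$ and $d$ absorbing the uniform $O(1)$. The variances of the individual increments are automatically positive because the $\sigma$-algebras $\mathcal F_k$ are strictly decreasing (each new annulus genuinely adds information about $\chi_{N',0}$).

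The main obstacle is the uniformity of the error term: one must ensure that the $O(1)$ in the two displays above does not secretly depend on $N$, $n$, or $k$. This is handled by observing that $B(r_k)\subseteq B(N'/b)$ always sits at distance $\ge (1-1/b)N'$ from $\partial B(N')$, so the harmonic-difference estimates and the potential-kernel remainder $O(|x|^{-2})$ are controlled by a constant depending only on $b$; and the lower cutoff $|z-\tilde z|\gtrsim r_k$ on the diagonal-like terms uses only the scale-invariant bound on harmonic measure from the center of a square, again a $b$-free (indeed absolute) constant. Once these uniform estimates are in place, Lemma~\ref{lem:ellipicity_upper} follows directly by feeding $(\sigma,d)$ into Lemma~\ref{lem:finite_LIL}.
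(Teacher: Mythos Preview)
Your argument is essentially correct and follows the same overall structure as the paper (Gaussian martingale increments, then a Green-function variance computation), but you take a longer detour at the key step. The paper observes directly from the Gibbs--Markov decomposition that
\[
\var(M_{n,k})=G_{B(N')}(0,0)-G_{B(b^{n-k}N)}(0,0),
\]
since $\chi_{N',0}$ splits as the sum of the coarse field value $M_{n,k}$ and an independent fine-field GFF value in $B(b^{n-k}N)$. This reduces everything to the standard asymptotic $G_{B(r)}(0,0)=g\log r+O(1)$, giving $\var(M_{n,k})=g(\log b)k+O(1)$ in one line. Your route via the explicit harmonic-average representation and the double boundary sum $\sum_{z,\tilde z}H_{r_k}(0,z)H_{r_k}(0,\tilde z)\,\fraka(z-\tilde z)$ is valid but strictly harder: the claim that this sum equals $g\log r_k+O(1)$ does not follow merely from ``typical $|z-\tilde z|\asymp r_k$''; you need to check that the near-diagonal contribution (where $|z-\tilde z|$ is small) is $O(1)$, which requires a short computation using $H_{r_k}(0,z)\le c/r_k$ and $\sum_{m=1}^{r_k}\log(r_k/m)=O(r_k)$. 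This works, but it is exactly the computation you would do to prove $G_{B(r_k)}(0,0)=g\log r_k+O(1)$ from scratch, so you gain nothing over the paper's identity.

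One small wording issue: you call $\mathcal F_k=\sigma(\chi_{N',v}\colon v\in\partial B(b^{n-k}N))$ a ``shrinking'' or ``strictly decreasing'' sequence of $\sigma$-algebras, but these are not nested at all. The filtration that makes $\{M_{n,k}\}_k$ a martingale is the \emph{increasing} one from the paper's definition, $\sigma(\chi_{N',v}\colon v\in\bigcup_{n-k\le j\le n}\partial B(b^jN))$; the Gibbs--Markov screening you invoke is precisely what lets you replace this by the innermost boundary when computing the conditional expectation. Your conclusion (independent centered Gaussian increments with $s_k^2=\var(M_{n,k})$) is unaffected, but the exposition should be tightened.
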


\begin{proofsect}{Proof}
Since the $M_{ n,k} - M_{ n,k-1}$'s are differences of a Gaussian 
martingale sequence, they are independent normals. So we only need to verify the constraints on the variances. Denoting   $N'':=b^{n - k} N$,   the Gibbs-Markov property of the GFF implies
\begin{equation}
\label{eq:ellipticity_upper1}
\var (M_{ n,k}) = G_{B(N')}(0,0)-G_{B(N'')}(0,0).
\end{equation}
Recalling our notation $H^B(x,y)$ for the harmonic measure, the representation 
\begin{equation}
G_B(x,y) = -\fraka(x-y)+\sum_{z\in\partial B}H^B(x,z)\fraka(y-z)
\end{equation}
gives
\begin{equation}
\var (M_{ n,k})= \sum_{z\in\partial B(N')}H^{B(N' )}(0,z)\fraka(z)-\sum_{z\in\partial B(N'')}H^{B(N'')}(0,z)\fraka(z).
\end{equation}
Now substitute the asymptotic form \eqref{E:3.14} and notice that the terms arising from~$c_0$ exactly cancel, while those from the error $O(|x|^{-2})$ are uniformly bounded. Concerning the terms arising from the term $g\log|x|$, here we note that
\begin{equation}
\sup_{N\ge1}\,\Bigl|\sum_{z\in\partial B(N)}H^{B(N)}(0,z)\log|z| - \log N\Bigr|<\infty,
\end{equation}
which follows by using  $\log|x+r|-\log|x|=O(|r|/|x|)$ to approximate the sum by an integral. Hence we get
\begin{equation}
\begin{aligned}
G_{B(N')}(0,0)-G_{B(N'')}(0,0) &= g\log(N')-g\log(N'')+O(1) 
\\&= g\log(b)(n-k)+O(1)
\end{aligned}
\end{equation}
with $O(1)$ bounded uniformly in~$N\ge1$, $n\ge1$ and~$k=1,\dots,n-1$.
\end{proofsect}

Using the above setup, pick two (possibly real) numbers~$1<r_1<r_2<b$ and define 
\begin{equation}
A_{ n,k}:= B\bigl(\lfloor r_2 b^{k} N\rfloor \bigr)\smallsetminus B\bigl(\lceil r_1 b^{k} N\rceil\bigr)^{\circ}\,.
\end{equation} 
  The point of working with the conditional expectations of $\chi_{N'}$ evaluated at the origin is that these expectations represent very well the typical value of the same conditional expectation anywhere on~$A_{ n,k}$. Namely, we have:

\begin{lemma}
\label{lem:anchoring}
Denote
\begin{equation}
\Delta_{n} := \max_{k=1,\dots,n-1}\,\,\max_{v \in A_{ n,k}}\,\Bigl|
M_{ n,k} - 
\E\bigl(\chi_{N', v} \,\big|\,\chi_{N', v}: v \in {\textstyle\bigcup_{n \geq j \geq n - k}}\partial B(b^jN)\bigr)\Bigr|.
\end{equation}
For each~$b\ge2$  (and each~$r_1,r_2$ as above)  there are $\wt C>0$  and $N_0\ge1$  such that for all  $N\ge N_0$ and all  $n\ge1$,
\begin{equation}
\label{lem:anchoring_eq}
\P\bigl(\Delta_{n} \geq \wt C\sqrt{\log n}\,\bigr) \leq 1/ n^2\,.
\end{equation}
\end{lemma}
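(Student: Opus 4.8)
The plan is to fix $1\le k\le n-1$, reduce $M_{n,k}-\E(\chi_{N',v}\mid\mathcal F_{n,k})$ (where $\mathcal F_{n,k}:=\sigma(\chi_{N',w}\colon w\in\bigcup_{n-k\le j\le n}\partial B(b^jN))$) to a centered Gaussian field in $v$ whose variance and increments are controlled \emph{uniformly} in $k$, $n$, $N$, and then finish with Borell's inequality and a union bound over the at most $n$ values of~$k$. For the reduction, set $m:=b^{\,n-k}N$, so that $B(m)$ and $B(bm)$ are the two innermost of the concentric squares whose boundary circles generate~$\mathcal F_{n,k}$, and note $A_{n,k}$ lies in the annulus $B(bm)\setminus B(m)$ at distance of order~$m$ from both of its boundary circles (here $1<r_1<r_2<b$ is used). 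Since the origin is screened from $\partial B(bm)$ and the outer circles by $\partial B(m)$, and every $v\in A_{n,k}$ is screened from all conditioning circles other than $\partial B(m),\partial B(bm)$ by these two, the Gibbs--Markov property (Lemma~\ref{lem:Markov}) gives $M_{n,k}=\E(\chi_{N',0}\mid\mathcal G)$ and $\E(\chi_{N',v}\mid\mathcal F_{n,k})=\E(\chi_{N',v}\mid\mathcal G)$ for $\mathcal G:=\sigma(\chi_{N',w}\colon w\in\partial B(m)\cup\partial B(bm))$, hence
\begin{equation*}
M_{n,k}-\E\bigl(\chi_{N',v}\,\big|\,\mathcal F_{n,k}\bigr)=\E\bigl(\chi_{N',0}-\chi_{N',v}\,\big|\,\mathcal G\bigr).
\end{equation*}
I would then decompose $\chi_{N'}$ on $B(bm)$ into its coarse field $h$ (the harmonic extension of the $\partial B(bm)$-values) and fine field $\xi=\chi_{N'}-h$ (a Dirichlet GFF on $B(bm)$, independent of~$h$), and $\xi$ on $B(m)$ into its coarse field $\xi^{\mathrm c}$ and a further fine field; using that $h$ is a function of the $\partial B(bm)$-values, that $\xi$ vanishes on $\partial B(bm)$, and $h\perp\xi$, one rewrites the right-hand side as $(h_0-h_v)+(\xi^{\mathrm c}_0-\zeta_v)$, where $\zeta_v:=\E(\xi_v\mid\sigma(\xi_w\colon w\in\partial B(m)))$ is the harmonic extension of the $\partial B(m)$-values of $\xi$ into the annulus $D:=B(bm)\setminus B(m)$ (with zero boundary values on $\partial B(bm)$), evaluated at~$v$.

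The crux is that each of these four terms is $O(1)$ uniformly. Although $h_0$ and $h_v$ individually have variance of order~$k$, the points $0$ and $v$ lie at distance of order~$m$ from $\partial B(bm)$ and at mutual distance of order~$m$, so Lemma~\ref{lemma-3.7}, applied along a tube-shaped connected subset of $B(bm)$ joining them, gives $\mathrm{Var}(h_0-h_v)\le C$ and $\mathrm{Var}(h_v-h_{v'})\le C(|v-v'|/m)^2$ for $v,v'\in A_{n,k}$, with $C=C(b,r_1,r_2)$. For the two fine terms, \eqref{E:3.14} and the harmonic-measure computation behind Lemma~\ref{lem:ellipicity_upper} give $\mathrm{Var}(\xi^{\mathrm c}_0)=G_{B(bm)}(0,0)-G_{B(m)}(0,0)=g\log b+O(1)$ and $\mathrm{Var}(\zeta_v)=G_{B(bm)}(v,v)-G_D(v,v)=O(1)$, uniformly over $k\ge1$, $v\in A_{n,k}$ and $N\ge N_0$ (the $O(1)$ again uses that $v$ is at distance of order~$m$ from both components of $\partial D$), and, $\zeta$ being harmonic on $D$, also $\mathrm{Var}(\zeta_v-\zeta_{v'})\le C(|v-v'|/m)^2$. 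So the centered Gaussian field $X^{(k)}_v:=M_{n,k}-\E(\chi_{N',v}\mid\mathcal F_{n,k})$, $v\in A_{n,k}$, will obey $\sup_v\mathrm{Var}(X^{(k)}_v)\le C_0$ and $\mathrm{Var}(X^{(k)}_v-X^{(k)}_{v'})\le C_0(|v-v'|/m)^2$ with $C_0=C_0(b,r_1,r_2)$ independent of $k,n,N$.

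It then remains to assemble the bound. Covering $A_{n,k}$ by a bounded number of squares of side of order~$m$ and using $\mathrm{Var}(X^{(k)}_v-X^{(k)}_{v'})\le C_0|v-v'|/m$, Lemma~\ref{lem:max_expectation} together with the variance bound gives $\E\max_{v\in A_{n,k}}|X^{(k)}_v|\le C_1(b,r_1,r_2)$; then Lemma~\ref{lem:Borell_ineq} yields $\P(\max_{v\in A_{n,k}}|X^{(k)}_v|\ge C_1+t)\le 4\e^{-t^2/(2C_0)}$ for all $t\ge0$. Taking $t=\wt C\sqrt{\log n}$ and union-bounding over $k=1,\dots,n-1$ gives $\P(\Delta_n\ge C_1+\wt C\sqrt{\log n})\le 4n^{1-\wt C^2/(2C_0)}$; choosing $\wt C$ large (in terms of $C_0$, hence of $b,r_1,r_2$) so that $\wt C^2/(2C_0)\ge4$, and then $N_0$ large enough that $C_1\le\wt C\sqrt{\log n}$ and $4n^{-3}\le n^{-2}$ for $n\ge N_0$, proves~\eqref{lem:anchoring_eq}. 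I expect the main obstacle to be the middle step: $M_{n,k}$ and $\E(\chi_{N',v}\mid\mathcal F_{n,k})$ are conditional means of the GFF whose individual fluctuations grow like $\sqrt k$, yet the whole content of the lemma is that their difference stays bounded — making this cancellation visible through the two-scale Gibbs--Markov splitting and then feeding it into the smoothness estimate Lemma~\ref{lemma-3.7} and the Green's-function bookkeeping of Lemma~\ref{lem:ellipicity_upper} is the non-routine part, whereas the chaining, Borell and union-bound steps are standard.
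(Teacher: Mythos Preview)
Your argument is correct and structurally parallel to the paper's: both reduce to showing that the centered Gaussian field $X^{(k)}_v:=M_{n,k}-\E(\chi_{N',v}\mid\mathcal F_{n,k})$ on~$A_{n,k}$ has pointwise variance and increment variance bounded uniformly in $k,n,N$, and then conclude via chaining (Lemma~\ref{lem:max_expectation}), Borell's inequality (Lemma~\ref{lem:Borell_ineq}), and a union bound over~$k$. Where you diverge is in the variance step. The paper splits it into (i)~the oscillation of $\tilde\chi_k$ over $A_{n,k}$, handled by Corollary~\ref{cor:field_smoothness1} applied to the annulus $A'_{n,k}$, and (ii)~the anchoring $\tilde\chi_{k,v}-\tilde\chi_{k,0}$ at a single $v\in A_{n,k}$, handled by passing to~$\tilde\chi_{k-1}$ and applying Corollary~\ref{cor:field_smoothness1} on a box containing both $0$ and~$v$. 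Your two-scale Gibbs--Markov decomposition $X^{(k)}_v=(h_0-h_v)+(\xi^{\mathrm c}_0-\zeta_v)$ is in effect the same splitting made explicit: your~$h$ coincides with $\tilde\chi_{k-1}$ on $B(bm)$, so $h_0-h_v$ is exactly the paper's piece~(ii), while $\xi^{\mathrm c}_0$ and $\zeta_v$ are the martingale increments $\tilde\chi_k-\tilde\chi_{k-1}$ at $0$ and at~$v$, whose $O(1)$ size you read off directly from Green's-function asymptotics rather than from the oscillation bound. What your route buys is that the cancellation of the order-$\sqrt k$ fluctuations becomes visible --- both $M_{n,k}$ and the conditional mean at~$v$ carry their large part in~$h$, and $h_0-h_v=O(1)$ because $0$ and~$v$ sit at comparable depth in $B(bm)$ --- at the cost of a few extra lines of bookkeeping. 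One small slip at the end: ``$N_0$ large enough that $C_1\le\wt C\sqrt{\log n}$'' conflates $N$ and~$n$; since the claim is for all $n\ge1$, simply enlarge~$\wt C$ to absorb~$C_1$ (the case $n=1$ is vacuous).
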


\begin{proofsect}{Proof}
Denote $ A_{n,k}'  := B(b^{ k+1}N) \smallsetminus B(b^{ k}N)$ and for~$v\in A'_{n,k}$ abbreviate
\begin{equation}
\label{E:3.46}
\tilde\chi_{k,v} := \E\bigl(\chi_{N', v} \,\big|\,\chi_{N', v}\colon v \in {\textstyle\bigcup_{n \geq j \geq n-k}}\partial B(b^jN)\bigr).
\end{equation}
From the Gibbs-Markov property we also have 
\begin{equation}
\tilde\chi_{k,v} = \E\bigl(\chi_{N', v} \,\big|\,\chi_{N', v}\colon v \in \partial A_{n,k}'\bigr)\,, 
\qquad v\in A_{n,k}'.
\end{equation}
As soon as~$N$ is sufficiently large, the domains $A:=B(N')$, $B:= A_{n,k}'$ and $\tilde B:=A_{ n,k}$ obey condition~\eqref{E:3.19}  with some~$\delta\ge1$ for all~$n\ge1$ and all $k\in\{1,\dots,n-1\}$. 
Corollary~\ref{cor:field_smoothness1}  then  gives
\begin{equation}
\label{E:3.48}
\P\Bigl(\,\max_{u,v\in A_{ n,k}}\bigl|\tilde\chi_{k,v}-\tilde\chi_{k,u}\bigr|>c+t\Bigr)\le2\e^{-\tilde c t^2}
\end{equation}
for some constants $c,\tilde c>0$ independent of~$N$, $n$ and~$k$. This shows that the oscillation of $\tilde\chi_k$ on~$A_{ n,k}$ has a uniform Gaussian tail, so in order to bound $M_{ n,k}-\tilde\chi_{k,v}=\tilde\chi_{k,0}-\tilde\chi_{k,v}$ uniformly for~$v\in A_{ n,k}$, it suffices to show that, for just one~$v\in A_{ n,k}$, also $\tilde\chi_{k,v}-\tilde\chi_{k,0}$ has such a tail. Since this random variable is a centered Gaussian, it suffices to estimate its variance. Here \eqref{E:3.46} gives
\begin{equation}
\label{}
\var\bigl(\tilde\chi_{k,v}-\tilde\chi_{k,0}\bigr)\le \var\bigl(\tilde\chi_{k-1,v}-\tilde\chi_{k-1,0}\bigr).
\end{equation}
Corollary~\ref{cor:field_smoothness1} can now be applied with $A:=B(N')$, $B:= B(b^{k+1}N)$ and $\tilde B:=B(\lfloor r_2 b^k N\rfloor)$ to bound the right-hand side by a constant uniformly in~$N$, $n$ and~$k=1,\dots,n-1$. Combined with \eqref{E:3.48}, the union bound shows
\begin{equation}
\P\Bigl(\,\max_{v\in A_{ n,k}}\bigl|\tilde\chi_{k,v}-M_{n,k}\bigr|>c'+t\Bigr)\le2\e^{-\tilde c' t^2}
\end{equation}
with $c',\tilde c'\in(0,\infty)$ independent of~$N$, $n$ and~$k$.
Another use of the union bound now yields \eqref{lem:anchoring_eq}, thus proving the claim.
\end{proofsect}

\subsection{Cardinality of the level sets}
 In this subsection, we estimate the cardinality of the sets of points where the GFF  equals  (roughly) a  prescribed  multiple of its absolute maximum. 
Recall that  from \cite{BZ10,BDZ14} we know that the family of random variables 
\begin{equation}
\label{E:3.51}
\max_{v\in B(N)}\chi_{N,v} - 2\sqrt g\log N -   \frac{3}{4}  \sqrt g\log\log N
\end{equation}
 is  tight as~$N\to\infty$. The level sets we are interested in are of the form 
\begin{equation}
\label{eq-def-level-set}
\mathcal A_{N, \alpha} := \Bigl\{v\in B(\lfloor N/2\rfloor)\colon \chi_{N,v} \in( \alpha \tilde m_N, \alpha 
\tilde m_N + 1)\Bigr\}\,,
\end{equation}
 where $\tilde m_N:=2\sqrt g\log N$ and~$\alpha\in(0,1)$. Our conclusion about these is as follows:

\begin{theorem} \label{thm-levelset}
For any $\alpha_0\in(0,1)$ there are  $c=c(\alpha_0)>0$ and  $\kappa=\kappa(\alpha_0)>0$  such that for all $0\leq \alpha_N\leq \alpha_0$ and all $\delta \geq \mathrm{e}^{-(\log N)^{1/4}}$ the bound
\begin{equation}
\label{E:3.52}
\P\bigl(|\mathcal A_{N, \alpha_N}| \leq \delta \E |\mathcal A_{N, \alpha_N}|\bigr)
\leq c\delta^{ \kappa}\,
\end{equation}
holds for all~$N$ sufficiently large.
The same holds  also for the GFF on $ B(N) \smallsetminus \{0\}$.
\end{theorem}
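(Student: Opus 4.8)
\emph{Proof proposal.} The plan is to match $|\mathcal A_{N,\alpha_N}|$ from below by a barrier‑restricted count whose second moment exceeds the square of its mean by only a polylogarithmic factor, and then to upgrade the resulting Paley--Zygmund bound to the claimed polynomial lower tail by a conditional‑independence argument over sub‑boxes that recurses on the spatial scale. We begin with the (easy) first moment: since $\var\chi_{N,v}=G_{B(N)}(v,v)=g\log N+O(1)$ uniformly for $v\in B(\lfloor N/2\rfloor)$ by \eqref{E:3.14}, a direct Gaussian estimate gives $c(\alpha_0)N^{2-2\alpha_N^2}(\log N)^{-1/2}\le\E|\mathcal A_{N,\alpha_N}|\le C(\alpha_0)N^{2-2\alpha_N^2}(\log N)^{-1/2}$, uniformly in $\alpha_N\in[0,\alpha_0]$. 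For the GFF on $B(N)\smallsetminus\{0\}$ the variance at $v$ equals $G_{B(N)}(v,v)-G_{B(N)}(v,0)^2/G_{B(N)}(0,0)$, which is still $g\log N+O(1)$ outside a fixed neighbourhood of the origin and never exceeds it; since the count is dominated by $v$ at macroscopic distance from $0$, the same two‑sided bound on $\E|\mathcal A_{N,\alpha_N}|$ holds and the rest of the argument is unchanged.

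Next, the barrier. For $v\in B(\lfloor N/2\rfloor)$ and $0\le k\le L:=\lfloor\log_2 N\rfloor$ put $S^v_k:=\E\bigl(\chi_{N,v}\mid\chi_N\text{ off }v+[-2^k,2^k]^2\bigr)$; by Gibbs--Markov (Lemma~\ref{lem:Markov}) and the variance computation behind Lemma~\ref{lem:ellipicity_upper}, the sequence $0=S^v_L,S^v_{L-1},\dots,S^v_0$ followed by $\chi_{N,v}$ is a Gaussian random walk of $L+O(1)$ steps with independent increments of variance $g\log 2+O(1)$. Fix a large absolute $C_0$ and let $\widehat{\mathcal A}_{N,\alpha}\subseteq\mathcal A_{N,\alpha}$ consist of those $v$ for which in addition $S^v_{L-j}\le\frac jL\alpha\tilde m_N+C_0\sqrt{\log N}$ for every $0\le j\le L$. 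Conditionally on $\chi_{N,v}$ lying in the window, $(S^v_{L-j})_j$ is, up to $O(1)$, a Gaussian bridge from $0$ to $\alpha\tilde m_N$ over $L$ steps, so after subtracting the interpolating line it is a centered bridge of length $L$ whose running maximum is $O(\sqrt{\log N})$ with high probability; hence for $C_0$ large the barrier holds with conditional probability at least $\tfrac12$ (a reflection/ballot estimate, or Lemma~\ref{lem:Borell_ineq} applied to the bridge), so $\E|\widehat{\mathcal A}_{N,\alpha_N}|\ge c_0(\alpha_0)\E|\mathcal A_{N,\alpha_N}|$. The point of the truncation is the matching second moment, $\E|\widehat{\mathcal A}_{N,\alpha_N}|^2\le C_1(\alpha_0)(\log N)^{C_2}\bigl(\E|\widehat{\mathcal A}_{N,\alpha_N}|\bigr)^2$: for $2^j\le|u-v|<2^{j+1}$ the field splits (Gibbs--Markov) into a part common to $u,v$ carried by scales $\ge 2^j$ and two conditionally independent parts on scales $<2^j$, and the barriers on $u$ and on $v$ force the common coarse value at scale $\sim 2^j$ below $\frac{L-j}{L}\alpha\tilde m_N+C_0\sqrt{\log N}$, which is exactly what kills the superpolynomial blow‑up the naive second moment exhibits once $\alpha>1/\sqrt2$; summing the pair bound over the $O(N^2 4^{L-j})$ pairs at scale $2^j$ and over $j$ produces the factor $(\log N)^{C_2}$, pairs at bounded distance contributing $O(\E|\widehat{\mathcal A}_{N,\alpha_N}|)$. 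This is the now‑standard (if lengthy) second‑moment computation for near‑extremal level sets of log‑correlated fields.

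Finally, the amplification. Paley--Zygmund plus the last two bounds yields only $\P(|\widehat{\mathcal A}_{N,\alpha_N}|\ge\tfrac12\E|\widehat{\mathcal A}_{N,\alpha_N}|)\ge c_3(\alpha_0)(\log N)^{-C_2}$, and the gap is closed by tiling. Fix $K$ with $\log K=\lfloor\sqrt{\log N}\rfloor$, partition $B(\lfloor N/2\rfloor)$ into $M\asymp(N/K)^2$ disjoint squares $B_i$ of side $K$ with concentric sub‑squares $B_i'$ of side $\lfloor K/2\rfloor$, and let $\mathcal F$ be generated by $\chi_N$ off $\bigcup_i B_i^\circ$; by Gibbs--Markov the fine parts $\varphi_i$ on $B_i^\circ$ are conditionally independent given $\mathcal F$, while the harmonic parts $h_i$ are $\mathcal F$‑measurable and oscillate by $O(1)$ on $B_i'$ (Corollary~\ref{cor:field_smoothness1}). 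Let $Z_i$ be the within‑$B_i$ barrier‑restricted analogue of $|\widehat{\mathcal A}_{N,\alpha_N}\cap B_i'|$ with $h_i$ as boundary datum, further restricted to those $i$ for which the effective within‑box level fraction $(\alpha\tilde m_N-h_i)/\tilde m_K$ stays below $\tfrac{1+\alpha_0}{2}$ (the remaining $i$ contributing subdominantly to the first moment), so that $\sum_i Z_i\le|\widehat{\mathcal A}_{N,\alpha_N}|$ and, writing $\mu_i:=\E(Z_i\mid\mathcal F)$, the second‑moment bound applied inside $B_i$ at scale $K$ gives $\var(Z_i\mid\mathcal F)\le(\log K)^{C_2}\mu_i^2$. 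Conditional independence and Chebyshev then give, on $\{\sum_i\mu_i\ge 2\delta\,\E|\mathcal A_{N,\alpha_N}|\}$, that $\P(\sum_i Z_i\le\delta\,\E|\mathcal A_{N,\alpha_N}|\mid\mathcal F)\le 4(\log K)^{C_2}\max_i\mu_i/\sum_i\mu_i\le 2(\log K)^{C_2}K^2/(\delta\,\E|\mathcal A_{N,\alpha_N}|)$, which for $\delta\ge\e^{-(\log N)^{1/4}}$ is superpolynomially small in $N$, hence far below $\delta^{\kappa}$. It remains to control $\P(\sum_i\mu_i\le 2\delta\,\E|\mathcal A_{N,\alpha_N}|)$; but $\sum_i\mu_i$ is, up to a constant factor, a barrier‑restricted level‑set count of the coarse field, which lives on scales $[K,N]$ and is thus governed by the statement of Theorem~\ref{thm-levelset} at the strictly smaller scale $\asymp N/K$. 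An induction on $\log N$ — arranged so that the constant lost at each of the $\asymp\sqrt{\log N}$ steps is $1+o(1)$, absorbed by the slack in $\delta^{\kappa}$ thanks to $\delta\ge\e^{-(\log N)^{1/4}}$ — then closes the argument, the case of $\delta$ bounded below being immediate from $\E|\widehat{\mathcal A}_{N,\alpha_N}|\ge c_0(\alpha_0)\E|\mathcal A_{N,\alpha_N}|$.

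\emph{Main obstacle.} The delicate point is the amplification: because the variance of $|\mathcal A_{N,\alpha}|$ exceeds the square of its mean by a polynomial factor once $\alpha>1/\sqrt2$, the lower tail is genuinely only polynomial in $\delta$, so the decomposition into conditionally independent sub‑box counts together with a scale recursion is unavoidable, and the real work is to propagate the constants (and the exponent $\kappa$) through that recursion so that the accumulated loss over the $\asymp\sqrt{\log N}$ levels does not destroy the bound. The barrier second‑moment estimate is the longest computation but is by now routine.
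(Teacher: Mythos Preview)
Your truncation by barriers on the multiscale martingale $S^v_k$ is in spirit the same as the paper's: the paper averages~$\chi_N$ over concentric annuli to define $\bar\chi_{N,k,v}$, introduces the event
\[
E_{v,\epsilon,r,\alpha_N}=\bigl\{\chi_{N,v}\in(\alpha_N\tilde m_N,\alpha_N\tilde m_N+1)\bigr\}\cap\bigcap_{k=1}^n\Bigl\{\bar\chi_{N,k,v}\le\alpha_N\tfrac{n-k}{n}\tilde m_N+\epsilon[k\wedge(n-k)]+r\Bigr\},
\]
and shows $\E\mathcal Z\ge\tfrac12\E|\mathcal A_{N,\alpha_N}|$ together with $\var\mathcal Z/(\E\mathcal Z)^2=O(1)$ (not merely polylog---the barrier here grows \emph{linearly} in $k\wedge(n-k)$, which is why the ratio is bounded). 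So your first two steps are close to the paper's, just with a slightly looser barrier.

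The real divergence is in the amplification, and this is where your argument has a gap. The paper does \emph{not} recurse on scale. Instead it performs a single ``sprinkling'' step: pick $M$ with $N_1\le M\le\e^{(\log N)^{1/4}}$, tile $B(\lfloor N/2\rfloor)$ by $L^2\asymp(N/M)^2$ boxes $V_i$ of side~$M$, and on the high-probability event $\mathcal E=\{\chi_{N,i,v}^c\ge-40\log M\ \text{on all }V_i\}$ (probability $1-O(M^{-1})$) observe that the field on each $V_i$ stochastically dominates an independent Dirichlet GFF minus $40\log M$. The $40\log M$ shift changes the level fraction by $O(\log M/\log N)$ and costs only a factor $M^{400}$ in expectation, so applying the Paley--Zygmund bound \emph{inside each box} gives $\P(|\mathcal A_{N,\alpha_N,i}|\ge M^{-400}\E|\mathcal A_{N,\alpha_N}|\mid\mathcal E)\ge c$ uniformly in~$i$. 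Conditional independence then yields $\P(\max_i|\mathcal A_{N,\alpha_N,i}|\ge M^{-400}\E|\mathcal A_{N,\alpha_N}|)\ge1-O(M^{-1})-(1-c)^{L^2}$, and choosing $M$ so that $\delta<M^{-400}<2\delta$ (this is exactly where the hypothesis $\delta\ge\e^{-(\log N)^{1/4}}$ enters) gives \eqref{E:3.52} with $\kappa=1/400$.

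Your recursion, by contrast, hinges on the assertion that ``$\sum_i\mu_i$ is, up to a constant factor, a barrier-restricted level-set count of the coarse field'', and this is not established. Each $\mu_i$ is roughly $K^{2-2\beta_i^2}$ with $\beta_i=(\alpha\tilde m_N-h_i)/\tilde m_K$, an exponential function of the coarse value $h_i$ rather than an indicator; turning $\sum_i\mu_i$ into a genuine level-set count requires restricting to~$i$ with $h_i$ near $\alpha\tilde m_{N/K}$ and controlling the remainder, which you do not do. Even granting that, the coarse field at the box centers is not literally a GFF on $B(N/K)$, so ``Theorem~\ref{thm-levelset} at scale $N/K$'' cannot be invoked without further work identifying the two fields up to admissible error. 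Finally, you would need to track the multiplicative constants through $\asymp\sqrt{\log N}$ recursion levels; you acknowledge this as ``the real work'' but do not carry it out. The paper's one-shot sprinkling avoids all three issues.
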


 The exponent linking  the cardinality of the level set  to  the  linear  size of the  underlying  domain has been computed in~\cite{daviaud} building on \cite{BDG}  where the  leading-order growth-rate of the absolute maximum  was determined.  While much progress on the maxima of the GFF has been made recently, notably with the help of modified branching random walk (MBRW) introduced in 
\cite{BZ10}, the methods  used in these studies  do not seem to  be of much use  here. Indeed, in order to make use of the modified branching random walk one needs to invoke a comparison between the GFF and MBRW, which is conveniently available for the maximum (using Slepian's lemma \cite{Slepian62}), but does not seem to extend to the cardinality of the level sets. 

Another possible approach  to consider is the intrinsic dimension of the level sets (see \cite{CDD13}), but this would not give a sharp estimate as we desire. Our approach  to  Theorem~\ref{thm-levelset} is  much simpler, being  a combination of the second moment method (which directly applies to GFF) and the ``sprinkling method'' which was employed in \cite{Ding11} in the context of the GFF. We remark that the second moment method has  recently been used to prove that a suitably-scaled size of the whole level set admits a non-trivial distributional limit~\cite{BL4}.

\begin{proofsect}{Proof of Theorem~\ref{thm-levelset}}
 The proof is actually quite easy when $\alpha<1/\sqrt2$, but becomes more complicated in the complementary regime of~$\alpha$. This is due to well known failure of the second-moment method in these problems and the need for a suitable truncation to make it work again. The first half of the proof thus consists of the set-up, and control, of the truncation.

Pick $N\ge1$ large and let $n:=\max\{k\colon 2^k< N/8\}$.
For $v\in B(\lfloor N/2\rfloor)$, write $B(v, L):=v+B(L)$ and, for $k = 1, \ldots, n$, set, abusing of our earlier notation, $A_{n,k}(v) := B(v, 2^{k+1}) \smallsetminus B(v, 2^k)$. Note that $A_{n,k}(v)\subset B(\lfloor 3N/4\rfloor)$ for all~$k=1,\dots,n$. Then for all $x,y\in A_{n,k}(v)$ and with~$g:=2/\pi$,  
\begin{equation}
\E (\chi_{N,v} \chi_{N,x}) = g(\log 2)(n-k) + O(1)
\end{equation}
and
\begin{equation}
\E (\chi_{N,x} \chi_{N,y}) \ge g(\log 2)(n-k) + O(1)
\end{equation}
hold with $O(1)$ uniformly bounded in~$N$ and~$x,y$ as above. 
Next denote 
\begin{equation}
\bar \chi_{N,k,v} := \frac{1}{|A_{n,k}(v)|} \sum_{u\in A_{n,k}(v)} \chi_{N,u}\,.
\end{equation}
A straightforward calculation then shows that
\begin{equation}
\label{eq-var-bar-X}
\var (\bar \chi_{N,k,v}) = g(\log 2)(n-k) + O(1)\,
\end{equation}
and
\begin{equation}
\E (\bar \chi_{N,k,v} \chi_{N,v})= g(\log 2)(n-k) + O(1),
\end{equation}
again, with $O(1)$ uniform in~$N$. It follows that there are numbers $a_x=a_{N,k,v,x}$ with $|a_{x} - 1| = 
O(1/(n-k))$ and a Gaussian process $Y_x=Y_{N,k,v,x}$ which is independent of $\bar \chi_{N,k,v}$ and obeys $\var(Y_x)=g(\log 2)k+O(1)$ such that
\begin{equation}
\label{eq-X-Y}
\chi_{N,x} = a_x\bar \chi_{N,k,v} + Y_x,\qquad x\in \{v\} \cup 
A_{n,k}(v).
\end{equation}
Further, we have that 
\begin{equation}\label{eq-cov-Y}
\max_{x\in A_{n,k}(v)}\E (Y_v Y_x)= O(1)
\end{equation}
again with~$O(1)$ uniform in~$N$.

For $\epsilon>0$, $r>0$ and $0\leq \alpha_N \leq \alpha_0$, define the event
\begin{multline}
\qquad
E_{v, \epsilon, r, \alpha_N} := \bigl\{\chi_{N,v} \in (\alpha_N \tilde m_N, \alpha_N \tilde m_N+1)\bigr\} 
\\
\cap \bigcap_{k=1}^n \Bigl\{\bar \chi_{N,k,v} 
\leq \alpha_N \frac{n-k}{n} \tilde m_N  + \epsilon [k \wedge (n-k)]+ r\Bigr\}\,.
\qquad
\end{multline}
We claim that for $\epsilon := \frac{(1-\alpha_0)}{10} $ and $r := r_{\alpha_0}$ sufficiently large, we have
\begin{equation}
\label{eq-E-v}
\P\bigl(E_{v, \epsilon, r, \alpha_N}\bigr) \geq \tfrac{1}{2}\P\bigl(\chi_{N,v} \in (\alpha_N \tilde m_N, \alpha_N \tilde m_N+1)\bigr)\,.
\end{equation}
In order to prove \eqref{eq-E-v}, note that by \eqref{eq-var-bar-X}
\begin{equation}
\E \bigl(\bar \chi_{N,k,v}\,\big|\, \chi_{N,v} \in (\alpha_N \tilde m_N, \alpha_N \tilde m_N+1)\bigr) =  \alpha_N \frac{n-k}{n}\tilde m_N + O(1)
\end{equation}
and
\begin{equation}
\var \bigl(\bar \chi_{N,k,v}\,\big|\, \chi_{N,v}\bigr) \leq \frac{4(n-k) k}{n}\,.
\end{equation}
Abbreviating $s_k := \alpha_N \frac{n-k}{n} \tilde m_N  + \epsilon [k \wedge (n-k)]+ r$, from these observations we have 
\begin{multline}
\qquad
\sum_{k=1}^n\P\Bigl( \bar \chi_{N,k,v} \geq s_k  \,\Big|\, \chi_{N,v} \in (\alpha_N 
\tilde m_N, \alpha_N \tilde m_N+1)\Bigr) 
\\
\leq \sum_{k=1}^n\e^{-\epsilon((n-k)\wedge k + r + O(1))/100} \leq 1/2\,,
\qquad
\end{multline}
 where
the last inequality holds for all~$r\ge r(\alpha_0)$ where $r(\alpha_0)\in(0,\infty)$.  This yields \eqref{eq-E-v}. 

Now we are ready to apply the second moment method. We will work with 
\begin{equation}
\mathcal Z := \sum_{v\in B(\lfloor N/2\rfloor)} \mathbf 1_{E_{v, \epsilon, r, \alpha_N}}
\end{equation}
From \eqref{eq-E-v} and a calculation for the Gaussian distribution we get
\begin{equation}\label{eq-expectation-Z}
\E \mathcal Z \geq \frac{1}{2}\E |\mathcal A_{N, \alpha_N}| \ge \frac{c}{\sqrt{n}} 4^{(1-\alpha_N^2) n}\,
\end{equation}
for some constant~$c>0$.
Our next task is a derivation of a suitable upper bound on $\var \mathcal Z$.  From \eqref{eq-X-Y} and \eqref{eq-cov-Y} we get that, for any $v\in B(\lfloor N/2\rfloor)$ and with~$c_r>0$ a constant depending on~$r$ but not on~$v$ or~$N$,
\begin{equation}
\begin{aligned}
\sum_{u\in B(\lfloor N/2\rfloor)}\P(&E_{u, \epsilon, r, \alpha_N}\cap E_{v, \epsilon, r, \alpha_N})
\\
\leq& \sum_{k = 1}^n \sum_{u\in A_{n,k}(v)} \P\Bigl(\chi_{N,u},\chi_{N,v} \in (\alpha_N \tilde m_N, \alpha_N \tilde m_N + 1),  \bar \chi_{N,k,v} 
\leq x_{k} \Bigr)  
\\
\leq& \sum_{k=1}^n \sum_{u\in A_{n,k}(v)} \int_{-\infty}^{x_\ell } \P\Bigl(Y_{v} \wedge Y_{u} \geq \alpha_N \tilde m_N - s\Bigr) \P(\bar \chi_{N,k,v}  \in \text{d}s) 
\\
\leq & \,c_r\sum_{k = 
1}^n \frac{1}{\sqrt{n-k}} \Bigl(\frac{1}{\sqrt{k}}\Bigr)^2 4^{-\alpha_N^2(n-k)} 4^{(1-2\alpha_N^2) k}  4^{2\epsilon \alpha_N [(n-k)\wedge k]}\,.
\end{aligned}
\end{equation}
Here the last inequality follows from the fact that, once we write the integral using the explicit form of the law of $\bar\chi_{N,k,v}$, the integrand is maximized at $s := s_{k}$ and decays exponentially when~$s$ is away from $s_k$. Combined with \eqref{eq-expectation-Z}, the preceding inequality implies that
\begin{equation}
\frac{\var \mathcal Z}{(\E \mathcal Z)^2} 
\leq c_r\sum_{k = 1}^n \frac{n}{\sqrt{n-k}} \Bigl(\frac{1}{\sqrt{k}}\Bigr)^2 4^{-(1-\alpha_N^2)(n-k)} 4^{2\epsilon \alpha_N [(n-k)\wedge k]} 
= O(1)\,.
\end{equation}
This implies
\begin{equation}\label{eq-second-moment-level-set-first-bound}
\P(\mathcal Z \geq \E \mathcal Z) \geq c\,
\end{equation}
for some $c=c(\alpha_0)>0$ sufficiently small uniformly in~$N\ge N_1$ for some~$N_1$ large.

It remains to enhance the lower bound in~\eqref{eq-second-moment-level-set-first-bound} to a number sufficiently close to one.  To this end, pick an integer $M$ with~$N_1\leq M \leq \e^{(\log N)^{1/4}}$, let $L:=\lfloor N/(2M)\rfloor$ and consider a collection of boxes $V_1,\dots,V_{L^2}$ of the form~$V_i:=v_1+B(M)$ contained in~$B(\lfloor N/2\rfloor)$. For $u\in V_i$, $i=1, \ldots, L^2$, define the coarse fields 
\begin{equation}
\label{E:3.73}
\chi^c_{N,i,u} = \E \bigl(\chi_{N,u} \,\big|\, \chi_{N,x}\colon x\in \partial V_i \bigr)\,.
\end{equation}
By Lemma~\ref{lem:max_expectation} and \cite[Lemma 3.10]{BDZ14}, we get that 
\begin{equation}
\E \max_{v\in V_i} |\chi^c_{N,i,v} - \chi^c_{N, i,v_i}| 
\leq O(1)\,.
\end{equation}
In addition, as is easy to check, $\var \chi^c_{N,i,v_i} \leq 4 \log M$. Introducing the event
\begin{equation}
\mathcal E := \bigl\{\chi^c_{N,i,v} \geq - 40 \log M\colon  v\in V_i, 1\leq i\leq L^2\bigr\},
\end{equation}
we obtain that 
\begin{equation}\label{eq-Lambda-c}
\P(\mathcal E^c) 
= O(M^{-1})\,.
\end{equation}
Conditioning on $\mathcal E$ and on the values $\{\chi_{N,v}\colon v\in \partial V_i, 1\leq i\leq L^2\}$, the GFF in each square of~$V_i$ are independent of each other. Further, the Gaussian field on $V_i$ dominates the field obtained from subtracting $40\log M$ from the GFF on $V_i$ with Dirichlet boundary condition on $\partial V_i$. Write
\begin{equation}
\mathcal A_{N, \alpha_N, i} := \bigl\{v\in V_i\colon \chi_{N,v} \in( \alpha_N \tilde m_N, \alpha_N 
\tilde m_N + 1)\bigr\}.
\end{equation}
 By a straightforward first moment computation, we see that
\begin{equation}
\E |\mathcal A_{N, \alpha_N}| \leq M^{400} \E |\mathcal A_{N, \alpha_N + 40\log M/\tilde m_N, i}|\,.
\end{equation}
Therefore, applying \eqref{eq-second-moment-level-set-first-bound} to $V_i$ we get that
\begin{equation}
\P\bigl( |\mathcal A_{N, \alpha_N, i}| \geq M^{-400} \E |\mathcal A_{N, \alpha_N}|\,\big| \,\mathcal E\bigr) \geq c\,.
\end{equation}
By conditional independence, we then get that
\begin{equation}
\P\bigl( \max_{1\leq i\leq L^2}|\mathcal A_{N, \alpha_N, i}| \geq M^{-400} \E |\mathcal A_{N, \alpha_N}|\,\big|\,\mathcal E) \geq 1  - (1 -c)^{L^2}\,.
\end{equation}
Combined with \eqref{eq-Lambda-c}, it gives
\begin{equation}
\P\bigl(|\mathcal A_{N, \alpha_N}| \geq M^{-400} \E |\mathcal A_{N, \alpha_N}|\bigr) \geq 1 - O(M^{-1}) -  (1 -c)^{L^2}.
\end{equation}
Choosing $M$ so large that $\delta<M^{-400}<2\delta$ (assuming that~$\delta$ is sufficiently small), this readily gives the claim for the GFF on~$B(N)$ with Dirichlet boundary condition. 

In the case that the GFF on $B(N)\smallsetminus\{0\}$, the same calculation goes through by considering instead the level set restricted to the square $(\lfloor N/4\rfloor,0)+B(\lfloor N/2\rfloor)$ and replacing~$\chi_N$ in~\eqref{E:3.73} by~$\eta$. We leave further details to the reader. 
\end{proofsect}

\subsection{A  non-Gibbsian  decomposition of GFF on a square}
As a final item of concern in this section we note that, apart from the Gibbs-Markov property, our proofs will also make use of another decomposition of the GFF which is based on a suitable decomposition of the Green function. This decomposition will be  
of crucial importance  for the development of the RSW  theory  in Section~\ref{sec:RSW}.

\begin{lemma}
\label{lem:field_decomp_smooth}
Let $\{\chi_{N,v}\}_{v \in B(N)}$ be the GFF on~$B(N)$ with Dirichlet boundary condition. Then there exist two independent, centered Gaussian fields $\{Y_{N,v}\}_{v \in B(N)}$ and $\{Z_{N,v}\}_{v \in B(N)}$ such that the following hold:
\settowidth{\leftmargini}{(1111)}
\begin{enumerate}
\item[(a)] $\chi_{N} = Y_{N} + Z_{N}$ a.s.
\item[(b)] $\var(Y_{N,v}) = O(\log \log N)$ uniformly for all $v \in B(N)$.
\item[(c)] $\var(Z_{N,v} - Z_{N,v}) = O(1/\log N)$ uniformly for all $u, v \in B(\lceil N/2 \rceil)$ such that $u \sim v$.
\end{enumerate}
The distribution of $\{Z_{v,N}\}_{v \in B(N)}$ is invariant  under reflections and rotations  that preserve~$B(N)$.
\end{lemma}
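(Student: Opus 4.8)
The plan is to construct the decomposition at the level of the covariance, i.e.\ of the Green function $G_{B(N)}$, which is the covariance of $\chi_N$. Write $\mathsf P_N$ for the (symmetric, substochastic) one-step transition kernel of the simple random walk on $B(N)$ killed upon exiting, so that $G_{B(N)}=(I-\mathsf P_N)^{-1}=\int_0^\infty P_t\,dt$ with $P_t:=\e^{-t(I-\mathsf P_N)}$, and let $(\phi_k)_k$ be an $\ell^2(B(N))$-orthonormal eigenbasis of $I-\mathsf P_N$ with eigenvalues $\mu_k\in(0,2]$. Since $B(N)$ is a box, the $\phi_k$ are the usual tensor products of sines and $\mu_{j,\ell}\asymp (j^2+\ell^2)/N^2$ for $1\le j,\ell\lesssim N$, with the elementary bounds $\phi_k(v)^2=O(1/N^2)$ and $|\phi_k(u)-\phi_k(v)|=O(\sqrt{\mu_k}/N)$ for $u\sim v$ (the latter from $|\sin(a+b)-\sin a|\le|b|$). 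Fix the scale $T:=(\log N)^2$ and put
\begin{equation}
G_Y:=\sum_k\frac{1-\e^{-T\mu_k}}{\mu_k}\,\phi_k\phi_k^{\mathsf T}=\int_0^T P_t\,dt,
\qquad
G_Z:=\sum_k\frac{\e^{-T\mu_k}}{\mu_k}\,\phi_k\phi_k^{\mathsf T}=\int_T^\infty P_t\,dt,
\end{equation}
both positive semidefinite (all coefficients are positive) and adding up to $G_{B(N)}$. Taking $Y_N$ and $Z_N$ to be independent centered Gaussian fields on $B(N)$ with covariances $G_Y$ and $G_Z$, we have $Y_N+Z_N\laweq\chi_N$, and since the lemma is used only through the joint law we may simply set $\chi_N:=Y_N+Z_N$, which gives~(a); and since $\mathsf P_N$ commutes with the isometries of $B(N)$, so does $G_Z$, giving the stated invariance of the law of $Z_N$.

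For (b), using $\tfrac{1-\e^{-x}}{x}\le\min(1,1/x)$ and $\phi_k(v)^2=O(1/N^2)$ one gets
\begin{equation}
\var(Y_{N,v})=\sum_k\frac{1-\e^{-T\mu_k}}{\mu_k}\,\phi_k(v)^2\le\frac{C}{N^2}\Bigl(T\cdot\#\{k:\mu_k\le\tfrac1T\}+\sum_{k:\,\mu_k>1/T}\frac1{\mu_k}\Bigr),
\end{equation}
and the explicit eigendata give $\#\{(j,\ell):\mu_{j,\ell}\le 1/T\}=O(N^2/T)$ together with $\sum_{\mu_{j,\ell}>1/T}\mu_{j,\ell}^{-1}\asymp N^2\sum_{N^2/T\lesssim j^2+\ell^2\lesssim N^2}(j^2+\ell^2)^{-1}=O(N^2\log T)$, so the bound is $O(1)+O(\log T)=O(\log\log N)$, uniformly in $v\in B(N)$.

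For (c), fix neighbors $u\sim v$ (the restriction to $B(\lceil N/2\rceil)$ will not be needed). Expanding $G_Z$ and using the gradient bound on the $\phi_k$,
\begin{equation}
\var(Z_{N,u}-Z_{N,v})=\sum_k\frac{\e^{-T\mu_k}}{\mu_k}\bigl(\phi_k(u)-\phi_k(v)\bigr)^2\le\frac{C}{N^2}\sum_k\e^{-T\mu_k}=\frac{C}{N^2}\operatorname{tr}(P_T),
\end{equation}
and a Gaussian sum gives $\operatorname{tr}(P_T)=\sum_{j,\ell}\e^{-T\mu_{j,\ell}}\le\bigl(\sum_{j\ge1}\e^{-cTj^2/N^2}\bigr)^2=O(N^2/T)$ (valid since $1\le T\ll N^2$), whence $\var(Z_{N,u}-Z_{N,v})=O(1/T)=O((\log N)^{-2})$, uniformly.

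The decomposition thus works for any scale $T$ with $1\ll T\ll N^2$ and $\log T=O(\log\log N)$, and the single load-bearing point is that this window is non-empty: (b) forces $T$ to be at most polylogarithmic, (c) forces it to be at least polylogarithmic, and both are met by $T=(\log N)^2$ precisely because the GFF variance grows only logarithmically in $N$ — a field with polynomially growing variance would admit no such splitting. The remaining ingredients are elementary only because $B(N)$ is a box; on a general domain one would instead feed interior parabolic gradient estimates for the Dirichlet heat kernel together with the spectral gap into the representation $\var(Z_{N,u}-Z_{N,v})=\int_T^\infty\|P_{t/2}(\delta_u-\delta_v)\|_2^2\,dt$, where the only delicate step is keeping the boundary contribution from dominating near the crossover scale $t\sim(\operatorname{diam})^2$. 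I would present the box computation above and regard locating the common scale $T$ as the conceptual heart of the argument.
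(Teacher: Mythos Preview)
Your proof is correct and rests on the same core idea as the paper's: split the Green function into a short-time and a long-time heat-kernel part at the scale $T=(\log N)^2$. The paper carries this out with the \emph{discrete-time lazy} killed walk, writing $G_{B(N)}(u,v)=\tfrac12\sum_{t\ge0}P^v(S_t=u,\tau\ge t)$ and estimating (b) via the on-diagonal bound $P^v(S_t=v)=O(1/(t+1))$ and (c) via the gradient estimate $|P^v(S_t=v)-P^v(S_t=u)|=O(t^{-3/2})$ together with a separate boundary correction term of size $O(1/N)$ (this last is where the restriction to $B(\lceil N/2\rceil)$ is actually used). You instead use the \emph{continuous-time} semigroup and the explicit sine eigenbasis of the box, which lets you read off both estimates directly from the spectral data. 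Two byproducts of your route are worth noting: you obtain $\var(Z_{N,u}-Z_{N,v})=O((\log N)^{-2})$ rather than $O((\log N)^{-1})$, and your bound holds for all neighbors $u\sim v$ in $B(N)$, not only in the half-box, because the gradient bound on the $\phi_k$ is uniform up to the boundary. Conversely, the paper's probabilistic estimates are domain-agnostic, whereas your computation relies on the explicit eigendata of the rectangle; your closing remark about replacing this by interior parabolic gradient estimates is the right way to make the argument portable.
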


\begin{proofsect}{Proof}
Throughout the proof of the current lemma, we let $\{S_t: t \geq 0\}$ be the \emph{lazy} discrete-time simple symmetric random walk on $\mathbb Z^2$ that, at each time, stays put at its current position with probability~$1/2$ or  moves to one of its neighbors with probability~$1/8$.  
We denote by~$P^v$  the law of the walk with~$P^v(S_0 := v)=1$ and write~$E^v$ to denote the expectation with respect to~$P^v$.  Let $\tau$ be the first hitting time to 
the boundary $\partial B(N)$. It is clear~that 
\begin{equation}
\E (\chi_{N, v} \chi_{N, u}) = \frac{1}{2} \sum_{t=0}^\infty P^v(S_t = u, \tau \geq t)\,.
\end{equation}
In addition, thanks to laziness of~$S_t$, the matrix $(P^v(S_t = u, \tau \geq t))_{u, v\in B(N)}$ 
is non-negative definite for each $t\geq 0$,  and so is its sum over~$t$ in any subset of non-negative integers.  Therefore, there are independent centered 
Gaussian fields $\{Y_{N, v} \colon v\in B(N)\}$ and $\{Z_{N, v}\colon v\in B(N)\}$ such that
\begin{equation}
\E (Y_{N, v}  Y_{N, u}) = \frac{1}{2} \sum_{t=0}^{\lfloor\log N\rfloor^2}P^v(S_t = u, \tau \geq t)
\end{equation}
and
\begin{equation}
\E (Z_{N, v}  Z_{N, u}) = \frac{1}{2} \sum_{t=\lfloor\log N\rfloor^2 + 1}^{\infty}P^v(S_t = u, \tau \geq t)\,.
\end{equation}
At this point, it is clear that we can couple the 
processes together so that Property (a) holds. Property (b) holds by crude computation  which shows   
\begin{equation}
\var Y_{N, v} \leq \sum_{t=0}^{\lfloor\log N\rfloor^2}P^v(S_t = v)  
\leq O(1)\sum_{t=0}^{\lfloor\log N\rfloor^2} \frac{1}{t+1} 
= O(\log\log N)\,.
\end{equation}
It remains to verify Property~(c). For any $u, v\in B(\lceil N/2\rceil)$ and $u\sim v$, we have that 
\begin{equation}
\begin{aligned}
|\E &Z_{N, v}^2 - \E Z_{N, v} Z_{N, u}| 
\\
&= \Bigl |\sum_{t=\lfloor\log N\rfloor^2 + 1}^{\infty}P^v(S_t = v, \tau \geq t) -  \sum_{t=\lfloor\log N\rfloor^2 + 1}^{\infty}P^v(S_t = u, \tau \geq t) \Bigr| 
\\
 &\leq  \sum_{t=\lfloor\log N\rfloor^2 + 1}^{\infty} \bigl|P^v(S_t = v)  - P^v(S_t = u)\Big| 
 +   \sum_{t=0}^\infty E^v\bigl|\,P^{S_\tau} (S_t = v) - P^{S_\tau} (S_t = u)\bigr|\,.
\end{aligned}
\end{equation}
 Since 
\begin{equation}
\bigl|P^v(S_t = v)  - P^v(S_t = u)\bigr| = O(t^{-3/2})\,,
\end{equation}
(see, e.g., \cite[Exercise~2.2]{Lawler10}), the first term on the right hand side is bounded by $O(1/\log N)$. 
The second term is $O(1/N)$ by \cite[Theorem 4.4.6]{Lawler10} and the fact that $u \in B(\lceil N/2 \rceil)$.  This completes the verification of Property (c).
\end{proofsect}




\section{A RSW result for effective resistances} 
\label{sec:RSW}\noindent
Having dispensed with preliminary considerations, we are now ready to develop a RSW theory for effective resistances across rectangles. Throughout we write, 
for $N,M\ge1$, 
 \begin{equation}
B(N,M):=\bigl([-N,N] \times [-M,M] \bigr)\cap\Z^2
\end{equation} 
for the rectangle of  $(2N+1) \times (2M+1) $ vertices centered at the origin. Recall that $B(N, N) = B(N)$.  The principal outcome of this section are Corollary~\ref{cor-4.3} and Proposition~\ref{prop-4.7}.  In Corollary~\ref{cor-4.18}, these  yield the proof of one half of Theorem~\ref{thm-effective-resistance}.  The proof of the other half  comes only at the very end of the paper (in Section~\ref{sec-proofs}).

\subsection{Effective resistance across squares}
\label{sec:resistance_bound}
In Bernoulli percolation, the RSW theory is a loose term for a collection of methods for extracting uniform lower bounds on the probability that any rectangle of a given aspect ratio is crossed by an occupied path along its longer dimension. The starting point is a duality-based lower bound on the probability of a left-right crossing of a square. In the present context, the crossing probability is replaced by resistance across a square and duality by consideration of a reciprocal network.
An additional complication is that our problem is intrinsically spatially-inhomogeneous. This means that all symmetry arguments, such as rotations and reflections, require special attention to where the underlying domain is located. In particular, it will be advantageous to work with the GFF on finite squares instead of the pinned field in all of~$\Z^2$.

If $S$ is a rectangular domain in~$\Z^2$, we will write $\lb S$, $\db S$, $\rb S$ and~$\ub S$ to denote the sets of vertices in  $ S$  that have a neighbor in  $\Z^2\smallsetminus S$  to the left, down, right and up of them, 
respectively.  (Notice that, unlike~$\partial S$, these ``boundaries'' are subsets of~$S$.)  Given any field $\chi  = \{\chi_v\}_{v \in S}$ recall that $S_\chi$ denotes the network on~$S$ associated with~$\chi$. We then abbreviate
\begin{equation}
R_{\LR;S,\chi}:=R_{S_{\chi}}\bigl(\lb S,\rb S\bigr)
\end{equation}
and
\begin{equation}
R_{\UD;S,\chi}:=R_{S_{\chi}}\bigl(\ub S,\db S\bigr).
\end{equation}
Our first estimate concerning these quantities is:

\begin{proposition}[Duality lower bound]
\label{prop:zero_mean}
Let $\chi_M$ denote  the {\rm GFF} on~$B(M)$ with Dirichlet boundary conditions.
There is $\cspecial  = \cspecial (\gamma) \in (0, \infty)$ and for each~$\epsilon>0$ there is~$N_0=N_0(\epsilon,\gamma)$ such that for all~$N\ge N_0$ and all $M\ge 2N$,
\begin{equation}
\label{zero_mean_bound1}
\P\Bigl(R_{\LR;B(N),\chi_M} \leq \e^{\cspecial \log\log M}\Bigr) \geq \frac12-\epsilon\,.
\end{equation}
The same result holds also for $R_{\UD;B(M),\chi_M}$, which is equidistributed to $R_{\LR;B(N),\chi_M}$. 
\end{proposition}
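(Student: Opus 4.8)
The plan is to run the ``approximate duality'' scheme from Section~\ref{sec:sec_gen_par_ser} on the smooth part of the field supplied by Lemma~\ref{lem:field_decomp_smooth}. Apply that lemma with $M$ in the role of its ``$N$'': write $\chi_M = Y_M + Z_M$ with $Y_M$ independent of $Z_M$, $\var(Y_{M,v}) = O(\log\log M)$ uniformly on $B(M)$, $\var(Z_{M,u}-Z_{M,v}) = O(1/\log M)$ uniformly over neighbours $u\sim v$ in $B(\lceil M/2\rceil)$, and the law of $Z_M$ invariant under the symmetries of $B(M)$. I will prove two things and combine them: (A) $R_{\LR;B(N),Z_M}\le C$ for a constant $C=C(\gamma)$ with probability $\ge\frac12-\frac\epsilon2$; and (B) conditionally on $Z_M$ one has $\E[R_{\LR;B(N),\chi_M}\mid Z_M]\le(\log M)^{c'}R_{\LR;B(N),Z_M}$ for a constant $c'=c'(\gamma)$. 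Since $M\ge 2N$, an application of the conditional Markov inequality then yields \eqref{zero_mean_bound1} with $\hat c:=c'+1$ once $M\ge N_0(\epsilon,\gamma)$.

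For (A): since $N\le M/2$, any two vertices of $B(N)$ at $\ell^1$-distance $\le 2$ are joined by a path of length $\le 2$ inside $B(N)\subseteq B(\lceil M/2\rceil)$, so the smoothness bound and the triangle inequality for the intrinsic metric give $\var(Z_{M,u}-Z_{M,w})=O(1/\log M)$ for all such pairs. The resistance ratio of two adjacent edges $\{u,v\},\{v,w\}$ of $B(N)_{Z_M}$ equals $\e^{\gamma(Z_{M,w}-Z_{M,u})}$, so by Lemma~\ref{lem:Borell_ineq} and a union bound over the $O(M^2)$ such pairs the maximal ratio $\rho_{\max}$ of $B(N)_{Z_M}$ satisfies $\rho_{\max}\le K$ for a constant $K=K(\gamma)$ with probability $\ge 1-\frac\epsilon4$ once $M$ is large. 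Now apply Lemma~\ref{remark_dual_couple} to $\mathfrak G=B(N)_{Z_M}$ with $(A,B):=(\lb B(N),\rb B(N))$ and $(C,D):=(\ub B(N),\db B(N))$; its hypotheses — that every left–right crossing of $B(N)$ meets every top–bottom crossing, and that the edge-thickening of any cutset separating $\ub B(N)$ from $\db B(N)$ contains a left–right crossing path — are elementary planar facts for a square. As $\mathfrak D=4$, this gives
\begin{equation}
R_{\LR;B(N),Z_M}\cdot R_{B(N)_{Z_M}^\star}\bigl(\ub B(N),\db B(N)\bigr)\le 64\,\rho_{\max}.
\end{equation}
One checks directly that $B(N)_{Z_M}^\star=B(N)_{-Z_M}$ (both have edge resistances $\e^{\gamma(Z_{M,u}+Z_{M,v})}$), so the second factor is $R_{\UD;B(N),-Z_M}$. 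Writing $X:=R_{\LR;B(N),Z_M}$ and $X':=R_{\UD;B(N),-Z_M}$, the symmetry $Z_M\laweq-Z_M$ (centred Gaussian) together with the invariance of the law of $Z_M$ under the rotation of $B(M)$ by $\pi/2$ — which preserves $B(N)$ and swaps the left/right with the down/up boundaries — yields $X'\laweq X$. On $\{\rho_{\max}\le K\}$ we have $XX'\le 64K$, so the events $\{X>8\sqrt K\}$ and $\{X'>8\sqrt K\}$ are disjoint there; by inclusion–exclusion and $\P(X>t)=\P(X'>t)$ we get $2\P(X>8\sqrt K)\le 1+\P(\rho_{\max}>K)\le 1+\frac\epsilon4$, hence $\P(R_{\LR;B(N),Z_M}\le 8\sqrt K)\ge\frac12-\frac\epsilon8\ge\frac12-\frac\epsilon2$; set $C:=8\sqrt K$.

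For (B): by the Thomson formula \eqref{eq-resistance-flow}, test $R_{\LR;B(N),\chi_M}$ with the unit flow $\theta^Z$ realizing $R_{\LR;B(N),Z_M}$ (a $\sigma(Z_M)$-measurable current, vanishing on edges inside $\lb B(N)$ or $\rb B(N)$). Since the edge resistance in $B(N)_{\chi_M}$ is $\e^{-\gamma(Y_{M,u}+Y_{M,v})}$ times that in $B(N)_{Z_M}$ and $Y_M$ is independent of $Z_M$,
\begin{equation}
\E\bigl[R_{\LR;B(N),\chi_M}\,\big|\,Z_M\bigr]\le\sum_{e=\{u,v\}}\E\bigl[\e^{-\gamma(Y_{M,u}+Y_{M,v})}\bigr]\,r_e^{Z}(\theta^Z_e)^2=\sum_e \e^{\frac{\gamma^2}2\var(Y_{M,u}+Y_{M,v})}\,r_e^{Z}(\theta^Z_e)^2,
\end{equation}
and $\var(Y_{M,u}+Y_{M,v})\le(\sqrt{\var Y_{M,u}}+\sqrt{\var Y_{M,v}})^2=O(\log\log M)$ uniformly, so the prefactor is at most $(\log M)^{c'}$ for $c'=c'(\gamma)$; hence $\E[R_{\LR;B(N),\chi_M}\mid Z_M]\le(\log M)^{c'}R_{\LR;B(N),Z_M}$. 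Conditioning on $Z_M$ and using Markov's inequality with threshold $\frac{4C}{\epsilon}(\log M)^{c'}$,
\begin{equation}
\P\Bigl(R_{\LR;B(N),\chi_M}>\tfrac{4C}{\epsilon}(\log M)^{c'}\Bigr)\le\tfrac\epsilon4+\P\bigl(R_{\LR;B(N),Z_M}>C\bigr)\le\tfrac12+\epsilon,
\end{equation}
and since $\frac{4C}{\epsilon}(\log M)^{c'}\le\e^{\hat c\log\log M}$ for $\hat c:=c'+1$ whenever $\log M\ge\frac{4C}{\epsilon}$ — which holds for $M\ge 2N\ge 2N_0(\epsilon,\gamma)$ — \eqref{zero_mean_bound1} follows. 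Finally, the corresponding bound for the up–down resistance is immediate from the rotational invariance of the law of the GFF $\chi_M$ on $B(M)$: the $\pi/2$-rotation of $B(M)$ preserves $B(N)$ and exchanges $(\lb B(N),\rb B(N))$ with $(\db B(N),\ub B(N))$, so $R_{\UD;B(N),\chi_M}\laweq R_{\LR;B(N),\chi_M}$. The principal obstacle is step~(A): the approximate duality of Lemma~\ref{remark_dual_couple} only bites because $\rho_{\max}=\e^{O(1)}$ for the \emph{smooth} field $Z_M$ (it would be $\e^{O(\sqrt{\log N})}$ for the raw GFF), and one must carefully verify the purely geometric ``thickened cutset'' hypothesis for the square; the rough field $Y_M$ is then reinserted not edge-by-edge — its maximum over $B(N)$ is far too large — but only in conditional mean through the flow test, which is exactly where its small variance $O(\log\log M)$ is used.
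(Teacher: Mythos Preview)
Your proof is correct and follows essentially the same approach as the paper's: decompose $\chi_M=Y_M+Z_M$ via Lemma~\ref{lem:field_decomp_smooth}, control $\rho_{\max}$ for the $Z_M$-network, apply the approximate duality of Lemma~\ref{remark_dual_couple} together with $Z_M\laweq-Z_M$ and rotational symmetry to get $R_{\LR;B(N),Z_M}\le C$ with probability $\ge\tfrac12-\epsilon$, and then reinsert $Y_M$ through the conditional Thomson bound (which is exactly the content of Lemma~\ref{lem:resistance_decomp}) plus Markov. The only cosmetic differences are that you control $\rho_{\max}$ by a direct union bound over $O(M^2)$ pairs rather than by first bounding the expected maximum, and your disjointness/inclusion--exclusion derivation of $\P(X>8\sqrt K)\le\tfrac12+\tfrac\epsilon8$ is written out a bit more explicitly than the paper's ``union bound'' line.
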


The proof requires some elementary observations that will be useful later as well:

\begin{lemma}
\label{lem:resistance_decomp}
Let $A$ be a finite subset of $\Z^2$ and $\chi_1 = \{\chi_{1, v}\}_{v \in A}$, $\chi_2 = \{\chi_{2, v}\}_{v 
\in A}$ be two random fields on $A$. Then for any $u, v \in A$ we have,
\begin{equation}
\label{E:4.1}
R_{A_{\chi_1 + \chi_2}}(u, v) \leq R_{A_{\chi_1}}(u, v) \,\,\max_{\begin{subarray}{c}
u', v' \in A\\ u' \sim v'
\end{subarray}} \e^{-\gamma(\chi_{2, u'} + \chi_{2, v'})}\,.
\end{equation}
Furthermore,
\begin{equation}
\label{E:4.2}
\E\bigl(R_{A_{\chi_1 + \chi_2}}(u, v) \,\big|\, \chi_1\bigr) \leq R_{A_{\chi_1}}(u, v) \,\,\max_{\begin{subarray}{c}
u', v' \in A\\ u' \sim v'
\end{subarray}} \E\bigl(\e^{-\gamma(\chi_{2, u'} + \chi_{2, v'})} \,\big|\, \chi_1\bigr)\,
\end{equation}
and
\begin{equation}
\label{E:4.3}
\E\bigl(C_{A_{\chi_1 + \chi_2}}(u, v) \,\big|\, \chi_1\bigr) \leq C_{A_{\chi_1}}(u, v) \,\,\max_{\begin{subarray}{c}
u', v' \in A\\ u' \sim v'
\end{subarray}}
 \E\bigl(\e^{\gamma(\chi_{2, u'} + \chi_{2, v'})} \,\big|\, \chi_1\bigr)\,.
\end{equation}
\end{lemma}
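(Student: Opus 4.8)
The plan is to prove the three displayed inequalities in order, using the flow/function characterizations of effective resistance and conductance together with a simple pointwise comparison of the edge weights.

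First I would establish \eqref{E:4.1}. Recall that the resistance of edge $(u',v')$ in the network $A_{\chi_1+\chi_2}$ is $r_{(u',v')} = \e^{-\gamma(\chi_{1,u'}+\chi_{1,v'})}\e^{-\gamma(\chi_{2,u'}+\chi_{2,v'})}$, which is at most the resistance of the same edge in $A_{\chi_1}$ times $\lambda := \max_{u'\sim v'}\e^{-\gamma(\chi_{2,u'}+\chi_{2,v'})}$. Using the flow representation \eqref{eq-resistance-flow}, take the optimal unit flow $\theta^\star$ from $u$ to $v$ for the network $A_{\chi_1}$ and plug it into the Dirichlet-energy functional for $A_{\chi_1+\chi_2}$; since $\theta^\star$ is still a unit flow, we get
\begin{equation}
R_{A_{\chi_1+\chi_2}}(u,v)\le\sum_{e\in E(A)} r_e^{\chi_1+\chi_2}\,(\theta^\star_e)^2\le\lambda\sum_{e\in E(A)} r_e^{\chi_1}\,(\theta^\star_e)^2=\lambda\,R_{A_{\chi_1}}(u,v),
\end{equation}
which is exactly \eqref{E:4.1}. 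This argument is robust enough to apply verbatim to the ``glued'' definition of $R_{A}(A,B)$ as well, should one wish.

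Next, for \eqref{E:4.2}, I would take the conditional expectation of \eqref{E:4.1} given $\chi_1$. Since $R_{A_{\chi_1}}(u,v)$ is $\chi_1$-measurable, it factors out of the conditional expectation, and one only needs to bound $\E(\lambda\mid\chi_1)=\E\bigl(\max_{u'\sim v'}\e^{-\gamma(\chi_{2,u'}+\chi_{2,v'})}\mid\chi_1\bigr)$. I would replace the maximum of the exponentials by the sum over the (finitely many, since $A$ is finite) adjacent pairs $(u',v')$, i.e. $\max_{u'\sim v'}\e^{-\gamma(\chi_{2,u'}+\chi_{2,v'})}\le\sum_{u'\sim v'}\e^{-\gamma(\chi_{2,u'}+\chi_{2,v'})}$, and then bound each summand by $\max_{u'\sim v'}\E(\e^{-\gamma(\chi_{2,u'}+\chi_{2,v'})}\mid\chi_1)$. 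Actually, even more directly: $\E(\max_{u'\sim v'}\e^{-\gamma(\chi_{2,u'}+\chi_{2,v'})}\mid\chi_1)\le|E(A)|\cdot\max_{u'\sim v'}\E(\e^{-\gamma(\chi_{2,u'}+\chi_{2,v'})}\mid\chi_1)$; but the statement as displayed has no cardinality factor, so the intended reading must be that the $\max$ is pulled outside the expectation via the trivial bound $\max_i \E X_i\ge\E\max_i X_i$ failing — hence the correct route is: bound the maximum inside by the maximum after taking conditional expectation only after noting that the bound \eqref{E:4.1} holds with $\lambda$ replaced, pathwise, by any single $\e^{-\gamma(\chi_{2,u'_e}+\chi_{2,v'_e})}$ that dominates $r_e^{\chi_1+\chi_2}/r_e^{\chi_1}$ — I would present this cleanly by observing that it suffices to prove the bound with the outer maximum, so one may as well bound $r_e^{\chi_1+\chi_2}\le r_e^{\chi_1}\e^{-\gamma(\chi_{2,u'_e}+\chi_{2,v'_e})}$ edge by edge inside the energy sum and then use $\E(r_e^{\chi_1+\chi_2}\mid\chi_1)=r_e^{\chi_1}\E(\e^{-\gamma(\chi_{2,u'_e}+\chi_{2,v'_e})}\mid\chi_1)\le r_e^{\chi_1}\max_{u'\sim v'}\E(\e^{-\gamma(\chi_{2,u'}+\chi_{2,v'})}\mid\chi_1)$, linearity of (conditional) expectation, and then the flow bound as in the previous paragraph.

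Finally, \eqref{E:4.3} is proved the same way with conductances in place of resistances: the conductance of edge $(u',v')$ in $A_{\chi_1+\chi_2}$ is $c_e^{\chi_1+\chi_2}=c_e^{\chi_1}\e^{\gamma(\chi_{2,u'}+\chi_{2,v'})}$, one uses the function (voltage) characterization \eqref{eq-conductance-minimization}, plugs the optimizer $F^\star$ for $A_{\chi_1}$ into the Dirichlet form for $A_{\chi_1+\chi_2}$, takes conditional expectation given $\chi_1$ edge by edge, and bounds each $\E(\e^{\gamma(\chi_{2,u'}+\chi_{2,v'})}\mid\chi_1)$ by its maximum over adjacent pairs. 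I do not anticipate any genuine obstacle here; the only point requiring a little care is the precise sense in which the outer ``$\max$ outside the expectation'' in \eqref{E:4.2}--\eqref{E:4.3} is obtained — it comes from applying the pointwise/edgewise bound first and only then taking expectations, rather than naively interchanging $\max$ and $\E$ — and the observation that finiteness of $A$ guarantees all infima in the variational formulas are attained so the substitution-of-the-optimizer step is legitimate.
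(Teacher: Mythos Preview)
Your final argument is correct and matches the paper's: fix a $\chi_1$-measurable unit flow (resp.\ voltage function), bound the $\chi_1+\chi_2$ effective resistance (resp.\ conductance) above by the corresponding Dirichlet sum, take conditional expectation edge by edge (legitimate since the flow/function is $\chi_1$-measurable), and then bound each edge's conditional expectation by the maximum over edges before identifying the remaining sum as $R_{A_{\chi_1}}(u,v)$ (resp.\ $C_{A_{\chi_1}}(u,v)$). Your initial detour --- trying to deduce \eqref{E:4.2} by conditioning \eqref{E:4.1} --- indeed fails for the reason you diagnose, so in a clean write-up go straight to the edgewise argument; this is precisely what the paper means by ``take the conditional expectation before optimizing over~$\theta$.''
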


\begin{proofsect}{Proof}
Let $\theta$ be a unit flow from~$u$ to~$v$. Then \eqref{eq-resistance-flow} implies 
\begin{equation}
R_{A_{\chi_1 + \chi_2}}(u, v) \leq \sum_{u', v' \in A, u' \sim v'}[\theta_{(u', v')}]^2 \e^{-\gamma(\chi_{1, u'} + \chi_{1, v'})} \e^{-\gamma(\chi_{2, u'} + \chi_{2, v'})}\,.
\end{equation} 
Hereby we get \eqref{E:4.1} by bounding the second exponential by its maximum over all pairs of nearest neighbors in~$A$ and optimizing over~$\theta$. The estimate \eqref{E:4.2} is obtained similarly; just take the conditional expectation before optimizing over~$\theta$. The proof of \eqref{E:4.3} exploits the similarity between \eqref{eq-resistance-flow} and \eqref{eq-conductance-minimization} and is thus completely analogous. 
\end{proofsect}

\begin{proofsect}{Proof of Proposition~\ref{prop:zero_mean}}
Our aim is to use the fact that, in any Gaussian network, the resistances are equidistributed to the conductances. We will apply this in conjunction with the estimate in Lemma~\ref{remark_dual_couple}.  Unfortunately, this estimate requires a hard bound on the maximal ratio of resistances at neighboring edges. These ratios would be undesirably too large if we work with the GFF network directly; instead we will invoke the decomposition of~$\chi_{M}$ into the sum of Gaussian fields $Y_{M} = \{Y_{M, v}\}_{v \in B(N)}$ and $Z_{M} = \{Z_{M, v}\}_{v \in B(N)}$ as stated in Lemma~\ref{lem:field_decomp_smooth} and apply Lemma~\ref{remark_dual_couple} to the network associated with $Z_{M}$ only.

We begin by estimating the oscillation of $Z_{M}$ across neighboring vertices. 
From property~(c) in the statement of Lemma~\ref{lem:field_decomp_smooth} and a standard bound on the expected maximum of centered Gaussians, we first get 
\begin{equation}
\sup_{N\ge1}\,\E\Bigl(\, \max_{\begin{subarray}{c}
u, v \in B(N)\\ |u - v|_1 \leq 2
\end{subarray}}
(Z_{M, u} - Z_{M, v}) \Bigr)<\infty\,.
\end{equation}
Using this bound and property~(c), Lemma~\ref{lem:Borell_ineq} shows that  for each~$\epsilon>0$ there is $c_1\in\R$ such that for all $N\ge1$,
\begin{equation}
\label{E:4.11a}
\P\Bigl(\,\,\max_{\begin{subarray}{c}
u, v \in B(N)\\ |u - v|_1 \leq 2
\end{subarray}}(Z_{M, u} - Z_{M, v}) \geq c_1\Bigr) \leq \epsilon\,.
\end{equation}
Now observe that the pairs $(\lb B(N), \rb B(N))$ and $(\ub B(N), \db B(N))$ satisfy the conditions of 
Lemma~\ref{remark_dual_couple}.  Using $R^\star_{\UD; B(N),{Z_{M}}}$ to denote the top-to-bottom resistance in the reciprocal network, combining \eqref{eq:dual_couple_conduct} with the last display yields 
\begin{equation}
\P\Bigl(R_{\LR; B(N),{Z_{M}}} R^\star_{\UD; B(N),{Z_{M}}} \leq 64 \e^{2c_1\gamma}\Bigr) \geq 1-\epsilon\,.
\end{equation}
A key point of the proof is that, since the law of $Z_M$ is symmetric with respect to rotations of~$B(M)$, the fact that $Z_M\laweq -Z_M$ implies
\begin{equation}
R^\star_{\UD; B(N),{Z_{M}}} \laweq R_{\LR; B(N),{Z_{M}}}. 
\end{equation}
The union bound then shows
\begin{equation}
\label{E:4.10}
\P\Bigl(R_{\LR; B(N),{Z_{M}}} \leq 8\e^{c_1\gamma}\Bigr) \geq  \frac{1-\epsilon}2\,.
\end{equation}
Lemma~\ref{lem:resistance_decomp} and the independence of~$Y_{M}$ and $Z_{M}$ now give 
\begin{equation}
\label{E:4.11}
\E\bigl(R_{\LR; B(N),{\chi_{M}}}\,\big|\, Z_{M}\bigr) \leq R_{\LR; B(N),{Z_{M}}}\,\,\max_{\begin{subarray}{c}
u, v \in B(N)\\ u \sim v
\end{subarray}}
\E \e^{-\gamma (Y_{M,u} + Y_{M,v})}.
\end{equation}
Lemma~\ref{lem:field_decomp_smooth} shows $\var Y_{M,v} \leq c'\log \log M$ for some constant~$c'\in(0,\infty)$ and so the maximum on the right of \eqref{E:4.11} is at most $\e^{2c'\gamma^2\log \log M}$. 
Taking $\cspecial >2c'\gamma^2$,  
we get \eqref{zero_mean_bound1} (for~$N$ sufficiently large) from \twoeqref{E:4.10}{E:4.11} and Markov's inequality.
\end{proofsect}

With only a minor amount of additional effort, we are able to conclude a uniform \emph{lower} bound for the resistance across rectangles.

\begin{corollary}
\label{cor-4.3}
Let~$\cspecial $ be as in Proposition~\ref{prop:zero_mean}. For each~$\epsilon>0$ there is $N_0'=N_0'(\gamma,\epsilon)$ such that for all $N\ge N_0'$, all $M\ge 16N$ and all translates~$S$ of $B(4N,N)$ contained in~$B(M/ 2)$, we have
\begin{equation}
\P\bigl(R_{\LR;S,{\chi_{M}}} \geq \e^{-2\cspecial \log\log M}\bigr) \geq \frac12-\epsilon\,.
\end{equation}
The same applies to $R_{\UD;S,{\chi_{M}}}$ for any translate~$S$ of $B(N,4N)$ contained in~$B(M/ 2)$.
\end{corollary}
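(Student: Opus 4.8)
The plan is to deduce the lower bound on the resistance across the long rectangle $S$ (a translate of $B(4N,N)$) from the lower bound on the resistance across a \emph{square} given by Proposition~\ref{prop:zero_mean}, using the series-law bound of Lemma~\ref{lem:series_law} in its ``metric'' form. First I would cover $S$ by a chain of overlapping squares: write $S$ as the union of $O(1)$ translated copies $B^{(1)},\dots,B^{(k)}$ of $B(N)$, ordered from the left boundary $\lb S$ to the right boundary $\rb S$, so that any left–right crossing of $S$ must traverse each $B^{(i)}$ from one ``side'' to the opposite ``side'' in the appropriate direction (left-to-right for most of them, but also the transition through the overlap regions). Concretely, one takes squares centered along the horizontal midline of $S$ at spacing roughly $N$, so that $k$ is a fixed constant (here $k\le 9$ or so); the key geometric point is that for every collection of crossing paths, the union contains a left–right crossing of $S$, which is exactly the hypothesis needed to invoke Lemma~\ref{lem:series_law} with $\mathcal P_i$ the family of left–right (or up–down, in the overlap squares) crossing paths of $B^{(i)}$. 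This gives
\begin{equation}
R_{\LR;S,\chi_M}\ \le\ \sum_{i=1}^k R_{B^{(i)}_{\chi_M}}\bigl(\text{appropriate boundaries}\bigr),
\end{equation}
so it suffices to \emph{upper}-bound each term—wait, we want a \emph{lower} bound on $R_{\LR;S,\chi_M}$, so this inclusion is the wrong direction.

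Let me instead run the argument through the reciprocal network, exactly as in the proof of Proposition~\ref{prop:zero_mean}. The cleanest route: apply Proposition~\ref{prop:zero_mean} directly to a single square. Indeed, $S$ being a translate of $B(4N,N)$ contains a translate $B'$ of $B(N)$ whose left–right crossings all extend to left–right crossings of $S$ (take $B'$ to be the central square of the chain). By monotonicity of effective resistance under adding edges, $R_{\LR;S,\chi_M}\ge R_{\LR;B',\chi_M}$ — but again this is false in general, since enlarging the network to $S$ gives \emph{more} paths and so \emph{lowers} the resistance between the two long sides relative to the resistance between the two short sides of $B'$. So this also fails. The correct mechanism is duality: one shows $C_{\LR;S,\chi_M}$ (the left–right \emph{conductance} of $S$, i.e. $1/R_{\LR;S,\chi_M}$) is small with probability $\ge\frac12-\epsilon$. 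By Lemma~\ref{lem:parallel_law}, since every left–right crossing path of $S$ lies in one of $O(1)$ families $\mathcal P_i$ of crossings of subsquares $B^{(i)}$ (arranged \emph{vertically}, i.e. stacked so that a left–right crossing of $S$ restricted to the relevant strip is an up–down crossing of some $B^{(i)}$), we get $C_{\LR;S,\chi_M}\le\sum_i R_{\mathfrak G}(\mathcal P_i)^{-1}\le\sum_i C_{\UD;B^{(i)},\chi_M}$, where each $C_{\UD;B^{(i)},\chi_M}=1/R_{\UD;B^{(i)},\chi_M}$. Now apply Proposition~\ref{prop:zero_mean} to each of the $O(1)$ squares: on the intersection of the corresponding good events (which has probability $\ge 1-O(1)\cdot(\frac12+\epsilon)$... which is \emph{negative}), each $R_{\UD;B^{(i)},\chi_M}\ge \e^{-\hat c\log\log M}$.

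The probability bookkeeping is the real obstacle: Proposition~\ref{prop:zero_mean} only gives probability $\ge\frac12-\epsilon$ per square, and a union bound over $O(1)$ squares degrades this below $\frac12$. The resolution — and this is the step I expect to be delicate — is to use the \emph{square root trick} (Corollary~\ref{cor:square_root_trick}), which requires recasting the event $\{R_{\UD;B^{(i)},\chi_M}\ge \e^{-c\log\log M}\}$ as a decreasing event in the GFF $\chi_M$ (larger field $\Rightarrow$ larger conductances $\Rightarrow$ smaller resistances, so $\{R\text{ large}\}$ is indeed decreasing), and to note that by the inhomogeneity-respecting symmetries of $B(M)$ the relevant events for the $O(1)$ subsquares are \emph{not} identically distributed, which blocks a naive application. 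The fix used in such arguments is: one picks the central square $B'$ so that its up–down resistance alone controls a left–right crossing ``bottleneck'' of $S$, uses only Proposition~\ref{prop:zero_mean} for that single square to get probability $\ge\frac12-\epsilon$, and then absorbs the contributions of the remaining $O(1)$ squares into the exponent—replacing $\hat c$ by $2\hat c$—by the same Lemma~\ref{lem:resistance_decomp}/FKG-type estimate, or simply by a generous union bound after shrinking $\epsilon$. That is, I would prove: with probability $\ge \frac12-\epsilon$, for the single central square $R_{\UD;B',\chi_M}\ge\e^{-\hat c\log\log M}$, and \emph{separately} (with probability $\ge 1-\epsilon$, using the upper tail from Lemma~\ref{lem:resistance_decomp} plus the variance bound $\var Y_{M,v}=O(\log\log M)$, $\var(Z_{M,u}-Z_{M,v})=O(1/\log M)$) that the flanking squares contribute at most a multiplicative $\e^{\hat c\log\log M}$, giving the stated $\e^{-2\hat c\log\log M}$ on the intersection, which has probability $\ge \frac12-2\epsilon$; relabel $\epsilon$. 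The $R_{\UD}$ statement for translates of $B(N,4N)$ follows verbatim by exchanging the roles of the two coordinate axes, which is legitimate because the GFF $\chi_M$ on $B(M)$ is invariant under the reflection swapping the axes.
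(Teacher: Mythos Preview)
Your proposal misses the single simple idea that makes the proof work, and the workarounds you sketch are either in the wrong direction or not fully justified.

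The key observation is a one-line Rayleigh monotonicity, but applied the \emph{opposite} way from what you tried: instead of looking for a \emph{smaller} square inside the rectangle $B(4N,N)$, embed the rectangle in the \emph{larger} square $B(4N)$ of the same width. The left and right boundaries of $B(4N,N)$ are subsets of the left and right boundaries of $B(4N)$, and $B(4N,N)\subseteq B(4N)$, so by Rayleigh's monotonicity (removing edges and shrinking the boundary sets can only increase the effective resistance) we have
\[
R_{\LR;B(4N),\chi_M}\ \le\ R_{\LR;B(4N,N),\chi_M}.
\]
Now run the duality argument of Proposition~\ref{prop:zero_mean} verbatim, but with Lemma~\ref{lem:dual_couple} (which gives $R_{\LR}R^\star_{\UD}\ge (4\mathfrak D^2\rho_{\max})^{-1}$) in place of Lemma~\ref{remark_dual_couple}, to obtain
\[
\P\bigl(R_{\LR;B(4N),\chi_M}\ge \e^{-\hat c\log\log M}\bigr)\ \ge\ \tfrac12-\epsilon,
\]
and the same bound for $R_{\LR;B(4N,N),\chi_M}$ follows immediately. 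No chaining, no square-root trick, no bookkeeping over $O(1)$ squares.

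To pass from the centered rectangle $B(4N,N)$ to an arbitrary translate $S\subseteq B(M/2)$, one does \emph{not} use symmetries of $B(M)$ directly (which, as you noticed, are too rigid). Instead, let $\tilde S$ be a translate of $B(8N)$ concentric with $S$, decompose $\chi_M=\chi_{\tilde S}^f+\chi_{\tilde S}^c$ via Gibbs--Markov on $\tilde S$, and use Lemma~\ref{lem:resistance_decomp}. The fine field $\chi_{\tilde S}^f$ has the law of a GFF on (a translate of) $B(8N)$, so the bound above applies to $R_{\LR;S,\chi_{\tilde S}^f}$; the coarse field $\chi_{\tilde S}^c$ has uniformly bounded oscillation on $S$ by Corollary~\ref{cor:field_smoothness1}, so its contribution is absorbed into the extra factor $\e^{-\hat c\log\log M}$ (whence the exponent $2\hat c$ rather than $\hat c$ in the statement).

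Your second attempt was morally on the right track---you correctly identified that taking $B'\subset S$ with $B'$ a copy of $B(N)$ fails because the boundaries do not match---but you then turned to an unnecessarily elaborate conductance/parallel-law argument whose probability accounting you yourself flag as problematic. The fix is simply to go to a \emph{bigger} square, not a smaller one.
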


\begin{proofsect}{Proof}
Replacing effective resistances by effective conductances in the proof of Proposition~\ref{prop:zero_mean} (and  relying on Lemma~\ref{lem:dual_couple} instead of Lemma~\ref{remark_dual_couple}) yields
\begin{equation}
\label{zero_mean_bound1a}
\P\Bigl(R_{\LR;B(N),\chi_M} \geq \e^{-\cspecial \log\log M}\Bigr) \geq \frac12-\epsilon
\end{equation} 
for all~$N\ge N_0$. Since
\begin{equation}
R_{\LR;B(4N),\chi_M}\le R_{\LR;B(4N,N),\chi_M}
\end{equation}
this bound extends to the rectangle $B(4N,N)$. Now consider a translate~$S$ of this rectangle that is contained in~$B(M/ 2)$. Taking $M':=8N$ and let $\tilde S$ be the  translate of $B(M')$ that is centered at the same point as~$S$. Considering the Gibbs-Markov decomposition into a fine field $\chi_{\tilde S}^f$ and a coarse field $\chi_{\tilde S}^c$ on $\tilde S$, we then get
\begin{multline}
\qquad
\P\Bigl(R_{\LR;S,\chi_M} \geq \e^{\tilde c\gamma}\e^{-\cspecial \log\log M}\Bigr)
\\
\ge
\P\Bigl(R_{\LR;S,\chi_{\tilde S}^f} \geq \e^{-\cspecial \log\log M'}\Bigr)-\P\Bigl(\max_{u\in S}|\chi_{{\tilde S},u}^c|\le\tilde c\Bigr)\,.
\qquad
\end{multline}
Since~$S$ and~$\tilde S$ are centered at the same point, the first probability is at least $\frac12-\epsilon$ by our extension of \eqref{zero_mean_bound1a} to rectangles. The second probability can be made arbitrarily small uniformly in~$N$ by taking~$\tilde c$ large. The claim follows.
\end{proofsect}

\begin{remark}
Despite our convention that constants such as~$c,\tilde c, c'$, etc may change meaning line to line, the constant~$\cspecial $ will denote the quantity from Proposition~\ref{prop:zero_mean} throughout the rest of this~paper.
\end{remark}

\subsection{Restricted resistances across squares}
As noted already in the introduction, our  approach to the RSW theory is strongly inspired by~\cite{Tassion14} which is itself  based on inductively controlling  the crossing probability (in Bernoulli percolation)  between $\lb B(N)$ and a \emph{portion} of~$\rb B(N)$. We will now setup the relevant objects and notations and prove estimates that will later serve in an argument by contradiction.

For the square $B(N)$ and $\alpha,\beta\in[-N,N]\cap\Z$ with $\alpha\le\beta$,  consider the subset of $\rb B(N)$ defined by 
\begin{equation}
\rb^{[\alpha,\beta]}B(N):=
(\{N\}\times[\alpha,\beta])\cap\Z^2
\end{equation}
Let $\mathcal P_{N;[\alpha,\beta]}$ denote the set of paths in~$B(N)$ that use only the vertices in $((-N,N)\times[-N,N])\cap\Z^2$ except for the initial vertex, which lies in $\lb B(N)$, and the terminal vertex, which lies in~$\rb^{[\alpha,\beta]}B(N)$.
With these notions in place, we now introduce the shorthand
\begin{equation}
\label{E:5.16}
R_{N,[\alpha,\beta],\chi}:=R_{B(N)_\chi}\bigl(\mathcal P_{N;[\alpha,\beta]}\bigr) 
 = R_{B(N)_\chi}\bigl(\lb B(N),\rb^{[\alpha,\beta]}B(N)\bigr). 
\end{equation}
Our first goal is to define a quantity~$\alpha_N$ which will mark, in rough terms, the point of transition of $\alpha\mapsto R_{N,[0,\alpha],\chi_{2N}}$ from large to small values.

\begin{figure}[t]
\centerline{\includegraphics[width=0.45\textwidth]{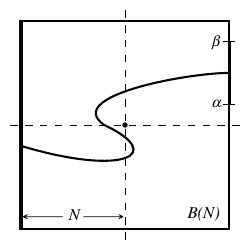}
}
\small 
\vglue0.0cm
\caption{
\label{fig5}
An illustration of the geometric setting underlying the definition of the restricted effective resistance $R_{N,[\alpha,\beta],\chi}$ in~\eqref{E:5.16}.}
\normalsize
\end{figure}

We first need a couple of simple observations. Note that $\mathcal P_{N;[0,N]}\cup \mathcal P_{N;[-N,0]}$ includes all paths starting on $\lb B(N)$ and terminating on $\rb B(N)$. Lemma~\ref{lem:parallel_law} then shows
\begin{equation}
\frac1{R_{\LR; B(N),\chi_{2N}}}
\le \frac1{R_{N,[0,N],\chi_{2N}}}+\frac1{R_{N,[-N,0],\chi_{2N}}}
\end{equation}
while the symmetry of both the law of~$\chi_{2N}$ and  the  square $B(N)$ with respect to the reflection through the~$x$ axis implies $R_{N,[0,N],\chi_{2N}} \laweq R_{N,[-N,0],\chi_{2N}}$.
By Proposition~\ref{prop:zero_mean}, there is~$N_0$ such that
\begin{equation}
\P\bigl(R_{\LR; B(N),\chi_{2N}}>\e^{\cspecial \log\log (2N)}\bigr)\le2/3
\end{equation}
as soon as~$N\ge N_0$. The square-root trick in Corollary~\ref{cor:square_root_trick} then shows
\begin{equation}
\label{eq:Tassion1_1}
\P\bigl(R_{N,[0,N],\chi_{2N}} > 2\e^{\cspecial \log\log(2N)}\bigr) \leq \sqrt{2/3} < 0.82
\end{equation}
as soon as~$N\ge N_0$. 

Next we note that,  by Lemma~\ref{lemma-3.7}, 
\begin{equation}
\sup_{N\ge1}\max_{\begin{subarray}{c}
v\in B(3N/2)\\u\sim v
\end{subarray}}
\var(\chi_{2N,v}-\chi_{2N,u})<\infty.
\end{equation}
Hence, there is~$C'\in(0,\infty)$ such that $\chi:=\chi_{2N}$ obeys
\begin{equation}
\label{E:5.22}
\max_{v \in B(N)}\P\Bigl(\max\{\chi_{v - e_2} - \chi_{v + e_1}, \chi_{v - e_2 + e_1} + \chi_{v - e_2} - \chi_{v} - \chi_{v + e_1}\} \geq C'\Bigr) \leq 0.005
\end{equation}
for all~$N\ge1$. Now set $C_1:=2(2\e^{C'\gamma}+1)$, define $\phi_N\colon\{0,\dots,N\}\to[0,1]$ by
\begin{equation}
\phi_N(\alpha):=\P\bigl(R_{N,[ \alpha ,N],\chi_{2N}}>(4+C_1)\,\e^{\cspecial \log\log(2N)}\bigr)
\end{equation}
and, noting that $\alpha\mapsto\phi_N(\alpha)$ is non-decreasing with $\phi_N(0)<0.82$ (cf \eqref{eq:Tassion1_1}), let
\begin{equation}
\alpha_N := \begin{cases}
\min\bigl\{\alpha\in\{0,\dots,\lfloor N/2\rfloor\}\colon \phi_N(\alpha) > 0.99\bigr\}  &\text{if } \phi_N(\lfloor N/2\rfloor) > 0.99,
\\
\lfloor N/2\rfloor, &\text{otherwise}\,.
\end{cases}
\end{equation}
This definition implies the following inequalities:
 
\begin{lemma}
\label{lem:Tassion1}
For~$C'$ as in~\eqref{E:5.22}, define $C_2:=4(2\e^{C'\gamma}+1)^2$ and let $ \cspecial $,~$N_0$ and~$C_1$ be as above. Then the following two properties hold for all~$N\ge N_0$:
\settowidth{\leftmargini}{(1111)}
\begin{enumerate}
\item[(\rm P1)] For all $\alpha \in\{0,\dots, \alpha_N\}$,
\begin{equation}
\label{E:5.25}
\P\bigl(R_{N,[\alpha,N],\chi_{2N}} \leq 5C_2\e^{\cspecial \log\log(2N)}\bigr) \geq 0.005.
\end{equation}
\item[(\rm P2)] If $\alpha_N < \lfloor N/2\rfloor$, then for all $\alpha \in\{ \alpha_N,\dots,N\}$,
\begin{equation}
\label{E:5.25a}
\P\bigl(R_{N,[\alpha,N],\chi_{2N}} \geq (4 + C_1)\,\e^{\cspecial \log \log (2N)}\bigr) > 0.99
\end{equation}
and
\begin{equation}
\label{E:5.26}
\P\bigl(R_{N,[0,\alpha],\chi_{2N}} \leq  4\e^{\cspecial \log\log(2N)}\bigr) \geq 0.17\,.
\end{equation}
\end{enumerate}
\end{lemma}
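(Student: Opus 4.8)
My plan is to deduce both properties from three inputs: monotonicity of the (restricted) resistances in~$\alpha$, the defining property of~$\alpha_N$, and one local ``surgery'' relating $R_{N,[\alpha_N-1,N],\chi_{2N}}$ to $R_{N,[\alpha_N,N],\chi_{2N}}$. Write $m_N:=\e^{\hat c\log\log(2N)}$, $\chi:=\chi_{2N}$, and $\rho:=2\e^{C'\gamma}+1\ge3$, so $C_1=2\rho$ and $C_2=4\rho^2$. Since enlarging the target segment only enlarges the admissible path family, $\alpha\mapsto R_{N,[\alpha,N],\chi}$ is non-decreasing and $\alpha\mapsto R_{N,[0,\alpha],\chi}$ non-increasing pointwise in~$\chi$; in particular $\phi_N$ is non-decreasing. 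From the definition of~$\alpha_N$: if $\alpha_N=\lfloor N/2\rfloor$ then $\phi_N(\alpha)\le0.99$ for all $\alpha\le\alpha_N$; if $\alpha_N<\lfloor N/2\rfloor$ then $\phi_N(\alpha)\le0.99$ for $\alpha<\alpha_N$, $\phi_N(\alpha_N)>0.99$, and $\alpha_N\ge1$ (since $\phi_N(0)\le\P(R_{N,[0,N],\chi}>2m_N)\le\sqrt{2/3}<0.99$ by~\eqref{eq:Tassion1_1}). Now whenever $\phi_N(\alpha)\le0.99$ we get $\P(R_{N,[\alpha,N],\chi}\le(4+C_1)m_N)\ge0.01$, and since $4+C_1\le C_2\le5C_2$ this is~\eqref{E:5.25}; hence (P1) holds for all $\alpha$ except, in the case $\alpha_N<\lfloor N/2\rfloor$, the single value $\alpha=\alpha_N$. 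Also (P2a) is immediate: for $\alpha\in\{\alpha_N,\dots,N\}$ we have $\phi_N(\alpha)\ge\phi_N(\alpha_N)>0.99$, which is exactly~\eqref{E:5.25a}.

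The heart of the matter is~\eqref{E:5.25} for $\alpha=\alpha_N$ when $\alpha_N<\lfloor N/2\rfloor$. Put $w:=(N,\alpha_N-1)$ and $v:=(N-1,\alpha_N)$, so $\rb^{[\alpha_N-1,N]}B(N)=\rb^{[\alpha_N,N]}B(N)\cup\{w\}$, the edge $e_a:=\{v-e_2,w\}$ is the unique edge by which a path of $\mathcal P_{N;[\alpha_N-1,N]}$ can reach~$w$ (its only interior neighbour), and the boundary edge $e_c:=\{w,v+e_1\}$ is used by no such path. I would take an energy-minimizing unit flow~$\theta$ from $\lb B(N)$ to $\rb^{[\alpha_N-1,N]}B(N)$ supported on edges of paths from $\mathcal P_{N;[\alpha_N-1,N]}$ (legitimate by~\eqref{E:5.16} and the path decomposition in the proof of Proposition~\ref{prop:fundamental_prop_resistance}). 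Then $\theta$ carries some $\delta\in[0,1]$ into~$w$ through~$e_a$ and nothing through~$e_c$, so rerouting those $\delta$ units out of~$w$ through~$e_c$ to $v+e_1\in\rb^{[\alpha_N,N]}B(N)$ gives a unit flow from $\lb B(N)$ to $\rb^{[\alpha_N,N]}B(N)$ of energy $(\text{energy of }\theta)+r_{e_c}\delta^2$, hence $R_{N,[\alpha_N,N],\chi}\le R_{N,[\alpha_N-1,N],\chi}+r_{e_c}\delta^2$. Since the energy of~$\theta$ on~$e_a$ alone is $r_{e_a}\delta^2\le R_{N,[\alpha_N-1,N],\chi}$, and since in the network $B(N)_\chi$ one has $r_{e_c}/r_{e_a}=\e^{\gamma(\chi_{v-e_2}-\chi_{v+e_1})}$, on the event $\{\chi_{v-e_2}-\chi_{v+e_1}<C'\}$ --- of probability $\ge1-0.005$ by~\eqref{E:5.22} (valid as $v\in B(N)$) --- this yields $R_{N,[\alpha_N,N],\chi}\le(1+\e^{\gamma C'})R_{N,[\alpha_N-1,N],\chi}$. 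Intersecting with $\{R_{N,[\alpha_N-1,N],\chi}\le(4+C_1)m_N\}$, of probability $\ge1-\phi_N(\alpha_N-1)\ge0.01$, and using $(1+\e^{\gamma C'})(4+C_1)=(\rho+1)(\rho+2)\le20\rho^2=5C_2$, an inclusion--exclusion bound gives $\P(R_{N,[\alpha_N,N],\chi}\le5C_2m_N)\ge0.01+0.995-1=0.005$, i.e.~\eqref{E:5.25}.

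For (P2b), assume $\alpha_N<\lfloor N/2\rfloor$; since $R_{N,[0,\alpha],\chi}\le R_{N,[0,\alpha_N],\chi}$ for $\alpha\ge\alpha_N$ it suffices to prove~\eqref{E:5.26} at $\alpha=\alpha_N$. Decomposing the sink set as $\rb^{[0,N]}B(N)=\rb^{[0,\alpha_N]}B(N)\cup\rb^{[\alpha_N,N]}B(N)$ and invoking the parallel law (Lemma~\ref{lem:parallel_law}; or directly~\eqref{eq-conductance-minimization} via the pointwise minimum of the two optimal voltage profiles) together with the duality $C=1/R$, I obtain $1/R_{N,[0,N],\chi}\le1/R_{N,[0,\alpha_N],\chi}+1/R_{N,[\alpha_N,N],\chi}$. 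On the event $\{R_{N,[0,N],\chi}\le2m_N\}\cap\{R_{N,[\alpha_N,N],\chi}\ge4m_N\}$ this forces $R_{N,[0,\alpha_N],\chi}\le4m_N$; the first event has probability $\ge1-\sqrt{2/3}$ by~\eqref{eq:Tassion1_1}, while the second has probability $\ge\P(R_{N,[\alpha_N,N],\chi}>(4+C_1)m_N)=\phi_N(\alpha_N)>0.99$. Hence $\P(R_{N,[0,\alpha_N],\chi}\le4m_N)\ge(1-\sqrt{2/3})+0.99-1>0.17$, which is~\eqref{E:5.26}.

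The step I expect to need the most care is the surgery in the second paragraph: one must make sure the energy-minimizing flow can genuinely be realized on paths of $\mathcal P_{N;[\alpha_N-1,N]}$, so that \emph{all} flow entering~$w$ passes through the interior edge~$e_a$ and \emph{none} through the boundary edges at~$w$ --- this is what makes the rerouting cost exactly $r_{e_c}\delta^2$ with the clean ratio bound --- and that the geometric labelling of $v$, $v-e_2$, $v+e_1$ makes the relevant ratio of edge resistances precisely the quantity controlled by~\eqref{E:5.22}. Once that local estimate is in hand, everything else is monotonicity of~$\phi_N$, the square-root-trick input~\eqref{eq:Tassion1_1}, the parallel law, and inclusion--exclusion with the numerology of $C_1$ and~$C_2$.
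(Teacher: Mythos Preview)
Your argument tracks the paper's proof essentially line by line: monotonicity of~$\phi_N$, the defining property of~$\alpha_N$, and the parallel-law plus inclusion--exclusion derivation of~(P2) are handled identically. The one place you diverge is the surgery for~(P1) at $\alpha=\alpha_N$: you push the $\delta$ units of flow arriving at $w=(N,\alpha_N-1)$ out through the \emph{boundary} edge $e_c=\{w,(N,\alpha_N)\}$, whereas the paper (Lemma~\ref{lemma-reroute}) pulls that flow back into the \emph{interior} and reroutes it along $u\to v\to t$. Your version is cleaner---one new edge, one resistance ratio $\e^{\gamma(\chi_{v-e_2}-\chi_{v+e_1})}$ to control, yielding the sharper factor $1+\e^{C'\gamma}$---but the rerouted flow now visits the right-boundary vertex~$w$ as a \emph{non-terminal} point, so its path decomposition does not lie in $\mathcal P_{N;[\alpha_N,N]}$ (whose paths must keep all non-endpoint vertices in the open strip $(-N,N)\times[-N,N]$). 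You are therefore really bounding the glued-set resistance on the right of~\eqref{E:5.16} and invoking that displayed equality to pass to the restricted one. The paper's interior detour keeps the flow supported on $\mathcal P_{N;[\alpha_N,N]}$ directly, at the cost of touching two edges that may already carry flow---whence the $(a+b)^2\le 2a^2+2b^2$ step and the second quantity in the maximum of~\eqref{E:5.22}---and the larger constant~$C_1$. Either variant lands comfortably inside $5C_2$, so the distinction is cosmetic.
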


\begin{proofsect}{Proof}
We begin with (P1). Since $\phi_N(\alpha) \leq 0.99$ for $\alpha \in \{0,\dots, \alpha_N-1\}$, for all such~$\alpha$ we have
\begin{equation}
\label{eq:Tassion1_2}
\P\bigl(R_{N,[\alpha,N],\chi_{2N}}\leq (4 + C_1)\,\e^{\cspecial \log\log(2N )}) \geq  0.01\,.
\end{equation} 
 In order to deal with $\alpha = \alpha_N$, we will  will need: 

\begin{lemma}
\label{lemma-reroute}
For $\chi:=\chi_{2N}$ and~$v$ being the point with coordinates $(N-1,\alpha_N)$, we have
\begin{equation}
\begin{aligned}
\label{eq:Tassion1_3}
\bigl\{R_{N,[\alpha_N,N],\chi_{2N}} &>  \,C_1 R_{N,[\alpha_N-1,N],\chi_{2N}}\bigr\}
\\
&\!\!\!\subseteq
\Bigl\{\max\bigl\{\chi_{v - e_2} - \chi_{v + e_1}, \chi_{v - e_2 + e_1} + \chi_{v - e_2} - \chi_{v} - \chi_{v + e_1}\bigr\}\ge C'\Bigr\}.
\end{aligned}
\end{equation}
\end{lemma}

\noindent
Deferring the proof of this lemma until after this proof,  we now combine \eqref{eq:Tassion1_2} for $\alpha:=\alpha_N-1$ with \eqref{E:5.22} to get 
\begin{equation}
\begin{aligned}
\label{eq:Tassion1_4}
\P\Bigl(&R_{N,[\alpha_N,N],\chi_{2N}}\le \,(4+C_1)C_1\e^{\cspecial \log\log(2N)}\Bigr)
\\
&\qquad\ge \P\Bigl(R_{N,[\alpha_N-1,N],\chi_{2N}}\le (4+C_1)\,\e^{\cspecial \log\log(2N)},\,\,R_{N,[\alpha_N,N],\chi_{2N}} \le C_1R_{N,[\alpha_N-1,N]}\Bigr)
\\
&\qquad\geq 0.01 - 0.005 = 0.005\,.
\end{aligned}
\end{equation}
Since $(4+C_1)C_1\le 5C_2$, the bound \eqref{E:5.25} holds for~$\alpha:=\alpha_N$  as well.  Thanks to the upward monotonicity of $\alpha\mapsto R_{N,[\alpha,N],\chi_{2N}}$  and also of $\alpha \mapsto \phi_N(\alpha)$,  the inequality extends to all $\alpha\le\alpha_N$.

The first inequality in $\mathrm{(P2)}$ evidently holds by our choice of~$\alpha_N$. As for the second inequality, Lemma~\ref{lem:parallel_law} shows
\begin{equation}
\frac1{R_{N,[0,N],\chi_{2N}}}\le\frac1{R_{N,[0,\alpha],\chi_{2N}}}+\frac1{R_{N,[\alpha,N],\chi_{2N}}}
\end{equation}
and this then implies 
\begin{multline}
\quad
\bigl\{R_{N,[0,N],\chi_{2N}} \leq 2\e^{\cspecial \log\log(2N)}, R_{N,[\alpha,N],\chi_{2N}} > (4 + C_1)\,\e^{\cspecial \log\log(2N)}\bigr\} 
\\
\subseteq \bigl\{R_{N,[0,\alpha],\chi_{2N}} \leq 4\e^{\cspecial \log\log(2N)}\bigr\}.
\quad
\end{multline}
Invoking \eqref{eq:Tassion1_1} and the definition of~$\alpha_N$, the probability of the event on the right is then at most $0.99 - 0.82 = 0.17$.
\end{proofsect}

 We still owe to the reader: 

\begin{proofsect}{Proof of Lemma~\ref{lemma-reroute}}
 Suppose~$\chi$ is such that the complementary event to that on the right of \eqref{eq:Tassion1_3} occurs. We will show that then the complement of the event on the left occurs as well. For this, let 
$\theta$ be the optimal flow  realizing the effective resistance in~\eqref{E:5.16} and let $\theta(x,y)$ denote  its value on edge $(x,y)$. To reduce clutter of indices, write $r(x,y)$ for the resistance of edge $(x,y)$. Abbreviate $t:=v+e_1$, $u:=v-e_2$ and $w:=u+e_1 = (N,\alpha_N-1)$. Our aim is to reroute $\theta(v,t)$ through~$u$ to $w$. Define a flow~$\tilde\theta$ by  setting  $\tilde \theta(v, u) := \theta(v,u) + \theta(v, t)$, $\tilde \theta(u,w) :=  \theta(u,w) + \theta(v, t)$ and $\tilde \theta(v, t): = 0$  and letting~$\tilde\theta_e:=\theta_e$ for all other edges~$e$.  The only edges where $\tilde \theta$ might expend more energy than $\theta$ are the edges $(v, u)$ and $(u,w)$. To bound the change in energy, we note 
\begin{equation}
\begin{aligned}
r(v, u) \tilde \theta(v, u)^2 
&\leq r(v, u)\bigl[\theta(v, u) + \theta(v, t)\bigr]^2 
\\
&\leq 2r(v, u)\theta(v, u)^2  + 2r(v, t)\,\e^{C'\gamma}\theta(v, t)^2 
\end{aligned}
\end{equation}
with the second inequality due to  the containment in the complement of the event on the right of~\eqref{eq:Tassion1_3}.  Similarly we have 
\begin{equation}
r(u, w) 
\tilde \theta(u,  w)^2  
\leq 2r(u,w)\theta(u,w)^2  
+ 2r(v, t)\,\e^{C'\gamma}\theta(v, t)^2 .
\end{equation}
Hence we get $R_{N, [\alpha_N-1, N],\chi_{2N}} \leq (2 + 4\e^{C'\gamma})R_{N, [\alpha_N, N],\chi_{2N}} = C_1 R_{N, [\alpha_N, N],\chi_{2N}}$, thus proving \eqref{eq:Tassion1_3}.
\end{proofsect}

\subsection{From squares to rectangles}
We now move to bounds on resistance across rectangular domains. As in Bernoulli percolation, a fundamental tool in this endeavor is the FKG inequality which, in our case, will be used in the following~form:

\begin{lemma}
\label{lem:resistance_chaining1} Consider a finite set $S \subseteq \Z^2$  and a Gaussian process $\{\chi_v\}_{v \in \mathcal R}$ such that $\cov(\chi_u, \chi_v) \geq 0$ for all $u, v \in S$. 
Suppose that $\mathcal P_1, \mathcal P_2,\cdots, \mathcal P_{n}$ are collections of paths in~$S$ that satisfy the conditions of Lemma~\ref{lem:series_law} for a pair of 
disjoint subsets $(A, B)$~of $S$. Then for any $r > 0$, we have
\begin{equation}
\P\bigl(R_{S_\chi}(A, B) \leq nr\bigr) \geq \prod_{i =1}^n\P\bigl(R_{S_\chi}(\mathcal P_i) \leq r\bigr)\,.
\end{equation}
\end{lemma}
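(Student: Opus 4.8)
The plan is to combine the variational characterization of the restricted effective resistance with the FKG inequality for Gaussian processes. First I would recall from Lemma~\ref{lem:series_law} that, under the stated hypotheses on $\mathcal P_1,\dots,\mathcal P_n$, one has the deterministic bound
\begin{equation}
R_{S_\chi}(A,B)\le\sum_{i=1}^nR_{S_\chi}(\mathcal P_i).
\end{equation}
Consequently, if the event $\bigcap_{i=1}^n\{R_{S_\chi}(\mathcal P_i)\le r\}$ occurs, then $R_{S_\chi}(A,B)\le nr$, so
\begin{equation}
\P\bigl(R_{S_\chi}(A,B)\le nr\bigr)\ge\P\Bigl(\,\bigcap_{i=1}^n\bigl\{R_{S_\chi}(\mathcal P_i)\le r\bigr\}\Bigr).
\end{equation}
It remains to lower bound the probability of this intersection by the product $\prod_{i=1}^n\P(R_{S_\chi}(\mathcal P_i)\le r)$.

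The second step is to verify that each event $\{R_{S_\chi}(\mathcal P_i)\le r\}$ is a decreasing event in the field $\{\chi_v\}_{v\in S}$, so that Lemma~\ref{lem:fkg_gaussian} (FKG for Gaussian processes, applicable since $\cov(\chi_u,\chi_v)\ge0$) applies. To see the monotonicity, I would go back to the definition \eqref{resist_restrict} of the restricted effective resistance together with \eqref{eq-r-e-restrict}: the edge resistances are $r_e=\e^{-\gamma(\chi_u+\chi_v)}$ for $e=(u,v)$, which are decreasing functions of $\chi$. Increasing $\chi$ only decreases every $r_e$, hence enlarges the admissible set $\mathfrak R_{\mathcal P_i}$ (the constraint $\sum_{A}1/r_{e,A}\le1/r_e$ is relaxed) and, for a fixed admissible assignment, decreases each quantity $\sum_{e\in A}r_{e,A}$; either way $R_{S_\chi}(\mathcal P_i)$ is non-increasing in each coordinate $\chi_v$. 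Therefore $\{R_{S_\chi}(\mathcal P_i)\le r\}$ is an increasing event, or—taking complements—the events $\{R_{S_\chi}(\mathcal P_i)> r\}$ are decreasing; applying FKG iteratively to these decreasing events gives
\begin{equation}
\P\Bigl(\,\bigcap_{i=1}^n\bigl\{R_{S_\chi}(\mathcal P_i)\le r\bigr\}\Bigr)\ge\prod_{i=1}^n\P\bigl(R_{S_\chi}(\mathcal P_i)\le r\bigr),
\end{equation}
where one uses that $R_{S_\chi}(\mathcal P_i)$ is a measurable (indeed continuous) function of $\chi$. Combining the three displays yields the claim.

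The only genuinely delicate point is the monotonicity claim, i.e.\ that $\chi\mapsto R_{S_\chi}(\mathcal P_i)$ is non-increasing coordinatewise; once this is in hand, everything else is a mechanical application of FKG and Lemma~\ref{lem:series_law}. I would phrase the monotonicity carefully via \eqref{resist_restrict}: writing $R_{S_\chi}(\mathcal P_i)=\bigl(\sup_{\{r_{e,P}\}\in\mathfrak R_{\mathcal P_i}}\sum_{P}(\sum_{e\in P}r_{e,P})^{-1}\bigr)^{-1}$, it is a reciprocal of a supremum over a set that grows and of a function that grows when the $r_e$'s shrink, hence it is non-increasing in the conductances $c_e=\e^{\gamma(\chi_u+\chi_v)}$, equivalently non-increasing in $\chi$. (Alternatively one can argue directly from Rayleigh monotonicity applied to the auxiliary networks built in the proof of Proposition~\ref{prop:fundamental_prop_resistance}.) One should also note the measurability/continuity of $\chi\mapsto R_{S_\chi}(\mathcal P_i)$, which is immediate since $S$ is finite and the infimum in the Dirichlet-energy representation is attained; this is needed to legitimately invoke Lemma~\ref{lem:fkg_gaussian}, which is stated for bounded Borel measurable (not necessarily continuous) monotone functions—so after truncating the events to indicator functions there is nothing further to check.
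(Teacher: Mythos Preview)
Your proof is correct and follows essentially the same approach as the paper: invoke Lemma~\ref{lem:series_law} to get the deterministic inequality $R_{S_\chi}(A,B)\le\sum_i R_{S_\chi}(\mathcal P_i)$, observe that each $\chi\mapsto R_{S_\chi}(\mathcal P_i)$ is coordinatewise non-increasing (so the events $\{R_{S_\chi}(\mathcal P_i)\le r\}$ are increasing), and apply the FKG inequality from Lemma~\ref{lem:fkg_gaussian}. The paper's own proof is a one-line reference to exactly these three ingredients; your write-up just spells out the monotonicity and measurability details more carefully. One minor wording slip: you should apply FKG directly to the \emph{increasing} events $\{R_{S_\chi}(\mathcal P_i)\le r\}$ rather than to their complements---the displayed inequality you want is precisely the positive association of increasing events.
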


\begin{proofsect}{Proof}
This is an immediate consequence of Lemma~\ref{lem:series_law},  the monotonicity of $R_{S_\chi}(\mathcal P_i)$ in individual edge resistances, and the FKG inequality in Lemma~\ref{lem:fkg_gaussian}. 
\end{proofsect}

The principal  outcome of this subsection is: 

\begin{proposition}
\label{lem:Tassion2}
There are $c_0,C_3\in(0,\infty)$ such that for all $N\ge N_0$ for which $\alpha_N \leq 2\alpha_{\lfloor 4N/7 \rfloor}$ holds, all $M\ge8N$ and any shift~$S$ of $B(4N,N)$ satisfying $S\subseteq B(M/2)$,
\begin{equation}
\label{E:5.32}
\P\bigl(R_{\LR;S,{\chi_{M}}} \leq C_3\e^{\cspecial \log\log M}\bigr) \geq c_0\,.
\end{equation}
The same applies to $R_{\UD;S,{\chi_{M}}}$ for any shift~$S$ of $B(N,4N)$ that obeys~$S\subseteq B(M/2)$.
\end{proposition}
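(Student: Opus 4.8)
The plan is to follow the Tassion-style RSW argument, building a left-right crossing of the rectangle $B(4N,N)$ out of several overlapping copies of crossings of squares and ``restricted'' crossings of squares, all glued together using the series law (Lemma~\ref{lem:series_law}) and the FKG-type chaining of Lemma~\ref{lem:resistance_chaining1}. The starting ingredients are: (i) the duality lower bound of Proposition~\ref{prop:zero_mean}, which gives $R_{\LR;B(N),\chi_M}\le\e^{\hat c\log\log M}$ with probability $\ge\frac12-\epsilon$; (ii) the estimates (P1)--(P2) of Lemma~\ref{lem:Tassion1} for the restricted resistances $R_{N,[\alpha,\beta],\chi_{2N}}$; and (iii) the hypothesis $\alpha_N\le 2\alpha_{\lfloor 4N/7\rfloor}$, which is exactly the ``no-jump'' condition that lets one push a crossing of a portion of a smaller square into a crossing of a genuinely larger square. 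The Gibbs-Markov property will be used to replace $\chi_M$ restricted to a sub-square by the corresponding finite-volume GFF (up to a coarse field that is uniformly bounded with high probability, exactly as in the proof of Corollary~\ref{cor-4.3}), so that the estimates phrased for $\chi_{2N}$ on $B(N)$ can be invoked on the relevant sub-squares of $S$.

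Concretely, I would first prove the statement for a single square: combine (P2) (the bound on $R_{N,[0,\alpha_N],\chi_{2N}}$, valid when $\alpha_N<\lfloor N/2\rfloor$) with the reflection symmetry through the $x$-axis and the hypothesis $\alpha_N\le 2\alpha_{\lfloor 4N/7\rfloor}$ to conclude that a restricted crossing of $B(\lfloor 4N/7\rfloor)$ landing in $\rb^{[0,\alpha]}$ can be extended, by adding a bounded-resistance crossing of an overlapping square of comparable size positioned to bridge the gap, to a full left-right crossing of a slightly larger square. The geometry here is the standard Tassion construction: place a square $B(\lfloor 4N/7\rfloor)$ and a reflected/translated copy so that the ``landing interval'' $\rb^{[-\alpha,\alpha]}$ of the first is crossed vertically (in the UD sense) by the second; the hypothesis on $\alpha_N$ guarantees these landing intervals are long enough relative to $\alpha_N$ for the overlap to force a connection. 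Using Lemma~\ref{lem:resistance_chaining1} (FKG plus the series law), the probability that the union of these $O(1)$ crossings has total resistance $\le C_3\e^{\hat c\log\log M}$ is bounded below by a product of $O(1)$ probabilities each $\ge$ a fixed constant (from Proposition~\ref{prop:zero_mean}, Corollary~\ref{cor-4.3}, and (P2)), hence $\ge c_0$. Then, to go from a square to the $4N\times N$ rectangle, I would chain together $O(1)$ overlapping crossings of squares of side $\asymp N$ placed along the long axis of $S$: each consecutive pair overlaps in a square of side $\asymp N$, and a vertical (UD) crossing of the overlap square together with the two horizontal crossings forces a path all the way across; again Lemma~\ref{lem:series_law} controls the resistance additively and Lemma~\ref{lem:resistance_chaining1} controls the probability multiplicatively, so the final bound is $\ge c_0$ for a possibly smaller $c_0$ and larger $C_3$. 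The $\UD$ statement for $B(N,4N)$ follows verbatim by the reflection $\eta\mapsto\eta$ composed with the rotation exchanging the roles of the two axes, using that the law of $\chi_M$ is invariant under rotations of $B(M)$.

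The main obstacle, and the step requiring the most care, is the square-to-larger-square extension using the hypothesis $\alpha_N\le 2\alpha_{\lfloor 4N/7\rfloor}$: one must verify that the landing interval $\rb^{[0,\alpha_{\lfloor 4N/7\rfloor}]}$ of a crossing of the smaller square is long enough, and positioned correctly, so that a bounded-resistance UD-crossing of an appropriately placed overlap square necessarily intersects it, thereby producing (via Lemma~\ref{lem:series_law} applied to the relevant collections of paths) a crossing of the larger square whose restricted resistance is controlled. This is where the specific constant $4N/7$ and the precise geometry of the overlapping boxes matters, and where the proof in \cite{Tassion14} must be adapted to the setting of \emph{collections} of paths and the resistance metric rather than single connections --- in particular one must check that the ``gluing'' of path collections at an interval of the boundary still satisfies the hypotheses of Lemma~\ref{lem:series_law} (every choice of one path from each collection yields a connected union containing an $A$--$B$ path). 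A secondary technical point is handling the Gibbs-Markov coarse-field correction uniformly over all the $O(1)$ sub-squares used in the construction; this is routine given Corollary~\ref{cor:field_smoothness1} and the argument already carried out in Corollary~\ref{cor-4.3}, but it does mean the constant $\hat c$ in the exponent may need to be replaced by a larger absolute multiple of it, which is harmless.
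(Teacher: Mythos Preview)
Your overall strategy---gluing restricted crossings via Lemma~\ref{lem:series_law} and Lemma~\ref{lem:resistance_chaining1}, handling the coarse field by Corollary~\ref{cor:field_smoothness1} as in Corollary~\ref{cor-4.3}, and chaining across the long direction at the end---is the right one and matches the paper. But there is a genuine structural gap: you base the entire argument on (P2), which is available \emph{only when} $\alpha_N<\lfloor N/2\rfloor$. The proposition must also cover the case $\alpha_N=\lfloor N/2\rfloor$ (the hypothesis $\alpha_N\le 2\alpha_{\lfloor 4N/7\rfloor}$ certainly allows this), and there (P2) is vacuous. The paper handles this case by a separate, simpler construction using only (P1): since $\alpha_N=\lfloor N/2\rfloor$, a crossing of $B(N)$ from the bottom to the $[\alpha_N,N]$-portion of the top and a crossing to the $[-N,-\alpha_N]$-portion of the top must overlap each other and any left-right crossing of the same square; one then tiles the rectangle by seven overlapping translates of $B(N)$, using a triple $(\mathcal P_i,\mathcal P_i',\mathcal P_i'')$ in each. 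The hypothesis on $\alpha_{\lfloor 4N/7\rfloor}$ is not used at all in this case. Your write-up needs this branch.

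For the case $\alpha_N<\lfloor N/2\rfloor$, your description of the geometry is also off in a way that matters. You propose to work with crossings of the \emph{smaller} square $B(\lfloor 4N/7\rfloor)$ and extend outward. The paper instead takes two overlapping translates $S_1,S_2$ of $B(N)$ (so that $S_1\cup S_2$ is a $2(2N-K)\times 2N$ rectangle, $K:=\lfloor 4N/7\rfloor$) and uses (P2) to get crossings of $S_1,S_2$ landing in the $[0,\alpha_N]$-portion of their inner boundaries; the bridge is a single square $S_3$ of side $K$, shifted \emph{vertically by $\alpha_K$} and contained in $S_1\cap S_2$, inside which five path collections (a full UD crossing plus four crossings from one side to the $[\pm\alpha_K,\pm K]$-portion of the other, supplied by (P1) at scale $K$) are needed. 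The vertical shift by $\alpha_K$ together with $\alpha_N\le 2\alpha_K$ is exactly what guarantees that every choice of one path from each of the seven collections has connected union containing a left-right path; a generic ``UD crossing of an overlap square'' does not suffice. Once this is set up, the chaining to the full $B(4N,N)$ proceeds as you describe.
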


By Proposition~\ref{prop:zero_mean} the bound holds for left-to-right resistance of centered squares. We will  employ a geometric argument combined with the FKG inequality to extend the bound from squares to rectangular domains. The main technical tool is Lemma~\ref{lem:series_law}  which,  in a sense, permits us to bound resistance by path-connectivity considerations only. We will actually use a different argument depending on whether $\alpha_N$ equals, or is less than $\lfloor N/2\rfloor$.

\begin{proofsect}{Proof of Proposition~\ref{lem:Tassion2}, case $\alpha_N = \lfloor N/2\rfloor$}
Here we will need the bound \eqref{E:5.25}, but for the underlying domain not necessarily centered at the box which defines the underlying field. Thus, for~$S$ a translate of the square $B(N)$ such that $S\subseteq B(M/2)$, let~$R_{S,[\alpha,\beta],\chi_M}$ denote the quantity corresponding to $R_{N,[\alpha,\beta],\chi_M}$ for the square~$S$ and the underlying field  given by~$\chi_{M}$. In light of \eqref{E:5.25}, Corollary~\ref{cor:field_smoothness1} and Lemma~\ref{lem:resistance_chaining1} show that, for some constant $C_3'\in(0,\infty)$ depending only on~$C_1$ and~$C_2$, 
\begin{equation}
\label{E:5.33}
\P\bigl(R_{S,[\alpha_N,N],\chi_M} \leq C_3'\e^{\cspecial \log\log M}\bigr)\ge0.001
\end{equation}
holds for all~$N\ge N_0$, all $M\ge8N$ and all squares~$S$ as above that are contained in $B(M/2)$. Thanks to invariance of the law of~$\chi_{M}$ under rotations of $B(M)$, the same bound holds also for the ``rotated'' quantities; namely, those dealing with ``up-down' resistances.

Now let~$S$ be a translate by~$x\in\Z^2$ of the rectangle $B(4N,N)$ such that $S\subseteq B(M/2)$ and let us regard~$S$ as the union of the squares
\begin{equation}
S_i:=x+ (i-5) Ne_1+B(N),\qquad i=1,\dots,7.
\end{equation}
For each $i\in\{1,\dots,7\}$, consider the following collections of paths: First, let $\mathcal P_i$ be the set of all paths in~$S_i$ that cross~$S_i$ left to right (with only the initial and terminal point visiting the left and right boundaries of~$S_i$). Then (referring to parts of the boundary as if~$S_i$ were the square~$B(N)$), let~$\mathcal P_i'$ be the collection of paths that connects the bottom of the square to the $[-N,-\alpha_N]$ portion of the top boundary, and let $\mathcal P_i''$ be the path between the bottom of the square to the $[\alpha_N,N]$ portion of the top boundary. The key point (implied by the fact that $\alpha_N = \lfloor N/2\rfloor$) is now that, for any choice of paths $P_i\in\mathcal P_i$, $P_i'\in\mathcal P_i'$ and $P_i''\in\mathcal P_i''$ and any $i=1,\dots,7$, the graph union of the triplet of paths $(P_i,P_i',P_i'')$ is connected and, for each $i=1,\dots,6$, the graph union of $(P_i,P_i',P_i'')$ is connected to the graph union of $(P_{i+1},P_{i+1}',P_{i+1}'')$;  see Fig.~\ref{fig6}.

\begin{figure}[t]
\centerline{\includegraphics[width=0.98\textwidth]{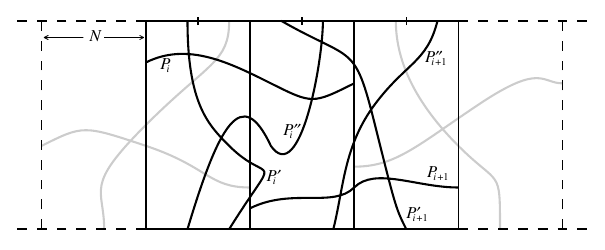}
}
\small 
\vglue0.0cm
\caption{
\label{fig6}
The setting of the proof of Proposition~\ref{lem:Tassion2}, case $\alpha_N = \lfloor N/2\rfloor$. The collection of paths shown suffices to ensure a left-to right crossing through the four shown translates of~$B(N)$. The key points to observe are that $P_i$ intersects both~$P_i'$ and~$P_i''$ while $P_i''$ intersects~$P_{i+1}'$, for each~$i$.}
\normalsize
\end{figure}

It follows that the graph union of the seven triplets of paths contains a left-to-right crossing of the rectangle~$S$ and, by Lemma~\ref{lem:series_law}, we thus get
\begin{equation}
R_{\LR;S,{\chi_{M}}}
\le\sum_{i=1}^7\Bigl(R_{S_i,\chi_{M}}(\mathcal P_i)+R_{S_i,\chi_{M}}(\mathcal P_i')+R_{S_i,\chi_{M}}(\mathcal P_i'')\Bigr).
\end{equation}
In light of the definition \eqref{E:5.16} (and, for simplicity of computation, restricting~$\mathcal P_i$ to paths that terminate only at the top $[\alpha_N,N]$ portion of the right boundary), \eqref{E:5.33} and the FKG inequality now give \eqref{E:5.32} with $C_3:=21C_3'$ and $c_0:=10^{-63}$.
\end{proofsect}

 \begin{proofsect}{Proof of Proposition~\ref{lem:Tassion2}, case $\alpha_N < \lfloor N/2\rfloor$}
Here, in addition to \eqref{E:5.25a} which, as before, we bring to the form \eqref{E:5.33}, we will also need \eqref{E:5.26} --- this is why we need $\alpha_N<\lfloor N/2\rfloor$ --- which we extend using Corollary~\ref{cor:field_smoothness1} and Lemma~\ref{lem:resistance_chaining1} to the form
\begin{equation}
\label{E:5.35}
\P\bigl(R_{S,[0,\alpha_N],\chi_M} \leq C_3''\e^{\cspecial \log\log M}\bigr)\ge0.01
\end{equation}
for some $C_3''\in(0,\infty)$, all~$N\ge N_0$ and all translates~$S$ of $B(N)$ such that $S\subseteq B(M/2)$. The same bound holds also for all rotations and reflections of these quantities.

\begin{figure}[t]
\centerline{\includegraphics[width=0.65\textwidth]{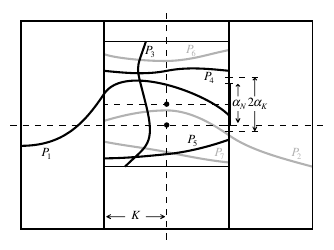}
}
\small 
\vglue0.0cm
\caption{
\label{fig7}
An illustration of the geometric setting underlying the key argument in the proof of Proposition~\ref{lem:Tassion2}, case $\alpha_N < \lfloor N/2\rfloor$. Here $K:=\lfloor 4N/7\rfloor$ and $\alpha_N\le 2\alpha_K$. Examples of paths $P_1\in\mathcal P_1$, $P_3\in\mathcal P_3$, $P_4\in\mathcal P_4$ and $P_5\in\mathcal P_5$ are shown in black. Together with any choice of paths $P_2\in\mathcal P_2$, $P_6\in\mathcal P_6$ and $P_7\in\mathcal P_7$ (shown in gray), these enforce a left-to-right crossing of the rectangle.  We note that, although in the above drawing one of~$P_5$ and~$P_7$ can be dropped without changing the overall effect, all the depicted paths are generally needed to achieve the desired crossing. 
}
\normalsize
\end{figure}

Abbreviate $K:=\lfloor 4N/7\rfloor$ and note that $K<N<2K$ for~$N$ large enough. Let us first deal with~$S$ being a translate of the rectangle~$([-N,3N-2K]\times[-N,N])\cap\Z^2$ by some~$x\in\Z^2$ subject to the restriction $S\subseteq B(4K)$. Consider the squares
\begin{equation}
S_1:=x+B(N),\quad S_2:=x+2(N-K)e_1+B(N)
\end{equation}
and
\begin{equation}
S_3:=x+(N-K)e_1+\alpha_K e_2+[-K,K]^2\cap\Z^2
\end{equation}
and note that $S_1\cup S_2=S$ and $S_3\subseteq S_1\cap S_2$;  see Fig.~\ref{fig6}.  Define the following collections of paths: First, let~$\mathcal P_1$ be all paths in~$S_1$ from the left side to the $[0,\alpha_N]$ portion of the right side. Similarly, let $\mathcal P_2$ be all paths in~$S_2$ from the $[0,\alpha_N]$ portion of the left side to the right side of~$S_2$.  Next  we define the following collections of paths in~$S_3$:
\settowidth{\leftmargini}{(1111)}
\begin{enumerate}
\item[(1)] the set $\mathcal P_3$ of all paths from the top to the bottom sides of~$S_3$,
\item[(2)] the set $\mathcal P_4$ of all paths from the left side of~$S_3$ to the $[\alpha_K,K]$ portion of the right side,
\item[(3)] the set $\mathcal P_5$ of all paths from the left side of~$S_3$ to the $[-K,-\alpha_K]$ portion of the right side,
\item[(4)] the set $\mathcal P_6$ of all paths from the $[\alpha_K,K]$ portion of the left side of~$S_3$ to the right side, and
\item[(5)] the set $\mathcal P_7$ of all paths from the $[-K,-\alpha_K]$ portion of the left side of~$S_3$ to the right side.
\end{enumerate}
The key point is that, thanks to the assumption  $\alpha_N\le 2\alpha_K$,  for any choice of paths $P_i\in\mathcal P_i$, the graph union of these paths will contain a left-to-right path crossing~$S$;  see Fig.~\ref{fig6}. By Lemma~\ref{lem:series_law}, 
\begin{equation}
R_{\LR,S,\chi_{M}}\le\sum_{i=1}^7 R_{S_i,\chi_{M}}(\mathcal P_i),
\end{equation}
where $S_4=\dots=S_7:=S_3$. From here we get \eqref{E:5.32} for all $2(2N-K)\times2N$ rectangles~$S\subseteq B( M/2)$ with $C_3:=21\max\{C_3',C_3''\}$ and~$c_0:=10^{-14}$.

In order to prove the desired claim, consider a translate~$S$ of $B(4N,N)$ by~$x\in\Z^2$ entirely contained in~$B( M/2)$ and note that, letting $k:= \lceil\frac {4N}{N-K}\rceil$, and we can cover~$S$ by the family of rectangles $S_0',\dots,S_k'$ and~$S_1'',\dots,S_{k-1}''$ defined as follows:
\begin{equation}
S_j':= x_j+\bigl([0,2(2N-K)]\times[-N,N]\bigr)\cap\Z^2,\quad j=0,\dots,k,
\end{equation}

where $x_j:=x+2(N-K)je_1$ for all $j=0,\dots,k-1$ and~$x_k:=x+[8N-2k(N-K)]e_1$, which ensures that all~$S_i'$ lie inside~$S$ (and thus inside~$B(M/2)$), 
and
\begin{equation}
S_j'':= y_j+\bigl([-N,N]\times[0,2(2N-K)]\bigr)\cap\Z^2,\quad j=1,\dots,k-1,
\end{equation}

where $y_j-x_j$ are such that all~$S_j''$ lie in~$B( M/2)$ (this is possible because $2(2N-K)<16N$) and such that $S_j'\cap S_j''\subseteq S_{j+1}'$ for each~$j=1,\dots,k-1$. Assuming each $S_j'$ and~$S_j''$ contains a path connecting the shorter sides of the rectangle, the graph union of these paths then contains a left-to-right crossing of~$S$. Lemma~\ref{lem:series_law} then  gives
\begin{equation}
R_{\LR,S,\chi_{M}}
\le\sum_{j=0}^k R_{\LR,S_j',\chi_{M}}+\sum_{j=1}^{k-1}R_{\UD,S_j'',\chi_{M}}\,.
\end{equation}
In light of our earlier proof of \eqref{E:5.32} for rectangles of dimensions $2N\times2(2N-K)$, we get \eqref{E:5.32} for $2N\times8N$ rectangles as well with $C_3:=21(2k+1)\max\{C_3',C_3''\}$ and $c_0=10^{-14(2k+1)}$.
\end{proofsect}

\subsection{Bounding the growth of~$\alpha_N$}
It appears that Proposition~\ref{lem:Tassion2} could  be  more than sufficient for proving uniform upper bound on resistance across rectangles, provided we can somehow guarantee that $N\mapsto\alpha_N$ does not grow faster than exponentially with~$N$. This is the content of:

\begin{proposition}
\label{lem:Tassion_main}
For each~$c_0\in(0,1)$ and each~$C_3\in(0,\infty)$, there exists an integer~$C_5>8$ such that if, for some $N\ge1$,
\begin{equation}
\label{E:5.32a}
\P\bigl(R_{\LR;S,{\chi_{16N}}} \leq C_3\e^{\cspecial \log\log (16N)}\bigr) \geq c_0
\end{equation}
holds for all translates or rotates~$S$ of $B(4N,N)$ contained in~$B(8N)$, then we have $\alpha_{N'}\ge N$ for at least one~$N'\in\{8N,\dots,C_5N\}$.
\end{proposition}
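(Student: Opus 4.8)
The plan is to argue by contradiction: suppose $\alpha_{N'} < N$ for \emph{all} $N' \in \{8N, \dots, C_5 N\}$, where $C_5$ will be chosen large at the end. For each such~$N'$ this means, in the notation preceding Lemma~\ref{lem:Tassion1}, that $\alpha_{N'} < \lfloor N'/2\rfloor$, so property (P2) of Lemma~\ref{lem:Tassion1} applies. In particular $R_{N',[0,\alpha_{N'}],\chi_{2N'}}$ is small (at most $4\e^{\hat c\log\log(2N')}$) with probability at least $0.17$ and, more to the point, $R_{N',[\alpha_{N'},N'],\chi_{2N'}} \ge (4+C_1)\e^{\hat c\log\log(2N')}$ with probability at least $0.99$. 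The latter says that the restricted resistance between $\lb B(N')$ and the \emph{upper half-ish} slice $\rb^{[\alpha_{N'},N']}B(N')$ is typically large. Because $\alpha_{N'}<N$ for all $N'$ in this range, the slice $\rb^{[\alpha_{N'},N']}B(N')$ contains the portion $\rb^{[N,N']}B(N')$ of the right boundary, so the large-resistance event persists for all of these roughly $\log(C_5)$ many dyadic scales between $8N$ and $C_5 N$.

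The heart of the argument is to convert ``resistance to the upper slice is large at every scale between $8N$ and $C_5N$'' into a contradiction with the hypothesis \eqref{E:5.32a}, which gives a \emph{uniformly positive} probability $c_0$ that a left-right resistance across a $B(4N,N)$ rectangle is at most $C_3\e^{\hat c\log\log(16N)}$. Following the Tassion strategy, I would build a left-to-right crossing of such a rectangle (at scale comparable to $16N$) out of $O(1)$ path-collections, each of which is a crossing of a translate of $B(N')$ (for a well-chosen intermediate $N'$) that stays \emph{below} the bad slice, i.e. lands on the $[0,\alpha_{N'}]$ portion rather than the $[\alpha_{N'}, N']$ portion of the boundary; these are controlled by the good event \eqref{E:5.26}/\eqref{E:5.33}. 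The delicate point is exactly where the quantitative input enters: because $\alpha_{N'}$ is small relative to $N'$ at many scales, one can iterate a ``going-around'' construction — use the smallness of $\alpha_{N'}$ to climb past an obstacle using the resistance-free region near the bottom — and after $\sim\log C_5$ iterations one has either forced $\alpha_{N'}\ge 2\alpha_{\lfloor 4N'/7\rfloor}$ to fail somewhere (which feeds into Proposition~\ref{lem:Tassion2}) or one has directly manufactured, with probability bounded below by a product of $O(1)$-many FKG factors (using Lemma~\ref{lem:resistance_chaining1} and Lemma~\ref{lem:series_law}), a crossing whose resistance is at most a constant times $\e^{\hat c\log\log(16N)}$ but which is forbidden by the accumulated large-resistance events from (P2) applied across the $\log C_5$ scales — contradiction once $C_5$ is large enough that $\log C_5$ beats the $c_0$-independent constants.

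Concretely, the key steps in order: (1) assume $\alpha_{N'}<N$ for all $N'\in\{8N,\dots,C_5N\}$ and record, via (P2), the events $\mathcal E_{N'} := \{R_{N',[\alpha_{N'},N'],\chi_{2N'}} \ge (4+C_1)\e^{\hat c\log\log(2N')}\}$ with $\P(\mathcal E_{N'})>0.99$, together with the complementary smallness events from \eqref{E:5.26}; (2) observe that since these are monotone (decreasing, resp.\ increasing) in the edge-resistances and $\chi$ is an FKG Gaussian field, intersections of $O(\log C_5)$ of them still have probability bounded away from $0$ from below (square-root trick / direct FKG) — this is what lets us work at many scales simultaneously; (3) run the geometric construction of Proposition~\ref{lem:Tassion2}-type, but now \emph{across scales}: chain together crossings of intermediate squares $B(N')$ landing on their lower slices $\rb^{[0,\alpha_{N'}]}$, using Lemma~\ref{lem:series_law} to bound the total resistance by the sum, to produce on a positive-probability event a left-right crossing of a translate of $B(4N,N)$ inside $B(8N)$ with resistance $\le C_3'\e^{\hat c\log\log(16N)}$ for a constant $C_3'$ \emph{independent of} $C_5$; (4) simultaneously, on that same event the large-resistance events $\mathcal E_{N'}$ force any left-right crossing built this way to have resistance $\ge (4+C_1)\log C_5$-ish times $\e^{\hat c\log\log(16N)}$ — here is where the number of scales gives a factor growing in $C_5$; (5) choosing $C_5$ so large that this lower bound exceeds $C_3'\e^{\hat c\log\log(16N)}$ yields the contradiction, completing the proof.

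The main obstacle I anticipate is step (3)–(4): organizing the geometry so that the crossing one constructs genuinely \emph{must} traverse the bad slices $\rb^{[\alpha_{N'},N']}$ at all of the $\log C_5$ intermediate scales (so that the $\mathcal E_{N'}$'s actually obstruct it and accumulate a large resistance), while at the same time the \emph{construction} of the crossing only ever uses crossings landing in the \emph{good} lower slices. Reconciling these requires a careful nested arrangement of the intermediate squares (as in Fig.~\ref{fig7}) and a bookkeeping argument showing that the ``bad'' resistance contributions of the $\mathcal E_{N'}$ events lie on edge-disjoint portions of the network and hence add up. Everything else — the FKG product bounds, the translation of (P1)/(P2) to off-center domains via Corollary~\ref{cor:field_smoothness1}, and the final choice of $C_5$ — is routine given the tools already assembled in Sections~2–4.
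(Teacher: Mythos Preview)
Your contradiction setup is right, and you correctly identify that (P2) of Lemma~\ref{lem:Tassion1} is the thing to contradict. But the mechanism you sketch in steps (3)--(4) is not the one that works, and the tension you flag in your final paragraph is real: those two steps are not reconcilable as stated.

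The paper does not try to contradict the hypothesis~\eqref{E:5.32a} (that is assumed, not to be violated), nor does it accumulate a lower bound of order $\log C_5$ on any resistance. Instead, the contradiction is obtained directly with the \emph{first} inequality in (P2), namely $\P\bigl(R_{N',[\alpha_{N'},N'],\chi_{2N'}}\ge(4+C_1)\e^{\hat c\log\log(2N')}\bigr)>0.99$, by exhibiting an event of probability $>0.01$ on which this restricted resistance is \emph{small}. The geometric key is a rerouting around the corner point $\o_{N'}=(N',0)$: if one has (i) a path from $\lb B(N')$ to the $[0,\alpha_{N'}]$-slice, and (ii) a circuit in some annulus $A$ centered at $\o_{N'}$ with inner radius at least $\alpha_{N'}$, then the union contains a path from $\lb B(N')$ to the $[\alpha_{N'},N']$-slice. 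By Lemma~\ref{lem:series_law},
\[
R_{N',[\alpha_{N'},N'],\chi_{2N'}}\;\le\; R_{N',[0,\alpha_{N'}],\chi_{2N'}}\;+\;R_{A,\chi_{2N'}}\,.
\]
The first term is handled by the \emph{second} inequality in (P2). For the second term one needs an RSW-type bound at the scale of the annulus, which is at least $8N$; this is exactly the content of Lemma~\ref{lem:Tassion3} (which you do not invoke): the hypothesis~\eqref{E:5.32a} together with $\alpha_K\le N$ propagates the rectangle-crossing bound from scale $N$ to any scale $K>2N$.

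The role of the many scales $\{8N,\dots,C_5N\}$ is \emph{not} to sum up $\log C_5$ large contributions, but the opposite: to give enough independent tries so that, after accounting via the LIL machinery (Lemmas~\ref{lem:ellipicity_upper}--\ref{lem:anchoring}) for the shift between the field $\chi_{16N}$ governing~\eqref{E:5.32a} and the field $\chi_{2N'}$ governing (P2), at least one annular scale has small circuit resistance in the correct field with probability $\ge0.97$. One chooses $m$ so that $(1-c_2)^m\le0.01$ and then $m'$ (hence $C_5=8^{m'+1}$) large enough for the LIL to guarantee $m$ favorable scales; combining with the $0.17$ from (P2) via FKG gives probability $>0.16$, contradicting the $0.99$. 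Your plan to chain crossings of $B(N')$-squares with $N'\ge8N$ to cross a $B(4N,N)$ rectangle cannot work geometrically (the squares are larger than the target rectangle), and the events $\mathcal E_{N'}$ do not obstruct rectangle crossings in any way that would yield your step~(4).
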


The proof will be based on the following lemma:
  
\begin{lemma}
\label{lem:Tassion3}
Suppose that, for some~$c_0,C_3\in(0,\infty)$ and some~$N\ge1$, \eqref{E:5.32a} holds for all translates and rotates of $B(4N,N)$ contained in~$B(8N)$. There are $c_1$ and $C_4$, depending only on~$c_0$ and~$C_3$, respectively, such that whenever $K > 2N$ is such that $\alpha_K \leq N$ and $M\ge16K$,
\begin{equation}
\label{E:4.48}
\P\bigl(R_{\LR,S,\chi_{M}} \leq C_4\e^{\cspecial \log\log M}\bigr) \geq c_1
\end{equation}
holds for all translates and rotates of $B(4K,K)$ contained in~$B(8K)$.
\end{lemma}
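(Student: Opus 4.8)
The plan is to mimic the proof of Proposition~\ref{lem:Tassion2} in its case $\alpha_N<\lfloor N/2\rfloor$, but with the two scales interchanged: here the \emph{large} scale is $K$ and the \emph{small} scale is $N$, and the hypothesis $\alpha_K\le N$ plays the role that $\alpha_N\le 2\alpha_{\lfloor 4N/7\rfloor}$ played there.

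\emph{Step 1: transferring the inputs to the field $\chi_M$.} Given a translate or rotate $S$ of $B(4N,N)$ with $S\subseteq B(M/2)$, let $\tilde S$ be the translate of $B(16N)$ centred at the centre of $S$; since $M\ge 16K>32N$ one has $\tilde S\subseteq B(M)$. Writing the Gibbs--Markov decomposition $\chi_M=\chi^f_{\tilde S}+\chi^c_{\tilde S}$ on $\tilde S$ (Lemma~\ref{lem:Markov}), the field $\chi^f_{\tilde S}$ is a translate of $\chi_{16N}$, independent of the discrete-harmonic field $\chi^c_{\tilde S}$, and $S$ sits inside the inner copy $B(8N)$ of $\tilde S$. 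By Lemma~\ref{lem:resistance_decomp} and Corollary~\ref{cor:field_smoothness1} (which gives a uniform Gaussian tail for the oscillation of $\chi^c_{\tilde S}$ over $S$), together with \eqref{E:5.32a} applied to $\chi^f_{\tilde S}$ and the independence of the two fields, one obtains --- exactly as in the derivation of \eqref{E:5.33} from \eqref{E:5.25} --- constants $C_3'\in(0,\infty)$ and $c_0'>0$, depending only on $C_3$ and $c_0$, with
\[
\P\bigl(R_{\LR;S,\chi_M}\le C_3'\,\e^{\hat c\log\log M}\bigr)\ge c_0'
\]
for every such $S$, and likewise for $R_{\UD}$ and all rotates and reflections. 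The same manoeuvre, now based on a translate of $B(2K)$ and on Lemma~\ref{lem:Tassion1} (whose parts (P1) and (P2) apply at scale $K$ because $\alpha_K\le N<\lfloor K/2\rfloor$ once $K\ge 2N+2$), yields, for some absolute $c>0$, some $C_3''\in(0,\infty)$ and all translates $S$ of $B(K)$ with $S\subseteq B(M/2)$,
\[
\P\bigl(R_{S,[0,\alpha_K],\chi_M}\le C_3''\,\e^{\hat c\log\log M}\bigr)\ge c
\quad\text{and}\quad
\P\bigl(R_{S,[\alpha_K,K],\chi_M}\le C_3''\,\e^{\hat c\log\log M}\bigr)\ge c,
\]
together with all rotates and reflections of these restricted resistances. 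The single borderline value $K=2N+1$, where possibly $\alpha_K=\lfloor K/2\rfloor$, is handled by the simpler construction of Proposition~\ref{lem:Tassion2}, case $\alpha_N=\lfloor N/2\rfloor$, and will not be discussed.

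\emph{Step 2: the geometric construction.} As in the proof of Proposition~\ref{lem:Tassion2}, case $\alpha_N<\lfloor N/2\rfloor$, it suffices to cross left--right a translate of $\bigl([-K,3K-2N]\times[-K,K]\bigr)\cap\Z^2$; the full rectangle $B(4K,K)$ then follows by covering it with $O(1)$ such rectangles (together with $O(1)$ rectangles crossed in the perpendicular direction, to realise the vertical connections) placed inside $B(M/2)$, exactly as in that proof, and the analogous statement for $R_{\UD}$ and for a rotate $S$ of $B(4K,K)$ follows by running the same argument on a $90^\circ$-rotated configuration, which is legitimate since the law of $\chi_M$ on $B(M)$ is invariant under the rotations preserving $B(M)$. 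Put $S_1:=x+B(K)$ and $S_2:=x+2(K-N)e_1+B(K)$, so that $S_1\cup S_2$ is the desired rectangle and their overlap has width $2N$ and full height $2K$. Inside this overlap, centred near height $\lfloor\alpha_K/2\rfloor$, place $O(1)$ many $N$-scale squares and rectangles covering the two windows $\rb^{[0,\alpha_K]}S_1$ and $\lb^{[0,\alpha_K]}S_2$; because $\alpha_K\le N$, each window is a vertical segment of at most $N+1$ vertices, so it fits inside such $N$-scale boxes. Take $\mathcal P_1$ to be the paths in $S_1$ from $\lb S_1$ to $\rb^{[0,\alpha_K]}S_1$, take $\mathcal P_2$ to be the paths in $S_2$ from $\lb^{[0,\alpha_K]}S_2$ to $\rb S_2$, and take $\mathcal P_3,\dots,\mathcal P_m$ ($m=O(1)$) to be the \emph{full} left--right and top--bottom crossings of the $N$-scale boxes, arranged --- in direct imitation of the collections $\mathcal P_3,\dots,\mathcal P_7$ in Proposition~\ref{lem:Tassion2} --- so that for \emph{any} choice of one path from each collection the graph union is connected and contains a left--right crossing of the rectangle. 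The hypothesis $\alpha_K\le N$ enters only in verifying this last property; the verification is the same as in Proposition~\ref{lem:Tassion2} once one observes that ``the $[\alpha_{\lfloor4N/7\rfloor},\,\cdot\,]$-portion of a side'' is here replaced by ``an entire side'' (a full crossing subsumes every restricted one) and that the window has height $\alpha_K+1\le N+1$.

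\emph{Step 3: conclusion.} With the collections as above, Lemma~\ref{lem:series_law} gives $R_{\LR;S,\chi_M}\le\sum_{i=1}^m R_{S_i,\chi_M}(\mathcal P_i)$ for any left--right-crossed rectangle $S$, and since all covariances of $\chi_M$ are non-negative, Lemma~\ref{lem:resistance_chaining1} (the FKG inequality) bounds $\P\bigl(R_{\LR;S,\chi_M}\le C_4\e^{\hat c\log\log M}\bigr)$ from below by the product of the $O(1)$ probabilities produced in Step~1, each of which is bounded below by a positive absolute constant or by $c_0'$. Tracking the constants through the $O(1)$ rectangles used to build $B(4K,K)$ gives \eqref{E:4.48} with $C_4$ depending only on $C_3$ and $c_1>0$ depending only on $c_0$, as claimed; the $R_{\UD}$ version follows by the same argument on a $90^\circ$-rotated configuration. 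The crux of the whole proof is Step~2 --- choosing the finite family of crossing collections and checking that an arbitrary selection of one path from each yields a connected union with a full left--right crossing. This is where $\alpha_K\le N$ is used, and although it is a routine (if somewhat fiddly) adaptation of the corresponding verification in Proposition~\ref{lem:Tassion2}, it is where essentially all the care is required.
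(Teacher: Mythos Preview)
Your proof is correct in outline and uses the same ingredients as the paper: the $K$-scale restricted bound from part~(P2) of Lemma~\ref{lem:Tassion1} (available because $\alpha_K\le N<\lfloor K/2\rfloor$), the $N$-scale hypothesis~\eqref{E:5.32a} transferred to $\chi_M$ via Gibbs--Markov, and the combination via Lemma~\ref{lem:series_law} and FKG. The difference lies entirely in Step~2, and the paper's construction is considerably simpler than the one you sketch.

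Rather than overlapping the two $K$-squares by width~$2N$ and then trying to reproduce the intricate $\mathcal P_3,\dots,\mathcal P_7$ structure of Proposition~\ref{lem:Tassion2}, the paper takes $S_1=-Ke_1+B(K)$ and $S_2=Ke_1+B(K)$ meeting on the line $x=0$ (so $S_1\cup S_2=B(2K,K)$), and then places the \emph{annulus} $B(2N)\smallsetminus B(N)^\circ$ at the origin. The single window $\{0\}\times[0,\alpha_K]$ lies inside $B(N)$ precisely because $\alpha_K\le N$, while both $P_1$ and $P_2$ start outside $B(2N)$ (since $K>2N$); hence any circuit in the annulus --- built from four full crossings of the four maximal $N\times 4N$ rectangles $S_1',\dots,S_4'$ --- must meet both $P_1$ and $P_2$. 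This gives exactly six path collections (two at scale~$K$, four at scale~$N$) with an explicit connectivity check, yielding $C_4=2C_3''+4C_3$ and $c_1=10^{-4}(c_0)^4$ for $B(2K,K)$; the extension to $B(4K,K)$ then costs six more rectangles. Your construction can be made to work, but note that an annulus of outer radius~$2N$ does not actually fit ``inside the overlap'' of width~$2N$ --- it has to spill over into $S_1$ and $S_2$ --- so your description would need adjusting. The annulus picture avoids this and all of the delicate positioning you allude to.
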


\begin{figure}[t]
\centerline{\includegraphics[width=0.85\textwidth]{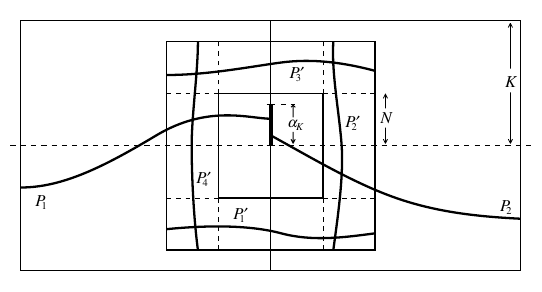}
}
\small 
\vglue0.0cm
\caption{
\label{fig8}
The geometric setup for the proof of Lemma~\ref{lem:Tassion3}. The graph union of paths $P_1,P_2,P_1',\dots,P_4'$ contains a left-to-right crossing of the $4K\times2K$-rectangle.}
\normalsize
\end{figure}

\begin{proofsect}{Proof}
We will first prove this for rectangles~$S$ of the form $B(2K,K)$. Consider the squares $S_1:=-Ke_1+[-K,K]^2\cap\Z^2$ and $S_2:=Ke_1+B(K)$ and let~$S_1',\dots,S_4'$ be the four maximal rectangles of dimensions $N\times4N$, labeled counterclockwise starting from the one at the bottom, contained in the annulus $B(2N)\smallsetminus B(N)^{ \circ}$. Let~$P_1$ be a path in~$S_1$ connecting the left-hand side to the $[0,\alpha_K]$ portion of the right-hand side and, similarly, $P_2$ is the path in~$S_2$ connecting the $[0,\alpha_K]$-portion of the left-hand side to the right hand side. Let $P_1',\dots,P_4'$ be paths (in $S_1',\dots,S_4'$, respectively) between the shorter sides of $S_1',\dots,S_4'$, respectively. Then the assumption $\alpha_K\le N$ implies that the graph union of $P_1,P_2,P_1',\dots,P_4'$ contains a path in~$S$ connecting the left side to the right side;  see Fig.~\ref{fig8}.  Combining \eqref{E:4.48} with \eqref{E:5.35} (in which~$N$ is replaced by~$K$), we get
the claim for~$S$ with $C_4:=2C_3''+4C_3$ and $c_1:=10^{-4}(c_0)^4$.

To extend this to rectangles~$S$ of the form $B(4K,K)$, we note that these can be covered by four translates and two rotates of $B(2K,K)$ such that the existence of a crossing between the shorter sides in each of these rectangles forces a crossing of~$S$. Thanks to Lemma~\ref{lem:resistance_chaining1}, the desired bound then holds for~$S$ as well; we just need to multiply the above~$C_4$ by~$6$ and raise the above~$c_1$ to the sixth power.
\end{proofsect}

We are now ready to give:

\begin{proofsect}{Proof of Proposition~\ref{lem:Tassion_main}}
The proof is by way of contradiction; indeed, we will prove that if such~$N'$ does not exist, then we will ultimately violate the first inequality in (P2) in Lemma~\ref{lem:Tassion1} for a sufficiently large square. This will be done by showing that a path from the left side of the square~$B(N')$ to the $[0,\alpha_{N'}]$ part of the right side can be re-routed to instead terminate in the~$[\alpha_{N'},N']$-part of the right side. The re-routing will be achieved by showing existence of a path winding around an annulus of inner ``radius'' at least~$\alpha_{N'}$ centered at the point $\o_{N'}:=(N',0)$.

We will focus on~$N'$ of the form $N':=b^n N$, where~$b:=8$ and $n\ge1$. Fix such an~$n$ (and thus~$N'$) and, for $k=1,\dots,n$, let $B_{ n,k}:=\o_{N'}+B(b^kN)$. Consider also the annulus $A_{ n,k}:=\o_{N'}+B(4b^kN)\smallsetminus B(2b^kN)^\circ$ and define the conditional field
\begin{equation}
\chi_{4N',k;v}:=\chi_{4N', v} - \E\biggl(\chi_{4N', v} \,\bigg|
\, \sigma\Bigl(\chi_{4N',u}\colon u\in\bigcup_{n-k\le j\le n}\partial B_{ n,j}\Bigr)\biggr). 
\end{equation}
By the Gibbs-Markov property of the GFF, $\{\chi_{4N',k;v}\colon v\in A_{ n,k}\}$ has the law of the values on~$A_{ n,k}$ of the GFF in~$B(b^{k+1}N)\smallsetminus B(b^k N)^\circ$  with Dirichlet boundary condition. Let $R_{A_{ n,k}; \chi_{4N', k}}$ denote the sum of the resistances between the shorter sides of the four maximal rectangles contained in $A_{ n,k}$, in the field $\chi_{4N',k}$.

Assuming $\alpha_{N'}\le N$, Lemma~\ref{lem:Tassion3} in conjunction with Corollary~\ref{cor:field_smoothness1} and Lemma~\ref{lem:resistance_chaining1} show that, for some $C'_{4}\in(0,\infty)$ and $c_2>0$:
\begin{equation}
\label{eq:Tassion_main1}
\P\bigl(R_{A_{ n,k}; \chi_{4N', k}} \leq C'_{4}\e^{\cspecial \log\log N'}\bigr) \geq c_2\,.
\end{equation}
Let~$m$ be the smallest integer such that $(1 - c_2)^{m} \leq 0.01$, let~$C_1$ be as in the first inequality in (P2) in Lemma~\ref{lem:Tassion1} and let~$\wt C$ be the constant from Lemma~\ref{lem:ellipicity_upper}. Define
\begin{equation}
\wt M_{ n,k} :=\min_{v\in A_{ n,k}}\E\biggl(\chi_{4N', v} \,\bigg|
\, \sigma\Bigl(\chi_{N',u}\colon u\in\bigcup_{n-k\le j\le n}\partial B_{j,n}\Bigr)\biggr).
\end{equation}
Lemma~\ref{lem:ellipicity_upper}  (dealing with the LIL for the sequence $M_{n,k}$) and Lemma~\ref{lem:anchoring}  (dealing with the deviations~$\Delta_n$) tell us that there is a positive integer $m' > 100$ satisfying
\begin{multline}
\label{eq:Tassion_main2}
\qquad
\P\biggl(\#\Bigl\{k =1,\dots,m'-1\colon \gamma \wt M_{k, m'} \geq 0.5\log \frac{C'_{4}}{C_1} + \log 5 + \wt C\gamma\sqrt{\log m'}\Bigr\} < m\biggr) \\ 
\le 0.01 + 0.01 = 0.02 \,.
\qquad
\end{multline}
Putting together \eqref{eq:Tassion_main1}, \eqref{eq:Tassion_main2}, the choices of $m$ and $m'$ along with Lemmas~\ref{lem:anchoring} and \ref{lem:resistance_decomp} we get  for all $N$ such that $c\log \big(1 + \tfrac{(m' + 1)\log 8}{\log N} \big) \leq \log 5$,
\begin{equation}
\P\big(\exists k \in \{1,\dots,m'\}\colon R_{A_{k, m'}; \chi_{C_{5}N}} \geq C_1\e^{\cspecial \log\log N}\big) \leq 0.02 + 0.01 = 0.03\,,
\end{equation}
where $C_{5} := 8^{m' + 1}$.

\begin{figure}[t]
\centerline{\includegraphics[width=0.65\textwidth]{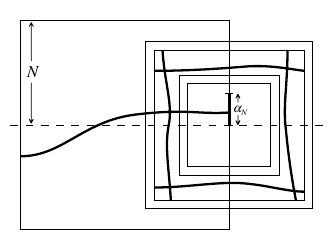}
}
\small 
\vglue0.0cm
\caption{
\label{fig9}
The geometric setting for a key argument in the proof of Proposition~\ref{lem:Tassion_main}: Once~$\alpha_N$ is less than the inner radius of the depicted annulus, $R_{B(N)_{\chi}}(\mathcal P_{N; [\alpha_{N}, N]})$ is bounded by $R_{B(N)_{\chi}}(\mathcal P_{N; [0,\alpha_{N}]})$ plus the sum of the resistances between the shorter sides of the four maximal rectangles contained in the annulus.
}
\normalsize
\end{figure}

We are now ready to derive the desired contradiction.
Lemma~\ref{lem:series_law} 
gives us that if $\alpha_{N'} \leq N$ for all $8N \leq N' \leq C_{5}N$, then
\begin{equation}
\P\Bigl(R_{B(N')_{\chi_{N'}}}(\mathcal P_{N'; [\alpha_{N'}, N']}) \leq R_{B(N')_{\chi_{N'}}}(\mathcal P_{N'; [0, \alpha_{N'}]}) + C_1\e^{\cspecial \log\log N}\Bigr) \\
\geq 1 - 0.03 = 0.97\,.
\end{equation}
From the second inequality in (P2) in Lemma~\ref{lem:Tassion1} we have
\begin{equation}
\P\bigl(R_{B(N')_{\chi_{N'}}}(\mathcal P_{N'; [0, \alpha_{N'}]}) \leq  4\e^{\cspecial \log\log N'}\bigr) \geq 0.17\,.
\end{equation}
The last two displays and the FKG imply
\begin{equation}
\P\Bigl(R_{B(N')_{\chi_{N'}}}(\mathcal P_{N'; [\alpha_{N'}, N']}) \leq (4 + C_1)\,\e^{\cspecial \log\log N'}\Bigr) > 0.17 \times 0.97 > 0.16\,.
\end{equation}
in contradiction with the first inequality in (P2) in 
Lemma~\ref{lem:Tassion1}. The claim follows. 
\end{proofsect}

\subsection{Resistance across rectangles and annuli}
\label{sec-wrap-up}
As a consequence of the above arguments, we are now ready to state our first \emph{unrestricted} general upper bound on the effective resistance across rectangles:

\begin{proposition}
\label{prop-4.7}
There are constants $C_{6}, c_3\in(0,\infty)$ and $N_1\ge1$ such that for all $N \geq N_1$, all $M\ge16N$ and for every translate~$S$ of $B(4N,N)$ contained in~$B(M/2)$, we have
\begin{equation}
\label{E:5.55b}
\P\bigl(R_{\LR;S,{\chi_{M}}} \leq C_6\e^{\cspecial \log\log(M)}\bigr) \geq c_3\,.
\end{equation}
The same applies to $R_{\UD;S,{\chi_{M}}}$ for translates~$S$ of $B(N,4N)$ with~$S\subseteq B(M/2)$.
\end{proposition}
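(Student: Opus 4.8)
The plan is to combine Proposition~\ref{lem:Tassion2} with Proposition~\ref{lem:Tassion_main} to remove the awkward hypothesis ``$\alpha_N \le 2\alpha_{\lfloor 4N/7\rfloor}$'' appearing in Proposition~\ref{lem:Tassion2}, together with the assumption~\eqref{E:5.32a} appearing in Proposition~\ref{lem:Tassion_main}. The key observation is that these two hypotheses cannot both fail along a long enough geometric sequence of scales, so at least one scale nearby always works, and then a Gibbs-Markov / field-smoothness argument lets us transport a good crossing bound from the working scale to the scale we actually care about.

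First I would set up the dichotomy. Fix~$N$ and consider the sequence of scales $N, 2N, 4N, \dots$ (more precisely, along the ratio $7/4$ so as to match the hypothesis of Proposition~\ref{lem:Tassion2}, or along powers of~$8$ to match Proposition~\ref{lem:Tassion_main}; one reconciles the two by interleaving). Suppose first that there is a scale $N'$ comparable to~$N$ (within a bounded multiplicative factor depending only on absolute constants) with $\alpha_{N'}\le 2\alpha_{\lfloor 4N'/7\rfloor}$: then Proposition~\ref{lem:Tassion2} directly gives~\eqref{E:5.32} for $4N'\times N'$-rectangles, and I would transport this down to genuine $4N\times N$-rectangles using that $N$ and $N'$ differ by a bounded factor (covering~$S$ by a bounded number of translates/rotates of $B(4N',N')$, or vice versa, and applying Lemma~\ref{lem:series_law} together with Lemma~\ref{lem:resistance_chaining1}/FKG, exactly as in the covering arguments at the end of the proof of Proposition~\ref{lem:Tassion2}). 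In the complementary case, $\alpha_{M} > 2\alpha_{\lfloor 4M/7\rfloor}$ for every scale~$M$ near~$N$, which forces $\alpha$ to grow at least geometrically along a run of scales; but then at the top of that run we still have (trivially, or by Proposition~\ref{prop:zero_mean} plus Corollary~\ref{cor-4.3} combined with Proposition~\ref{lem:Tassion2} at whatever sub-scale \emph{does} satisfy the ratio condition --- note the condition $\alpha_N\le 2\alpha_{\lfloor 4N/7\rfloor}$ is automatically true whenever $\alpha_N=\lfloor N/2\rfloor$, which is the ``saturated'' case handled separately inside Proposition~\ref{lem:Tassion2}) a scale at which~\eqref{E:5.32a} holds, and Proposition~\ref{lem:Tassion_main} then produces a scale $N'\in\{8N,\dots,C_5N\}$ with $\alpha_{N'}\ge N'/C_5 \ge N$. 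At such a scale $\alpha_{N'}$ is a definite fraction of~$N'$, hence $\alpha_{N'}\le\lfloor N'/2\rfloor$ is attained either with equality or close to it; in the equality case Proposition~\ref{lem:Tassion2} applies unconditionally, and in the near-equality case one checks the ratio condition $\alpha_{N'}\le 2\alpha_{\lfloor 4N'/7\rfloor}$ holds because a lower bound $\alpha_{N'}\ge cN'$ propagates (via monotonicity-type estimates on $\alpha$, or simply because $\alpha_{\lfloor 4N'/7\rfloor}$ cannot be much smaller than $\alpha_{N'}$ once $\alpha_{N'}$ is already of order $N'$). Either way Proposition~\ref{lem:Tassion2} gives~\eqref{E:5.32} at scale~$N'$, and a final bounded-factor transport brings it down to scale~$N$, giving~\eqref{E:5.55b} with $C_6$ and $c_3$ absolute (and $N_1$ large enough for all the auxiliary lemmas to apply). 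The ``up-down'' version follows by the rotation invariance of the law of $\chi_M$ on $B(M)$, exactly as in the statements cited.

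The main obstacle is the bookkeeping of the two different hypotheses --- the ratio condition of Proposition~\ref{lem:Tassion2} is phrased at ratio $7/4$ while Proposition~\ref{lem:Tassion_main} operates at ratio~$8$ --- and making sure the chain of implications closes up with \emph{absolute} constants rather than constants that degrade as we iterate over more and more scales. Concretely: I must verify that the number of scales one has to look through before hitting a ``good'' scale is bounded by an absolute constant (this is exactly what Proposition~\ref{lem:Tassion_main} guarantees, via its output $C_5$), so that the bounded-factor transport step multiplies~$C_6$ and exponentiates~$c_3$ only a bounded number of times. A secondary technical point is the transport step itself when $N'$ and $N$ differ by the factor $C_5$: one covers~$S$ (a $4N\times N$-rectangle) by $O(C_5)$ translates and rotates of $B(4N',N')$ --- wait, that is the wrong direction since $N'>N$; rather one notes $B(4N',N')$ \emph{contains} many translates of $B(4N,N)$ and uses instead that a crossing of the big rectangle restricted appropriately, together with Lemma~\ref{lem:series_law} and Corollary~\ref{cor:field_smoothness1} to absorb the coarse-field discrepancy coming from using $\chi_M$ versus the field natural to the sub-box, yields the crossing of the small rectangle; this is the same maneuver used repeatedly above (e.g.\ in Corollary~\ref{cor-4.3}) and introduces only a bounded multiplicative loss. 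Once these constants are pinned down the proof is a direct assembly; I do not anticipate any genuinely new estimate being needed.
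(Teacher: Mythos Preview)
Your dichotomy and growth argument contain the right ingredients, but the organization has a genuine gap: the transport direction. You only ever locate good scales $N'$ \emph{above} $N$, and a left-right crossing of $B(4N',N')$ does not restrict to a crossing of a translate of $B(4N,N)$ sitting inside it (your own ``wait, that is the wrong direction'' diagnosis is correct, and your proposed fix --- ``a crossing of the big rectangle restricted appropriately \dots\ yields the crossing of the small rectangle'' --- simply does not follow from anything available). To cover a translate of $B(4N,N)$ via Lemma~\ref{lem:series_law} by boundedly many rectangles of a scale at which the bound is already known, you need a good scale \emph{below}~$N$.

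The related issue is circularity and an unbounded scale gap. Proposition~\ref{lem:Tassion_main} takes \eqref{E:5.32a} at scale~$N$ as a \emph{hypothesis}, which is essentially what you are trying to prove; you try to supply it via ``whatever sub-scale satisfies the ratio condition,'' but without a prior lower bound on~$\alpha$ the run of scales where $\alpha_M> 2\alpha_{\lfloor 4M/7\rfloor}$ can have length of order $\log N$ (the comparison $2>7/4$ only forces termination once~$\alpha$ is already a definite fraction of the scale), so the gap between~$N$ and the first good scale is not $O(1)$ and your constants degrade. The paper resolves both problems at once by an \emph{inductive} construction (Lemma~\ref{lemma-4.7}): assuming \eqref{E:5.55} at~$N_k$, Proposition~\ref{lem:Tassion_main} yields $L\in[8N_k,C_5N_k]$ with $\alpha_L\ge N_k$; this lower bound on~$\alpha_L$ is exactly what makes the $(7/4)$-versus-$2$ growth argument terminate in boundedly many steps, producing $N_{k+1}\le cN_k$ at which Proposition~\ref{lem:Tassion2} applies. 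One then handles a general~$N$ by taking the $N_k$ just \emph{below} it and covering $B(4N,N)$ by $O(1)$ translates and rotates of $B(4N_k,N_k)$ --- the correct direction for the covering.
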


We begin by showing that \eqref{E:5.32a} holds (with the same constants) along an exponentially growing sequence of~$N$. This is where Proposition~\ref{lem:Tassion2} and 
Proposition~\ref{lem:Tassion_main} come together.  Later we use Lemma~\ref{lem:series_law} to extend this bound to all the intermediate scales by  combining  the effective resistances along a sequence of appropriately placed 
rectangles.

\begin{lemma}
\label{lemma-4.7}
Let $c_0$ and~$C_3$ be as in Proposition~\ref{lem:Tassion2}. There is $c\in(0,\infty)$ and an increasing sequence $\{N_k\colon k\ge1\}$ of positive integers such that, for each $k\ge1$, we have
\begin{equation}
\label{E:5.54}
14N_{k}-1 \leq N_{k+1} \leq c N_{k}
\end{equation}
and the bound
\begin{equation}
\label{E:5.55}
\P\bigl(R_{\LR;S,{\chi_{16N_k}}} \leq C_3\e^{\cspecial \log\log(16N_k)}\bigr) \geq c_0
\end{equation}
holds for all translates~$S$ of $B(4N_k,N_k)$ contained in~$B(8N_k)$.
\end{lemma}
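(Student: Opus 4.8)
The plan is to build the sequence $\{N_k\}$ recursively using a dichotomy driven by the growth of $\alpha_N$. At each stage we have a value $N_k$ for which \eqref{E:5.55} holds, and we want to produce $N_{k+1}$ within a constant factor of $N_k$ (but at least $14N_k-1$) for which the same bound holds. The key structural inputs are Proposition~\ref{lem:Tassion2} (squares-to-rectangles, valid when $\alpha_N\le 2\alpha_{\lfloor 4N/7\rfloor}$) and Proposition~\ref{lem:Tassion_main} (if the rectangle bound holds at scale $N$ then $\alpha_{N'}\ge N$ for some $N'\in\{8N,\dots,C_5N\}$). The base case $N_1$ is furnished directly: Proposition~\ref{prop:zero_mean} gives the left-to-right resistance bound for a centered square $B(N)$ with probability $\ge\frac12-\epsilon\ge c_0$, and extending this to translates of $B(4N,N)$ contained in $B(8N)$ via the Gibbs--Markov decomposition (fine field plus a uniformly bounded coarse field, exactly as in the proof of Corollary~\ref{cor-4.3}) and Lemma~\ref{lem:resistance_chaining1} yields \eqref{E:5.55} for all sufficiently large $N_1$, perhaps after shrinking $c_0$ and enlarging $C_3$ to the values fixed in Proposition~\ref{lem:Tassion2}.

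For the inductive step, suppose \eqref{E:5.55} holds at $N_k$. First I would feed $N_k$ into Proposition~\ref{lem:Tassion_main}: since \eqref{E:5.32a} is precisely \eqref{E:5.55} at scale $N_k$, we obtain some $N'\in\{8N_k,\dots,C_5N_k\}$ with $\alpha_{N'}\ge N_k$. Now examine the ratio $\alpha_{N'}/\alpha_{\lfloor 4N'/7\rfloor}$. If $\alpha_{N'}\le 2\alpha_{\lfloor 4N'/7\rfloor}$, then Proposition~\ref{lem:Tassion2} applies at scale $N'$ and gives \eqref{E:5.32} for translates of $B(4N',N')$ in $B(M/2)$ with $M\ge 8N'$; taking $M:=16N'$ and noting $N'\in[8N_k,C_5N_k]$, we may set $N_{k+1}:=N'$ — it satisfies $14N_k-1\le 8N_k\le N_{k+1}\le C_5N_k$, and by Proposition~\ref{lem:Tassion2}'s constants (which are the $c_0,C_3$ we fixed throughout) \eqref{E:5.55} holds at $N_{k+1}$ after possibly absorbing the constant difference between $C_3$ in Proposition~\ref{lem:Tassion2} and the one we carry; since the constants in Propositions~\ref{lem:Tassion2} and~\ref{lem:Tassion_main} are interlocked (Tassion\_main is invoked with exactly the output constants of Tassion2), this bookkeeping closes.

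The remaining case is $\alpha_{N'}>2\alpha_{\lfloor 4N'/7\rfloor}$. Here one iterates: set $K_0:=N'$ and $K_{j+1}:=\lfloor 4K_j/7\rfloor$, so $\alpha_{K_j}$ would have to drop by a factor of $2$ at each step while $K_j$ only shrinks by a factor $7/4$; since $\alpha_{K_j}\ge 0$ and $\alpha_{K_j}\le K_j\le 2N_k$ eventually, this cannot continue indefinitely, so there is a first $j^\star$ with $\alpha_{K_{j^\star}}\le 2\alpha_{\lfloor 4K_{j^\star}/7\rfloor}$, and moreover $K_{j^\star}$ is still comparable to $N_k$ up to an absolute constant (the number of halving steps is bounded by $\log_2(2N_k)$ in principle, but each step also guarantees $\alpha_{K_{j+1}}\le\alpha_{K_j}/2$, so at most $\log_2\alpha_{N'}\le\log_2 N'$ steps, which is too weak — so instead I would argue via Lemma~\ref{lem:Tassion3} directly). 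In fact the cleanest route avoids this iteration: apply Lemma~\ref{lem:Tassion3} with the roles $N\leftsquigarrow N_k$ and $K\leftsquigarrow$ a suitable multiple in $[8N_k,C_5N_k]$ for which $\alpha_K\le N_k$ (such a $K$ exists — it is the $N'$ just produced, since $\alpha_{N'}\ge N_k$ is a lower bound, but one actually wants $\alpha_K\le N_k$, so one takes $K$ just below where $\alpha$ first exceeds $N_k$, which by monotonicity of $\alpha$ and Proposition~\ref{lem:Tassion_main} sits in the same dyadic window). Lemma~\ref{lem:Tassion3} then directly gives \eqref{E:4.48}, i.e. \eqref{E:5.55} at scale $K$, with constants $c_1,C_4$ depending only on $c_0,C_3$; setting $N_{k+1}:=K$ completes the step, again after reconciling $(c_1,C_4)$ with $(c_0,C_3)$ by choosing the ambient constants at the outset to be the worse of the two pairs (this is consistent because Lemma~\ref{lem:Tassion3} is stated with generic $c_0,C_3$ inputs).

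The main obstacle I anticipate is the constant bookkeeping: one must choose $c_0$ and $C_3$ once and for all so that they simultaneously serve as hypotheses and conclusions across Propositions~\ref{lem:Tassion2}, \ref{lem:Tassion_main} and Lemma~\ref{lem:Tassion3}, i.e. find a fixed point of the constant transformations these lemmas induce. This is possible because each lemma's output constants depend only on its input constants and not on the scale, so one picks $C_3$ large and $c_0$ small enough to be stable under finitely many applications; the dyadic window $[8N_k, C_5N_k]$ ensures the geometric growth \eqref{E:5.54} with $c:=C_5$ (or $c:=\max\{C_5,14\}$), and the lower bound $14N_k-1\le N_{k+1}$ holds since every candidate for $N_{k+1}$ is at least $8N_k$, which exceeds $14N_k-1$ once... wait — $8N_k < 14N_k-1$ for $N_k\ge1$, so one must actually ensure $N_{k+1}\ge 14N_k$; this forces using the upper end of the window, so I would instead take $N_{k+1}$ to be a value in $[14N_k, C_5N_k]$ produced by a variant of the above (applying Proposition~\ref{lem:Tassion_main} and then padding by a bounded factor via Lemma~\ref{lem:resistance_chaining1} to reach scale $\ge 14N_k$), which is a routine enlargement and constitutes the only genuinely fiddly point.
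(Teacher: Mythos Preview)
Your proposal has the right ingredients but iterates in the wrong direction, which is why the argument unravels. You set $K_0:=N'$ and $K_{j+1}:=\lfloor 4K_j/7\rfloor$, i.e.\ you descend by factors of $4/7$. As you correctly notice, this forces $\alpha_{K_j}$ to halve at each step, but since $\alpha_{N'}$ can be as large as $N'/2\asymp N_k$, termination may take $\asymp\log N_k$ steps, at which point $K_{j^\star}$ is no longer comparable to $N_k$. Your attempted rescue via Lemma~\ref{lem:Tassion3} then runs into two problems: (i) you need $\alpha_K\le N_k$, but you have just produced $N'$ with $\alpha_{N'}\ge N_k$, and $\alpha$ is not monotone so ``take $K$ just below where $\alpha$ first exceeds $N_k$'' is not well-defined; (ii) even granting such a $K$, Lemma~\ref{lem:Tassion3} outputs constants $(c_1,C_4)$, not $(c_0,C_3)$, so the induction does not close with the fixed constants demanded by the statement --- this is exactly the ``fixed point'' problem you flag, and it is not resolvable by choosing constants worse at the outset, because Proposition~\ref{lem:Tassion_main} requires \eqref{E:5.55} with the \emph{specific} constants $c_0,C_3$ of Proposition~\ref{lem:Tassion2}.

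The missing idea is to iterate \emph{upward}. Starting from the $L\in[8N_k,C_5N_k]$ with $\alpha_L\ge N_k$ provided by Proposition~\ref{lem:Tassion_main}, define $L_0:=L$ and $L_{j+1}:=\min\{M\in\N:\lfloor 4M/7\rfloor=L_j\}$, so $L_j\le c'(7/4)^jL$. If $\alpha_{L_{i+1}}>2\alpha_{L_i}$ for all $i<j$, then
\[
2^j N_k\le 2^j\alpha_{L_0}<\alpha_{L_j}\le L_j\le c'(7/4)^j C_5 N_k,
\]
and since $2>7/4$ this fails for some $j$ bounded by an absolute constant. At the first such $j\ge1$ one has $\alpha_{L_j}\le 2\alpha_{\lfloor 4L_j/7\rfloor}$, so Proposition~\ref{lem:Tassion2} applies at $N_{k+1}:=L_j$ and delivers \eqref{E:5.55} with exactly $(c_0,C_3)$ --- no constant reconciliation needed. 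The lower bound $N_{k+1}\ge 14N_k-1$ comes for free because $j\ge1$ forces $N_{k+1}\ge L_1\ge(7/4)L-1\ge 14N_k-1$; your padding workaround is unnecessary. The base case is handled by running the same upward iteration from $L:=1$.
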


\begin{proofsect}{Proof}
We will  construct $\{N_k\colon k\ge1\}$  by induction. Suppose that $N_1,\dots,N_k$ have already been defined. Since \eqref{E:5.55} holds for~$N_k$, Proposition~\ref{lem:Tassion_main} shows the existence of an $L \in [8N_k, C_{5}N_{k}]$ with $\alpha_L\ge N_k$. Define a sequence $\{L_j\colon j\ge0\}$ by $L_0:=L$ and $L_{j+1}:=\min\{L\in\N\colon \lfloor 4L/7\rfloor = L_j\}$ and note that $L_j\le c(7/4)^j L$ for some numerical constant~$c'\in(0,\infty)$. Now if $\alpha_{L_{i+1}}>2\alpha_{L_i}$ is true for $i=0,\dots,j-1$, then
\begin{equation}
\label{E:5.56}
2^j N_k\le 2^j\alpha_L<\alpha_{L_j}\le L_j\le c'(7/4)^j L\le c'(7/4)^j C_5 N_k.
\end{equation}
The fact that $7/4<2$ implies that this must fail  once~$j$ is sufficiently large; i.e.,  for some~$j\in\{0,\dots,C_5'\}$, where $C_5'$ depends only on~$C_5$. We thus let~$j\ge1$ be the smallest such that $\alpha_{L_j}\le2\alpha_{L_{j-1}}$ and set $N_{k+1}:=L_j$. Then \eqref{E:5.54} holds by the inequality on the right of \eqref{E:5.56} and the fact that $N_{k+1}\ge L_1\ge(7/4)L-1\ge14N_k-1$. The bound \eqref{E:5.55} is implied by Proposition~\ref{lem:Tassion2}.

To start the induction, we just take the above sequence $\{L_j\}$ with $L:=1$ and find the first index~$j$ for which $\alpha_{L_j}\le2\alpha_{L_{j-1}}$. Then we set~$N_1:=L_j$ and argue as above.
\end{proofsect}

From here we now conclude:

\begin{proofsect}{Proof of Proposition~\ref{prop-4.7}}
Let~$\{N_k\}$ be the sequence from Lemma~\ref{lemma-4.7}. 
Invoking Corollary~\ref{cor:field_smoothness1}, 
the bound \eqref{E:5.55} shows that, for each $M\ge16N_k$ and any translate~$S$ of $B(4N_k,N_k)$ contained in~$B(M/2)$,
\begin{equation}
\label{E:5.55a}
\P\bigl(R_{\LR;S,{\chi_{M}}} \leq C_3'\e^{\cspecial \log\log(M)}\bigr) \geq c_0'.
\end{equation}
holds with some constants~$C_3',c_0'\in(0,\infty)$ independent of~$k$ and~$M$.
By invariance of the law of~$\chi_M$ with respect to rotations of~$B(M)$, the same holds for the resistance~$R_{\UD;S,{\chi_{M}}}$ for all rotations of  $B(4N_k,N_k)$ contained in~$B(M/2)$. 

Now pick~$N\ge N_1$ and let~$k$ be such that $N_k\le N<N_{k+1}$. For $M\ge 16 N\ge 16N_k$, consider a translate~$S$ of $B(4N,N)$ contained in $B(M/2)$. Let~$m:=\min\{r\in\N\colon (3 r+1)N\ge N_{k+1}\}$; by \eqref{E:5.54} this $m$ is bounded uniformly in~$k$. We then find rectangles $S_i$, $i=1,\dots,m$ that are translates of $B(4N_k,N_k)$ such that $S_{i+1}=3Ne_1+S_i$ for each $i=1,\dots,m-1$ and are centered along the same horizontal line as~$S$ and positioned  in such a way that they all lie inside~$B(M/2)$. Next we find translates $S_1',\dots,S_{m-1}'$ of $B(N_k,4N_k)$ such that $S_i\cap S_{i+1}$, which is a translate of~$B(N)$, is contained in~$S_i'$ for each~$i=1,\dots,m-1$. We can again position these so that $S_i'\subseteq B(M/2)$ for each~$i$.

It is clear from the construction that if, for each $i=1,\dots, m$, we are given a path in~$S_i$ and, for each $i=1,\dots,m-1$, a path in~$S_i'$ and these paths connect the shorter sides of the rectangle they lie in, then the graph union of all these paths contains a path in~$S$ between the left side and right side thereof. Lemma~\ref{lem:series_law} then gives
\begin{equation}
\label{E:4.62}
R_{\LR;S,{\chi_{M}}}\le\sum_{i=1}^m R_{\LR;S_i,{\chi_{M}}}+\sum_{i=1}^{m-1}R_{\UD;S'_i,{\chi_{M}}}.
\end{equation}
All the rectangles lie in~$B(M/2)$ and so \eqref{E:5.55a} applies to the resistances on the right of \eqref{E:4.62}. Lemma~\ref{lem:resistance_chaining1} then readily gives \eqref{E:5.55} with the constants given by $C_6:=(2m-1)C_3'$ and $c_3:=(c_0')^{2m-1}$.
\end{proofsect}

 In addition to resistance across rectangles, the proofs in Section~\ref{sec-5} will also require an lower bound for resistances across annuli. For $N<M$, let $A(N,M):=B(M)\smallsetminus B(N)^\circ$ and denote
\begin{equation}
\bdryin A(N,M):=\partial B(N)
\quad\text{\rm and}\quad
\bdryout A(N,M):=\partial B(M)^\circ
\end{equation}
Note that $\bdryin A(N,M)\subset A(N,M)$ as well as $\bdryout A(N,M)\subset A(N,M)$. We have:

\begin{lemma}
\label{lemma-annuli}
There $C_7,c_4\in(0,\infty)$ such that for all large-enough~$N$ and $A:=A(N,2N)$,
\begin{equation}
\P\Bigl(R_{A_{\chi_{4N}}}(\bdryin A,\bdryout A)\ge C_7\e^{-3\cspecial \log\log(4N)}\Bigr)\ge c_4.
\end{equation}
\end{lemma}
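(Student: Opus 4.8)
\emph{Plan.} The plan is to pass from resistance to conductance and then reduce to estimates already established in Sections~\ref{sec:sec_gen_par_ser}--\ref{sec:RSW}. Since $R_{A_{\chi_{4N}}}(\bdryin A,\bdryout A)=1/C_{A_{\chi_{4N}}}(\bdryin A,\bdryout A)$, it suffices to prove that the ``radial'' effective conductance $C_{A_{\chi_{4N}}}(\bdryin A,\bdryout A)$ is at most $\e^{O(\log\log N)}$ with probability at least some fixed $c_4>0$. As in the proof of Proposition~\ref{prop:zero_mean}, the ratio of resistances of neighbouring edges in $A_{\chi_{4N}}$ is too large to feed into the approximate-duality estimates of Section~\ref{sec:sec_gen_par_ser}, so I would first invoke the decomposition $\chi_{4N}=Y_{4N}+Z_{4N}$ of Lemma~\ref{lem:field_decomp_smooth} and carry out the duality on the network $A_{Z_{4N}}$ only, paying for the difference with $Y_{4N}$ as in \eqref{E:4.11}. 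Concretely, by \eqref{E:4.3} applied with $\chi_1:=Z_{4N}$, $\chi_2:=Y_{4N}$ (independent), together with $\var(Y_{4N,v})=O(\log\log N)$,
\[
\E\bigl(C_{A_{\chi_{4N}}}(\bdryin A,\bdryout A)\,\big|\,Z_{4N}\bigr)\ \le\ C_{A_{Z_{4N}}}(\bdryin A,\bdryout A)\,\max_{u'\sim v'}\E\,\e^{\gamma(Y_{4N,u'}+Y_{4N,v'})}\ \le\ C_{A_{Z_{4N}}}(\bdryin A,\bdryout A)\,\e^{c_1\log\log N},
\]
so by a conditional Markov inequality it is enough to bound $C_{A_{Z_{4N}}}(\bdryin A,\bdryout A)$ from above with positive probability; the exponent $3\hat c$ in the statement is comfortably large enough to absorb all the $O(\log\log N)$ losses that accumulate (for $N$ large).

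To bound $C_{A_{Z_{4N}}}(\bdryin A,\bdryout A)$ I would split the annulus $A(N,2N)$ into its four maximal rectangles $\mathcal R_1,\dots,\mathcal R_4$, each a translate or rotate of an $\asymp N\times 4N$ rectangle, and compare effective conductances: gluing the four pairwise overlaps (the corner squares) each to a single vertex can only increase the effective conductance, after which the four rectangles stand ``in parallel'' between $\langle\bdryin A\rangle$ and $\langle\bdryout A\rangle$, yielding a comparison of the form
\[
C_{A_{Z_{4N}}}(\bdryin A,\bdryout A)\ \le\ O(1)\sum_{i=1}^{4} C_{(\mathcal R_i)_{Z_{4N}}}\bigl(\text{inner long side of }\mathcal R_i,\ \text{outer long side of }\mathcal R_i\bigr).
\]
For each fixed~$i$ I would then invoke Lemma~\ref{lem:dual_couple} applied to the network $(\mathcal R_i)_{Z_{4N}}$ with $(\mathsf A,\mathsf B)$ the two long sides of the rectangle and $(\mathsf C,\mathsf D)$ its two short sides: a crossing between the long sides and a crossing between the short sides of a rectangle always intersect, so the geometric hypothesis holds, and the lemma gives $C_{(\mathcal R_i)_{Z_{4N}}}(\mathsf A,\mathsf B)\le 4\mathfrak D^2\,\rho_{\max}\bigl((\mathcal R_i)_{Z_{4N}}\bigr)\,R_{(\mathcal R_i)_{Z_{4N}}^\star}(\mathsf C,\mathsf D)$. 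Here $\mathfrak D=4$; property~(c) of Lemma~\ref{lem:field_decomp_smooth}, a bound on $\E\max$ and Lemma~\ref{lem:Borell_ineq} show $\rho_{\max}\le\e^{O(1)}$ with probability $\ge1-\epsilon$ (exactly as in \eqref{E:4.11a}); and since $(\mathcal R_i)_{Z_{4N}}^\star=(\mathcal R_i)_{-Z_{4N}}$ with $-Z_{4N}\laweq Z_{4N}$ (automatic for a centred Gaussian field), $R_{(\mathcal R_i)_{Z_{4N}}^\star}(\mathsf C,\mathsf D)$ is equal in law to the resistance across $\mathcal R_i$ between its short sides in the network $(\mathcal R_i)_{Z_{4N}}$ --- i.e.\ to the resistance across $\mathcal R_i$ ``the long way''.

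Thus everything reduces to showing that the long-way resistance across an $\asymp N\times 4N$ rectangle in the network $(\,\cdot\,)_{Z_{4N}}$ is at most $\e^{O(\log\log N)}$ with probability bounded away from zero (the four rectangles being equidistributed by the rotation invariance of $Z_{4N}$). This is exactly the type of statement established in Section~\ref{sec:RSW}: the arguments there (ultimately Proposition~\ref{prop-4.7}, resp.\ the square-scale bound \eqref{E:4.10}) use of the underlying field only that it is a centred Gaussian process with non-negative covariances (so that the FKG inequality and the square-root trick apply), that its law is invariant under the symmetries of the box, and that its harmonic coarse fields are smooth in the sense of Corollary~\ref{cor:field_smoothness1}; the pieces of those proofs that invoke a Gibbs--Markov split are replaced by the conditioning device \eqref{E:4.3} using $Z_{4N}\perp Y_{4N}$. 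Hence one obtains, for each~$i$, that $C_{(\mathcal R_i)_{Z_{4N}}}(\mathsf A,\mathsf B)\le\e^{O(\log\log N)}$ with probability bounded below; since the events in question are monotone (decreasing) in $Z_{4N}$, the FKG inequality (as used in Lemma~\ref{lem:resistance_chaining1}) lets one intersect the four events, and combining with the displays above finishes the proof.

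I expect the main obstacle to be the last step --- transporting the rectangle-crossing RSW estimates of Section~\ref{sec:RSW} from the GFF $\chi_M$ to the auxiliary smoothed field $Z_M$ while keeping the maximal resistance ratio $\rho_{\max}$ under control. The clean way to handle this is to run Section~\ref{sec:RSW} once for the abstract class of centred, positively correlated Gaussian fields with good coarse-field smoothness (of which $\chi_M$ and $Z_M$ are both members) and then apply those results to $Z_{4N}$ directly. A secondary, purely bookkeeping point is the Rayleigh-monotonicity comparison of the annulus with its four rectangular arms and the handling of the corner overlaps, which is elementary planar topology in the spirit of the re-routing argument in the proof of Proposition~\ref{lem:Tassion_main}.
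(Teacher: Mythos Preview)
Your overall architecture --- split $\chi_{4N}=Y_{4N}+Z_{4N}$ via Lemma~\ref{lem:field_decomp_smooth}, run the planar duality of Lemma~\ref{lem:dual_couple} on the $Z_{4N}$-network where $\rho_{\max}$ is under control, and reduce the annulus to the four maximal rectangles --- is exactly the paper's. The one genuine gap is precisely the step you yourself flag as the ``main obstacle'': at the end you need
\[
\P\bigl(R_{(\mathcal R_i)_{Z_{4N}}}(\mathsf C,\mathsf D)\le \e^{O(\log\log N)}\bigr)\ge p>0,
\]
and you propose to get it by re-running all of Section~\ref{sec:RSW} for the field~$Z_{4N}$. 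That is a substantial detour, and it is not clear it goes through cleanly: several steps there (the translations between boxes via Corollary~\ref{cor:field_smoothness1}, the reduction of $\chi_{2N}$ to $\chi_M$, etc.) lean on the Gibbs--Markov property of the GFF, which $Z_M$ does not obviously enjoy.

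The paper closes this gap far more cheaply, by applying the \emph{same} conditional-Markov transfer you used in your first display, but in the reverse direction. After duality one needs $R^\star_{(\mathcal R_i)_{Z_{4N}}}(\mathsf C,\mathsf D)\laweq R_{(\mathcal R_i)_{Z_{4N}}}(\mathsf C,\mathsf D)$ to be small; write
\[
\P\bigl(R_{(\mathcal R_i)_{Z_{4N}}}(\mathsf C,\mathsf D)\le\tilde r\bigr)\ \ge\ \P\bigl(R_{(\mathcal R_i)_{\chi_{4N}}}(\mathsf C,\mathsf D)\le\tilde r/A\bigr)\ -\ \P\bigl(C_{(\mathcal R_i)_{\chi_{4N}}}>A\,C_{(\mathcal R_i)_{Z_{4N}}}\bigr),
\]
and bound the last probability by $A^{-1}\e^{\hat c\log\log(4N)}$ via \eqref{E:4.3} and conditional Markov given~$Z_{4N}$ (your first display, read backwards). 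The first probability on the right is now the long-way rectangle bound for~$\chi_{4N}$, which is exactly Proposition~\ref{prop-4.7}. Choosing $A$ of order $\e^{\hat c\log\log(4N)}$ large enough keeps the error below $c_3/2$; the three accumulated $\hat c\log\log N$ losses (two transfers plus the RSW input) explain the exponent~$3\hat c$ in the statement. So your proof becomes correct --- and essentially identical to the paper's --- once you bracket the duality step on~$Z_{4N}$ by transfers $\chi\to Z$ and $Z\to\chi$, rather than committing to~$Z_{4N}$ for the RSW input as well.
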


\begin{proofsect}{Proof}
Let~$S_1,S_2,S_3,S_4$ denote the four maximal rectangles contained in~$A$. We assume that the rectangles are labeled clockwise starting from the one on the right.  Now observe that  every path in~$A$ from $\bdryin A$ to~$\bdryout A$ contains a path that is contained in, and connects the longer sides of, one of  the rectangles $S_1,S_2,S_3,S_4$. It follows that 
\begin{equation}
R_{A_{\chi_{4N}}}(\bdryin A,\bdryout A)
\ge R_{\LR,S_1,\chi_{4N}}+R_{\UD,S_2,\chi_{4N}}+R_{\LR,S_3,\chi_{4N}}+R_{\UD,S_4,\chi_{4N}}\,.
\end{equation}
The claim will follow from the FKG inequality if we can show that, for some~$p>0$ and~$C_7'>0$,
\begin{equation}
\label{E:5.77a}
\P\bigl(R_{\LR,S,\chi_{4N}}\ge C_7'\e^{-3\cspecial \log\log(4N)}\bigr)\ge p
\end{equation}
holds for all translates~$S$ of $([0,N]\times[0,4N])\cap\Z^2$ contained in~$B(2N)$ and all~$N$ sufficiently large. (Indeed, then $c_4:=p^4$ and $C_7:=4C_7'$.)

We will show this using the duality in Lemma~\ref{lem:dual_couple}, but for that we will first need to invoke the decomposition $\chi_{4N}=Y_{4N}+Z_{4N}$ from Lemma~\ref{lem:field_decomp_smooth}. First, for any $r,A>0$,
\begin{equation}
\P\bigl(R_{\LR,S,\chi_{4N}}\ge r\bigr)\ge \P\bigl(R_{\LR,S,Z_{4N}}\ge r/A\bigr)-\P\bigl(R_{\LR,S,\chi_{4N}}< A R_{\LR,S,Z_{4N}}\bigr)
\end{equation}
Passing over to conductances, from Lemma~\ref{lem:resistance_decomp} we then get, as before,
\begin{equation}
\P\bigl(R_{\LR,S,\chi_{4N}}< A R_{\LR,S,Z_{4N}}\bigr)\le \frac1A\e^{\cspecial \log\log(4N)},
\end{equation}
while the duality in Lemma~\ref{lem:dual_couple} gives, as in the proof of Proposition~\ref{prop:zero_mean}, 
\begin{equation}
\P\Bigl(R_{\LR,S,Z_{4N}}\,R^\star_{\UD,S,Z_{4N}}\ge \e^{-2\gamma c_1}/64\Bigr)\ge 1-\epsilon.
\end{equation}
Finally, we use Lemma~\ref{lem:resistance_decomp} one more time to get
\begin{equation}
\P\bigl(R^\star_{\UD,S,Z_{4N}}\le\tilde r\bigr)\ge \P\bigl(R_{\UD,S,\chi_{4N}}\le\tilde r/A\bigr)-\frac1{A}\e^{\cspecial \log\log(4N)}.
\end{equation}
If we set $\tilde r/A:=C_6\e^{\cspecial \log\log(4N)}$, Proposition~\ref{prop-4.7} bounds the first probability below by~$c_3$. Now take $A:=C\e^{3\cspecial \log\log(4N)}$ for~$C$ large and work your way back to get \eqref{E:5.77a}.
\end{proofsect}

\subsection{Gaussian concentration and upper bound on point-to-point resistances}
To get the tail estimate on the effective resistance in Theorem~\ref{thm-effective-resistance}, we need to invoke  a concentration-of-measure argument for the quantity at hand.  Recall the notation $R_{A_\chi}(\mathcal P)$ for the effective resistance in network~$A_{\chi}$ restricted to the collection of paths in~$\mathcal P$.

\begin{proposition}
\label{lem:Gaussian_conc}
Suppose $\chi$ is a Gaussian field on $B(N)$ with $\var(\chi_x)\le c_1\log N$   for all $u\in B(N)$ and~$c_1$  independent of~$N$. Let $A_\chi$ be a subnetwork of $B(N)_\chi$ and let $\mathcal P$ be a  finite 
collection of paths within $A$ between some given source 
and destination. 
There is a constant~$c_2\in(0,\infty)$ such that for all~$N\ge1$, all~$t\ge0$  and all~$\gamma>0$, 
\begin{equation}
\mathbb{P}\Bigl(\bigl|\log{ R_{A_{\chi}} (\mathcal P)}- \E \log{ R_{A_{\chi}} (\mathcal P)}\bigr|  \geq t\sqrt{\log N}\Bigr) \leq 
2 \e^{- c_2\gamma^{-2}t^2 }\,.
\end{equation}
\end{proposition}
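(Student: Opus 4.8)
The plan is to invoke the Gaussian concentration inequality (the Borell--Tsirelson--Sudakov--Ibragimov inequality, as in \cite{ST74,Borell75}) for the functional $\chi\mapsto \log R_{A_\chi}(\mathcal P)$. The key point to establish is that this functional is Lipschitz, with respect to the Euclidean norm on the vector $\{\chi_x\}_{x\in B(N)}$ (equivalently, after absorbing the covariance structure, one applies the abstract Gaussian concentration inequality which requires the Lipschitz constant only in the intrinsic $\ell^2$-sense), with a Lipschitz constant of order $\gamma$. Once that Lipschitz bound is in hand, and since $\var(\chi_x)\le c_1\log N$, the standard concentration statement gives that $\log R_{A_\chi}(\mathcal P)$ deviates from its mean by more than $s$ with probability at most $2\e^{-s^2/(2 L^2 \sigma^2)}$ where $L=O(\gamma)$ is the Lipschitz constant and $\sigma^2=O(\log N)$; setting $s:=t\sqrt{\log N}$ then yields the claimed bound with $c_2$ absolute.

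First I would derive the Lipschitz estimate. Recall from \eqref{resist_restrict} that
\begin{equation}
R_{A_\chi}(\mathcal P)=\inf_{\{r_{e,P}\}\in\mathfrak R_{\mathcal P}}\Bigl(\sum_{P\in\mathcal P}\frac1{\sum_{e\in P}r_{e,P}}\Bigr)^{-1},
\end{equation}
where the edge resistance is $r_e=\e^{-\gamma(\chi_u+\chi_v)}$ for $e=(u,v)$. Consider two fields $\chi,\chi'$ and let $\{r_{e,P}\}$ be (near-)optimal for $\chi$. Replacing $r_{e,P}$ by $r_{e,P}\,\e^{-\gamma[(\chi'_u+\chi'_v)-(\chi_u+\chi_v)]}$ produces an admissible assignment for $\chi'$ satisfying \eqref{eq-r-e-restrict} (the constraint is homogeneous in the same exponential factor per edge), and it scales each path sum, hence the whole restricted resistance, by at most $\max_{e=(u,v)}\e^{\gamma|(\chi'_u+\chi'_v)-(\chi_u+\chi_v)|}$. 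Taking logarithms and symmetrizing,
\begin{equation}
\bigl|\log R_{A_{\chi'}}(\mathcal P)-\log R_{A_\chi}(\mathcal P)\bigr|\le \gamma\max_{e=(u,v)\in E(A)}\bigl|(\chi'_u-\chi_u)+(\chi'_v-\chi_v)\bigr|\le 2\gamma\max_{x\in B(N)}|\chi'_x-\chi_x|\le 2\gamma\,\|\chi'-\chi\|_2.
\end{equation}
Thus the map is $2\gamma$-Lipschitz in $\ell^2$, uniformly in $N$, in the collection $\mathcal P$, and in the subnetwork $A$.

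The main (and essentially only) obstacle is this Lipschitz bound, and in particular making sure the constraint set $\mathfrak R_{\mathcal P}$ behaves well under the exponential rescaling of edge resistances --- but as noted, the constraint \eqref{eq-r-e-restrict} is scale-covariant edge by edge, so a uniform multiplicative perturbation of all $r_{e,P}$ with the same per-edge factor stays admissible, and the argument goes through cleanly. Everything else is the black-box Gaussian concentration inequality applied to a Lipschitz functional of a Gaussian vector with variances bounded by $c_1\log N$; one should note that concentration around the mean (rather than the median) follows in the usual way, and the constant $c_2$ can be taken to be $(8c_1)^{-1}$ or similar, absolute. Finally, since $R_{A_\chi}(\mathcal P)$ is finite and positive $\P$-a.s.\ (the collection $\mathcal P$ is nonempty and finite and all conductances are positive), $\E\log R_{A_\chi}(\mathcal P)$ is well defined, completing the argument.
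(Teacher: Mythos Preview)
Your approach is exactly the paper's: establish that $\chi\mapsto\log R_{A_\chi}(\mathcal P)$ is $2\gamma$-Lipschitz in the $\ell^\infty$-norm (the paper isolates this as Lemma~\ref{lem:Lipschitz}) and then invoke Borell--Sudakov--Tsirelson. Your rescaling argument for the Lipschitz bound --- replacing $r_{e,P}$ by $r_{e,P}\e^{-\gamma[(\chi'_u+\chi'_v)-(\chi_u+\chi_v)]}$ and checking that~\eqref{eq-r-e-restrict} is preserved edge by edge --- is correct and equivalent to the paper's (which instead freezes the auxiliary variable~$\mathbf q$ and notes that for each fixed~$\mathbf q$ the log is $2\gamma$-Lipschitz in~$\ell^\infty$, so the maximum over~$\mathbf q$ is too).

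There is, however, one genuine muddle in how you feed this into the concentration step. You pass from $2\gamma\|\chi'-\chi\|_\infty$ to $2\gamma\|\chi'-\chi\|_2$ and then claim a bound of the form $2\e^{-s^2/(2L^2\sigma^2)}$ with $L=2\gamma$ and $\sigma^2=\max_x\var(\chi_x)$. That deduction is not valid from the $\ell^2$-Lipschitz bound: writing $\chi=\Sigma^{1/2}Z$ with $Z$ standard Gaussian, an $\ell^2$-Lipschitz constant $L$ in the $\chi$-variables becomes $L\,\|\Sigma^{1/2}\|_{\mathrm{op}}=L\sqrt{\|\Sigma\|_{\mathrm{op}}}$ in the $Z$-variables, and for the GFF covariance $\|\Sigma\|_{\mathrm{op}}$ is of order $N^2$, not $\log N$. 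What \emph{does} work --- and what you already have before that last inequality --- is the $\ell^\infty$-Lipschitz bound: for each coordinate~$i$, Cauchy--Schwarz gives $|e_i^T\Sigma^{1/2}(Z-Z')|\le\sqrt{\Sigma_{ii}}\,\|Z-Z'\|_2$, so an $\ell^\infty$-Lipschitz constant $L$ in~$\chi$ becomes $L\max_i\sqrt{\Sigma_{ii}}\le L\sqrt{c_1\log N}$ in~$Z$, and then Borell--TIS gives exactly $2\e^{-s^2/(2L^2 c_1\log N)}$. So drop the passage to $\ell^2$; keep the $\ell^\infty$ bound and run the whitening argument as above. With that correction your proof is complete and coincides with the paper's, with $c_2=(8c_1)^{-1}$.
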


For the proof, we will need:

\begin{lemma}
\label{lem:Lipschitz}
Let $A$ be a subnetwork of $B(N)$ and $\mathcal P$ be a  finite  collection of paths within~$A$ between 
some given source and destination. Let $g\colon \R^{V(A)} \rightarrow \R$ be defined by
\begin{equation}
g(\mathbf{x}) := \max_{\mathbf q \in \mathcal Q}\log \Big(\sum_{P \in \mathcal P} \frac{1}
{\sum\limits_{e\in P} \mathrm{e}^{-\gamma(x_{e_-} + x_{e_+})}q_{e, P}}\Big)\,,
\end{equation}
where $\mathcal Q$ is the set of all $\mathbf q = (q_{e, P})_{e \in E(A), P\in 
\mathcal P} \in \R_+^{E(A)\times \mathcal P}$ such that 
\begin{equation}
\sum_{P\in \mathcal P} \frac{1}{q_{e, P}} \leq 1 \,\text{\rm\ for all }\, e \in  E(A)\,.
\end{equation}
Then $g$ is Lipschitz in the $\ell^{\infty}$ norm on $\R^{V(A)}$ with Lipschitz constant 
$2\gamma$.
\end{lemma}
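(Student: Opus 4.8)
The plan is to show that $g$ satisfies $|g(\mathbf x) - g(\mathbf y)| \le 2\gamma \|\mathbf x - \mathbf y\|_\infty$ by exploiting that $g$ is a maximum (over $\mathbf q \in \mathcal Q$) and a logarithm of quantities that depend on $\mathbf x$ only through the factors $\mathrm e^{-\gamma(x_{e_-}+x_{e_+})}$. The key structural fact is that $\log$ of a ``harmonic-sum-type'' expression $\bigl(\sum_P (\sum_{e\in P} a_{e,P})^{-1}\bigr)^{-1}$ is monotone and scales nicely: if we multiply every $a_{e,P}$ by a common factor $\lambda$, the whole expression is multiplied by $\lambda$, so the $\log$ shifts by $\log\lambda$; and more generally if each $a_{e,P}$ is multiplied by a factor lying in $[\lambda_{\min},\lambda_{\max}]$, the expression changes by a factor in the same interval (by monotonicity of each ingredient: the inner sums $\sum_{e\in P} a_{e,P}$, their reciprocals, the outer sum, and its reciprocal are all monotone in the obvious directions).

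First I would fix $\mathbf x, \mathbf y \in \R^{V(A)}$ and set $\delta := \|\mathbf x - \mathbf y\|_\infty$. For any edge $e$ with endpoints $e_-, e_+$ we have $|(x_{e_-}+x_{e_+}) - (y_{e_-}+y_{e_+})| \le 2\delta$, hence
\begin{equation}
\mathrm e^{-2\gamma\delta}\,\mathrm e^{-\gamma(x_{e_-}+x_{e_+})} \le \mathrm e^{-\gamma(y_{e_-}+y_{e_+})} \le \mathrm e^{2\gamma\delta}\,\mathrm e^{-\gamma(x_{e_-}+x_{e_+})}.
\end{equation}
Then I would fix an arbitrary $\mathbf q \in \mathcal Q$ and compare the two expressions $\Phi(\mathbf x,\mathbf q) := \sum_{P\in\mathcal P}\bigl(\sum_{e\in P}\mathrm e^{-\gamma(x_{e_-}+x_{e_+})}q_{e,P}\bigr)^{-1}$ and $\Phi(\mathbf y,\mathbf q)$: since each summand $\mathrm e^{-\gamma(x_{e_-}+x_{e_+})}q_{e,P}$ in the inner sum is multiplied by a factor in $[\mathrm e^{-2\gamma\delta},\mathrm e^{2\gamma\delta}]$ when passing from $\mathbf x$ to $\mathbf y$, the inner sum $\sum_{e\in P}(\cdot)$ changes by a factor in that interval, hence so does its reciprocal (with the interval flipped), hence so does the outer sum $\Phi$. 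Therefore $\mathrm e^{-2\gamma\delta}\Phi(\mathbf y,\mathbf q) \le \Phi(\mathbf x,\mathbf q) \le \mathrm e^{2\gamma\delta}\Phi(\mathbf y,\mathbf q)$, which gives $|\log\Phi(\mathbf x,\mathbf q) - \log\Phi(\mathbf y,\mathbf q)| \le 2\gamma\delta$ for every $\mathbf q$.

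Finally I would pass from the pointwise-in-$\mathbf q$ bound to the bound on $g$ using the elementary fact that a supremum of functions with a common Lipschitz (here, uniform) modulus inherits that modulus: taking $\mathbf q^\star$ to be a maximizer for $\mathbf x$ (the set $\mathcal Q$ is nonempty and the sup is attained, or one argues with near-maximizers), $g(\mathbf x) = \log\Phi(\mathbf x,\mathbf q^\star) \le \log\Phi(\mathbf y,\mathbf q^\star) + 2\gamma\delta \le g(\mathbf y) + 2\gamma\delta$, and symmetrically $g(\mathbf y) \le g(\mathbf x) + 2\gamma\delta$. This yields $|g(\mathbf x)-g(\mathbf y)| \le 2\gamma\|\mathbf x - \mathbf y\|_\infty$, as claimed. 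I do not anticipate a serious obstacle here; the only point requiring a little care is making the chain of monotonicity steps (inner sum $\to$ reciprocal $\to$ outer sum) precise, and confirming that the factor is exactly $\mathrm e^{2\gamma\delta}$ rather than something larger — the coefficient $2$ coming precisely from the two endpoints $e_-, e_+$ each contributing a perturbation of size at most $\delta$.
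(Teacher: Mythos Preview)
Your proof is correct and follows essentially the same route as the paper: show that for each fixed $\mathbf q\in\mathcal Q$ the function $\mathbf x\mapsto \log\Phi(\mathbf x,\mathbf q)$ is $2\gamma$-Lipschitz in $\|\cdot\|_\infty$, then pass to the supremum over~$\mathbf q$. The paper compresses your monotonicity chain into a single displayed inequality, but the argument is the same.
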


\begin{proofsect}{Proof}
Define a new real-valued function, also denoted by $g$, on $\R^{V(A)} \times \R_+^{E(A)\times 
\mathcal P}$ via
\begin{equation}
g(\mathbf{x} ,\mathbf{q}) := \log \Big(\sum_{P \in \mathcal P} \frac{1}
{\sum\limits_{e\in P} \mathrm{e}^{-\gamma(x_{e_-} + x_{e_+})}q_{e, P}}\Big)\,.
\end{equation}
Then for any $\mathbf q \in \mathcal Q$ and $\mathbf x, \mathbf y \in \R^{V(A)}$ it is clear that
\begin{equation}
|g(\mathbf{x} ,\mathbf{q}) - g(\mathbf{y} ,\mathbf{q})| \leq 2\gamma||\mathbf x - \mathbf y||
_{\infty}\,.
\end{equation}
Hence $g(\mathbf{x}) = \max_{\mathbf q \in \mathcal Q}g(\mathbf{x}, \mathbf q)$ is $2\gamma$-
Lipschitz relative to the $\ell^{\infty}$ norm as well.
\end{proofsect}

\begin{proofsect}{Proof of Proposition~\ref{lem:Gaussian_conc}}
 The ``standard'' Gaussian concentration inequality (see \cite{ST74,Borell75} and also Lemma~\ref{lem:Borell_ineq}) states that for any centered Gaussian process $\mathbf X := \{X_v\colon v \in A\}$ indexed by points in a finite set~$A$ and for any $g\colon\R^A\to\R$ obeying $|g(\mathbf x)-g(\mathbf y)|\le c\Vert\mathbf x-\mathbf y\Vert_\infty$ for all~$\mathbf x,\mathbf y\in\R^A$,
\begin{equation}
\P\Bigl(\,\bigl|   g(\mathbf X)  - \E  g(\mathbf X)\bigr|  \geq \lambda\Bigr) \leq 2 \e^{-\frac{\lambda^2}{ 2 c^2\sigma^2}}\,,\quad\lambda>0,
\end{equation}
where $\sigma^2 := \max_{v \in A}\E (X_v^2)$. The claim then follows via Lemma~\ref{lem:Lipschitz}.
\end{proofsect}

We are now ready to give  a version of the upper bound in Theorem~\ref{thm-effective-resistance}, albeit for a network arising from a GFF on a finite subset of~$\Z^2$: 

 \begin{lemma}
\label{prop:thm1.4}
There is $c_1 \in (0, \infty)$ depending only on $\gamma$ and a constant $c'' \in (0, \infty)$ such that 
\begin{equation}
\P\Bigl(R_{B(N)_{\chi_M}}(u,v)\ge c_1(\log M)\,\e^{t\sqrt{\log M}}\Bigr)\le 2c_1(\log M)\,\e^{-c'' t^2}
\end{equation}
 holds  for all $N \geq 1$,  all  $M \geq 32 N$ and  all  $t \geq 0$.
\end{lemma}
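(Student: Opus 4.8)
The plan is to combine the RSW-type upper bounds on rectangular resistances (Proposition~\ref{prop-4.7} and Lemma~\ref{lemma-annuli}) with the series law (Lemma~\ref{lem:series_law}) to bound $R_{B(N)_{\chi_M}}(u,v)$ by a sum of $O(\log M)$ i.i.d.-ish rectangle/annulus resistances, and then to upgrade the resulting typical bound $\e^{O(\log\log M)}$ into a tail bound $\e^{t\sqrt{\log M}}$ via the Gaussian concentration estimate of Proposition~\ref{lem:Gaussian_conc}. First I would set up the geometric decomposition: given $u,v\in B(N)$ with $M\ge32N$, build a chain of concentric dyadic annuli around $u$ (and similarly around $v$) up to scale comparable to $N$, together with a ``connecting'' rectangle at the largest scale joining the two annular systems. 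By Lemma~\ref{lem:series_law}, $R_{B(N)_{\chi_M}}(u,v)$ is at most the sum, over $O(\log N)=O(\log M)$ scales, of the resistances across the four maximal rectangles inside each annulus, plus one connecting-rectangle resistance; each of these is a restricted resistance $R_{A_{\chi_M}}(\mathcal P)$ of the kind handled by Proposition~\ref{lem:Gaussian_conc}.

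The second step is to control each summand individually. For a single annular (or rectangular) resistance $R_j := R_{A_j,\chi_M}(\mathcal P_j)$ at scale $2^j$, Proposition~\ref{prop-4.7} (applied to the coarse-field-corrected rectangle as in Corollary~\ref{cor-4.3}, Proposition~\ref{lem:Tassion2}, using the Gibbs--Markov decomposition of $\chi_M$ on a slightly larger square) gives $\P(R_j\le C_6\e^{\hat c\log\log M})\ge c_3>0$, hence $\E\log R_j \le \hat c\log\log M + C$ on a positive-probability event; combined with Proposition~\ref{lem:Gaussian_conc} — whose hypothesis $\var(\chi_{M,x})\le c_1\log M$ holds for the GFF on $B(M)$ by the standard $\var\chi_{M,x}\le g\log M+O(1)$ bound — one gets that $\log R_j$ is $\sqrt{\log M}$-sub-Gaussian around its mean, and that the positive-probability upper bound forces $\E\log R_j\le C'\log\log M$. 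Thus $\P(\log R_j \ge C'\log\log M + s\sqrt{\log M})\le 2\e^{-c_2\gamma^{-2}s^2}$ for every scale $j$.

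Finally, I would assemble the pieces by a union bound over the $O(\log M)$ scales. Since $R_{B(N)_{\chi_M}}(u,v)\le\sum_j R_j$ and there are at most $C\log M$ summands, if every $R_j\le c_1(\log M)\e^{t\sqrt{\log M}}$ then the total is at most (redefining $c_1$) $c_1(\log M)\e^{t\sqrt{\log M}}$; the bad event has probability at most $\sum_j \P(R_j > \cdots)\le 2(C\log M)\e^{-c''t^2}$ after absorbing the $\log\log M$ correction into $c_1(\log M)$ (using $\log\log M\le \tfrac12 t\sqrt{\log M}$ for the relevant range of $t$, and noting the statement is trivially true once the right-hand side exceeds $1$, which lets us assume $t$ is not too small). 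This yields the claimed bound with the stated form $2c_1(\log M)\e^{-c''t^2}$.

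The main obstacle, I expect, is bookkeeping the passage from ``$R_j\le C_6\e^{\hat c\log\log M}$ with probability $\ge c_3$'' (a \emph{one-sided, positive-probability} statement) to a genuine bound on $\E\log R_j$ usable in concentration: one must rule out that $\log R_j$ has a very heavy \emph{lower} tail that would drag $\E\log R_j$ down and thereby weaken (or even reverse the usefulness of) the concentration bound. This is handled precisely because Proposition~\ref{lem:Gaussian_conc} gives two-sided concentration of $\log R_j$ about its mean at scale $\sqrt{\log M}$: if $\E\log R_j$ were much smaller than $\hat c\log\log M$, concentration would be inconsistent with $R_j$ also being large with non-negligible probability — but here we only need the cheap direction, namely that $\{R_j\le C_6\e^{\hat c\log\log M}\}$ having probability $\ge c_3$ together with concentration forces $\E\log R_j\le \hat c\log\log M + O(\sqrt{\log M})$, which is all that is needed. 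A secondary technical point is ensuring the rectangles/annuli at the largest scale still fit inside $B(M/2)$, which is exactly why the hypothesis $M\ge32N$ (rather than merely $M\ge16N$) is imposed.
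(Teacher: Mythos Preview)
Your approach is essentially the paper's: chain $u$ to $v$ by $O(\log M)$ aspect-ratio-4 rectangles inside $B(M)$, use Proposition~\ref{prop-4.7} plus Gaussian concentration (Proposition~\ref{lem:Gaussian_conc}) on each piece, then union-bound. One difference worth noting: the paper additionally invokes the \emph{lower} bound Corollary~\ref{cor-4.3} so as to trap $|\log R_j|$ in a bounded interval with positive probability before appealing to concentration, whereas you observe (correctly) that the one-sided positive-probability upper bound from Proposition~\ref{prop-4.7} together with two-sided concentration already forces $\E\log R_j\le O(\sqrt{\log M})$, which is all that is needed for the upper tail in this lemma --- so your route is a slight simplification. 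A small slip: Lemma~\ref{lemma-annuli} gives a \emph{lower} bound on the radial resistance across an annulus, not an upper bound on circuit resistance, so it plays no role here; only Proposition~\ref{prop-4.7} is used for the annular pieces (via their four maximal rectangles, exactly as you describe).
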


\begin{proofsect}{Proof}
 We first derive a similar bound for effective resistances across rectangles by Proposition~\ref{lem:Gaussian_conc} and the estimates on its quantiles given by Proposition~\ref{prop-4.7} and Corollary~\ref{cor-4.3}. Then we use a gluing argument like the one used in the proof of Proposition~\ref{prop-4.7} to obtain the required upper bound for point-to-point effective resistances. Let us start with the following observation.  Combining Proposition~\ref{prop-4.7} with Corollary~\ref{cor-4.3}, for each $\epsilon>0$ there is $N_0''=N_0''(\gamma,\epsilon)$ such that if $N\ge N_0''$, $M\ge 32 N$ and~$S$ is a translate of $B(4N,N)$ contained in~$B(M/2)$, then we have
\begin{equation}
\P\Bigl(\bigl|\log R_{\LR;S,\chi_{2M}}\bigr|\le 2\cspecial \log\log(2M)+\log C_6\Bigr)\ge \epsilon.
\end{equation}
Decomposing $\chi_{2M}$ on $B(M)$ into a fine field~$\chi_{M}^f$ and a coarse field $\chi_M^c$, the fact that 
\begin{equation}
|\log R_{\LR;S,\chi_{2M}}\bigr|\ge |\log R_{\LR;S,\chi_{M}^f}\bigr|-2\gamma\max_{u\in S}\bigl|\chi_{M}^c\bigr|
\end{equation}
along with $\chi_M^f\laweq\chi_M$ shows
\begin{equation}
\P\Bigl(\bigl|\log R_{\LR;S,\chi_{M}}\bigr|\le2\cspecial \log\log(2M) +\log C_6+2\tilde c\gamma\Bigr)
\ge \epsilon-\P\Bigl(\max_{u\in S}\bigl|\chi_{M}^c\bigr|>\tilde c\Bigr).
\end{equation}
The last probability tends to zero as~$\tilde c\to\infty$ uniformly in~$M\ge1$ and so, by choosing~$\tilde c$ large, there is a constant $C_7\in(0,\infty)$ such that, for all $N\ge N_0''$,
\begin{equation}
\label{E:4.72}
\P\Bigl(\bigl|\log R_{\LR;S,\chi_{M}}\bigr|\le2\cspecial \log\log(2M)+\log C_7\Bigr)\ge \epsilon/2
\end{equation}
holds for all $M\ge32N$ and all translates of $B(4N,N)$ contained anywhere in~$B(M)$.

Since \eqref{E:4.72} gives us an interval of width of order $\log\log M$ where $\bigl|\log R_{\LR;S,\chi_{M}}\bigr|$ keeps a uniformly positive mass, the Gaussian concentration in Proposition~\ref{lem:Gaussian_conc} shows that, for some constants $c',c''\in(0,\infty)$, 
\begin{equation}
\E\bigl|\log R_{\LR;S,\chi_M}\bigr|\le c'\sqrt{\log M}
\end{equation}
and also
\begin{equation}
\label{E:4.74}
\P\Bigl(\bigl|\log R_{\LR;S,\chi_M}\bigr|>t\sqrt{\log M}\Bigr)\le 2\e^{-c'' t^2}
\end{equation}
hold for every~$t\ge0$.
The proof has so far assumed $N\ge N_0''$; to eliminate this assumption we note that $\var(\chi_{M,v})\le \tilde c\log M$ uniformly in~$v\in B(M)$ and so the union bound gives
\begin{equation}
\P\bigl(\max_{v\in S}\bigl|\chi_{M,v}|>t\sqrt{\log M}\Bigr)\le 2|S|\e^{-\frac12\tilde c^{-2}t^2}.
\end{equation}
Since $|S|\le(4N_0''+1)^2$ while $|\log R_{\LR;S,\chi_M}|$ is at most $2\gamma \max_{v\in S}\bigl|\chi_{M,v}|$ times an $N_0''$-dependent constant, by adjusting~$c''$ we make \eqref{E:4.74} hold for all $N\ge1$. Due to rotation symmetry, the same bound holds also for $R_{\UD;S,\chi_M}$ and any translate~$S$ of~$B(N,4N)$ contained in~$B(M)$.

\begin{figure}[t]
\centerline{\includegraphics[width=0.65\textwidth]{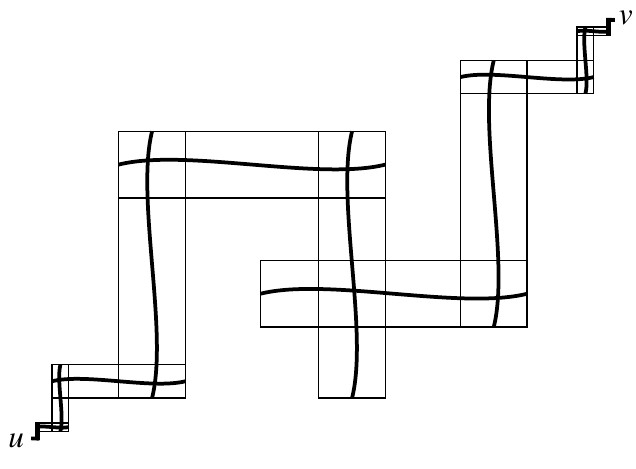}
}
\small 
\vglue0.0cm
\caption{
\label{fig9u}
 A collection of rectangles $4N\times2N$ or $2N\times4N$ rectangles with paths that ensure a connection between two given points at distance order~$M\ge32N$.
}
\normalsize
\end{figure}

Now fix~$M\ge32$ and let~$u,v\in B(M)$. Then one can find  (see Fig.~\ref{fig9u})  a collection of rectangles of the form $B(N,4N)$ or $B(4N,N)$ with $32N\le M$ that are contained in $B(M)$ and satisfy:
\settowidth{\leftmargini}{(11)}
\begin{enumerate}
\item[(1)] There are at most $c_1\log M$ of such rectangles with $c_1\in(0,\infty)$ independent of~$M$.
\item[(2)] If a path is chosen connecting the shorter sides in each of these rectangles, then the graph union of these paths contains a path from $u$ to~$v$.
\end{enumerate}
By Lemma~\ref{lem:series_law}, this construction dominates $R_{B(N)_{\chi_M}}(u,v)$ by the sum of the resistances between the shorter sides of these rectangles. The FKG inequality, \eqref{E:4.74} and a union bound then imply 
\begin{equation}
\P\Bigl(R_{B(N)_{\chi_M}}(u,v)\ge c_1(\log M)\,\e^{t\sqrt{\log M}}\Bigr)\le 2c_1(\log M)\,\e^{-c'' t^2}
\end{equation}
This is the desired claim.
\end{proofsect}

\subsection{ Adapting the bounds to the full-plane field }
In order to extend Lemma~\ref{prop:thm1.4} to the network with the underlying field~$\eta$, we first note: 

\begin{lemma}
Let~$\eta$ denote the GFF on~$\Z^2$ pinned at the origin. There are $C_1, c_1\in(0,\infty)$ and $N_1\ge1$ such that for all $N \geq N_1$, all $M\ge16N$ and for every translate~$S$ of $B(4N,N)$ contained in~  
$B(M/2)$,   we have
\begin{equation}
\label{E:5.1}
\P\bigl(R_{\LR;S,\eta} \leq C_1\e^{2\cspecial \log\log(M)}\bigr) \geq c_1\,.
\end{equation}
The same applies to $R_{\UD;S,{\chi_{M}}}$ for translates~$S$ of $B(N,4N)$ with~$S\subset B(M/2)$.
\end{lemma}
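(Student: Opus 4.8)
The plan is to transfer the estimate from the Dirichlet GFF $\chi_M$ to the pinned field $\eta$ by peeling off, via the Gibbs--Markov property, a fine field that is essentially a copy of $\chi_M$, while arguing that the coarse remainder does no harm. By invariance of the law of $\eta$ and of $B(M/2)$ under lattice rotations about the origin it suffices to treat $R_{\LR;S,\eta}$ for $S$ a translate of $B(4N,N)$ contained in $B(M/2)$; the $\UD$ statement then follows by rotating $S$ by $\tfrac\pi2$ about the origin. Fix such an $S$ and take $N_1$ at least as large as the constant in Proposition~\ref{prop-4.7}.

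First I would apply Lemma~\ref{lem:Markov} with $A:=\Z^2\setminus\{0\}$ (so $\eta=\chi_A$) and $B:=B(M)\setminus\{0\}$, obtaining $\eta|_{B(M)}=\eta^f+\eta^c$ with $\eta^f,\eta^c$ independent, $\eta^f\laweq\chi_{B(M)\setminus\{0\}}$ (the GFF on $B(M)$ with Dirichlet data on $\partial B(M)$ and at the origin), and $\eta^c$ discrete harmonic on $B(M)\setminus\{0\}$, equal to $\eta$ on $\partial B(M)$ and to $0$ at the origin. By \eqref{E:4.1} one has, in law, $R_{\LR;S,\eta}\le R_{\LR;S,\eta^f}\,\e^{-2\gamma\min_{u\in S}\eta^c_u}$, so it remains to control the two factors. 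For the first, put $h_M(v):=G_{B(M)}(v,0)/G_{B(M)}(0,0)\in[0,1]$, the harmonic function on $B(M)\setminus\{0\}$ equal to $1$ at the origin and $0$ on $\partial B(M)$; a covariance computation gives $\chi_M-\chi_{M,0}h_M\laweq\chi_{B(M)\setminus\{0\}}$, where $\chi_M$ is the Dirichlet GFF on $B(M)$. On $\{\chi_{M,0}\le0\}$ every edge of $S_{\chi_M-\chi_{M,0}h_M}$ has resistance at most that of the corresponding edge of $S_{\chi_M}$ (since $h_M\ge0$), so $R_{\LR;S,\chi_M-\chi_{M,0}h_M}\le R_{\LR;S,\chi_M}$ there. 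As $\{R_{\LR;S,\chi_M}\le C_6\e^{\hat c\log\log M}\}$ and $\{\chi_{M,0}\le0\}$ are both decreasing in the non-negatively correlated field $\chi_M$, the FKG inequality (Lemma~\ref{lem:fkg_gaussian}) together with Proposition~\ref{prop-4.7} yields $\P\bigl(R_{\LR;S,\eta^f}\le C_6\e^{\hat c\log\log M}\bigr)\ge\tfrac12 c_3$.

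The delicate point is the coarse field, and the naive route of invoking Corollary~\ref{cor:field_smoothness1} to say that $\eta^c$ is flat on $S$ fails when $S$ lies near the origin: there $\eta^c$ transitions from $0$ at the origin to order $\sqrt{\log M}$ on the rest of $S$, so it cannot be bounded uniformly. The fix is to write $\eta^c_v=\tilde\xi_v-h_M(v)\,\tilde\xi_0=(\tilde\xi_v-\tilde\xi_0)+\bigl(1-h_M(v)\bigr)\tilde\xi_0$, where $\tilde\xi_v:=\sum_{z\in\partial B(M)}H^{B(M)}(v,z)\eta_z$ is the harmonic extension of $\eta|_{\partial B(M)}$ into the \emph{full} box; this identity follows by splitting hitting probabilities of $\partial B(M)$ versus $\{0\}$, using that $1-h_M(v)$ is exactly $\P^v(\text{hit }0\text{ before }\partial B(M))$. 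The gain is that $1-h_M(v)\ge0$ is deterministic, that $v\mapsto\tilde\xi_v$ is genuinely flat on $B(M/2)$ — $\var(\tilde\xi_v-\tilde\xi_{v'})=O\bigl((|v-v'|/M)^2\bigr)$ by the harmonic-measure difference estimates used in the proof of Lemma~\ref{lemma-3.7}, whence (Borell's inequality, Lemma~\ref{lem:Borell_ineq}, plus the covering argument of Corollary~\ref{cor:field_smoothness1}, and using $\var(\tilde\xi_{v_0}-\tilde\xi_0)=O(1)$ for the point $v_0\in S$ nearest the origin to reduce from the oscillation over $S$) $\sup_{v\in S}|\tilde\xi_v-\tilde\xi_0|\le c_*$ with probability at least $\tfrac34$ for an absolute constant $c_*$ — and that $\tilde\xi_0$ is a centred Gaussian, so $\tilde\xi_0\ge0$ with probability $\tfrac12$. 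On the intersection of these events (probability $\ge\tfrac14$ by a union bound), for all $v\in S$ one gets $\eta^c_v=(\tilde\xi_v-\tilde\xi_0)+\bigl(1-h_M(v)\bigr)\tilde\xi_0\ge-c_*+0=-c_*$, i.e. $\min_{u\in S}\eta^c_u\ge-c_*$.

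Finally, this last event is $\sigma(\eta|_{\partial B(M)})\subseteq\sigma(\eta^c)$-measurable, hence independent of $\eta^f$, so combining the three estimates gives $\P\bigl(R_{\LR;S,\eta}\le C_6\e^{2\gamma c_*}\e^{\hat c\log\log M}\bigr)\ge\tfrac12 c_3\cdot\tfrac14=\tfrac{c_3}{8}$; since $M\ge16$ we have $\hat c\log\log M\ge0$ and hence $\e^{\hat c\log\log M}\le\e^{2\hat c\log\log M}$, so the claim follows with $C_1:=C_6\e^{2\gamma c_*}$ and $c_1:=c_3/8$ (and with $N_1$ as in Proposition~\ref{prop-4.7}). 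The main obstacle is precisely Step~3: one must control a \emph{difference} between the near-origin and the bulk behaviour of $\eta^c$ by exploiting the sign of the deterministic factor $1-h_M$, since $\eta^c$ itself is genuinely of size $\sqrt{\log M}$ whenever $S$ meets a neighbourhood of the origin of comparable scale.
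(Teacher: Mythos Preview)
Your treatment of the coarse field $\eta^c$ in Step~3 is elegant and correct: the identity $\eta^c_v=(\tilde\xi_v-\tilde\xi_0)+(1-h_M(v))\tilde\xi_0$ (modulo the minor slip that it is $h_M(v)$, not $1-h_M(v)$, that equals $\P^v(\text{hit }0\text{ before }\partial B(M))$) neatly separates an $O(1)$ oscillation from a term whose sign is controlled by the single Gaussian $\tilde\xi_0$. This part is actually cleaner than the corresponding step in the paper.

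The gap is in Step~2, where the FKG application fails. Recall that the edge resistances are $r_e=\e^{-\gamma(\chi_{M,u}+\chi_{M,v})}$, so increasing $\chi_M$ \emph{decreases} every $r_e$ and hence decreases $R_{\LR;S,\chi_M}$. Thus $\{R_{\LR;S,\chi_M}\le r\}$ is an \emph{increasing} event in $\chi_M$, while $\{\chi_{M,0}\le0\}$ is decreasing. For one increasing and one decreasing event FKG gives $\P(A\cap B)\le\P(A)\P(B)$, the wrong direction; a union bound does not help either because Proposition~\ref{prop-4.7} only supplies $\P(R_{\LR;S,\chi_M}\le r)\ge c_3$ with $c_3$ possibly tiny, so $c_3+\tfrac12-1<0$. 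The independence of $\chi_{M,0}$ from $\chi_M-\chi_{M,0}h_M$ does not rescue the argument: it lets you factor out $\{\chi_{M,0}\le0\}$ from events measurable in $\eta^f$, but $\{R_{\LR;S,\chi_M}\le r\}$ depends on $\chi_M$ itself, not on $\eta^f$ alone.

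The paper handles this step differently. It first reduces to the minimal $M$ with $S\subset B(M/2)$ (so the resistance is unchanged and the threshold only improves), which forces $M/N$ bounded. It then replaces $S$ by a sub-rectangle $\tilde S\subset S$ of dimensions $4N\times\lfloor N/3\rfloor$ lying at distance at least $N\asymp M$ from the origin; on $\tilde S$ one has $h_M=O(1/\log M)$, so the perturbation $\chi_{M,0}h_M$ has bounded variance there and can be absorbed by a Markov-inequality argument rather than FKG. Your Step~3 idea does not transfer to $\eta^f$ because the analogue of $\tilde\xi_0$ would be $\chi_{M,0}$, which has variance of order $\log M$, so the ``oscillation'' term $\chi_{M,v}-\chi_{M,0}$ is not $O(1)$ on $S$ when $S$ is near the origin.
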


\begin{proofsect}{Proof}
We will assume that~$M$ is the minimal  integer  such that~$S\subset B(M/2)$. Note that this means that $M/N$ is bounded.
We proceed in two steps, first reducing~$\eta$ to the GFF in~$\Lambda:= B(M) 
\smallsetminus\{0\}$ and then relating this field to~$\chi_M$. Using the Gibbs-Markov property, the field~$\eta$ can be written as $\chi_\Lambda+\chi^c$, where $\chi_\Lambda$, the fine field, has the law of the GFF on~$\Lambda$ while the coarse field~$\chi^c$ is~$\eta$ conditional on its values outside of~ $B(M)$.
Now pick an~$x\in B(M)\smallsetminus B(M/2)^\circ$ such that~$x$ is at least~$M/6$ lattice steps from both~$B(M/2)$ and~$B(M)^\cc$. For any $r,A>0$ we then have
\begin{equation}
\begin{aligned}
\label{E:5.2}
\P\bigl(R_{\LR;S,\eta}\le r\bigr)&\ge \P\bigl(R_{\LR;S,\eta}\le r,\,\eta_x^c\ge0\bigr)
\\
&\ge\P\bigl(R_{\LR;S,\eta_\Lambda}\le r/A,\,\eta^c_x\ge0\bigr)
-\P\bigl(R_{\LR;S,\eta}>AR_{\LR;S,\eta_\Lambda},\,\,\eta^c_x\ge0\bigr)
\end{aligned}
\end{equation}
Noting that both events are increasing functions of~$\eta$, for the first probability on the right we get
\begin{equation}
\P\bigl(R_{\LR;S,\eta_\Lambda}\le r/A,\,\eta^c_x\ge0\bigr)\ge\frac12\P\bigl(R_{\LR;S,\eta_\Lambda}\le r/A\bigr)
\end{equation}
using the FKG inequality. For the second probability we set
\begin{equation}
\varphi_u:=\eta^c_u-\frac{\cov(\eta^c_u,\eta^c_x)}{\var(\eta^c_x)}\eta^c_x,\qquad u\in B(M/2),
\end{equation}
and note, since $\cov(\eta^c_u,\eta^c_x)\ge0$, we have
\begin{equation}
R_{\LR;S,\eta}\le R_{\LR;S,\eta_\Lambda+\varphi}\quad\text{on }\{\eta^c_x\ge0\bigr\}.
\end{equation}
But the above definition ensures that~$\varphi$ is independent of~$\eta^c_x$ and a calculation using the explicit form of the law of~$\eta^c$ gives that $\max_{v\in\Lambda}\var(\varphi_v)$ is bounded by a constant independent of~$M$. Markov's inequality and \eqref{E:4.2} then bound the last probability in \eqref{E:5.2} by~$c'/A$ for some constant~$c'\in(0,\infty)$ independent of~$A$ or~$M$.

\newcommand{\frakg}{\mathfrak g}

Next let $\frakg_M\colon\Z^2\to[0,1]$ be discrete harmonic on~$\Lambda$ with $\frakg_M(0):=1$ and~$\frakg_N(u):=0$ whenever $u\not\in B(M)$. Let $\tilde\chi$ have the law of $\chi_M(0)\frakg(\cdot)$ but assume that~$\tilde\chi$ is independent of~$\chi_\Lambda$. The Gibbs-Markov property shows
\begin{equation}
\tilde\chi+\chi_\Lambda\laweq\chi_M.
\end{equation}
A direct use of Lemma~\ref{lem:resistance_decomp} is hampered by the fact that $\var(\tilde\chi(0))$ is of order~$\log M$. However, this is not a problem when~$S$ is at least distance~$\delta M$ from the origin because then $\frakg_N(x)=O(1/\log M)$. Letting $K:=\lfloor N/3\rfloor$, we now note that each translate~$S$ of $B(4N,N)$ contains a translate $\tilde S$ of $B(4N,K)$ which is at least distance~$N$ from the origin and is aligned with one of the longer side of~$S$. Lemma~\ref{lem:resistance_decomp} then gives, for any $b\in\R$,
\begin{equation}
R_{\LR;\tilde S,\chi_\Lambda+\tilde\chi}\ge \e^{-c'' b}R_{\LR; \tilde S,\chi_\Lambda}\ge  \e^{-c'' b} R_{\LR; S,\chi_\Lambda},
\qquad\text{on }\{\tilde\chi(0)\le b\log N\}
\end{equation}
for some~$c''>0$.
Hence
\begin{equation}
\label{E:5.5}
\P\bigl(R_{\LR; S,\chi_\Lambda}\le r/A\bigr)\ge
\P\bigl(R_{\LR; \tilde S,\chi_\Lambda+\tilde\chi}\le \e^{-\tilde c b} r/A\bigr)-\P\bigl(\tilde\chi(0)> b\log N\bigr).
\end{equation}
Now set $r:=C_1\e^{2\cspecial \log\log(M)}$, $A:=\e^{\cspecial \log\log(M)}$ and pick any~$b>0$. Then the last probability in both \eqref{E:5.2} and \eqref{E:5.5} tends to zero as~$N\to\infty$, while, as soon as~$C_1$ is large enough, the first  probability on the right of \eqref{E:5.5} is uniformly positive by Proposition~\ref{prop-4.7} and a routine use of the FKG inequality (to get us from rectangles of the form $B(4N,K)$ to those with aspect ratio~$4$).  The claim follows. 
\end{proofsect}

Using exactly the same argument as in the proof of Lemma~\ref{prop:thm1.4}, we then get:

\begin{corollary}
\label{cor-4.18}
Let~$\eta$ be the GFF in~$\Z^2\smallsetminus\{0\}$. There are $C,C'\in(0,\infty)$ such that 
\begin{equation}
\P\Bigl(R_{B(N)_\eta}(u,v)\ge C\,\e^{Ct\sqrt{\log N}}\,\Bigr)\le C'\e^{-t^2}\log N
\end{equation}
 holds  for all $N \geq 1$ and  all  $t \geq 0$.
\end{corollary}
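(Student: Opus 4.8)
The plan is to rerun the argument behind Lemma~\ref{prop:thm1.4} with the pinned field~$\eta$ in place of~$\chi_M$, the crucial new input being the crossing estimate~\eqref{E:5.1}, which already accomplishes the transfer from the finite-box GFF to~$\eta$. Two preliminary remarks make the machinery applicable. First, for every~$v\in B(N)$ we have $\var(\eta_v)=G_{\Z^2\smallsetminus\{0\}}(v,v)=2\fraka(v)$, which by~\eqref{E:3.14} is at most $c_1\log N$ with~$c_1$ independent of~$N$; hence Proposition~\ref{lem:Gaussian_conc} applies to the field $\eta$ restricted to~$B(N)$. Second, $\cov(\eta_u,\eta_v)=G_{\Z^2\smallsetminus\{0\}}(u,v)\ge0$ for all~$u,v$, so the FKG inequality (Lemma~\ref{lem:fkg_gaussian}) is at our disposal.

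First I would fix a rectangle~$S$ that is a translate of $B(4N',N')$ (or of $B(N',4N')$) with $32N'\le N$ and $S\subseteq B(N)$, and let~$\mathcal P$ be the collection of paths crossing~$S$ between its shorter sides, so that $R_{S_\eta}(\mathcal P)=R_{\LR;S,\eta}$ (resp.\ $R_{\UD;S,\eta}$). Applying~\eqref{E:5.1} with ambient box $B(2N)\supseteq S$ gives, for $N'$ above the threshold there, a constant $m_0=O(\log\log N)$ and an $\epsilon>0$, both independent of~$S$, with $\P(\log R_{\LR;S,\eta}\le m_0)\ge\epsilon$; for the bounded set of scales $N'$ below that threshold the rectangle $S$ has bounded size and $\log R_{\LR;S,\eta}\le O(1)+2\gamma\max_{v\in S}|\eta_v|$, so such rectangles are controlled directly by $\P(\max_{v\in S}|\eta_v|>t\sqrt{\log N})\le2|S|\e^{-ct^2}$. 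Combining the positive-mass statement with Proposition~\ref{lem:Gaussian_conc} and choosing~$t_0$ so that $2\e^{-c_2\gamma^{-2}t_0^2}<\epsilon$ forces $\E\log R_{\LR;S,\eta}\le m_0+t_0\sqrt{\log N}\le c'\sqrt{\log N}$, and therefore
\begin{equation*}
\P\bigl(\log R_{\LR;S,\eta}>(c'+t)\sqrt{\log N}\bigr)\le2\e^{-c_2\gamma^{-2}t^2},\qquad t\ge0,
\end{equation*}
and likewise for $R_{\UD;S,\eta}$. Note that only the upper bound~\eqref{E:5.1} is needed here, not a two-sided estimate.

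Next, for arbitrary $u,v\in B(N)$ I would choose, exactly as in the proof of Lemma~\ref{prop:thm1.4}, a family of at most $c_1\log N$ rectangles of the above type, all contained in~$B(N)$, such that selecting in each rectangle a path joining its shorter sides yields, after taking the graph union, a path from~$u$ to~$v$; the $O(1)$ rectangles forced to be small because they sit near~$u$, near~$v$, or in the regime where $|u-v|$ is small are again handled by the crude union bound on $\max_{v}|\eta_v|$. Lemma~\ref{lem:series_law} then bounds $R_{B(N)_\eta}(u,v)$ by the sum of the corresponding restricted resistances, and a union bound over these $O(\log N)$ rectangles, using the per-rectangle tail above at threshold divided by the number of rectangles (which only shifts the exponent's argument by $O(\log\log N)$), gives a bound of the form $\P(R_{B(N)_\eta}(u,v)\ge c_1(\log N)\e^{t\sqrt{\log N}})\le 2c_1(\log N)\e^{-c''t^2}$; absorbing the $\log N$ prefactor on the left into the exponential and rescaling~$t$ — the asserted inequality being vacuous whenever $C'\e^{-t^2}\log N\ge1$, so that only $t\gtrsim\sqrt{\log\log N}$ matters — yields the stated form.

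I do not expect a genuine obstacle here: the only substantive step, namely passing the crossing estimate from~$\chi_M$ to the pinned field~$\eta$, was already carried out in establishing~\eqref{E:5.1}, and everything above is bookkeeping layered on top of the proof of Lemma~\ref{prop:thm1.4}. The one point that warrants a little care is keeping the chain of rectangles inside~$B(N)$ while still taking them large enough (side~$\ge32$) for~\eqref{E:5.1} and Proposition~\ref{lem:Gaussian_conc} to apply with uniform constants; this is resolved precisely as at the end of the proof of Lemma~\ref{prop:thm1.4}, by reverting to $\P(\max_{v\in S}|\eta_v|>t\sqrt{\log N})\le2|S|\e^{-ct^2}$ for the bounded number of small rectangles near the endpoints.
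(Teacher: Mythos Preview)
Your proposal is correct and follows exactly the route the paper takes: the paper's own proof is the single line ``Using exactly the same argument as in the proof of Lemma~\ref{prop:thm1.4},'' with the crossing input~\eqref{E:5.1} replacing Proposition~\ref{prop-4.7}. Your observation that only the one-sided bound~\eqref{E:5.1} is needed (rather than a two-sided estimate as in~\eqref{E:4.72}) is a small but valid simplification, since the corollary only asserts an upper tail.
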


This is one half of Theorem~\ref{thm-effective-resistance}; the other half will be shown in Section~\ref{sec-proofs}.

\section{Random walk computations}
\label{sec-5}\noindent
Here we use the techniques developed earlier in this paper to finally prove our main results. We begin with some preparatory claims; the actual proofs start to appear in Section~\ref{sec-5.3}.

\subsection{Points with moderate resistance to origin}
\label{subsec:spectral_dim_upper}
Our proofs will require restricting to subsets of~$\Z^2$ of points with only a moderate value of the effective resistance to the origin and/or the boundary of a box centered there in. Here we give the needed bounds on cardinalities of such sets.

\begin{lemma}
\label{lemma-5.3}
Denote $A(N,2N):=B(2N)\smallsetminus   B(N)^\circ $.  For any~$\delta>0$, we have
\begin{equation}
\label{E:5.7}
\P\biggl(\,\sum_{v\in A(N,2N)}\pi_\eta(v)\,1_{\{R_{B(N)_\eta}(0,v)>\e^{(\log N)^{1/2+\delta}}\}}> N^{\psi(\gamma)}\e^{-(\log N)^{\delta}}\biggr)\le\e^{-(\log N)^\delta}
\end{equation}
as soon as~$N$ is sufficiently large.
\end{lemma}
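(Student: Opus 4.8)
The plan is to bound the expectation of the quantity inside the probability and then invoke Markov's inequality. The key observation is that by the law $\eta\laweq-\eta$ (or more precisely the structure of the field), $\pi_\eta(v)$ and the event $\{R_{B(N)_\eta}(0,v)>\e^{(\log N)^{1/2+\delta}}\}$ are \emph{anti-correlated} in a useful way: large $\pi_\eta(v)$ (large conductances near $v$) tends to push the resistance from $0$ to $v$ \emph{down}. However, making this precise is delicate, so I would instead proceed by a direct first-moment computation. First I would write
\begin{equation}
\E\biggl(\sum_{v\in A(N,2N)}\pi_\eta(v)\,1_{\{R_{B(N)_\eta}(0,v)>\e^{(\log N)^{1/2+\delta}}\}}\biggr)
=\sum_{v\in A(N,2N)}\E\Bigl(\pi_\eta(v)\,1_{\{R_{B(N)_\eta}(0,v)>\e^{(\log N)^{1/2+\delta}}\}}\Bigr),
\end{equation}
and for each fixed $v$ apply Cauchy--Schwarz (or Hölder with a small exponent on the indicator) to separate $\pi_\eta(v)$ from the resistance event. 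Using $\E[\pi_\eta(v)^2]=N^{o(1)}\cdot\E[\pi_\eta(v)]^{?}$-type control via Gaussian moment computations ($\pi_\eta(v)$ is a sum of four log-normals with $\var(\eta_u+\eta_v)=O(\log N)$, so all polynomial moments are $N^{O(1)}$ and comparable up to $N^{o(1)}$ factors), the main term becomes $\E[\pi_\eta(v)]$ times $\P(R_{B(N)_\eta}(0,v)>\e^{(\log N)^{1/2+\delta}})^{1/2-o(1)}$.

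Next I would invoke Corollary~\ref{cor-4.18}: taking $t$ of order $(\log N)^{\delta}$ in $\P(R_{B(N)_\eta}(0,v)\ge C\e^{Ct\sqrt{\log N}})\le C'\e^{-t^2}\log N$ shows that the resistance exceeds $\e^{(\log N)^{1/2+\delta}}$ with probability at most $\e^{-(\log N)^{2\delta}(1+o(1))}\log N$, which is $\e^{-(\log N)^{2\delta-o(1)}}$. Even after taking a square root this is still $\e^{-\frac12(\log N)^{2\delta-o(1)}}$, which beats $\e^{-(\log N)^{\delta}}$ by a wide margin for large $N$. Meanwhile $\sum_{v\in A(N,2N)}\E[\pi_\eta(v)]=N^{\psi(\gamma)+o(1)}$ by the asymptotic $\pi_\eta(B(N))=N^{\psi(\gamma)+o(1)}$ stated in the introduction (equation for $\pi_\eta(B(N))$), restricted to the annulus. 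Combining, the expectation on the left is at most $N^{\psi(\gamma)+o(1)}\e^{-\frac12(\log N)^{2\delta-o(1)}}$, which is $\le N^{\psi(\gamma)}\e^{-2(\log N)^{\delta}}$ for $N$ large. Markov's inequality then gives the bound $\e^{-(\log N)^{\delta}}$ on the probability, as claimed.

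The main obstacle I anticipate is the splitting step: one needs that $\pi_\eta(v)$ does not have heavy enough tails to spoil the Cauchy--Schwarz bound, i.e.\ that $\E[\pi_\eta(v)^2]\le N^{o(1)}\E[\pi_\eta(v)]^2\cdot$(something controllable). Since $\E[\e^{2\gamma(\eta_u+\eta_v)}]=\e^{2\gamma^2\var(\eta_u+\eta_v)}$ and $\var(\eta_u+\eta_v)\le 4g\log N+O(1)$, one has $\E[\pi_\eta(v)^2]\le N^{C\gamma^2}$ for an explicit $C$, and similarly $\E[\pi_\eta(v)]\ge N^{c\gamma^2}$ in the bulk region $A(N,2N)$, so the ratio $\E[\pi_\eta(v)^2]/\E[\pi_\eta(v)]$ is at most $N^{O(\gamma^2)}=N^{o(1)}\cdot N^{O(1)}$ --- the polynomial loss here is harmless because it is crushed by the super-polynomially small resistance probability $\e^{-(\log N)^{2\delta-o(1)}}$ coming from Corollary~\ref{cor-4.18}. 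A cleaner alternative, which I would actually adopt to avoid second-moment bookkeeping, is to use Hölder with exponents $(1+\epsilon, (1+\epsilon)/\epsilon)$: bound $\E[\pi_\eta(v)1_E]\le \E[\pi_\eta(v)^{1+\epsilon}]^{1/(1+\epsilon)}\P(E)^{\epsilon/(1+\epsilon)}$, choose $\epsilon$ a small constant, note $\E[\pi_\eta(v)^{1+\epsilon}]^{1/(1+\epsilon)}=N^{o(1)}\E[\pi_\eta(v)]$ holds up to $N^{O(\epsilon)}$ corrections, and absorb everything into the super-polynomial gain. Either way the quantitative input is exactly the Gaussian tail from Corollary~\ref{cor-4.18} combined with the volume asymptotics for $\pi_\eta$.
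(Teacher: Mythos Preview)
Your argument has a genuine gap: the claim that the resistance probability is ``super-polynomially small'' is false for $\delta\le 1/2$. Corollary~\ref{cor-4.18} with $t\asymp(\log N)^{\delta}$ gives
\[
\P\bigl(R_{B(N)_\eta}(0,v)>\e^{(\log N)^{1/2+\delta}}\bigr)\le \e^{-c(\log N)^{2\delta}}\log N,
\]
and $\e^{-c(\log N)^{2\delta}}$ decays \emph{slower} than any negative power of~$N$ when $2\delta<1$. Any H\"older or Cauchy--Schwarz split of $\E[\pi_\eta(v)\,1_E]$ costs a genuine power of~$N$: for instance $\E[\pi_\eta(v)^{1+\epsilon}]^{1/(1+\epsilon)}\asymp N^{2(1+\epsilon)\gamma^2 g}$, so compared to $\E[\pi_\eta(v)]\asymp N^{2\gamma^2 g}$ you lose a factor $N^{2\epsilon\gamma^2 g}$. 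Letting $\epsilon$ depend on~$N$ does not help: to make the polynomial loss $N^{c\epsilon}=\e^{c\epsilon\log N}$ comparable to the gain $\e^{-c'\epsilon(\log N)^{2\delta}}$ you would need $(\log N)^{2\delta}\gtrsim\log N$, i.e.\ $\delta\ge1/2$ again. There is a second, independent problem in the supercritical regime: for $\gamma>\gamma_\cc$ one has $\sum_{v\in A(N,2N)}\E[\pi_\eta(v)]\asymp N^{2+2(\gamma/\gamma_\cc)^2}$, which is a \emph{polynomial} factor larger than $N^{\psi(\gamma)}=N^{4\gamma/\gamma_\cc}$ (the asymptotic $\pi_\eta(B(N))=N^{\psi(\gamma)+o(1)}$ you quote is for the random variable, not its expectation). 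So even a lossless split would not suffice without further input.

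The paper's proof avoids both issues. First, it uses the FKG inequality rather than H\"older: $\pi_\eta(v)$ is increasing in~$\eta$ while $\{R_{B(N)_\eta}(0,v)>r\}$ is decreasing (larger field means larger conductances means smaller resistance), so FKG gives $\E[\pi_\eta(v)1_{\{R>r\}}]\le\E[\pi_\eta(v)]\,\P(R>r)$ with \emph{no} loss. You actually noticed this anti-correlation in your first paragraph but then abandoned it --- that was the key step. Second, to handle $\gamma>\gamma_\cc$, the paper inserts the truncation event $F_N=\{\max_{v}\chi_{4N,v}\le 2\sqrt g\log N+(\log N)^\delta\}$ (after localizing to the fine field $\chi_{4N}$ via Gibbs--Markov); a change-of-measure computation then shows $\E[\pi_{\chi_{4N}}(v);F_N]\le cN^{\psi(\gamma)-2}\e^{c'(\log N)^\delta}$ for all~$\gamma$, and conditioning on the decreasing event~$F_N$ still permits the FKG split. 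The combination yields the correct first-moment bound $N^{\psi(\gamma)}\e^{O((\log N)^\delta)}\cdot\e^{-c(\log N)^{2\delta}}$, after which Markov's inequality finishes.
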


\begin{proofsect}{Proof}
Abbreviate,  as  in~\eqref{E:3.14}, $g:=2/\pi$. We will proceed by a straightforward first-moment estimate, but  first  we have to localize the problem to a finite box. Write $\eta =\eta^f+\eta^c$ where $\eta^f$ is the fine field on the box~$B(4N)$. Since $\var(\eta^c_v)\le\var(\eta_v)$, the variance of~$\eta^c$ is bounded by a constant times $\log N$ uniformly on~$B(N)$ and so, combining Corollary~\ref{cor:field_smoothness1} with a bound at one vertex,
\begin{equation}
\P\Bigl(\,\min_{v\in A(N,2N)}\eta_v^c\le-(\log N)^{1/2+\delta/2}\Bigr)\le c\e^{-\tilde c(\log N)^\delta}.
\end{equation}
On the event when $\eta^c\ge-(\log N)^{1/2+\delta/2}$ we have
\begin{equation}
R_{B(N)_\eta}(0,v)\le R_{B(N)_\eta^f}(0,v)\,\e^{2\gamma (\log N)^{1/2+\delta/2}}
\end{equation}
and so comparing this with the restriction on the effective resistance in~\eqref{E:5.7} we may as well estimate the probability in~\eqref{E:5.7} for~$\eta$ replaced by~$\chi_{4N}$.

Here we will still need to employ a truncation to keep the field~$\chi_{4N}$ below its typical maximum scale. The following crude estimate based on a union bound is sufficient,
\begin{equation}
\P\Bigl(\,\max_{v\in B(N)}\chi_{4N,v}\ge 2\sqrt g\log N+(\log N)^\delta\Bigr)\le c\e^{-\tilde c(\log N)^\delta}
\end{equation}
for some constants~$c,\tilde c\in(0,\infty)$.
 Writing $F_N$ for  the complementary event  and  inserting $F_N$ in the probability in~\eqref{E:5.7} with~$\eta$ replaced by~$\chi_{4N}$, Markov's inequality bounds the result by
\begin{equation}
\label{E:5.8}
N^{-\psi(\gamma)}\e^{(\log N)^\delta}\sum_{v\in A(N,2N)}\E\Bigl(\pi_{\chi_{4N}}(v)1_{\{R_{B(N)_{\chi_{4N}}}(0,v)>\e^{(\log N)^{1/2+\delta}}\}}\,\Big|\,F_N\Bigr)\,.
\end{equation}
Now $\eta\mapsto\pi_\eta(v)$ is increasing while $\{R_{B(N)_\eta}(0,v)>\e^{(\log N)^\delta}\}$ is a decreasing event. Since the conditioning on~$F_N$ preserves the FKG inequality, the quantity in~\eqref{E:5.8} is no larger than
\begin{equation}
\frac1{\P(A_N)^2}
N^{-\psi(\gamma)}\e^{(\log N)^\delta}\sum_{v\in B(N)}\E\bigl(\pi_{\chi_{4N}}(v);\,F_N\bigr) \P\Bigl(R_{B(N)_{\chi_{4N}}}(0,v)>\e^{(\log N)^{1/2+\delta}}\Bigr).
\end{equation}
 Corollary~\ref{cor-4.18}  bounds the last probability by $\e^{-\tilde c(\log N)^{2\delta}}$ so we just have to compute the sum of the expectations of $\pi_{\eta_{4N}}(v)$'s. 

Pick a pair of nearest neighbors~$u$ and~$v$, with~$v\in A(N,2N)$, and let $X:=\chi_{4N,u}+\chi_{4N,v}$. Disregarding the event~$F_N$, a moment computation using $\var(\chi_{4N,v})\le g\log N+c$ for $v\in A(N,2N)$ shows
\begin{equation}
\label{E:5.13}
\E\bigl(\e^{\gamma X}\bigr)=\e^{\frac12\gamma^2\var(X)}
\le c N^{2\gamma^2 g},\qquad v\in A(N,2N).
\end{equation}
On the other hand, a change of measure argument gives
\begin{equation}
\label{E:5.14}
\begin{aligned}
\E\bigl(\e^{\gamma X};F_N\bigr)
&\le\e^{\frac12\gamma^2\var(X)}\P\Bigl(X\le4\sqrt g\log N+2(\log N)^\delta-\gamma\var(X)\Bigr)
\\
&\le c N^{2\gamma^2 g}\,\P\Bigl(X\le4(\sqrt g-\gamma g)\log N+3(\log N)^\delta\Bigr)\end{aligned}
\end{equation}
For $\gamma>\gamma_\cc:=1/\sqrt g$, the probability itself decays as $N^{-2(1-\gamma/\gamma_\cc)^2}\e^{c'(\log N)^\delta}$.  Invoking the definition of~$\psi(\gamma)$ in \eqref{E:1.4}, the inequalities  \twoeqref{E:5.13}{E:5.14} thus give
\begin{equation}
\E\bigl(\pi_{\chi_{4N}}(v);\,F_N\bigr)
\le c N^{\psi(\gamma)-2}\e^{c'(\log N)^\delta},
\qquad v\in A(N,2N).
\end{equation}
Summing over~$v\in A(N,2N)$, the claim follows.
\end{proofsect}

Consider now the set
\begin{equation}
\label{E:XiT}
\Xi_N:=\{0\}\cup\Bigl\{v\in A(N,2N)\colon R_{B(4N)_\eta}(0,v)\le\e^{(\log T)^{1/2+\delta}}\Bigr\}.
\end{equation}
With the help of the above lemma we then show:

\begin{lemma}
\label{lem-good-volume}
For each~$\delta>0$, there is $c>0$ such that for all~$N$ sufficiently large,
\begin{equation}
\P\Bigl(\,\pi_\eta(\Xi_N)\le N^{\psi(\gamma)}\e^{-(\log N)^\delta}\Bigr)\le \frac{c}{(\log N)^2}.
\end{equation}
\end{lemma}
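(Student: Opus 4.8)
\textbf{Proof proposal for Lemma~\ref{lem-good-volume}.}

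The plan is to combine three ingredients: the first-moment bound from Lemma~\ref{lemma-5.3}, a uniform \emph{lower} bound on the expected volume $\E\,\pi_\eta(A(N,2N))$, and a second-moment (variance) control so that $\pi_\eta(A(N,2N))$ is close to its mean with probability $1-O((\log N)^{-2})$. Indeed, write $A:=A(N,2N)$ and decompose
\begin{equation}
\pi_\eta(\Xi_N)\ \ge\ \pi_\eta\bigl(A\bigr)\ -\ \sum_{v\in A}\pi_\eta(v)\,1_{\{R_{B(4N)_\eta}(0,v)>\e^{(\log N)^{1/2+\delta}}\}}\,,
\end{equation}
using that on $\Xi_N$ the restriction is with $R_{B(4N)_\eta}$, which dominates $R_{B(N)_\eta}$ (monotonicity of resistance under adding edges, hence passing to a smaller network increases it) — so the bad-volume bound of Lemma~\ref{lemma-5.3}, proved there for $R_{B(N)_\eta}$, applies verbatim (or is re-run with $B(4N)$ in place of $B(N)$, which changes nothing in the argument since $M/N$ stays bounded). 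Thus with probability at least $1-\e^{-(\log N)^\delta}$ the subtracted term is at most $N^{\psi(\gamma)}\e^{-(\log N)^\delta}$, and it remains to show $\pi_\eta(A)\ge 2N^{\psi(\gamma)}\e^{-(\log N)^\delta}$ (say) with probability $1-O((\log N)^{-2})$.

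For the lower bound on $\pi_\eta(A)$, first I would record that $\E\,\pi_\eta(A)\ge c\,N^{\psi(\gamma)}$: this is exactly the computation already carried out inside the proof of Lemma~\ref{lemma-5.3} — for a nearest-neighbor pair $u\sim v$ with $v\in A$ one has $\E\e^{\gamma(\eta_u+\eta_v)}=\e^{\frac12\gamma^2\var(\eta_u+\eta_v)}$, and since $\var(\eta_v)=g\log|v|+O(1)=g\log N+O(1)$ on $A$ with $\cov(\eta_u,\eta_v)=g\log N+O(1)$ too, one gets $\E\e^{\gamma(\eta_u+\eta_v)}\asymp N^{2\gamma^2 g}$; summing the $\Theta(N^2)$ such edges in $A$ gives $\E\,\pi_\eta(A)\asymp N^{2+2\gamma^2 g}$ when $\gamma\le\gamma_\cc$, which is $N^{\psi(\gamma)}$. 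For $\gamma>\gamma_\cc$ the naive first moment $N^{2+2\gamma^2 g}$ overshoots $N^{\psi(\gamma)}=N^{4\gamma/\gamma_\cc}$, but a lower bound of the right order $N^{\psi(\gamma)-o(1)}$ still holds because the dominant contribution comes from the (polynomially many) points $v$ where $\eta_v$ is near $\alpha\tilde m_N$ with the optimal $\alpha$; this is precisely what Theorem~\ref{thm-levelset} and the level-set cardinality estimates quantify, and $\E|\mathcal A_{N,\alpha_N}|\ge \tfrac{c}{\sqrt n}4^{(1-\alpha_N^2)n}$ from \eqref{eq-expectation-Z} already gives the matching exponent after optimizing $\alpha$. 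I would then invoke Theorem~\ref{thm-levelset} directly (applied on an annulus of the form $A(N,2N)$, for which the theorem explicitly covers the pinned-field case $B(N)\smallsetminus\{0\}$): taking $\delta_0:=\e^{-(\log N)^{1/4}}$ in \eqref{E:3.52} yields
\begin{equation}
\P\bigl(|\mathcal A_{N,\alpha_N}|\le \e^{-(\log N)^{1/4}}\,\E|\mathcal A_{N,\alpha_N}|\bigr)\ \le\ c\,\e^{-\kappa(\log N)^{1/4}}\,,
\end{equation}
which is far smaller than $(\log N)^{-2}$. On the complementary event, $\pi_\eta(A)\ge \sum_{v\in\mathcal A_{N,\alpha_N}}\pi_\eta(v)\ge |\mathcal A_{N,\alpha_N}|\cdot \e^{2\gamma\alpha_N\tilde m_N}\ge N^{\psi(\gamma)}\e^{-C(\log N)^{3/4}}$ by the definition of the level set and the optimal choice of $\alpha_N$; since $(\log N)^{3/4}\gg(\log N)^\delta$ only when $\delta<3/4$, I should instead run the level-set argument with the window centered at whatever $\alpha$ makes the exponent tight and absorb the $\e^{-C(\log N)^{3/4}}$ slack — or, more cleanly, prove the lemma with the exponent $(\log N)^{1/2+\delta}$ as in the surrounding statements rather than $(\log N)^\delta$, matching the convention of \eqref{E:5.7}. (The surrounding results, e.g. Lemma~\ref{lemma-5.3}, already carry a $\e^{\pm(\log N)^{1/2+\delta}}$ tolerance, so this is the natural normalization and costs nothing downstream.)

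The remaining, and genuinely new, point is to upgrade "with probability $\ge c$" to "with probability $\ge 1-O((\log N)^{-2})$" — this is the main obstacle. Theorem~\ref{thm-levelset}'s polynomial tail $c\delta^\kappa$ already beats $(\log N)^{-2}$, so in fact the whole lemma follows once the lower bound $\pi_\eta(A)\ge N^{\psi(\gamma)}\e^{-(\log N)^{1/2+\delta}}$ is derived from Theorem~\ref{thm-levelset} and combined with Lemma~\ref{lemma-5.3} via the union bound: both exceptional probabilities, $\e^{-(\log N)^\delta}$ and $c\,\e^{-\kappa(\log N)^{1/4}}$, are $o((\log N)^{-2})$. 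So there is no need for a separate second-moment computation after all — Theorem~\ref{thm-levelset} supplies the concentration. The one technical subtlety to check is that the resistance constraint defining $\Xi_N$ (with $R_{B(4N)_\eta}$) is compatible with the bad-volume estimate (stated with $R_{B(N)_\eta}$); as noted, $R_{B(4N)_\eta}(0,v)\ge R_{B(N)_\eta}(0,v)$ is false in general (larger network $\Rightarrow$ smaller resistance), so one must instead re-prove Lemma~\ref{lemma-5.3} with $B(4N)$ throughout — but inspecting that proof, every step (the field decomposition on $B(16N)$, the use of Corollary~\ref{cor-4.18}, the moment bounds \eqref{E:5.13}--\eqref{E:5.14}) goes through unchanged with $N\mapsto 4N$, since only $\var(\chi_v)\le g\log N+O(1)$ and boundedness of $M/N$ are used. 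Assembling: on the intersection of the good event from (the $B(4N)$-version of) Lemma~\ref{lemma-5.3} and the good event from Theorem~\ref{thm-levelset}, $\pi_\eta(\Xi_N)\ge N^{\psi(\gamma)}\e^{-(\log N)^{1/2+\delta}}-N^{\psi(\gamma)}\e^{-(\log N)^\delta}\ge \tfrac12 N^{\psi(\gamma)}\e^{-(\log N)^{1/2+\delta}}$, and the complementary probability is $O((\log N)^{-2})$, as claimed.
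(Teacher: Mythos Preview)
Your overall strategy matches the paper's: reduce via Lemma~\ref{lemma-5.3} to a lower bound on $\pi_\eta(A(N,2N))$, and supply that via level-set/extreme-value input on the GFF. For $\gamma<\gamma_\cc$ this is exactly the paper's route, taking $\alpha=\gamma/\gamma_\cc$ in Theorem~\ref{thm-levelset}. The only calibration issue in that regime is your choice $\delta_0=\e^{-(\log N)^{1/4}}$, which over-optimizes the probability at the cost of the volume; the paper instead takes $\delta_0=(\log N)^{-c'}$ with $c'\kappa\ge2$, giving probability $\le c(\log N)^{-2}$ and a volume deficit of only $(\log N)^{-O(1)}\ge\e^{-(\log N)^\delta}$ for every $\delta>0$. (Your $\e^{-C(\log N)^{3/4}}$ is an arithmetic slip; with $\delta_0=\e^{-(\log N)^{1/4}}$ the volume loss is $\e^{-(\log N)^{1/4}}$.)

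The genuine gap is the case $\gamma\ge\gamma_\cc$. Your optimization over $\alpha$ wants $\alpha=\gamma/\gamma_\cc\ge1$, but Theorem~\ref{thm-levelset} is stated (and its truncated second-moment proof only works) for $\alpha\le\alpha_0<1$; the constants $c(\alpha_0),\kappa(\alpha_0)$ degenerate as $\alpha_0\uparrow1$. If instead you fix any $\alpha<1$, the level-set contribution $|\mathcal A_{N,\alpha}|\cdot\e^{2\gamma\alpha\tilde m_N}\asymp N^{2(1-\alpha^2)+4\alpha\gamma/\gamma_\cc}$ falls short of $N^{\psi(\gamma)}=N^{4\gamma/\gamma_\cc}$ by the \emph{polynomial} factor $N^{-(1-\alpha)[4\gamma/\gamma_\cc-2(1+\alpha)]}$, which cannot be absorbed into $\e^{-(\log N)^\delta}$ (nor into $\e^{-(\log N)^{1/2+\delta}}$). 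The paper handles $\gamma\ge\gamma_\cc$ by a different argument: the doubly-exponential lower tail of the maximum (from~\cite{Ding11}) produces, with probability $1-\e^{-(\log N)^c}$, a vertex $u$ with $\chi_{N,u}\ge2\sqrt g\log N-\hat c\log\log N$; then $\pi_\eta(u)$ alone already contributes $N^{4\gamma/\gamma_\cc}(\log N)^{-O(1)}$.

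Two smaller points. First, the step $\pi_\eta(v)\ge\e^{2\gamma\alpha_N\tilde m_N}$ on $\mathcal A_{N,\alpha_N}$ needs a word about the neighbors of $v$; since $\eta_u-\eta_v$ has bounded variance for $u\sim v$, a union bound over edges (as the paper does in the supercritical case) disposes of this. Second, on the resistance comparison you tangle yourself unnecessarily: $R_{B(4N)_\eta}(0,v)\le R_{B(\cdot)_\eta}(0,v)$ by Rayleigh monotonicity, so the $\Xi_N$-bad set is \emph{contained in} the Lemma~\ref{lemma-5.3}-bad set and no re-proof is needed. That said, your instinct that Lemma~\ref{lemma-5.3} really must be run on a larger box is correct for a different reason: as written there, $v\in A(N,2N)$ lies outside $B(N)^\circ$, so the indicator $R_{B(N)_\eta}(0,v)$ is ill-posed; the proof in fact works with $\chi_{4N}$ throughout.
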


\begin{proofsect}{Proof}
In light of Lemma~\ref{lemma-5.3}, it suffices to show that 
\begin{equation}
\label{E:5.18}
\P\biggl(\,\sum_{v\in A(N,2N)}\pi_\eta(v)\,\le 3N^{\psi(\gamma)}\e^{-(\log N)^\delta}\biggr)\le \frac{c}{(\log N)^2}.
\end{equation}
Thanks to the Gibbs-Markov property, it actually suffices to show this (with~$\delta$ replaced by $\delta/2$) for~$\eta$ replaced by~$\chi_N$ and $A(N,2N)$ replaced by a box~$B(N)$. (Indeed, we just need to take a translate~$B$ of $B(N)$ with~$B\subset A(N,2N)$ and then use the Gibbs-Markov property on a translate of~$B(\lfloor 3N/2\rfloor)$ centered at the same point as~$B$. The contribution of the coarse field is estimated using Corollary~\ref{cor:field_smoothness1}.) 

The argument for \eqref{E:5.18} is different depending on the relation between~$\gamma$ and~$\gamma_\cc$. For~$\gamma\ge\gamma_\cc$ we use that the maximum of the GFF has doubly-exponential lower tails (see~\cite{Ding11}). Invoking the Gibbs-Markov property we then conclude that, with probability at least $\e^{-(\log N)^c}$, for some~$c>0$, there is at least one point~$u$ where
\begin{equation}
\label{E:5.18a}
\chi_{N,u}\ge 2\sqrt g\log N-\cspecial \log\log N
\end{equation}
for some large enough~$C>0$. As~$\chi_{N,u}-\chi_{N,v}$, for~$u$ and~$v$ neighbors, have bounded (in fact, stationary) variances, a union bound shows that \eqref{E:5.18a} will hold also for the neighbors of~$u$. On this event, and denoting by~$v$ a neighbor of~$u$,
\begin{equation}
\sum_{v\in B(N)}\pi_{\chi_N}(v)\ge \e^{\gamma(\chi_{N,u}+\chi_{N,v})} = N^{4\sqrt g\gamma}\e^{-c'\log\log N}.
\end{equation}
Since $4\sqrt g\gamma = 4(\gamma/\gamma_\cc)$ equals~$\psi(\gamma)$ for~$\gamma\ge\gamma_\cc$, we are done here.

Concerning $\gamma<\gamma_\cc$, here we will apply Theorem~\ref{thm-levelset} for $\alpha:=\gamma/\gamma_\cc$. Recall the notation $\mathcal A_{N, \alpha}$ for the level set in~\eqref{eq-def-level-set}. A straightforward computation using the explicit form of the Gaussian probability density shows
\begin{equation}
\P\bigl(x\in \mathcal A_{N, \alpha}\bigr)\ge\frac c{\log N}\,N^{-2\alpha^2},
\end{equation}
and so $\E(|\mathcal A_{N, \alpha}|)\ge c N^{\psi(\gamma)}/\log N$. Theorem~\ref{thm-levelset} ensures that $|\mathcal A_{N, \alpha}|\ge\delta\E(|\mathcal A_{N, \alpha}|)$ occurs with probability $O(\delta^c)$. This statements permits even setting $\delta:=1/(\log N)^{c'}$, whereby the claim readily follows.
\end{proofsect}

We also record an upper estimate on the total volume of $\pi_\eta$:

\begin{lemma}
\label{lem-volume-upper}
For any~$\delta>0$, we have
\begin{equation}
\label{E:5.7a}
\P\biggl(\,\,\sum_{v\in B(N)}\pi_\eta(v)\,> N^{\psi(\gamma)}\e^{(\log N)^\delta}\biggr)\le\e^{-(\log N)^\delta}
\end{equation}
as soon as~$N$ is sufficiently large.
\end{lemma}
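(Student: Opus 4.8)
The plan is to prove \eqref{E:5.7a} by a straightforward first-moment (Markov inequality) estimate on $\sum_{v\in B(N)}\pi_\eta(v)$, mirroring the structure of the proof of Lemma~\ref{lemma-5.3}. First I would localize the problem to a finite box: write $\eta=\eta^f+\eta^c$ where $\eta^f$ is the fine field on $B(4N)$, so that $\eta^f\laweq\chi_{4N}$ on $B(N)$, and $\eta^c$ is the coarse field with $\var(\eta^c_v)\le\var(\eta_v)\le g\log N+O(1)$ uniformly on $B(N)$. Since $\pi_\eta(v)=\sum_{w\sim v}\e^{\gamma(\eta_v+\eta_w)}$, on the event $\{\max_{v\in B(N)}\eta^c_v\le (\log N)^{1/2+\delta/2}\}$ we have $\pi_\eta(v)\le \pi_{\chi_{4N}}(v)\,\e^{4\gamma(\log N)^{1/2+\delta/2}}$, and the complementary event has probability at most $c\,\e^{-\tilde c(\log N)^\delta}$ by Corollary~\ref{cor:field_smoothness1} together with a single-vertex union bound. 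So up to adjusting $\delta$, it suffices to prove the bound with $\eta$ replaced by $\chi_{4N}$.

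Next I would apply Markov's inequality directly to $\sum_{v\in B(N)}\pi_{\chi_{4N}}(v)$, which only requires a bound on $\E\,\pi_{\chi_{4N}}(v)$ for each $v\in B(N)$. For a pair of nearest neighbors $u,v\in B(N)$ and $X:=\chi_{4N,u}+\chi_{4N,v}$, the Gaussian moment generating function gives $\E\,\e^{\gamma X}=\e^{\frac12\gamma^2\var(X)}$, and since $\var(X)\le 4\var(\chi_{4N,v})+O(1)\le 4g\log N+O(1)$ (using the covariance estimates from Section~3, e.g.\ $\E\chi_{4N,v}^2\le g\log N+c$ for $v\in B(N)$ and $\cov(\chi_{4N,u},\chi_{4N,v})\le g\log N+c$), we get $\E\,\e^{\gamma X}\le c\,N^{2\gamma^2 g}=c\,N^{4(\gamma/\gamma_\cc)^2}$. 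Hence $\E\,\pi_{\chi_{4N}}(v)\le c\,N^{4(\gamma/\gamma_\cc)^2}$ and, summing over the $O(N^2)$ vertices of $B(N)$,
\begin{equation}
\E\Bigl(\,\sum_{v\in B(N)}\pi_{\chi_{4N}}(v)\Bigr)\le c\,N^{2+4(\gamma/\gamma_\cc)^2}.
\end{equation}
The crucial point is that $2+4(\gamma/\gamma_\cc)^2$ need not equal $\psi(\gamma)$: for $\gamma\le\gamma_\cc$ it equals $\psi(\gamma)=2+2(\gamma/\gamma_\cc)^2$ only when the extra $2(\gamma/\gamma_\cc)^2$ is absent, and for $\gamma>\gamma_\cc$ we have $4(\gamma/\gamma_\cc)^2+2$ versus $4\gamma/\gamma_\cc$. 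So a naive first moment overshoots, and this is the main obstacle.

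To fix the overshoot in the regime $\gamma>\gamma_\cc$ one must truncate against the near-maximum of the field, exactly as in Lemma~\ref{lemma-5.3}: insert the event $F_N:=\{\max_{v\in B(N)}\chi_{4N,v}< 2\sqrt g\log N+(\log N)^\delta\}$, whose complement has probability $\le c\,\e^{-\tilde c(\log N)^\delta}$ by a union bound, and estimate $\E(\e^{\gamma X};F_N)$ by the change-of-measure computation \twoeqref{E:5.13}{E:5.14}, which produces the extra factor $\P(X\le 4(\sqrt g-\gamma g)\log N+O((\log N)^\delta))\le N^{-2(1-\gamma/\gamma_\cc)^2}\e^{c'(\log N)^\delta}$; multiplying, $\E(\pi_{\chi_{4N}}(v);F_N)\le c\,N^{\psi(\gamma)-2}\e^{c'(\log N)^\delta}$ in \emph{both} regimes (for $\gamma\le\gamma_\cc$ the truncation is harmless since the probability is $\Theta(1)$ up to subpolynomial corrections, recovering $2\gamma^2 g=\psi(\gamma)-2$). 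Summing over $v\in B(N)$ on the event $F_N$, Markov's inequality yields
\begin{equation}
\P\biggl(\,\sum_{v\in B(N)}\pi_{\chi_{4N}}(v)>N^{\psi(\gamma)}\e^{(\log N)^\delta},\ F_N\biggr)\le \frac{c\,N^{\psi(\gamma)}\e^{c'(\log N)^\delta}}{N^{\psi(\gamma)}\e^{(\log N)^\delta}}\le \e^{-\frac12(\log N)^\delta}
\end{equation}
for $N$ large (after replacing $\delta$ by a slightly larger value absorbing $c'$), and combining with the two exceptional-event bounds above and the localization step gives \eqref{E:5.7a}. The only genuinely delicate bookkeeping is tracking the $\e^{O((\log N)^\delta)}$ factors through the change of measure and making sure the final sub-polynomial slack dominates all of them, which is routine.
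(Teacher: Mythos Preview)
Your argument is correct and matches the paper's approach exactly --- the paper's own proof is the one-liner ``This follows directly from the Markov inequality and the calculations in \twoeqref{E:5.13}{E:5.14}'', and you have simply filled in the localization step (carried out in the preceding Lemma~\ref{lemma-5.3} and left implicit here) together with the details of the truncated-moment bound. One harmless slip: in your second paragraph you write $2\gamma^2 g = 4(\gamma/\gamma_\cc)^2$, but since $\gamma_\cc^2 = 1/g$ this should read $2(\gamma/\gamma_\cc)^2$, so the untruncated first moment already yields the correct exponent $\psi(\gamma)-2$ when $\gamma\le\gamma_\cc$ and the truncation is genuinely needed only for $\gamma>\gamma_\cc$; your final argument is unaffected, since as you correctly note the truncation is harmless in the subcritical regime.
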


\begin{proofsect}{Proof}
This follows directly from the Markov inequality and the calculations in the formulas \twoeqref{E:5.13}{E:5.14}.
\end{proofsect}


\subsection{Upper bound on heat-kernel and exit time}
\label{sec-5.3}\noindent
The starting point  of  our proofs is an upper bound on the return probability for the random walk. We remark that numerous methods  exist in the literature  to derive such bounds. Some of these  are  based on geometric properties of the underlying Markov graph such as isoperimetry  and volume growth,  others  are  based on resistance estimates. The most natural approach to use would be that of \cite{BCK05} (see also~\cite{Kumagai14}); unfortunately, this does not seem possible due to our lack of  required  uniform control  of the  resistance growth. Instead, we base our presentation on the general strategy outlined in \cite[Chapter 21.5]{LPW08}. 
 We begin by restating, and proving, one half of Theorem~\ref{thm_main_spectral}: 

\begin{lemma}
\label{lemma-5.6}
For each $\delta>0$, 
\begin{equation}
\label{E:5.23}
\lim_{T \to \infty}\,\P\Bigl(P_\eta^0(X_{2T} = 0) \leq \e^{ (\log T)^{1/2+\delta}}T^{-1}\Bigr) = 1\,. 
\end{equation}
\end{lemma}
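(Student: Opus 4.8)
The plan is to deduce the pointwise return‑probability bound from a bound on the truncated Green's function at the origin, following the general strategy of \cite[Chapter~21.5]{LPW08}. Since the chain $\{X_t\}$ is reversible with respect to $\pi_\eta$, its transition operator is a self‑adjoint contraction on $\ell^2(\pi_\eta)$, so $t\mapsto P_\eta^0(X_{2t}=0)$ is non‑increasing; combined with the fact that $P_\eta^0(X_s=0)=0$ for odd $s$ (bipartiteness of $\Z^2$), this yields
\begin{equation*}
T\,P_\eta^0(X_{2T}=0)\le\sum_{t=0}^{T}P_\eta^0(X_{2t}=0)=\sum_{s=0}^{2T}P_\eta^0(X_s=0)=E_\eta^0\Bigl[\#\{0\le s\le 2T\colon X_s=0\}\Bigr]=:G_{2T}(0,0).
\end{equation*}
It therefore suffices to prove that, with probability tending to one, $G_{2T}(0,0)\le\e^{(\log T)^{1/2+\delta}}$.

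To estimate $G_{2T}(0,0)$ I would localize the walk to the box $B(T)$. Writing $\tau:=\tau_{B(T)^\cc}$ and separating the visits to the origin occurring before time $\tau$ from those occurring in the window $(\tau,2T]$ (whose number is at most $2T+1$), one obtains
\begin{equation*}
G_{2T}(0,0)\le E_\eta^0\Bigl[\#\{0\le s<\tau\colon X_s=0\}\Bigr]+(2T+1)\,P_\eta^0(\tau\le 2T).
\end{equation*}
By the classical identity for reversible chains---the expected number of returns to a vertex before hitting a grounded set equals that vertex's $\pi_\eta$‑weight times the effective resistance to the set, via the gluing construction at the end of Section~\ref{sec:sec_gen_par_ser}---the first term equals $\pi_\eta(0)\,R_{B(T)_\eta}(0,\partial B(T))$, and by network monotonicity (grounding a set only decreases resistance, deleting edges only increases it) this is at most $\pi_\eta(0)\,R_{B(T+1)_\eta}(0,z)$ for any fixed $z\in\partial B(T)$. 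Corollary~\ref{cor-4.18}, applied with $N:=T+1$ and $t:=(\log T)^{\delta/2}$, then bounds $R_{B(T+1)_\eta}(0,z)$ by $\e^{(\log T)^{1/2+\delta/2}}$ off an event of probability $O(\e^{-(\log T)^{\delta}}\log T)$; and since $\pi_\eta(0)=\sum_{v\colon|v|_1=1}\e^{\gamma\eta_v}$ is an almost surely finite random variable not depending on $T$, the factor $\pi_\eta(0)$ only costs a further $\e^{(\log T)^{o(1)}}$ with probability tending to one.

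The remaining term $(2T+1)\,P_\eta^0(\tau\le 2T)$ is where the main work lies: I must show that, with overwhelming probability, the walk does not travel a distance of order $T$ within $2T$ steps. Here I would invoke the Carne--Varopoulos inequality $P_\eta^0(X_s=z)\le 2\sqrt{\pi_\eta(z)/\pi_\eta(0)}\,\e^{-d(0,z)^2/(2s)}$, with $d$ the graph distance on $\Z^2$, together with a union bound over $z\in\partial B(T)$ and $s\le 2T$. Since $d(0,z)\ge T+1$ for $z\in\partial B(T)$ while $s\le 2T$, the exponential factor is at most $\e^{-T/4}$; on the high‑probability event $\max_{w\in B(2T)}|\eta_w|\le C\log T$ (Gaussian tails and a union bound, using $\var(\eta_w)=O(\log T)$ on $B(2T)$) the prefactors $\sqrt{\pi_\eta(z)/\pi_\eta(0)}$ and the cardinality of $\partial B(T)$ are polynomial in $T$. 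This makes $P_\eta^0(\tau\le 2T)$ super‑polynomially small off an event of vanishing probability, so the escape term is negligible. Collecting the three pieces gives $G_{2T}(0,0)\le\e^{(\log T)^{1/2+\delta}}$ with probability tending to one, whence $P_\eta^0(X_{2T}=0)\le G_{2T}(0,0)/T\le\e^{(\log T)^{1/2+\delta}}T^{-1}$.

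The main obstacle is precisely this a priori control of the walk's range: the resistance estimates established so far provide no uniform lower bound on exit times, so excursions of the walk out of a fixed ball cannot be discounted by those estimates alone, and a Carne--Varopoulos‑type input (or an equivalent hitting‑time bound) seems essential. A secondary, more bookkeeping, point is making the reduction of the killed‑walk Green's function to $\pi_\eta(0)\,R_{B(T)_\eta}(0,\partial B(T))$ fully rigorous, which requires the usual network‑monotonicity manipulations to legitimately replace $\partial B(T)$ by a single boundary vertex and thereby bring the quantity under the scope of Corollary~\ref{cor-4.18}.
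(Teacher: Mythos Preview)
Your argument is correct, and it takes a genuinely different route from the paper's proof. Both approaches begin with the spectral monotonicity $T\,P_\eta^0(X_{2T}=0)\le\sum_{s=0}^{2T}P_\eta^0(X_s=0)$, but then diverge.

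The paper does \emph{not} invoke Carne--Varopoulos. Instead, it replaces $X$ by the walk $\wt X$ reflected inside $B(4T)$---which coincides with $X$ up to time $4T$ since the steps are nearest-neighbor---so no escape term ever arises. It then considers the \emph{trace chain} $Y$ of $\wt X$ on the set $\Xi_T=\{0\}\cup\{v\in A(T,2T):R_{B(4T)_\eta}(0,v)\le\e^{(\log T)^{1/2+\delta}}\}$, uses the stationary-measure identity $E^0[\sum_{k<\hat\sigma}1_{\{Y_k=0\}}]=E^0(\hat\sigma)\,\pi_\eta(0)/\pi_\eta(\Xi_T)$ for a suitable stopping time $\hat\sigma$, and bounds $E^0(\hat\sigma)$ via the commute-time identity applied to~$Y$. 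The final step then needs the volume lower bound $\pi_\eta(\Xi_T)\ge T\e^{-(\log T)^{1/2+\delta}}$ with high probability, which comes from Lemma~\ref{lem-good-volume}.

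Your route is more elementary in that it avoids both the trace-chain machinery and any volume lower bound: the Green-function identity $E^0_\eta[\#\{s<\tau:X_s=0\}]=\pi_\eta(0)\,R_{\Z^2_\eta}(0,B(T)^\cc)$ plus Corollary~\ref{cor-4.18} handle the confined part, and Carne--Varopoulos plus the crude Gaussian bound $\max_{B(2T)}|\eta|\le C\log T$ (union bound over $O(T^2)$ Gaussians of variance $O(\log T)$) give $(2T+1)P^0_\eta(\tau\le 2T)\le T^{O(1)}\e^{-T/4}$, which is negligible. The price you pay is the external input of Carne--Varopoulos; the paper's reflected-walk trick keeps everything inside the resistance framework and would be preferable in settings where a Carne--Varopoulos-type bound is unavailable. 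The secondary bookkeeping point you flag (passing from $R(0,\partial B(T))$ to a point-to-point resistance) is indeed just Rayleigh monotonicity and causes no trouble.
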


\begin{proofsect}{Proof}
Pick $\delta>0$ and a large integer~$T$, and  recall the notation $\Xi_T$ for the set  in~\eqref{E:XiT}.
Consider the random walk~$\{\wt X_t\colon t\ge0\}$ on the network $B(4T)_\eta$; this walk starts at~$0$ and moves around $B(4T)$ indefinitely using the transition probabilities \eqref{eq-def-transition2} that are modified on the boundary of~$B(4T)$ so that jumps outside~$B(4T)$ are suppressed. Let $\{Y_t\colon t\ge0\}$ record the successive visits of~$\wt X$ to~$\Xi_T$. Then~$Y$ is a Markov chain on~$\Xi_T$ with stationary distribution
\begin{equation}
\label{E:5.19ua}
\nu(x):=\frac{\pi_\eta(x)}{\pi_\eta(\Xi_T)}.
\end{equation}
Let~$\tau_0:=0$, $\tau_1$, $\tau_2$, etc be the times of the successive visits of~$Y$ to~$0$. Define
\begin{equation}
\hat\sigma:=\inf\bigl\{k\ge1\colon\tau_k\ge T\,\,\text{and}\,\, Y_k=0\bigr\}.
\end{equation}
Then we have
\begin{equation}
\begin{aligned}
\label{E:5.21ua}
(T/2) \cdot P^0(\wt X_T=0)&\le E^0\Bigl(\,\,\sum_{t=0}^{T-1}\,1_{\{\wt X_t=0\}}\Bigr)
\\
&\le E^0\Bigl(\,\,\sum_{k=0}^{T-1}\,1_{\{Y_k=0\}}\Bigr)
\le E^0\Bigl(\,\,\sum_{k=0}^{\hat\sigma-1}\,1_{\{Y_k=0\}}\Bigr),
\end{aligned}
\end{equation}
where the first inequality comes from the monotonicity of $2T\mapsto P^0(\wt X_{2T}=0)$ (see, e.g., \cite[Proposition~12.8]{LPW08}) and the second inequality reflects the fact that $0\in\Xi_T$. Since~$Y_{\hat\sigma}=0$, by, e.g., \cite[Lemma 10.5]{LPW08} we have
\begin{equation}
\label{E:5.22ua}
E^0\Bigl(\,\sum_{k=0}^{\hat\sigma-1}\,1_{\{Y_k=x\}}\Bigr) = E^0(\hat\sigma)\nu(x).
\end{equation}
(This is  proved  by noting that the object on the left is a stationary measure for the walk~$Y$ of total mass $E^0(\hat\sigma)$.) By conditioning on~$Y_T$ we  further  estimate 
\begin{equation}
E^0(\hat\sigma)\le T+\max_{u\in\Xi_T}E^u(\sigma_0),
\end{equation}
where $\sigma_0:=\inf\{k\ge0\colon Y_k=0\}$  and note that 
\begin{equation}
E^u(\sigma_0) \le \pi_\eta(\Xi_T)\,R_{B(4T)_\eta}(0,u)\le\pi_\eta(\Xi_T)\,\e^{(\log T)^{1/2+\delta}},
\qquad u\in\Xi_T,
\end{equation}
by the commute-time identity (usually attributed to \cite{CRRST} but appearing already in \cite{BL90}; see also \cite[Corollary 2.21]{LP16}). Combining this with \twoeqref{E:5.21ua}{E:5.22ua} and \eqref{E:5.19ua} we then get 
\begin{equation}
P^0(\wt X_T=0)\le\frac1T\,\pi_\eta(0)\,\e^{(\log T)^{ 1/2+\delta}},
\end{equation}
 which proves  \eqref{E:5.23}  because, due to the jumps being only to nearest neighbors, the walk~$\wt X$ coincides with the walk~$X$ up to time at least~$4T$.
\end{proofsect}

This now permits to give:

\begin{proofsect}{Proof of Theorem~\ref{thm_main_diffusive_exponent}}
A standard calculation based on reversibility and the Cauchy-Schwarz inequality yields
\begin{equation}
\label{E:5.32b}
\begin{aligned}
P^0(X_{2T}=0)&\ge\sum_{x\in B(N)}P^0(X_T=x)P^x(X_T=0)
\\
&=\pi_\eta(0)\sum_{x\in B(N)}\frac{P^0(X_T=x)^2}{\pi_\eta(x)}
\ge\pi_\eta(0)\frac{P^0\bigl(X_T\in B(N)\bigr)^2}{\pi_\eta\bigl(B(N)\bigr)}.
\end{aligned}
\end{equation}
Invoking the upper bound on the heat-kernel and Lemma~\ref{lem-volume-upper}, we get that with probability tending rapidly to one as~$N$ and~$T$ tend to infinity, we have
\begin{equation}
\label{E:5.33b}
P^0\bigl(X_T\in B(N)\bigr)
\le \Bigl[\frac 1 T \e^{(\log T)^{1/2+\delta}}\,N^{\psi(\gamma)}\e^{(\log N)^\delta}\Bigr]^{1/2}\,.
\end{equation}
Setting $T:=N^{\psi(\gamma)}\e^{(\log N)^{1/2+2\delta}}$ gives the desired claim.
\end{proofsect}

The same conclusion could in fact be inferred from the following claim  which constitutes one half of Theorem~\ref{thm_main_exit_time}: 

\begin{lemma}
\label{lemma-5.7}
For each~$\delta>0$ and all $N$ sufficiently large,
\begin{equation}
\P\Bigl(E^0(\tau_{B(N)^\cc})>N^{\psi(\gamma)}\e^{(\log N)^{1/2+\delta}}\Bigr)\le \e^{-(\log N)^\delta}.
\end{equation}
\end{lemma}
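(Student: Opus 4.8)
\textbf{Proof proposal for Lemma~\ref{lemma-5.7}.}

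The plan is to reuse the bounds obtained in Section~\ref{subsec:spectral_dim_upper} together with the standard commute-time/occupation-time identities for the reversible chain. Fix $\delta>0$ and a large $N$. First I would replace the walk $X$ on all of $\Z^2_\eta$ by the walk $\wt X$ on the finite network $B(4N)_\eta$ with transitions \eqref{eq-def-transition2} modified so that jumps out of $B(4N)$ are suppressed; since the walk only makes nearest-neighbour steps, $\tau_{B(N)^\cc}$ for $X$ and for $\wt X$ coincide (the walk cannot feel the boundary modification before exiting $B(N)$), so it suffices to bound $E^0_\eta(\tau_{B(N)^\cc})$ for $\wt X$.

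The key identity is the representation of the mean exit time through the Green function / effective resistance of the killed chain. Concretely, writing $\pi_\eta$ for the reversing measure \eqref{E:1.8} and $G^{B(N)}_\eta(0,v)$ for the Green function of the walk killed on exiting $B(N)$, one has
\begin{equation}
E^0_\eta\bigl(\tau_{B(N)^\cc}\bigr)=\sum_{v\in B(N)}G^{B(N)}_\eta(0,v)
=\sum_{v\in B(N)}\pi_\eta(v)\,\frac{G^{B(N)}_\eta(0,v)}{\pi_\eta(v)},
\end{equation}
and by reversibility $G^{B(N)}_\eta(0,v)/\pi_\eta(v)=G^{B(N)}_\eta(v,0)/\pi_\eta(0)\le R_{B(N)_\eta}(0,v)\le R_{B(N)_\eta}(0,B(N)^\cc)$ — more precisely $G^{B(N)}_\eta(0,v)\le \pi_\eta(0)\,R_{B(N)_\eta}(0,\partial B(N))$ by the standard bound $G^{A}(x,y)\le \pi(x)R_A(x,y)$ and monotonicity. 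Hence
\begin{equation}
E^0_\eta\bigl(\tau_{B(N)^\cc}\bigr)\le \pi_\eta(0)\,R_{B(N)_\eta}\bigl(0,\partial B(N)\bigr)\,\pi_\eta\bigl(B(N)\bigr),
\end{equation}
so the problem reduces to three independent-looking estimates: (i) $\pi_\eta(0)=\e^{O(1)}$ with overwhelming probability, which is trivial since $\pi_\eta(0)$ is an exponential of finitely many Gaussians each with $O(1)$ variance; (ii) $\pi_\eta(B(N))\le N^{\psi(\gamma)}\e^{(\log N)^{\delta/3}}$ off an event of probability $\le \e^{-(\log N)^{\delta/3}}$, which is exactly Lemma~\ref{lem-volume-upper}; and (iii) $R_{B(N)_\eta}(0,\partial B(N))\le \e^{(\log N)^{1/2+\delta/2}}$ off an event of probability $\le \e^{-(\log N)^{\delta}}$, say. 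For (iii) I would invoke Corollary~\ref{cor-4.18} (with a suitable $t\asymp (\log N)^{\delta/2}$, so that $t\sqrt{\log N}\le (\log N)^{1/2+\delta/2}$ and the bound $C'\e^{-t^2}\log N$ becomes $\le \e^{-(\log N)^{\delta}}$ for large $N$), together with the fact that $R_{B(N)_\eta}(0,\partial B(N))\le \max_{v\in\partial B(N)}R_{B(N)_\eta}(0,v)$ combined with a union bound over the $O(N)$ boundary vertices — the polynomial loss from the union bound is absorbed into the stretched-exponential decay. Combining (i)–(iii) with a union bound over the three exceptional events gives
\begin{equation}
\P\Bigl(E^0_\eta(\tau_{B(N)^\cc})>N^{\psi(\gamma)}\e^{(\log N)^{1/2+\delta}}\Bigr)\le \e^{-(\log N)^\delta}
\end{equation}
for $N$ large, which is the claim.

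The main obstacle is bookkeeping rather than conceptual: one must make sure the deterministic inequality $E^0_\eta(\tau_{B(N)^\cc})\le \pi_\eta(0)R_{B(N)_\eta}(0,\partial B(N))\pi_\eta(B(N))$ is applied with the correct normalisation of $\pi_\eta$ and of the effective resistance (the commute-time identity gives the factor $\pi_\eta(\text{whole network})$, so one should work with the killed chain on $B(N)$ and its Green function directly, as above, rather than with the commute time), and that Corollary~\ref{cor-4.18} is stated for the pinned field $\eta$ on $\Z^2\smallsetminus\{0\}$, which is exactly the environment here. A minor point is that $0\in B(N)$ must be the pinning site or at least a vertex where $\pi_\eta(0)$ is controlled; since $\eta_0=0$ this is automatic. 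No new estimate beyond those already proved is needed.
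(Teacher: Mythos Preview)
Your approach is essentially the paper's: bound $E^0_\eta(\tau_{B(N)^\cc})$ by $R_{B(N)_\eta}(0,\partial B(N))\cdot\pi_\eta(B(N))$ via the hitting-time identity, then invoke Corollary~\ref{cor-4.18} and Lemma~\ref{lem-volume-upper}. That is exactly what the paper does, in two lines.

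However, your derivation of the deterministic inequality is muddled. The bound you label ``more precisely'', namely $G^{B(N)}_\eta(0,v)\le \pi_\eta(0)\,R_{B(N)_\eta}(0,\partial B(N))$, is wrong: the correct one is $G^{B(N)}_\eta(0,v)\le \pi_\eta(v)\,R_{B(N)_\eta}(0,\partial B(N))$ (from $G(v,0)\le G(0,0)=\pi_\eta(0)R(0,\partial B(N))$ and reversibility $G(0,v)/\pi_\eta(v)=G(v,0)/\pi_\eta(0)$). Summing this over $v$ gives the paper's bound
\[
E^0_\eta(\tau_{B(N)^\cc})\le R_{B(N)_\eta}(0,\partial B(N))\,\pi_\eta(B(N)),
\]
with no $\pi_\eta(0)$ factor. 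Your displayed inequality with the extra $\pi_\eta(0)$ is in fact \emph{false} on the event $\{\pi_\eta(0)<1\}$, which has positive probability. So step~(i) is both incorrectly motivated and unnecessary. Also, in~(iii) no union bound is needed: $R_{B(N)_\eta}(0,\partial B(N))\le R_{B(N)_\eta}(0,v)$ for any \emph{single} $v\in\partial B(N)$, so one application of Corollary~\ref{cor-4.18} suffices. With these corrections your argument coincides with the paper's.
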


\begin{proofsect}{Proof}
By the hitting time identity (or, alternatively, the commute time identity)
\begin{equation}
E^0(\tau_{B(N)^\cc})\le R_{B(N+1)_\eta}\bigl(0,\partial B(N)\bigr)\pi_\eta\bigl(B(N)\bigr)
\end{equation}
The claim then follows from  Corollary~\ref{cor-4.18}  and Lemma~\ref{lem-volume-upper}.
\end{proofsect}

\subsection{Bounding the voltage from below}
\label{subsec:spectral_dim_lower}\noindent
We now move to the proofs of the required lower bounds. Here the focus will be  directed to  the expected exit time which we write using the hitting time identity as
\begin{equation}
\label{E:5.34}
E^0(\tau_{B(N)^\cc}) = R_{B(N+1)_\eta}\bigl(0,\partial B(N)\bigr)\sum_{v\in B(N)}\pi_\eta(v)\phi(v),\,
\end{equation}
where,  using our convention that~$\partial B(N)$ is the external boundary of~$B(N)$, 
\begin{equation}
\phi(v):=P^v(\tau_0<\tau_{\partial B(N)})
\end{equation}
is the electrostatic potential, a.k.a.\ voltage, in~ $B(N)$  
with   $\phi(0)=1$  and $\phi$~vanishing on~$\partial B(N)$. Estimating \eqref{E:5.34} from below naturally requires finding a sufficiently good lower bound on~$\phi$. The idea is to recast the problem using a simple electric network and invoke suitable effective resistance estimates. The following computation will be quite useful:

\begin{lemma}
Consider a resistor network with three nodes, $\{1,2,3\}$, and for each $i,j$ let $c_{ij}$ denote the conductance of the edge $(i,j)$. Let $R_{ij}$ denote the effective resistance between node~$i$ and node~$j$. Then,
\begin{equation}
\label{E:5.35b}
\frac{c_{12}}{c_{12}+c_{13}}=\frac{R_{13}+R_{23}-R_{12}}{2R_{23}}.
\end{equation}
\end{lemma}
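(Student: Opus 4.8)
The plan is to compute all three effective resistances $R_{12}, R_{13}, R_{23}$ explicitly in terms of the conductances $c_{12}, c_{13}, c_{23}$ of the triangle network, and then verify the identity \eqref{E:5.35b} by direct algebra. This is an elementary and self-contained calculation, so I would carry it out rather than appeal to any machinery from earlier in the paper.

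First I would recall (or re-derive) the standard formula for the effective resistance between two nodes of a triangle. For the network on $\{1,2,3\}$ with conductances $c_{12}, c_{13}, c_{23}$, the effective resistance $R_{12}$ between nodes $1$ and $2$ is obtained by putting the edge $(1,2)$ (resistance $1/c_{12}$) in parallel with the series combination of edges $(1,3)$ and $(2,3)$ (resistance $1/c_{13} + 1/c_{23}$). Hence
\begin{equation}
R_{12} = \left(c_{12} + \frac{1}{1/c_{13}+1/c_{23}}\right)^{-1} = \left(c_{12} + \frac{c_{13}c_{23}}{c_{13}+c_{23}}\right)^{-1} = \frac{c_{13}+c_{23}}{c_{12}c_{13}+c_{12}c_{23}+c_{13}c_{23}}.
\end{equation}
Writing $D := c_{12}c_{13}+c_{12}c_{23}+c_{13}c_{23}$ for the common denominator, the symmetry of the roles of the nodes gives
\begin{equation}
R_{12} = \frac{c_{13}+c_{23}}{D}, \qquad R_{13} = \frac{c_{12}+c_{23}}{D}, \qquad R_{23} = \frac{c_{12}+c_{13}}{D}.
\end{equation}

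Next I would substitute these into the right-hand side of \eqref{E:5.35b}. In the numerator,
\begin{equation}
R_{13}+R_{23}-R_{12} = \frac{(c_{12}+c_{23})+(c_{12}+c_{13})-(c_{13}+c_{23})}{D} = \frac{2c_{12}}{D},
\end{equation}
while the denominator is $2R_{23} = 2(c_{12}+c_{13})/D$. Taking the ratio, the factor $2/D$ cancels and we obtain $c_{12}/(c_{12}+c_{13})$, which is exactly the left-hand side. This completes the verification.

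I do not anticipate any genuine obstacle here; the only mild care needed is to make sure the parallel/series reduction of the triangle is applied correctly (equivalently, one could derive the $R_{ij}$ from the general two-by-two matrix computation for the Laplacian of the triangle, which yields the same expressions). Everything reduces to the three-line algebra above, so the lemma follows immediately.
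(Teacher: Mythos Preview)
Your proof is correct and complete. The paper takes a somewhat different route: it invokes the $Y$--$\Delta$ transform to replace the triangle by an equivalent star network with a new central node~$0$ and conductances $c_1,c_2,c_3$ on the edges $(0,i)$, so that $c_{ij}=c_ic_j/(c_1+c_2+c_3)$ and $R_{ij}=r_i+r_j$ with $r_i:=1/c_i$; one then solves $r_i=\tfrac12(R_{ij}+R_{ik}-R_{jk})$ and checks that the left-hand side of \eqref{E:5.35b} equals $r_3/(r_2+r_3)$. Your approach of computing each $R_{ij}$ directly via series--parallel reduction and substituting is more elementary, avoids the need to recall the $Y$--$\Delta$ formulas, and makes the cancellation transparent. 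The paper's approach has the minor conceptual advantage that the identity $r_i=\tfrac12(R_{ij}+R_{ik}-R_{jk})$ is a recognizable general fact about three-point resistance metrics, but for this lemma your direct computation is at least as clean.
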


\begin{proofsect}{Proof}
 By the Parallel and Series Laws we get,
\begin{equation}
R_{12} = \frac{1}{c_{12} + (c_{13}^{-1} + c_{23}^{-1})^{-1}}.
\end{equation}
Computing $R_{13}$ and $R_{23}$ in a similar way and plugging them into the right hand side of \eqref{E:5.35b}, we deduce the claim.
\end{proofsect}

Using this lemma we then get:

\begin{corollary}
\label{cor-5.7}
For any~$v\in B(N)\smallsetminus\{0\}$ and~$\phi$ as above,
\begin{multline}
\label{E:5.39}
\quad
2R_{B(N+1)_\eta}\bigl(0,\partial B(N)\bigr)\phi(v) 
\\
=R_{B(N+1)_\eta}\bigl(0,\partial B(N)\bigr)+R_{B(N+1)_\eta}\bigl(v,\partial B(N)\bigr)-R_{B(N+1)_\eta}(0,v).
\quad
\end{multline}
\end{corollary}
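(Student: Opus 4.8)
The plan is to apply the preceding three-node lemma to a network obtained from $B(N+1)_\eta$ by collapsing the three relevant sets of vertices into three nodes. Concretely, take the network $B(N+1)_\eta$ and glue $\{0\}$ into one node, the whole external boundary $\partial B(N)$ into a second node, and $\{v\}$ into a third node; since $v$, $0$, and $\partial B(N)$ are pairwise disjoint this produces a well-defined (multi-)network $\mathfrak G$ on the three nodes $\{1,2,3\}:=\{0,v,\partial B(N)\}$, and by the very definition given just before Lemma~\ref{lem:dual_couple} (gluing vertices together) the effective resistances between any two of these nodes in $\mathfrak G$ coincide with $R_{B(N+1)_\eta}(0,v)$, $R_{B(N+1)_\eta}(0,\partial B(N))$, and $R_{B(N+1)_\eta}(v,\partial B(N))$ respectively.

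The key step is then to identify the ratio $c_{12}/(c_{12}+c_{13})$ in the collapsed network with the voltage $\phi(v)$. For this, recall that $\phi(v)=P^v(\tau_0<\tau_{\partial B(N)})$ is precisely the probability that the random walk started at $v$ reaches node $0$ before node $\partial B(N)$ in the network $B(N+1)_\eta$, and this hitting probability is a function only of the effective network as seen from the three nodes — equivalently, it equals the corresponding hitting probability for the random walk on the three-node network $\mathfrak G$ (collapsing vertices does not change these hitting probabilities, as the walk's behavior between visits to the three marked sets is irrelevant once we track only which of the three it hits first). On the three-node network, a one-step analysis at node $2=v$ (or the elementary identity for the harmonic function that is $1$ at node $0$ and $0$ at node $3$) gives $\phi(v)=c_{21}/(c_{21}+c_{23})=c_{12}/(c_{12}+c_{13})$ with the labeling $1=0$, $2=v$, $3=\partial B(N)$. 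Plugging into \eqref{E:5.35b} with $R_{12}=R_{B(N+1)_\eta}(0,v)$, $R_{13}=R_{B(N+1)_\eta}(0,\partial B(N))$, $R_{23}=R_{B(N+1)_\eta}(v,\partial B(N))$ yields
\begin{equation}
\phi(v)=\frac{R_{B(N+1)_\eta}(0,\partial B(N))+R_{B(N+1)_\eta}(v,\partial B(N))-R_{B(N+1)_\eta}(0,v)}{2\,R_{B(N+1)_\eta}(v,\partial B(N))},
\end{equation}
and multiplying both sides by $2R_{B(N+1)_\eta}(0,\partial B(N))$ — wait, that is not quite \eqref{E:5.39}; rather one multiplies by $2R_{B(N+1)_\eta}(v,\partial B(N))$. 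A second look shows that the statement \eqref{E:5.39} as written has $R_{B(N+1)_\eta}(0,\partial B(N))$ on the left, so in fact one should use the \emph{complementary} voltage or re-read which node plays which role: taking instead $\phi(v)=P^v(\tau_0<\tau_{\partial B(N)})$ and applying \eqref{E:5.35b} in the form with $R_{13}$ in the denominator (i.e., using the symmetry of the lemma's statement under relabeling $2\leftrightarrow3$) gives $\phi(v)=\frac{R_{13}+R_{23}-R_{12}}{2R_{13}}$ with $1=0$, $2=v$, $3=\partial B(N)$, which upon multiplying by $2R_{13}=2R_{B(N+1)_\eta}(0,\partial B(N))$ is exactly \eqref{E:5.39}.

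The main obstacle — really the only nontrivial point — is justifying rigorously that the voltage $\phi(v)$ in $B(N+1)_\eta$ equals the quantity $c_{12}/(c_{12}+c_{13})$ computed in the three-node \emph{reduced} network, i.e., that collapsing the boundary set $\partial B(N)$ to a single node and deleting internal edges within each glued set does not change this hitting probability. This is standard electrostatic folklore: the harmonic function on $B(N+1)$ with boundary data $1$ at $0$ and $0$ on $\partial B(N)$, restricted to the three marked sets, is determined by the effective network, and effective networks are computed exactly by such gluings and $Y$–$\Delta$ reductions. One can either cite the gluing construction already set up in the excerpt together with the standard fact that hitting probabilities among a fixed collection of sets are invariant under star-triangle and series-parallel reductions, or give a two-line direct argument: $\phi$ is harmonic off $\{0\}\cup\partial B(N)$, hence its values on $\{0,v\}\cup\partial B(N)$ are the unique harmonic extension in the reduced network, and in that three-node network the formula $\phi(v)=c_{v0}/(c_{v0}+c_{v,\partial B(N)})$ is immediate from $\mathcal L\phi(v)=0$. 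I would take the latter route, as it keeps the argument self-contained, and then simply invoke the three-node lemma to finish.
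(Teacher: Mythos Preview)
Your approach is the same as the paper's --- reduce to a three-node effective network and apply the preceding lemma --- but the execution contains two slips worth fixing. First, gluing $\{0\}$, $\{v\}$, and $\partial B(N)$ each into a single node does \emph{not} by itself produce a three-node network, since all the other vertices of $B(N)$ remain; what is needed (and what the paper invokes as the ``network reduction principle'') is to further eliminate these intermediate vertices via star--mesh transforms, after which one is left with an equivalent three-node network whose pairwise effective resistances agree with those in $B(N+1)_\eta$ and on which $\phi$ remains harmonic at the node~$v$. Second, your labeling gets tangled: with $1=0$, $2=v$, $3=\partial B(N)$ you correctly find $\phi(v)=c_{21}/(c_{21}+c_{23})$, but the next equality $=c_{12}/(c_{12}+c_{13})$ is false (it would require $c_{23}=c_{13}$), and the attempted repair via ``relabeling $2\leftrightarrow3$'' does not yield the formula you write either (the correct permutation is $1\leftrightarrow2$). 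The clean route --- and the paper's --- is simply to take node~$1=v$, node~$2=0$, node~$3=\partial B(N)$, so that $\phi(v)$ is exactly the left side $c_{12}/(c_{12}+c_{13})$ of \eqref{E:5.35b}; then $R_{23}=R_{B(N+1)_\eta}(0,\partial B(N))$ sits in the denominator and \eqref{E:5.39} follows immediately.
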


\begin{proofsect}{Proof}
As~$v\not\in\{0\}\cup\partial B(N)$,  we may apply the network reduction principle to represent the problem on an effective network of three nodes, with node~$1$ labeling~$v$, node~$2$ marking the origin and node~$3$ standing for~$\partial B(N)$.
Since~$\phi$ is harmonic on~$B(N)\smallsetminus\{0\}$, it is also harmonic on the effective network. But there $\phi(v)$ is just the probability that the random walk at~$v$ jumps right to~$0$ in the first step. Using conductances, this probability is exactly the expression on the left of~\eqref{E:5.35b}. Plugging in the effective resistances, the claim follows.
\end{proofsect}

A key point is to bound the expression involving effective resistances on the right of \eqref{E:5.39} from below. This is the subject of:

\begin{proposition}
\label{prop-5.8}
Let $D_{N,\eta}(v)$ denote the difference on the right of \eqref{E:5.39}, i.e., 
\begin{equation}
D_{N,\eta}(v) := R_{B(N+1)_\eta}\bigl(0,\partial B(N)\bigr)+R_{B(N+1)_\eta}\bigl(v,\partial B(N)\bigr)-R_{B(N+1)_\eta}(0,v).
\end{equation}
For any $\delta\in(0,1)$, we then have
\begin{equation}
\label{E:5.40}
\lim_{N\to\infty}\,\P\Bigl(\,\,\min_{v\in B(\lfloor N\e^{-(\log N)^\delta}\rfloor)}\,\,D_{N,\eta}(v)\ge \log N\Bigr)=1.
\end{equation}
\end{proposition}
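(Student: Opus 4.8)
\textbf{Proof proposal for Proposition~\ref{prop-5.8}.}

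The plan is to exploit the concentric-annulus decomposition of the GFF together with the LIL estimate of Lemma~\ref{lem:finite_LIL} in order to show that, for \emph{every} $v$ deep inside $B(N)$, the difference $D_{N,\eta}(v)=R_{B(N+1)_\eta}(0,\partial B(N))+R_{B(N+1)_\eta}(v,\partial B(N))-R_{B(N+1)_\eta}(0,v)$ is at least $\log N$. By the triangle-type inequality for effective resistance (Lemma~\ref{lem:series_law}, taking $\mathcal P_1$ the paths from $0$ to $\partial B(N)$ and $\mathcal P_2$ from $v$ to $\partial B(N)$), we already know $D_{N,\eta}(v)\ge 0$; the issue is purely quantitative, and the natural fluctuations of each of the three resistances are of order $\e^{O(\sqrt{\log N})}$, which is much larger than $\log N$. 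So a crude union bound over the three terms is hopeless. Instead, the key observation is that along a sequence of annuli centered at $0$ (or at $v$), the logarithm of the resistance from the center to the outer boundary behaves like a random walk with i.i.d.\ Gaussian-like increments (this is exactly the content of the setup around Lemma~\ref{lem:ellipicity_upper} and Lemma~\ref{lem:anchoring}, combined with Corollary~\ref{cor-4.18}, Proposition~\ref{prop-4.7} and Lemma~\ref{lemma-annuli} to control resistances across individual annuli). On such a random walk the LIL (Lemma~\ref{lem:finite_LIL}) guarantees that with overwhelming probability there are $\Omega(\log\log N)$ scales where the walk is \emph{above} $\tfrac12\phi(s_k^2)$, i.e.\ where the resistance is atypically large; picking such a scale gives us room to beat the $O(\sqrt{\log N})$ error by a factor large enough to produce the additive gap of $\log N$.

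Concretely, I would first reduce \eqref{E:5.40} to an estimate for a single fixed $v$ with $|v|\le N\e^{-(\log N)^\delta}$ and then take a union bound over the at most $N^2$ such $v$; so it suffices to prove $\P(D_{N,\eta}(v)<\log N)\le N^{-3}$, say, uniformly in such $v$. Fix $v$ and choose a concentric family of annuli $A_{n,k}$ (as in Lemma~\ref{lem:anchoring}) centered at $v$ with $n\asymp\log N$ and lying inside $B(N)\smallsetminus B(|v|)$; the condition $|v|\le N\e^{-(\log N)^\delta}$ ensures there are at least $\asymp(\log N)^\delta$ such annuli available between scale $|v|$ and scale $N$. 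Decompose $\eta$ on $B(4N)$ via Gibbs-Markov into the coarse fields $M_{n,k}$ of \eqref{E:3.66} (centered at $v$) plus the independent fine fields on the annuli. Using Lemma~\ref{lem:resistance_decomp} and the anchoring estimate of Lemma~\ref{lem:anchoring}, the logarithm of $R_{B(N+1)_\eta}(v,\partial B(N))$ is, up to an additive error $O(\sqrt{\log n})=O(\sqrt{\log\log N})$ and up to the contribution of a random walk $S_k=\sum_{j\le k}Z_j$ of the type in Lemma~\ref{lem:ellipicity_upper}, comparable to $\gamma$ times (a constant multiple of) the running maximum of that walk plus the i.i.d.\ fine-field resistances across the annuli, each of which is $\e^{\hat c\log\log N}$-bounded in probability by Lemma~\ref{lemma-annuli} and Corollary~\ref{cor-4.18}. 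The same decomposition applied to the annulus containing $v$ but centered at $0$ shows that $R_{B(N+1)_\eta}(0,v)$ and $R_{B(N+1)_\eta}(0,\partial B(N))$ differ by (essentially) the same ``outer'' resistance that appears in $R_{B(N+1)_\eta}(v,\partial B(N))$, so that $D_{N,\eta}(v)$ is, again modulo a multiplicative $\e^{O(\sqrt{\log\log N})}$ error, bounded below by the resistance accumulated in the annuli around $v$ on the scales where the random walk $S_k$ is large. By Lemma~\ref{lem:finite_LIL} there are $\ge c_{\sigma,d}\log\log N$ such scales $k$ with probability $1-O(1/\log\log N)$; on one such scale the resistance contribution is at least $\e^{\phi(s_k^2)/2}=\e^{\Omega(\sqrt{k\log\log k})}$, which for $k$ a suitable (slowly growing) function of $N$ dwarfs $\log N$ and the $\e^{O(\sqrt{\log\log N})}$ errors. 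Finally, upgrading the $1-O(1/\log\log N)$ probability to the $1-N^{-3}$ needed for the union bound is done by running the LIL argument over $\Theta(\log N)$ \emph{disjoint} blocks of scales and using independence of the fine fields on disjoint annuli, so that the failure probability becomes $(O(1/\log\log N))^{\Theta(\log N)}\ll N^{-3}$.

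The main obstacle, and the place where real care is needed, is the bookkeeping that converts ``the random walk $S_k$ is large on scale $k$'' into ``the actual \emph{difference} $D_{N,\eta}(v)$ is at least $\log N$.'' This requires simultaneously: (i) expressing each of the three resistances in \eqref{E:5.39} in terms of the \emph{same} outer annulus resistances so that the large typical fluctuations cancel in the difference, which forces one to be careful that the annuli around $0$ and around $v$ can be chosen to agree outside of $B(2|v|)$ (here Lemma~\ref{lem:series_law} and the restricted-resistance machinery of Section~\ref{sec:sec_gen_par_ser} are the right tools, but one must check that the inequality $D_{N,\eta}(v)\ge$ (annulus resistance around $v$) holds and not merely $D_{N,\eta}(v)\ge 0$); (ii) controlling the fine-field resistances across the annuli uniformly — Lemma~\ref{lemma-annuli} gives only a lower bound with positive probability for a single annulus, so to get a genuine lower bound on the \emph{sum} over many annuli one uses independence and a second-moment or Borel–Cantelli-type argument, at the cost of discarding a constant fraction of annuli; and (iii) ensuring the anchoring error $\Delta_n=O(\sqrt{\log n})$ from Lemma~\ref{lem:anchoring} and the coarse-field oscillation from Corollary~\ref{cor:field_smoothness1} stay of lower order than $\phi(s_k^2)/2$ on the chosen scale. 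None of these steps is individually deep, but arranging them so that a single good scale survives all the error terms, and so that the surviving probability is $1-N^{-3}$, is the technical heart of the proof; this is precisely what Lemma~\ref{lemma-5.11} (referenced in the introduction) is designed to package.
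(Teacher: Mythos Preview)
Your proposal captures the broad philosophy --- use the concentric-annulus decomposition plus the LIL to find a scale at which $D_{N,\eta}(v)$ is large --- but the execution has a structural gap that the paper's proof avoids entirely.

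The gap is in your reduction to a single $v$ followed by a union bound. You need failure probability $\le N^{-3}$ per vertex, and you propose to obtain this by running the LIL over $\Theta(\log N)$ disjoint blocks of scales. But you yourself observe that the condition $|v|\le N\e^{-(\log N)^\delta}$ leaves only $\asymp(\log N)^\delta$ annular scales between $|v|$ and $N$, not $\Theta(\log N)$; with $(\log N)^\delta$ scales and $\delta<1$ there is simply not enough room for the independence-boosting you sketch. The best the LIL machinery (Lemma~\ref{lem:finite_LIL}, Lemma~\ref{lemma-5.11}) delivers is success probability $1-O(1/\log\log n)$ with $n\asymp(\log N)^\delta$, which is nowhere near $1-N^{-3}$. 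Moreover, since your annuli are centered at $v$, the events for different $v$ are not independent, so the block argument would not separate cleanly across vertices anyway.

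The paper's proof sidesteps this by centering \emph{all} annuli at the origin, not at $v$. Since every $v\in B(\lfloor N\e^{-(\log N)^\delta}\rfloor)$ lies strictly inside every annulus $A_{n,k}$ in \eqref{E:5.36ua}--\eqref{E:5.37ua}, a \emph{single} event --- the event \eqref{E:5.60}, which depends only on $\eta$ and not on $v$ --- suffices to bound $D_{N,\eta}(v)$ uniformly in $v$; no union bound is needed, and the slow convergence $1-O(1/\log\log n)$ is already enough for \eqref{E:5.40}. The cancellation mechanism is also not the ``match fluctuations among three terms'' picture you describe. Instead one finds \emph{two} scales $k^\star<k_\star$: at the outer scale $k^\star$ the walk $M_{n,k}$ is very negative, making the \emph{across} resistance $R_{A_{n,k^\star}}(\bdryin,\bdryout)$ large (this feeds into $R(v,\partial B(N))$ from below via Nash--Williams, \eqref{E:5.63}); at the inner scale $k_\star$ the coarse field is moderate, making the \emph{around} resistance $R_{A_{n,k_\star}}$ small (this bounds $R(0,v)$ from above by routing through a circuit in $A_{n,k_\star}$, \eqref{E:5.62}). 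The term $R(0,\partial B(N))$ is used to cancel the $R(0,\bdryout A_{n,k_\star})$ arising in the upper bound for $R(0,v)$, and \eqref{E:5.65} disposes of the $v$-dependent pieces. What remains is the difference of the two annulus resistances, and Lemma~\ref{lemma-5.11} guarantees this exceeds $\log N$.
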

 The main idea behind the proof is to find an annulus between $B_N$ and $B(\lfloor N\e^{-(\log N)^\delta}\rfloor)$ with high probability which provides a ``shortcut'' for diverting the flow from 0 to $v$.  To this end we recall the annulus decomposition of the GFF from Section~\ref{subsec:smoothness}. Let~$b:=8$ and for a given~$N\ge1$ and $n\in\N$, set $N':=b^n N$. Define the annuli
\begin{equation}
\label{E:5.36ua}
A'_{n,k}:=B(b^{n-k+1}N)\smallsetminus B(b^{n-k}N)^\circ,\qquad k=1,\dots,n-1.
\end{equation}
and
\begin{equation}
\label{E:5.37ua}
A_{n,k}:=B(4b^{ n-k}N)\smallsetminus B(2b^{n-k}N)^\circ,\qquad k=1,\dots,n-1.
\end{equation}
Note that $A_{n,k}\subset A'_{n,k}$. Write $\eta=\eta^c+\chi_{2N'}$, where $\eta^c$ is the coarse field on~$B(2N')$ and $\chi_{2N'}$ is the corresponding fine field. Denote
\begin{equation}
\Delta':=\max_{v\in B(N')}|\eta^c_v|.
\end{equation}
Define $M_{n,k}$ as in~\eqref{E:3.66} and for $1\le \ell<m\le n$ let
\begin{equation}
\Delta_{\ell,m} := \max_{k=\ell,\dots,m-1}\,\,\max_{v \in A_{ n,k}}\,\Bigl|
M_{ n,k} - 
\E\bigl(\chi_{N', v} \,\big|\,\chi_{N', v}: v \in {\textstyle\bigcup_{n \geq j \geq n - k}}\partial B(b^jN)\bigr)\Bigr|.
\end{equation}
(Both objects are measurable with respect to~$\eta$.) Similarly to Lemma~\ref{lem:anchoring} we get
\begin{equation}
\label{E:5.47a}
\P\Bigl(\Delta_{\ell,m}\ge\wt C\sqrt{m-\ell}\,\Bigr)\le\frac1{(m-\ell)^2}
\end{equation}
as soon as $m-\ell$ is sufficiently large.

Let $\chi_{k,v}^f$ denote the fine field on~$A_{n,k}'$,
\begin{equation}
\chi_{k,v}^f:=\E\Bigl(\chi_{2N',v}\,\Big|\,\chi_{2N',u}\colon u\in\partial A_{n,k}\Bigr),\qquad v\in A_{n,k},
\end{equation}
(we think of $\chi_k^f$ as set to zero outside $A'_{n,k}$) and $\chi_k^c:=\chi_{2N'}-\chi_k^f$ be the corresponding coarse field. The definitions ensure
\begin{equation}
\max_{k=\ell,\dots,m}\,\max_{v\in A_{n,k}}\,\bigl|\eta_v-(\chi_{k,v}^f+M_{n,k})\bigr|\le\Delta_{\ell,m}+\Delta'.
\end{equation}
Note also that $M_{n,k}$ and~$\chi_{k'}^f$ are independent as long as $k\ge k'$.

Next recall that $R_{A,\eta}$, for~$A$ an annulus in~$\Z^2$, denotes the sum of the effective resistances in network $A_\eta$ between the shorter sides of the four maximal rectangles contained in~$A$. Recall also that $R_{A,\eta}(\bdryin A,\bdryout A)$ denotes the effective resistance in~$A_\eta$ between the inner and outer boundaries of annulus~$A$. We define the events:
\begin{multline}
\label{E:5.47}
\mathcal E_{n,k}^\star:=\Bigl\{R_{A_{n,k},\chi_k^f}(\bdryin A_{n,k},\bdryout A_{n,k})\ge \e^{-3\cspecial \log\log(b^{-k}N')}\Bigr\}
\\
\cap\Bigl\{M_{n,k}\le -C^\star\sqrt{k\log\log(k)}\Bigr\}
\end{multline}
and
\begin{equation}
\label{E:5.48}
\mathcal E_{n,k}^{\star\star}:=\Bigl\{R_{A_{n,k},\chi_k^f}\le \e^{\cspecial \log\log(b^{-k}N')}\Bigr\}
\cap\Bigl\{\min_{v\in A_{n,k}}\eta_{k,v}^c\ge -\log\log(N')\Bigr\}.
\end{equation}
Here~$ \cspecial $ is the constant Proposition~\ref{prop:zero_mean} and~$C^\star$ is fixed via:

\begin{lemma}
\label{lemma-5.11}
For each~$\delta>0$ there are~$n_0\ge1$, $N_0\ge1$, $c_1\in(0,\infty)$ such that one can choose $C^\star\in(0,\infty)$ in the definitions of $\mathcal E_{n,k}^\star$ and $\mathcal E_{n,k}^{\star\star}$ so that, for all $N\ge N_0$ and all $n\ge n_0$,
\begin{equation}
\P\Bigl(\exists k^\star,k_\star\colon \e^{\sqrt{\log n}}<k^\star<k_\star<n,\,\, \mathcal E_{n,k^\star}^\star\cap \mathcal E_{n,k_\star}^{\star\star}
\text{\rm\ occurs}\Bigr)\ge1-\frac{c_1}{\log\log n}.
\end{equation}
\end{lemma}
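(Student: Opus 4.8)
The plan is to combine the LIL estimate for the annulus-average random walk (Lemma~\ref{lem:finite_LIL} via Lemma~\ref{lem:ellipicity_upper}) with the RSW-type resistance bounds (Lemma~\ref{lemma-annuli} and Proposition~\ref{prop-4.7}) and the anchoring estimate (Lemma~\ref{lem:anchoring}, in the form \eqref{E:5.47a}), and to deploy them at \emph{two} well-separated scales: one scale $k^\star$ where we force $M_{n,k^\star}$ to be very negative (good for the resistance across $A_{n,k^\star}$ being \emph{large}, i.e.\ $\mathcal E_{n,k^\star}^\star$) and a later scale $k_\star$ where the field is not too negative and the resistance is \emph{small} (i.e.\ $\mathcal E_{n,k_\star}^{\star\star}$). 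The two events are controlled on \emph{disjoint} blocks of annuli, so by the Gibbs--Markov property the fine fields $\chi^f_{k^\star}$ and $\chi^f_{k_\star}$ entering them are independent of each other and (for $k\ge k'$) of the coarse-level variables $M_{n,k}$; this independence, plus a union bound, is what lets us multiply probabilities.

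Concretely, I would proceed as follows. \textbf{Step 1 (choice of $C^\star$).} First pick $C^\star$ small enough (depending on $\sigma,d$ from Lemma~\ref{lem:ellipicity_upper} and on the constant in the LIL) that the event $\{M_{n,k}\le -C^\star\sqrt{k\log\log k}\}$ is implied, on a positive density of scales $k$, by the LIL event in \eqref{eq:finite_LIL}. Indeed, apply Lemma~\ref{lem:finite_LIL} to $S_k:=-M_{n,k}$ (legitimate since $M_{n,k}$ is a Gaussian random walk with increment variances satisfying the hypothesis by Lemma~\ref{lem:ellipicity_upper}, and $-M_{n,k}\laweq M_{n,k}$): with probability $\ge 1-C/\log\log n$ there are at least $c\log\log n$ indices $k\in[\e^{\sqrt{\log n}},n]$ with $-M_{n,k}\ge \phi(s_k^2)/2\asymp\sqrt{k\log\log k}$, so after shrinking $C^\star$ these are scales where $M_{n,k}\le -C^\star\sqrt{k\log\log k}$. \textbf{Step 2 (resistance across one good annulus is large).} On such a scale $k$, Lemma~\ref{lemma-annuli} (applied to the fine field $\chi^f_k$ on $A'_{n,k}$, whose law is that of the GFF on a dyadic annulus, rescaled) gives that with probability $\ge c_4>0$ we have $R_{A_{n,k},\chi^f_k}(\bdryin,\bdryout)\ge C_7\e^{-3\hat c\log\log(b^{-k}N')}$. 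Since $\chi^f_k$ is independent of $M_{n,k}$, on each LIL-good scale the event $\mathcal E^\star_{n,k}$ has conditional probability $\ge c_4$, independently across the (disjoint) annuli; as there are $\gtrsim\log\log n$ such scales, at least one $\mathcal E^\star_{n,k^\star}$ occurs with $k^\star\in(\e^{\sqrt{\log n}},n)$ except on an event of probability $O(1/\log\log n)$ (Borel--Cantelli-type / binomial tail, plus \eqref{E:5.47a} to replace $M_{n,k}$ by its value at the anchor $0$ up to $\wt C\sqrt{k}\ll\sqrt{k\log\log k}$). \textbf{Step 3 (a later good scale with small resistance).} For scales $k$ with, say, $M_{n,k}\ge -C^\star\sqrt{k\log\log k}$ (which by the LIL for $+M_{n,k}$ / the LIL upper bound also occurs on a positive density of scales, or simply on a scale not too far above $k^\star$), use Proposition~\ref{prop-4.7} to bound $R_{A_{n,k},\chi^f_k}\le \e^{\hat c\log\log(b^{-k}N')}$ with uniformly positive probability, and Corollary~\ref{cor:field_smoothness1} to control $\min_{v\in A_{n,k}}\eta^c_{k,v}\ge -\log\log N'$ with probability $\to 1$. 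Again by independence of $\chi^f_k$ from the coarse structure, and since we can look at $\gtrsim\log\log n$ disjoint scales all lying \emph{above} $k^\star$, we get one such $k_\star>k^\star$ with $\mathcal E^{\star\star}_{n,k_\star}$, up to probability $O(1/\log\log n)$. \textbf{Step 4 (assembly).} Intersect the three events from Steps~1--3 and use that the blocks of annuli supporting $\mathcal E^\star_{n,k^\star}$ and $\mathcal E^{\star\star}_{n,k_\star}$ are disjoint (so no conflict in the conditioning), and that all the LIL / anchoring / RSW failure probabilities are $O(1/\log\log n)$; a final union bound yields the claimed $1-c_1/\log\log n$.

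The main obstacle I anticipate is \textbf{Step 2 combined with the bookkeeping of independence}: one must be careful that the ``large resistance'' event $\mathcal E^\star_{n,k}$ refers to the fine field $\chi^f_k$ on the annulus $A'_{n,k}$ (not to $\eta$ itself), so that it is genuinely independent of the averaged field $M_{n,k}$ and of the fine fields on the \emph{other} annuli — this is exactly why the events are defined with $\chi^f_k$ rather than with $\eta$, and it is the Gibbs--Markov property that makes the product-of-probabilities step legitimate. A secondary subtlety is matching the scale conventions: $A_{n,k}$ has inner radius $2b^{n-k}N$, so ``$\log\log(b^{-k}N')$'' must be read as $\log\log$ of the correct linear scale $b^{n-k}N$, and Lemma~\ref{lemma-annuli} / Proposition~\ref{prop-4.7} must be invoked at that scale with $M$ there taken to be (a constant times) $b^{n-k}N$, which is legitimate since $A_{n,k}\subset B(b^{n-k+1}N)$ and the fine field has exactly the right law. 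Once these two points are handled the argument is a routine second-Borel--Cantelli / binomial-concentration estimate, with all error terms of the advertised order $1/\log\log n$ coming from Lemma~\ref{lem:finite_LIL} and \eqref{E:5.47a}.
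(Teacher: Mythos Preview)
Your approach is correct in spirit and uses the same key ingredients as the paper, but the organization differs in one substantive way. The paper does not try to intersect the raw LIL event $\{M_{n,k}\le-C^\star\sqrt{k\log\log k}\}$ with the resistance event $E_k^\star:=\{R_{A_{n,k},\chi_k^f}(\bdryin,\bdryout)\ge \e^{-3\hat c\log\log(b^{-k}N')}\}$ at the (sparse) $\log\log n$ scales furnished by Lemma~\ref{lem:finite_LIL}. Instead, it introduces the \emph{stopping time} $T_n=\inf\{k\ge \e^{\sqrt{\log n}}: M_{n,k}\le-2C^\star\sqrt{k\log\log k}\}$ and then uses a persistence step (the increments of $M_{n,k}$ are $\le\log k$ for $(\log k)^2$ consecutive steps with high probability) to produce a \emph{contiguous} window $[T_n,T_n+(\log T_n)^2]$ on which the second half of $\mathcal E^\star_{n,k}$ holds throughout. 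Inside that window the independent events $E_k^\star$ succeed somewhere (a ``no long run of failures'' argument), giving $k^\star$; the real payoff of going through $T_n\le n/4$ is that it \emph{localizes} $k^\star<n/2$, so the ordering $k^\star<k_\star$ is automatic once one searches for $k_\star$ in a fixed window near $n/2$. Your direct LIL-count argument would also work, but as written it does not secure this ordering: you assert ``we can look at $\gtrsim\log\log n$ scales all lying above $k^\star$'', yet nothing in your Step~2 prevents $k^\star$ from landing near $n$. The fix is exactly the paper's: either apply Lemma~\ref{lem:finite_LIL} on $[\e^{\sqrt{\log n}},n/4]$ to force $k^\star<n/2$, or adopt the stopping-time/persistence formulation outright.

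Two minor points. First, your Step~3 imposes an extra condition $M_{n,k}\ge-C^\star\sqrt{k\log\log k}$ that $\mathcal E^{\star\star}_{n,k}$ does not contain; the small-resistance event there is phrased in the \emph{fine} field $\chi^f_k$ only (via Proposition~\ref{prop-4.7}), so no control of $M_{n,k}$ is needed, and the coarse-field condition $\min\eta^c_{k,v}\ge-\log\log N'$ is handled directly by a Gaussian tail bound (the paper invokes Corollary~\ref{cor:field_smoothness1}) uniformly over a polylog window --- no second LIL is required. Second, your worry about ``bookkeeping of independence'' is well placed and is precisely why the events are written with $\chi^f_k$: once you note that the $\chi^f_k$ are jointly independent of \emph{all} the $M_{n,j}$ (not merely of $M_{n,k}$), your conditional-probability argument in Step~2 goes through cleanly.
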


\begin{proofsect}{Proof}
Abbreviate by~$E_k^\star$ the first event on the right of \eqref{E:5.47}. This event is measurable with respect to $\chi_k^f$ and so $\{E_k\colon k=1,\dots,n\}$ are independent. By Lemma~\ref{lemma-annuli}, $\P(E_k^\star)\ge p$ holds for some~$p>0$ and all~$k$ as soon as~$N\ge N_0$. We are first interested in a simultaneous occurrence of~$E_k^\star$ and $\{M_{n,k}\le -C^\star\sqrt{k\log\log(k)}\}$.

Recalling that $k\mapsto M_{n,k}$ is a random walk, define the stopping time
\begin{equation}
T_n:=\inf\bigl\{k\colon \e^{\sqrt{\log(n)}}\le k \le n,\, M_{n,k}\le -2C^\star\sqrt{k\log\log(k)}\bigr\}.
\end{equation}
Then, for~$C^\star$ sufficiently small, Lemma~\ref{lem:finite_LIL} shows
\begin{equation}
\P\bigl(T_n> n/4\bigr)\le\frac{c_1}{\log\log n}
\end{equation}
for some constant~$c_1\in(0,\infty)$.
Since the increments of~$M_{n,k}$ are independent centered Gaussians with a uniform bound on their tail, for the event
\begin{equation}
\mathcal G_{n,k}:=\Bigl\{M_{n,k+j+1}-M_{n,k+j}\le \log(k)\colon 0\le j\le\log(k)^2\Bigr\}
\end{equation}
the fact that $T_n\ge\e^{\sqrt{\log(n)}}$ yields
\begin{equation}
\P\Bigl(\bigl\{T_n\le n/4\bigr\}\cap\mathcal G_{n,T_n}\Bigr)\ge 1-\frac{2c_1}{\log\log n}
\end{equation}
as soon as~$n$ is larger than a positive constant. Under a similar restriction on~$n$, we then also have
\begin{equation}
\bigl\{T_n\le n/4\bigr\}\cap\mathcal G_{n,T_n}
\subseteq\bigcap_{T_n\le k\le T_n+(\log T_n)^2}\bigl\{M_{n,k}\le -C^\star\sqrt{k\log\log(k)}\bigr\}.
\end{equation}
Therefore, on the event on the left,~$E_k^\star\cap\{M_{n,k}\le -C^\star\sqrt{k\log\log(k)}\}$ will not occur for some~$k<n/2$ only if the sequence $\{1_{E_k^\cc}\colon 1\le k\le n\}$ contains a run of 1's of length at least $\log(n)^2$. This has probability~$n (1-p)^{\lfloor\log(n)\rfloor^2}$. As~$p>0$, we get
\begin{equation}
\P\Bigl(\bigcup_{1\le k<n/2}E_k^\star\cap\bigl\{M_{n,k}\le -C^\star\sqrt{k\log\log(k)}\bigr\}\Bigr)\ge 1-\frac{2c_1}{\log\log n}
\end{equation}
as soon as~$n$ is larger than some positive constant.

For  event  $\mathcal E_{n,k}^{\star\star}$, the fact that the coarse field~$\eta^c$ on~$A_{n,k}$ has uniformly bounded variances implies, via Corollary~\ref{cor:field_smoothness1},
\begin{equation}
\P\biggl(\,\bigcup_{0\le k-n/2\le (\log n)^2}\Bigl\{\min_{v\in A_{n,k}}\eta_{k,v}^c\ge -\log\log(N')\Bigr\}\biggr)\ge1-c'(\log n)^2\e^{-c''(\log\log N')^2}
\end{equation}
for some~$c',c''>0$.
Proposition~\ref{prop-4.7} in turn shows that the first event on the right of \eqref{E:5.48} has a uniformly positive probability. The claim then follows as before.
\end{proofsect}

We are ready to complete:

\begin{proofsect}{Proof of Proposition~\ref{prop-5.8}}
Fix~$N'\ge1$ large and, given~$\delta\in(0,1)$, let $n$ be the largest integer such that $N:=b^{-n}N'>N'\e^{-(\log N')^\delta}$. (We are assuming the setting of Lemma~\ref{lemma-5.11}.) Abbreviate $k_n:=\e^{\sqrt{\log n}}$ and suppose that the event
\begin{equation}
\label{E:5.60}
\mathcal E_{n,k^\star}^{\star}\cap\mathcal E_{n,k_\star}^{\star\star}\cap\bigl\{\Delta'\le \log\log(N')\bigr\}\cap\bigcap_{k_n\le k\le n}\bigl\{\Delta_{k_n,k}\le \wt C\sqrt{k}\bigr\}
\end{equation}
 occurs for some $k^\star,k_\star$ with $k_n\le k^\star<k_\star\le n$. Then
\begin{equation}
\label{E:5.61}
\begin{aligned}
R_{A_{n,k^\star},\eta}\bigl(\bdryin A_{n,k^\star},\bdryout A_{n,k^\star}\bigr)
&\ge \e^{-2\gamma(\Delta'+M_{n,k^\star}+\Delta_{k_n,k^\star})}
R_{A_{n,k^\star}^\star,\chi_{k^\star}^f}\bigl(\bdryin A_{n,k^\star},\bdryout A_{n,k^\star}\bigr)
\\
&\ge\e^{2\gamma[C^\star\sqrt{k^\star\log\log k^\star}-\wt C\sqrt{k^\star}-\log\log(N')\,]}\,\e^{-3\cspecial \log\log(N')}
\\
&\ge \e^{\tilde c\sqrt{k^\star\log\log k^\star}}
\end{aligned}
\end{equation}
holds for some constant $\tilde c>0$, where we used that $k^\star\ge k_n$ implies $\sqrt{k^\star}\gg\log\log(N')$ as soon as~$N'$ is sufficiently large.
Similarly, abbreviating $\mathfrak m_{n,k}:=\min_{v\in A_{n,k}}\eta_{k,v}^c$, we get
\begin{equation}
\label{E:5.62}
\begin{aligned}
R_{A_{n,k_\star},\eta}
\le \e^{-2\gamma(\mathfrak m_{n,k_\star}-\Delta')}R_{A_{n,k_\star},\chi_{k_\star}^f}
&\le \e^{4\gamma\log\log(N')}\,\e^{\cspecial \log\log(N')}
\\&\le R_{A_{n,k^\star},\eta}\bigl(\bdryin A_{n,k^\star},\bdryout A_{n,k^\star}\bigr) - \log(N'),
\end{aligned}
\end{equation}
where we again used that $\sqrt{k^\star}\gg\log\log(N')$.

Next observe that if $v\in B(N)$, then the Nash-Williams estimate implies
\begin{equation}
\label{E:5.63}
R_{B(N'),\eta}\bigl(v,\partial B(N')\bigr)\ge
R_{B(N'),\eta}\bigl(v,\bdryin A_{n,k^\star}\bigr)+R_{A_{n,k^\star},\eta}\bigl(\bdryin A_{n,k^\star},\bdryout A_{n,k^\star}\bigr)
\end{equation}
while the Series Law gives
\begin{equation}
R_{B(N'),\eta}(0,v)\le
R_{B(N'),\eta}\bigl(0,\bdryout A_{n,k_\star}\bigr)+R_{B(N'),\eta}\bigl(v,\bdryout A_{n,k_\star}\bigr)
+R_{A_{n,k_\star},\eta}.
\end{equation}
Since $k^\star<k_\star$ implies that $A_{n,k^\star}$ lies outside $A_{n,k_\star}$, we also have
\begin{equation}
\label{E:5.65}
R_{B(N'),\eta}\bigl(v,\bdryin A_{n,k^\star}\bigr)
\ge
R_{B(N'),\eta}\bigl(v,\bdryout A_{n,k_\star}\bigr).
\end{equation}
Combining \twoeqref{E:5.62}{E:5.65} we thus get that $D_{N',\eta}(v)\ge\log N'$ for all $v\in B(N)$ as soon as the event in~\eqref{E:5.60} occurs. The claim follows (for~$N$ replaced by~$N'$) from \eqref{E:5.47a} and Lemma~\ref{lemma-5.11}.
\end{proofsect}

\subsection{Proofs of  the  main results}
\label{sec-proofs}\noindent
We  will now move to prove the remaining part of our main  results. Fix~$\delta\in(0,\infty)$ small,  abbreviate $ N_\delta:=N\e^{(\log N)^\delta}$  and consider the set
\begin{multline}
\Xi_{N}^\star:=\{0\}\cup\partial B(N)
\\
\cup\Bigl\{v\in A(N_\delta,2 N_\delta)\colon R_{B(N+1)_\eta}(0,
v)\vee R_{B(N+1)_\eta}(v,
\partial B(N))\le\e^{(\log N)^{1/2+\delta}}\Bigr\}.
\end{multline}
We again claim:

\begin{lemma}
\label{lem-good-volume2}
For each~$\delta>0$, there is $c>0$ such that for all~$N$ sufficiently large, 
\begin{equation}
\label{E:5.32c}
\P\Bigl(\,\pi_\eta(\Xi_{N}^\star)\le N^{\psi(\gamma)}\e^{-(\log N)^{\delta}}\Bigr)\le \frac{c}{(\log N)^2}.
\end{equation}
\end{lemma}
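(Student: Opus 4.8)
The plan is to follow the proof of Lemma~\ref{lem-good-volume} almost verbatim, the one genuinely new feature being the second resistance constraint $R_{B(N+1)_\eta}(v,\partial B(N))\le\e^{(\log N)^{1/2+\delta}}$ in the definition of~$\Xi_N^\star$. Since $\Xi_N^\star$ contains the origin (and $\partial B(N)$, which only helps) together with every $v$ in the annulus $A(N_\delta,2N_\delta)$ meeting \emph{both} constraints, writing $\mathcal B_N$ for the set of $v\in A(N_\delta,2N_\delta)$ violating at least one of them we get $\pi_\eta(\Xi_N^\star)\ge\pi_\eta\bigl(A(N_\delta,2N_\delta)\bigr)-\pi_\eta(\mathcal B_N)$. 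Thus it suffices to prove, each off an event of probability $O(1/(\log N)^2)$, a lower bound on $\pi_\eta(A(N_\delta,2N_\delta))$ together with a matching (much smaller) upper bound on $\pi_\eta(\mathcal B_N)$. As in Lemma~\ref{lem-good-volume} I would allow the exponent~$\delta$ to shrink from line to line and absorb the subpolynomial discrepancy between~$N$ and~$N_\delta$, and I do not track this bookkeeping here.

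The lower bound on $\pi_\eta(A(N_\delta,2N_\delta))$ is exactly Lemma~\ref{lem-good-volume} read at scale~$N_\delta$: there $\Xi_{N_\delta}\subseteq\{0\}\cup A(N_\delta,2N_\delta)$, the term $\pi_\eta(0)=O(1)$ is negligible, and the stated estimate on $\pi_\eta(\Xi_{N_\delta})$ carries over. For the upper bound on $\pi_\eta(\mathcal B_N)$ I would split $\mathcal B_N$ according to which resistance is large. The mass of vertices with $R_{B(N+1)_\eta}(0,v)$ large is controlled by Lemma~\ref{lemma-5.3}: Rayleigh monotonicity lets one replace $R_{B(N+1)_\eta}(0,v)$ by the effective resistance in the (smaller) network figuring in Lemma~\ref{lemma-5.3}, and for~$N$ large the threshold $\e^{(\log N)^{1/2+\delta}}$ dominates the one used there at scale~$N_\delta$.

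For vertices with $R_{B(N+1)_\eta}(v,\partial B(N))$ large I would invoke the metric (triangle) property of effective resistance,
\begin{equation*}
R_{B(N+1)_\eta}\bigl(v,\partial B(N)\bigr)\le R_{B(N+1)_\eta}(v,0)+R_{B(N+1)_\eta}\bigl(0,\partial B(N)\bigr).
\end{equation*}
This forces such a~$v$ to satisfy either $R_{B(N+1)_\eta}(0,v)>\tfrac12\e^{(\log N)^{1/2+\delta}}$, in which case it is again one of the ``large resistance to the origin'' vertices handled by Lemma~\ref{lemma-5.3} (at a halved threshold), or else the $v$-free event $\{R_{B(N+1)_\eta}(0,\partial B(N))>\tfrac12\e^{(\log N)^{1/2+\delta}}\}$ occurs. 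The latter has probability $o(1/(\log N)^2)$: choosing any fixed $w_0\in\partial B(N)$, Rayleigh monotonicity gives $R_{B(N+1)_\eta}(0,\partial B(N))\le R_{B(N+1)_\eta}(0,w_0)$, and Corollary~\ref{cor-4.18} (applied with $N+1$ for~$N$ and~$t$ a suitable small power of $\log N$) bounds this by $\e^{-(\log N)^{\delta'}}\log N$ for some $\delta'>0$. Putting the pieces together yields $\pi_\eta(\mathcal B_N)\ll\pi_\eta(A(N_\delta,2N_\delta))$ off an event of probability $O(1/(\log N)^2)$, hence the claim.

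I do not expect a real obstacle: all the analytic input is already available (Lemmas~\ref{lem-good-volume}, \ref{lemma-5.3}, Corollary~\ref{cor-4.18} and the triangle inequality for effective resistance). The one idea not present in the proof of Lemma~\ref{lem-good-volume} is the reduction of the ``resistance to $\partial B(N)$'' constraint to the ``resistance to the origin'' constraint via the triangle inequality combined with the a priori bound $R_{B(N+1)_\eta}(0,\partial B(N))=N^{o(1)}$ with overwhelming probability; the remaining work is the (routine but slightly tedious) matching of thresholds and of the $N\leftrightarrow N_\delta$ scale change against the exponent in the statement.
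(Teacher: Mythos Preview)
Your strategy is correct and follows the same overall template as the paper: lower-bound the full annulus mass via Lemma~\ref{lem-good-volume} (respectively \eqref{E:5.18}) and upper-bound the $\pi_\eta$-mass of vertices violating either resistance constraint via the first-moment argument of Lemma~\ref{lemma-5.3}.

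The one point where you take a different route is in handling the constraint $R_{B(N+1)_\eta}(v,\partial B(N))\le\e^{(\log N)^{1/2+\delta}}$. You reduce it to the origin-constraint via the triangle inequality $R(v,\partial B(N))\le R(v,0)+R(0,\partial B(N))$ and then separately bound the single event $\{R(0,\partial B(N))>\tfrac12\e^{(\log N)^{1/2+\delta}}\}$ using Corollary~\ref{cor-4.18}. The paper instead simply observes that $R_{B(N+1)_\eta}(v,\partial B(N))\le R_{B(N+1)_\eta}(v,u)$ for any fixed $u\in\partial B(N)$, so Corollary~\ref{cor-4.18} applies directly to the resistance-to-boundary and the proof of Lemma~\ref{lemma-5.3} carries over verbatim with $\partial B(N)$ in place of~$0$. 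This is shorter: it avoids splitting into two cases and the separate treatment of the global event, and it does not pass through~$0$ at all. Your detour costs nothing in correctness, only a couple of extra lines; the paper's bound is the more economical observation.
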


\begin{proofsect}{Proof}
Using the same proof, Lemma~\ref{lemma-5.3} applies also for resistance $R_{B(N)_\eta}(v,\partial B(N))$. In light of 
\begin{equation}
R_{\,B(N+1)_\eta}\bigl(v,\partial B(N)\bigr)\le R_{\,B(N+1)_\eta}(v,u),\qquad u\in \partial B(N),
\end{equation}
 Corollary~\ref{cor-4.18}  applies to $R_{B(N+1)_\eta}(v,\partial B(N))$ just as well. Combining this with \eqref{E:5.18}, we proceed as in the proof of  Lemma~\ref{lem-good-volume}  to get the result. 
\end{proofsect}

We are now ready to give:

\begin{proofsect}{Proof of Theorem~\ref{thm_main_exit_time}}
The upper bound has already been shown in Lemma~\ref{lemma-5.7}, so we just need to derive the corresponding lower bound. For this we write \eqref{E:5.34} as a bound and apply \eqref{E:5.39} with Proposition~\ref{prop-5.8} to get  that, with probability tending to one as~$N\to\infty$, 
\begin{equation}
\label{E:5.69}
\E^0(\tau_{ B(N)^\cc}) \ge R_{B(N+1)_\eta}\bigl(0,\partial B(N)\bigr)\sum_{v\in \Xi_N^\star}\pi_\eta(v)\phi(v)
\ge\pi_\eta(\Xi_N^\star)\log(N).
\end{equation}
The claim then follows from Lemma~\ref{lem-good-volume2}.
\end{proofsect}

We then use the lower bound on the expected exit time to also get:

\begin{proofsect}{Proof of Theorem~\ref{thm_main_spectral}}
The upper bound on the return probability has already been proved in Lemma~\ref{lemma-5.6}, so we will focus on the lower bound and recurrence. Consider again the random walk~$\wt X$ on~$B(N+1)$ and let~$Y$ be its trace on~$\Xi_N^\star$. Let~$\hat\tau_{\partial B(N)}:=\inf\{k\ge0\colon Y_k\in\partial B(N)\}$. Then
\begin{equation}
\begin{aligned}
\quad \E^0(\hat\tau_{\partial B(N)})
&\le T \P^0(\hat\tau_{\partial B(N)}\le T)+\P^0(\hat\tau_{\partial B(N)}>T)\bigl(T+\max_{v\in \Xi_N^\star\smallsetminus\partial B(N)}\E^v(\hat\tau_{\partial B(N)})\bigr)
\\
&=T+\P^0(\hat\tau_{\partial B(N)}>T)\max_{v\in \Xi_N^\star\smallsetminus\partial B(N)}\E^v(\hat\tau_{\partial B(N)}).
\end{aligned}
\end{equation}
The hitting time estimate in conjunction with the definition of~$\Xi_N^\star$ gives
\begin{equation}
\E^v(\hat\tau_{\partial B(N)})\le \pi_\eta(\Xi_N^\star)\,\e^{(\log N)^{1/2+\delta}},\qquad v\in \Xi_N^\star\smallsetminus\partial B(N)
\end{equation}
whereby we get
\begin{equation}
\P^0(\hat\tau_{\partial B(N)}>T)
\ge \pi_\eta(\Xi_N^\star)^{-1}\e^{-(\log N)^{1/2+\delta}}\bigl(\E^0(\hat\tau_{\partial B(N)})
-T\bigr).
\end{equation}
Since \eqref{E:5.69} applies also for the expectation of $\hat\tau_{\partial B(N)}$, the choice $N:=T^{1/\psi(\gamma)}\e^{(\log N)^\delta}$ implies $\E^0(\hat\tau_{\partial B(N)})\ge 2T$ and thus, using \eqref{E:5.69} one more time,
\begin{equation}
\P^0(\hat\tau_{\partial B(N)}>T)\ge \e^{-(\log N)^{1/2+\delta}}.
\end{equation}
But $\hat\tau_{\partial B(N)}\le\tau_{\partial B(N)} :=\inf\{k\ge0\colon X_k\in\partial B(N)\}$  and so we get
\begin{equation}
\P^0\bigl(X_T\in B(N)\bigr)\ge \P^0(\tau_{\partial B(N)}>T)\ge\e^{-(\log N)^{1/2+\delta}}
\end{equation}
as well. Using this in~\eqref{E:5.32b}, the desired lower bound then follows from, e.g., \eqref{E:5.18}.

It remains to show recurrence. Here we note that \eqref{E:5.61} and \eqref{E:5.63} along with Lemma~\ref{lemma-5.11} imply that $R_{B(N),\eta}(0,\partial B(N))\to\infty$ in probability along a sufficiently rapidly growing deterministic sequence of~$N$'s.  Since  the sequence of resistances is increasing in~$N$, the convergence holds almost surely. By a well known criterion, this implies recurrence.
\end{proofsect}

It remains to give:

\begin{proofsect}{Proof of Theorem~\ref{thm-effective-resistance}}
The bound \eqref{E:1.10ua} has already been shown in Corollary~\ref{cor-4.18}. 
A completely analogous argument produces also the estimate
\begin{equation}
\P\Bigl(R_{\Z^2_\eta}(0,B(N)^\cc)\ge C\e^{C t\sqrt{(\log N)\log\log N}}\,\Bigr)\le C'(\log N)^{1-t^2}.
\end{equation}
Taking~$t$ sufficiently large and invoking the Borel-Cantelli lemma then proves \eqref{E:1.11ua} for~$N$ restricted to powers of~$2$. The monotonicity of $N\mapsto R_{\Z^2_\eta}(0,B(N)^\cc)$ extends this to general~$N$.

It remains to prove \eqref{E:1.12ua}.  We will use a decomposition of~$\eta$ from \cite[Proposition~3.12]{BL3}. Let~$b:=8$ and consider the annuli $A_k':=B(b^{k+1})\smallsetminus B(b^k)^\circ$ and $A_k:=B(4b^k)\smallsetminus B(2 b^k)$ for all $k\ge0$. Then
\begin{equation}
\label{E:5.69ua}
\eta_v = \sum_{k\ge0}\Bigl[\mathfrak b_k(v)X_k+\psi_{k,v}+\eta^f_{k,v}\Bigr],
\end{equation}
where $\mathfrak b_k\colon\Z^2\to\R$ is a function such that
\begin{equation}
\label{E:5.70ua}
\mathfrak b_k(v)=-1\,\,\text{if}\,\, v\not\in B(b^k)\quad\text{\rm and}\quad
\bigl|\mathfrak b_k(v)\bigr|\le cb^{\ell-k}\,\,\text{if}\,\, v\in B(b^\ell)\subseteq B(b^k),
\end{equation}
while $\{X_k\colon k\ge0\}$ are random variables and $\{\psi_k\colon k\ge0\}$ and $\{\eta_k^f\colon k\ge0\}$ are random fields (all measurable with respect to~$\eta$) that are independent of one another and distributed as centered Gaussian with the specifics of the law determined as follows:
\settowidth{\leftmargini}{(111)}
\begin{enumerate}
\item[(1)] $\lim_{k\to\infty}\var(X_k)=g\log b$,
\item[(2)] writing $\chi_k^c$ for the coarse field obtained as the conditional expectation of the GFF on~$B(b^k)$ given its values on $\partial B(b^{k-1})$, we have
\begin{equation}
\psi_k\laweq\chi_k^c-\E(\chi_k^c|\chi_{k,0}^c),
\end{equation}
\item[(3)] $\eta_k^f$ is the fine field on~$A_k'$.
\end{enumerate}
For~$\psi_k$, we in addition have the following variance estimate,
\begin{equation}
\label{E:5.72ua}
\var(\psi_{k,v})\le c b^{\ell-k},\qquad v\in B(b^\ell)\subseteq B(b^k).
\end{equation}
See~\cite[Lemma~3.7]{BL3} for \eqref{E:5.70ua} and~\cite[Lemma~3.8]{BL3} for \eqref{E:5.72ua}. 

Clearly, only one of the fine fields $\chi_k^f$ can contribute in \eqref{E:5.69ua} for each given~$v$ and $\chi_{k,v}=0$ unless~$v\in B(b^k)$. Setting (with some abuse of our earlier notation),
\begin{equation}
\Delta_k:=\max_{v\in A_k}\,\Bigl|\sum_{j>k}\mathfrak b_j(v) X_j+\sum_{j\ge k}\psi_{j,v}\Bigr|
\end{equation}
\cite[Lemma~3.8]{BL3} shows that, for some constants~$c,c'\in(0,\infty)$,
\begin{equation}
\label{E:5.73}
\P\bigl(\Delta_k\ge c+t\bigr)\le \e^{-c' t^2},\qquad t\ge0.
\end{equation}
The first half of \eqref{E:5.70ua} then lets us write
\begin{equation}
\eta_v +\sum_{j=0}^k X_k -\chi_{k,v}^f\le \Delta_k,\qquad v\in A_k.
\end{equation}
We now set $S_k:= \sum_{j=0}^k X_k$ and note that the Nash-Williams estimate  and Lemma~\ref{lem:resistance_decomp} imply
\begin{equation}
\label{E:5.75}
R_{B(N+1)_{\eta}}\bigl(0,\partial B(N)\bigr)\ge \max_{k=1,\dots,n-1}\Bigl[\,\e^{-2\gamma(\Delta_k-S_k)}\, R_{A_{k},\eta_k^f}\bigl(\bdryin A_{n,k},\bdryout A_{n,k}\bigr)\Bigr]
\end{equation}
where $n:=\max\{k\ge0\colon b^k\le N\}$.

The Borel-Cantelli lemma, straightforward estimates for Gaussian random variables and the tail bound \eqref{E:5.73} show the existence of~a random variable $n_0$ with~$\P(n_0<\infty)=1$ such that
\begin{equation}
\label{E:5.77}
\max_{0\le k\le n}\Delta_k\le(\log n)^2,\quad n\ge n_0,
\end{equation}
and
\begin{equation}
\label{E:5.78}
\max_{\begin{subarray}{c}
0\le k,l\le n\\|k-\ell|\le (\log n)^{5/4}
\end{subarray}}
|S_k-S_l|\le (\log n)^2,\quad n\ge n_0.
\end{equation}
Denoting
\begin{equation}
F_k:=\Bigl\{R_{A_{k},\eta_k^f}\bigl(\bdryin A_{n,k},\bdryout A_{n,k}\bigr)\ge C\e^{-3\cspecial \log(b^k)}\Bigr\}.
\end{equation}
Lemma~\ref{lemma-annuli} tells us $\inf_{k\ge1}\P(F_k)>0$. The events~$\{F_k\}$ are also independent, we may assume that~$n_0$ is also such that, for~$n\ge n_0$, the longest run of~$k=1,\dots,n$ where~$F_k$ fails is at most a constant time~$\log n$. Since by \eqref{E:5.78},~$S$ changes by at most~$(\log n)^2$ over that interval, from \eqref{E:5.77} and $b^k\le N$ we get
\begin{equation}
\label{E:5.80}
R_{B(N+1)_{\eta}}\bigl(0,\partial B(N)\bigr)
\ge\Bigl(\,\max_{0\le k\le n}\e^{2\gamma S_k}\Bigr)\e^{-4\gamma(\log n)^2}\,C\e^{-3\cspecial \log N}
\end{equation}
as soon as~$N$ (and thus~$n$) is sufficiently large.
The growth of the maximum is controlled by Chung's Law of the Iterated Logarithm \cite{Chung} which gives
\begin{equation}
\liminf_{n\to\infty}\,\frac{\max_{k\le n}S_k}{\sqrt{n/\log n}}>0
\end{equation}
The claim follows by using this in \eqref{E:5.80} and invoking that~$n$ is order~$\log N$.
\end{proofsect}

\def\cprime{$'$}

\end{document}